\PassOptionsToPackage{unicode}{hyperref}
\PassOptionsToPackage{hyphens}{url}
\PassOptionsToPackage{dvipsnames,svgnames,x11names}{xcolor}
\documentclass[
  12pt]{article}

\usepackage{amsmath,amssymb}
\usepackage{iftex}
\usepackage{subcaption}
\usepackage{amsthm}
\theoremstyle{definition}
\newtheorem{definition}{Definition}
\newtheorem{example}{Example}
\theoremstyle{remark}
\newtheorem{remark}{Remark}
\theoremstyle{plain}
\newtheorem{theorem}{Theorem}[section]
\newtheorem{corollary}[theorem]{Corollary}
\theoremstyle{definition}
\theoremstyle{remark}
\theoremstyle{plain}
\newtheorem{assumption}{Assumption}[section]
\usepackage{multirow}
\usepackage{diagbox}
\numberwithin{equation}{section}

\usepackage{multirow}
\usepackage{diagbox}

\usepackage{subcaption}

\usepackage[ruled, noend]{algorithm2e}

\ifPDFTeX
  \usepackage[T1]{fontenc}
  \usepackage[utf8]{inputenc}
  \usepackage{textcomp} 
\else 
  \usepackage{unicode-math}
  \defaultfontfeatures{Scale=MatchLowercase}
  \defaultfontfeatures[\rmfamily]{Ligatures=TeX,Scale=1}
\fi
\usepackage{lmodern}
\ifPDFTeX\else  
\fi
\IfFileExists{upquote.sty}{\usepackage{upquote}}{}
\IfFileExists{microtype.sty}{
  \usepackage[]{microtype}
  \UseMicrotypeSet[protrusion]{basicmath} 
}{}
\makeatletter
\@ifundefined{KOMAClassName}{
  \IfFileExists{parskip.sty}{%
    \usepackage{parskip}
  }{
    \setlength{\parindent}{0pt}
    \setlength{\parskip}{6pt plus 2pt minus 1pt}}
}{
  \KOMAoptions{parskip=half}}
\makeatother
\usepackage{xcolor}
\makeatletter
\ifx\paragraph\undefined\else
  \let\oldparagraph\paragraph
  \renewcommand{\paragraph}{
    \@ifstar
      \xxxParagraphStar
      \xxxParagraphNoStar
  }
  \newcommand{\xxxParagraphStar}[1]{\oldparagraph*{#1}\mbox{}}
  \newcommand{\xxxParagraphNoStar}[1]{\oldparagraph{#1}\mbox{}}
\fi
\ifx\subparagraph\undefined\else
  \let\oldsubparagraph\subparagraph
  \renewcommand{\subparagraph}{
    \@ifstar
      \xxxSubParagraphStar
      \xxxSubParagraphNoStar
  }
  \newcommand{\xxxSubParagraphStar}[1]{\oldsubparagraph*{#1}\mbox{}}
  \newcommand{\xxxSubParagraphNoStar}[1]{\oldsubparagraph{#1}\mbox{}}
\fi
\makeatother

\usepackage{longtable,booktabs,array}
\usepackage{calc} 
\usepackage{etoolbox}
\makeatletter
\patchcmd\longtable{\par}{\if@noskipsec\mbox{}\fi\par}{}{}
\makeatother
\IfFileExists{footnotehyper.sty}{\usepackage{footnotehyper}}{\usepackage{footnote}}
\makesavenoteenv{longtable}
\usepackage{graphicx}
\makeatletter
\def\maxwidth{\ifdim\Gin@nat@width>\linewidth\linewidth\else\Gin@nat@width\fi}
\def\maxheight{\ifdim\Gin@nat@height>\textheight\textheight\else\Gin@nat@height\fi}
\makeatother
\setkeys{Gin}{width=\maxwidth,height=\maxheight,keepaspectratio}
\makeatletter
\def\fps@figure{htbp}
\makeatother

\makeatletter
\@ifpackageloaded{caption}{}{\usepackage{caption}}
\AtBeginDocument{%
\ifdefined\contentsname
  \renewcommand*\contentsname{Table of contents}
\else
  \newcommand\contentsname{Table of contents}
\fi
\ifdefined\listfigurename
  \renewcommand*\listfigurename{List of Figures}
\else
  \newcommand\listfigurename{List of Figures}
\fi
\ifdefined\listtablename
  \renewcommand*\listtablename{List of Tables}
\else
  \newcommand\listtablename{List of Tables}
\fi
\ifdefined\figurename
  \renewcommand*\figurename{Figure}
\else
  \newcommand\figurename{Figure}
\fi
\ifdefined\tablename
  \renewcommand*\tablename{Table}
\else
  \newcommand\tablename{Table}
\fi
}
\@ifpackageloaded{float}{}{\usepackage{float}}
\floatstyle{ruled}
\@ifundefined{c@chapter}{\newfloat{codelisting}{h}{lop}}{\newfloat{codelisting}{h}{lop}[chapter]}
\floatname{codelisting}{Listing}

\makeatother
\makeatletter
\@ifpackageloaded{caption}{}{\usepackage{caption}}
\@ifpackageloaded{subcaption}{}{\usepackage{subcaption}}
\makeatother

\ifLuaTeX
  \usepackage{selnolig}  
\fi
\usepackage[]{natbib}
\usepackage{bookmark}

\IfFileExists{xurl.sty}{\usepackage{xurl}}{} 
\hypersetup{
  pdftitle={Title},
  pdfauthor={Author 1; Author 2},
  pdfkeywords={3 to 6 keywords, that do not appear in the title},
  colorlinks=true,
  linkcolor={blue},
  filecolor={Maroon},
  citecolor={Blue},
  urlcolor={Blue},
  pdfcreator={LaTeX via pandoc}}

 \title{\bf Tests for Independence of High-Dimensional Nonstationary Time Series}
  \author{Yunyi Zhang\\
  School of Data Science, The Chinese University of Hong Kong, Shenzhen,\\ Shenzhen, Guangdong 518172, China\\
  zhangyunyi@cuhk.edu.cn
  }

\begin{document}

\def\spacingset#1{\renewcommand{\baselinestretch}%
{#1}\small\normalsize} \spacingset{1}


  \maketitle

\bigskip
\begin{abstract}
This manuscript studies the problem of  independence testing between two high-dimensional time series without assuming weak stationarity, allowing their autocovariances to vary over time. To this end, we propose a bimodal weighted-average test statistic that removes the bias induced by temporal dependence under the null hypothesis, thereby avoiding the need to whiten the time series prior to hypothesis testing---a procedure that is challenging in high-dimensional and nonstationary settings. To facilitate statistical inference, we develop a dependent wild bootstrap procedure. On the theoretical side, we derive a concentration inequality for quadratic forms of time series data stemming from a class of high-dimensional, nonlinear, and  nonstationary processes. This result enables us to derive the asymptotic null distribution of the proposed test statistic and to establish the validity of the bootstrap algorithm. Numerical results show that the proposed test attains desired size and good power performance even when the dimension exceeds the sample size or when the data-generating process exhibits time-varying autocovariances.  In contrast, tests based  on whitening time series fail to maintain correct size in the presence of unstable autocovariance structures. Since nonstationary autocovariances commonly arise in real-life time series data, our work offers a robust procedure for independence testing. 
\end{abstract}

\noindent%
{\it Keywords:} Independence Test; Nonstationary time series; High-dimensional data;  Quadratic form;  Wild bootstrap.
\vfill

\newpage
\spacingset{1.8} 

\section{Introduction}

\subsection{Overview}

Suppose that we observe a mean-zero, vector-valued time series $\mathbf{z}_t = \left(\mathbf{x}_t^\top, \mathbf{y}_t^\top\right)^\top\in\mathbf{R}^{d_1 + d_2}$ for $t = 1,2,\cdots, T.$ We consider testing the null hypothesis $H_0$ that 
the sequences $\left\{\mathbf{x}_t\right\}_{t = 1,\cdots, T}$ and $\left\{\mathbf{y}_t\right\}_{t = 1,\cdots, T}$ are independent, against the alternative hypothesis $H_1$ that they are dependent.

Independence testing in time series has attracted considerable attention in economics, financial data analysis, machine learning, and biological studies, as evidenced by the growing body of literature, including \cite{https://doi.org/10.1111/j.1540-6261.1987.tb04368.x, MR2472050, NIPS2013_ae5e3ce4, MR3569280,  https://doi.org/10.1111/ecog.04360, MR4270386, NEURIPS2022_6739d8df, 10.1371/journal.pbio.3002758, MR5027588}. Several reasons motivate this line of research. For example, \cite{MR3416594} leveraged such testing procedures to investigate the relationship between inflation and currency exchange rates. In addition, when a time series comprises many features, practitioners may assess dependence structures among subsets of variables, where independence suggests separation into distinct communities. Furthermore, independence testing aids model simplification \cite{MR4270386}. If two time series are found to be independent, they may be modeled separately rather than jointly, leading to a substantial reduction in model complexity.

Early contributions, such as \cite{MR845894, MR1423878,  MR1410877, MR1613453, MR1997176, MR2472050}, primarily considered independence testing for scalar processes, while subsequent studies \cite{MR1463322, MR2245711, MR3416594, MR4270386, MR4384115, MR4530924} generalized the hypothesis testing problem to multivariate and functional time series. 
A common approach in the literature --- see, for example, \cite{MR2472050, MR3416594} ---  assumes that $\mathbf{x}_t$ and $\mathbf{y}_t$ follow autoregressive models with independent innovations and reduces the hypothesis testing problem to testing independence between fitted innovations. Although effective in low-dimensional settings, such an approach faces two challenges in high dimensions or when autocovariance stationarity is violated. First, in high-dimensional settings, consistent estimation of vector autoregressive models typically requires structural assumptions such as sparsity or low rank in the coefficient matrices \cite{MR3450535, MR3357870, MR4278792, zhang2023statistical, MR4480716}, which may be restrictive in practice. Second, when the autocovariance structure is nonstationary, fitting autoregressive models no longer recovers the underlying innovations, thus invalidating distributional approximations. Moreover, assessing autocovariance stationarity can be difficult in high dimensions \cite{10.1093/jrsssb/qkaf064}.

To address these challenges, this manuscript  proposes a bimodal weighted-average test statistic to test independence between two high-dimensional time series.  The proposed method allows the dimensions $d_1$ and $d_2$  to  exceed the sample size $T,$  imposes no stationarity assumptions on the autocovariance structure, and does not rely on vector autoregressive modeling. By avoiding these restrictions, the procedure is broadly applicable and robust to model misspecification.

The remainder of this manuscript is organized as follows: Section \ref{section.background_intro} reviews related literature.  Section \ref{section.methodology} introduces the test statistic and a dependent wild bootstrap algorithm that assists statistical inference via computer simulations. Section \ref{section.asymptotic} establishes the asymptotic distribution of the test statistic under $H_0,$ and proves the validity of the proposed bootstrap algorithm. Sections \ref{section.numerical_simulation} and \ref{section.applications} respectively  demonstrate the finite-sample performance and the applications of the test through numerical experiments and real data examples. Additional results, including further numerical experiments and detailed proofs of the theoretical results, are postponed to the online supplement.

\subsection{Related literature}
\label{section.background_intro}
This section provides a review of the literature on independence testing and its practical applications; further review of methods for analyzing high-dimensional vector autoregressive (VAR) processes is deferred to the online supplement. 

\textbf{Independence testing.} Research on testing the independence of two stationary scalar time series dates back to \cite{MR418379, Pierce01031977}, with more recent studies including \cite{MR4513134, MR4708634, Leong03042025}.  This line of research has been extended to vector time series; see, for example, \cite{MR2245711, MR2354407, MR3416594, MR4270386, dette2024detecting}. We also refer to \cite{MR4384115, MR4829509} for independence testing in the functional data setting, and to \cite{MR3782387, MR3798874, MR4185812, MR4404921, MR4797902} for independence testing with high-dimensional independent observations. Despite this extensive body of literature, to the best of our knowledge, relatively few studies have been conducted on independence testing for high-dimensional nonstationary time series, where the temporal dependence can affect the null distribution of the test statistic under $H_0$, potentially leading to size inflation.

\textbf{Applications.}    Independence testing serves as a key component in many complex algorithms. For example, as shown in \cite{NIPS2008_f7664060, spirtes2012causation, NIPS2013_47d1e990}, it complements causal discovery methods by determining the direction of causality. In addition, the works of \cite{10.1145/1273496.1273600, MR2930643} employ independence testing for feature selection. We also refer readers to \cite{doi:10.1073/pnas.1420291112, PhysRevResearch.3.013145, communication_nature, 10.1371/journal.pbio.3002758, DOUBOVIKOV2025121334}, among others, for several real-world data applications.

\subsection{Frequently-used notations}
This manuscript uses the standard order notations $O(\cdot), o(\cdot), O_p(\cdot), o_p(\cdot):$ For two numerical sequences $a_t, b_t, t = 1,2, \cdots,$ the notation $a_t = O(b_t)$ indicates that there exists a constant $C > 0$ such that $\vert a_t\vert\leq C\vert b_t\vert$ for any $t\in\mathbf{Z};$ and $a_t = o(b_t)$ indicates $\lim_{t\to\infty} \left(\frac{a_t}{b_t}\right) = 0.$ In particular, we say $a_t\asymp b_t$ if the conditions $a_t = O(b_t)$ and $b_t = O(a_t)$ hold true simultaneously. For two random variable sequences $X_t, Y_t,$ we say $X_t = O_p(Y_t)$ if for any given $0 < \varepsilon < 1,$ there exists a constant $C_\varepsilon > 0$ (depending only on the value of $\varepsilon $) such that $\mathbf{pr}\left(\vert X_t\vert\leq C_\varepsilon\vert Y_t\vert\right)\geq 1 - \varepsilon$ for any $t;$ and $X_t = o_p(Y_t)$ if $X_t / Y_t\to_p 0,$ where $\to_p$ denotes convergence in probability. The reader can refer to Definition 1.9 of \cite{MR2002723} for a detailed introduction to these notations. For a random variable $X\in\mathbf{R}$ and a real number $m\geq 1,$ define its $m$-norm $\Vert X\Vert_m = \mathbf{E}\left[\vert X\vert^m\right]^{1/m}.$  We use the notations $\vee,\wedge$ to represent the ``minimum'' and ``maximum'' of two numbers, i.e., $a\wedge b = \min(a,b),$ $a\vee b = \max(a,b).$ We use $C,C_0,C_1,C_2,\cdots$ to represent general constants, whose values are independent of the sample size $T.$

In the manuscript, the bold (Greek) lower-case letters, such as $\mathbf{a}$ or $\boldsymbol{\omega},$ refer to (random) vectors, while the bold (Greek) upper-case letters, such as $\mathbf{A}$ or $\boldsymbol{\Omega},$ refer to (random) matrices. Superscripts in brackets represent the specific elements of the vector. For example, a vector $\mathbf{a} = \left(\mathbf{a}^{(1)},\cdots, \mathbf{a}^{(d)}\right)^\top\in\mathbf{R}^d$ and a matrix $\mathbf{A} = \left\{\mathbf{A}^{(i,j)}\right\}_{i = 1,\cdots, d_1, j =1,\cdots, d_2}\in\mathbf{R}^{d_1\times d_2}.$ For a vector $\mathbf{a}\in\mathbf{R}^d$ and a number $m\geq 1,$ we denote its $m$-norm $\vert\mathbf{a}\vert_m = \left(\sum_{j = 1}^d\vert\mathbf{a}^{(j)}\vert^m\right)^{1/m}$ and $\vert\mathbf{a}\vert_\infty = \max_{j = 1,\cdots, d}\left\vert\mathbf{a}^{(j)}\right\vert.$ For a $d\times d$ matrix, we denote its spectral norm by $\vert\mathbf{A}\vert_2$ as its largest singular value, and its Frobenius norm $\vert\mathbf{A}\vert_F = \sqrt{\sum_{i = 1}^d\sum_{j = 1}^d \mathbf{A}^{(i,j)2}}.$ For a matrix $\mathbf{A}\in\mathbf{R}^{d\times d},$ define the trace of $\mathbf{A}$ by  $\mathrm{Tr}\left(\mathbf{A}\right) = \sum_{j = 1}^d \mathbf{A}^{(j,j)}.$

\section{Setting and methodology}
This section introduces the bimodal weighted-average test statistic, along with a corresponding dependent wild bootstrap algorithm for hypothesis testing.
\label{section.methodology}

\subsection{Constructing the test statistic}
Suppose the observed time series $\mathbf{z}_t = \left(\mathbf{x}_t^\top, \mathbf{y}_t^\top\right)^\top\in\mathbf{R}^{d_1 + d_2}$ satisfies $\mathbf{E}\left[\mathbf{z}_t\right] = 0,$        and the objective is to test the independence between the sequences $\left\{\mathbf{x}_t\right\}_{t\in\mathbf{Z}}$ and  $\left\{\mathbf{y}_t\right\}_{t\in\mathbf{Z}}.$ Unlike in the independent-data setting \cite{NIPS2007_d5cfead9, MR4404921}, the presence of nonzero autocovariances in time series induces bias in the test statistic, inflating its magnitude even under  $H_0.$ Existing literature, such as \cite{MR3416594, MR4270386} removed the autocovariances by 
whitening the data via fitting a vector autoregressive model. However, such methods are generally unsuitable for  nonstationary time series.

This work attenuates autocovariances by excluding pairs with small time lags. Specifically, for two given integer bandwidths $1 < \lambda_1 < \lambda_2 < T$ and a given lag $h\geq 0,$ we define the test statistic as
\begin{equation}
\begin{aligned}
    \widehat{R} = \sum_{t = 1 + \lambda_1}^T O_t,
    \quad O_t &= \frac{1}{\mathcal{U}}\sum_{t_1 = (t - \lambda_2)\vee 1}^{t - \lambda_1}\sum_{s = 0}^{h\wedge (t_1 - 1)}\mathbf{x}_{t}^\top\mathbf{x}_{t_1}\mathbf{y}_{t - s}^\top\mathbf{y}_{t_1 - s}\\
     & + \frac{1}{\mathcal{U}}\sum_{t_1 = (t - \lambda_2)\vee 1}^{t - \lambda_1}\sum_{s = 1}^{h\wedge(t_1 - 1)}\mathbf{x}_{t - s}^\top\mathbf{x}_{t_1 - s}\mathbf{y}_{t}^\top\mathbf{y}_{t_1},
\end{aligned}
\label{eq.def_R}
\end{equation}
where $\mathcal{U} = 2\sqrt{d_1d_2T(\lambda_2 - \lambda_1)h}$ normalizes $\widehat{R}$ to remain order $O_p(1)$ under $H_0$. As in \cite{MR3416594, MR4270386}, we sum over lags $s$ up to $h$ to  detect serial dependence up to lag $h.$ Allowing $h$ to diverge with $T$ enables the test statistic to capture cross-series dependence over increasingly long lags. We further demonstrate in Remark \ref{remark.underH0} that  a sufficiently large $\lambda_1$ reduces the bias induced by temporal dependence. The idea of excluding pairs to mitigate bias has also been explored in the literature, for example in \cite{MR2604697, MR3911118} in the context of independent data.

We refer to $\widehat{R}$ as  ``bimodal weighted-average'' because $\mathcal{U}\widehat{R}$ can be written as
\begin{align*}
&\frac{1}{2}\sum_{t_1 = 1}^T\sum_{t_2 = 1}^T f(t_1 - t_2)\sum_{s = 0}^{h\wedge (t_1 \wedge t_2 - 1)}\mathbf{x}_{t_1}^\top\mathbf{x}_{t_2}\mathbf{y}_{t_1 - s}^\top\mathbf{y}_{t_2 - s}\\
&+ \frac{1}{2}\sum_{t_1 = 1}^T\sum_{t_2 = 1}^T f(t_1 - t_2)\sum_{s = 1}^{h\wedge(t_1\wedge t_2 - 1)}\mathbf{x}_{t_1 - s}^\top\mathbf{x}_{t_2 - s}\mathbf{y}_{t_1}^\top\mathbf{y}_{t_2},
\end{align*}
where $f(x) = 1$ if $\lambda_1\leq \vert x\vert \leq \lambda_2,$ and  $f(x) = 0$ otherwise. Viewing $f(\cdot)$ as a kernel function, it is even and exhibits a bimodal shape: it vanishes at the origin and for large $|x|$, while attaining its maximum at intermediate values in $(0,\infty).$ This contrasts with standard kernel functions, which typically peak at zero and decay as $|x|$ increases. In this sense, the proposed test statistic is constructed using a piecewise constant kernel $f.$ 
Remarks \ref{remark.underH0} and \ref{remark.underH1} derive the expectations of the test statistic under both $H_0$ and $H_1,$ thereby explaining why  $\widehat{R}$ distinguishes between temporal dependence and dependence across  two time series.

\begin{remark}[Under $H_0$]
    \label{remark.underH0} 
Under independence between the sequences $\{\mathbf{x}_t\}_{t\in\mathbf{Z}}$ and $\{\mathbf{y}_t\}_{t\in\mathbf{Z}},$
temporal dependence dominates the bias of $\widehat{R}.$ We demonstrate in Remark \ref{remark.underH0_appendix} of the online supplement that the expectation of $\widehat{R}$ satisfies 
\begin{equation}
\begin{aligned}
   &\left\vert\mathbf{E}\left[ \widehat{R} \right]\right\vert\leq \frac{CTd_1d_2h}{\mathcal{U}\lambda_1^{2\alpha - 1}}
\end{aligned}
\label{eq.E_R_hat}
\end{equation}
under $H_0.$ Therefore, for sufficiently large $\lambda_1,$ the bias arising from temporal dependence becomes asymptotically negligible compared to its stochastic error. 

\end{remark}

\begin{remark}[Under $H_1$]
    \label{remark.underH1}
    Under the alternative hypothesis, due to dependence between the two time series, $\mathbf{E}\left[\mathbf{x}_{t}^\top\mathbf{x}_{t_1}\mathbf{y}_{t - s}^\top\mathbf{y}_{t_1 - s}\right]\neq 
\mathbf{E}\left[\mathbf{x}_{t}^\top\mathbf{x}_{t_1}\right]\mathbf{E}\left[\mathbf{y}_{t - s}^\top\mathbf{y}_{t_1 - s}\right]$ in general. For any $t\in\mathbf{Z}$ and $s\geq 0,$ we define the cross-covariance matrices $\boldsymbol{\Pi}_{t,s,1} = \mathbf{E}\left[\mathbf{x}_t\mathbf{y}_{t-s}^\top\right]\in\mathbf{R}^{d_1\times d_2}$ and $\boldsymbol{\Pi}_{t,s,2} = \mathbf{E}\left[\mathbf{y}_t\mathbf{x}_{t-s}^\top\right]\in\mathbf{R}^{d_2\times d_1}.$ We demonstrate in Remark \ref{remark.underH1_appendix} of the online supplement that the expectation $\mathbf{E}\left[O_t\right]$ contains contributions involving the products $\mathrm{Tr}\left(\boldsymbol{\Pi}_{t_1,s,1}\boldsymbol{\Pi}_{t,s,1}^\top\right)$ and $\mathrm{Tr}\left(\boldsymbol{\Pi}_{t_1,s,2}\boldsymbol{\Pi}_{t,s,2}^\top\right),$ where $\mathrm{Tr}(\cdot)$ denotes the trace of a matrix. Consequently, the proposed statistic accumulates these nonzero contributions, leading to a non-negligible bias under $H_1.$ This mechanism allows the test to distinguish cross-series dependence from temporal dependence within each series.
\end{remark}

Remark \ref{remark.underH1} also highlights the effect of nonstationarity.  If the time series $\mathbf{x}_t$ and $\mathbf{y}_t$ have stationary autocovariances, then $\boldsymbol{\Pi}_{t,s,1} = \boldsymbol{\Pi}_{1,s,1}$ for all $t,$ and 
$
    \mathrm{Tr}\left(\boldsymbol{\Pi}_{t_1,s,1}\boldsymbol{\Pi}_{t,s,1}^\top\right) = \mathrm{Tr}\left(\boldsymbol{\Pi}_{1,s,1}\boldsymbol{\Pi}_{1,s,1}^\top\right)\geq 0,
$
which makes $\mathbf{E}\left[O_t\right]$ positive. In contrast, without autocovariance stationarity, this bias can be negative, and a two-sided (rather than one-sided) alternative is required, which is different from the tests for stationary time series considered in \cite{MR3416594}.

Figure \ref{figure.bias_plot} plots the dynamics of the bias of the test statistic under both $H_0$ and $H_1$, in agreement with Remarks \ref{remark.underH0} and \ref{remark.underH1}. Under the null, the bias is driven by the autocovariances, and increasing $\lambda_1$ renders this bias negligible. In contrast, under $H_1$, the presence of nonzero cross-covariance matrices induces extra bias in the test statistic, thereby inflating its absolute value.

\begin{figure}[htbp]
    \centering
    \begin{subfigure}[b]{0.45\textwidth}
        \centering
        \includegraphics[width=\linewidth]{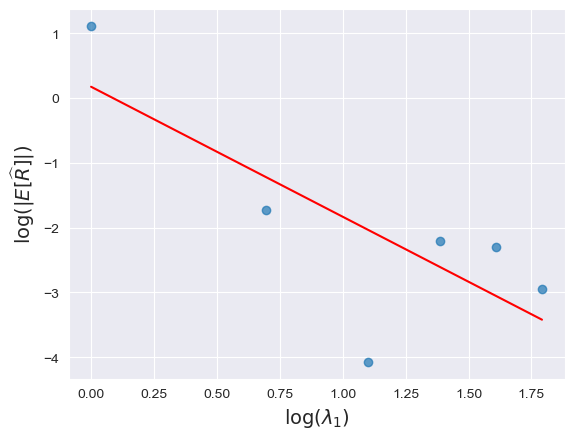}
        \caption{$\log(\mathrm{Bias})$ under $H_0$.}
        \label{null_situation}
    \end{subfigure}
    \hfill
    \begin{subfigure}[b]{0.45\textwidth}
        \centering
        \includegraphics[width=\linewidth]{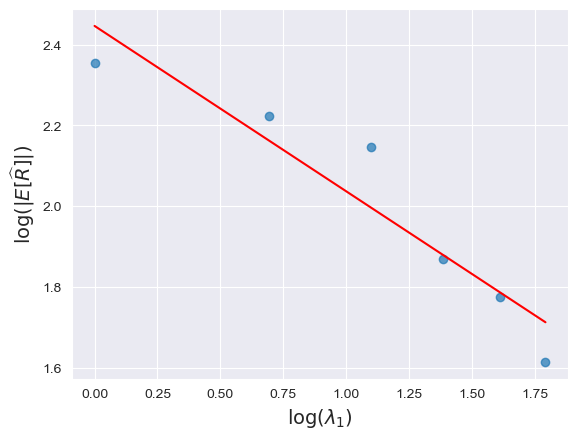}
        \caption{$\log(\mathrm{Bias})$ under $H_1$.}
       \label{fig.test_res}
    \end{subfigure}
    \caption{$\log(\mathrm{Bias})$ of the test statistic under $H_0$ and $H_1.$ Data are generated from the ``nAR(1)'' process in Section \ref{section.simulation_data_res}. Each value is calculated from 5000  independent replications. Blue dots denote the estimated bias, and red lines indicate the fitted regression line.} 
    \label{figure.bias_plot}
\end{figure}

\subsection{Bootstrap-assisted hypothesis testing}

The complex autocovariance structure of nonstationary time series induces a nontrivial variance for the test statistic. To circumvent explicit variance estimation, we develop a bootstrap procedure that approximates the sampling distribution of the test statistic through simulation.

The literature offers a wide range of bootstrap algorithms for statistical inference with time series data. Examples include \cite{MR1310224, MR1872222, MR1947405, MR3909939} for general stationary time series;  \cite{MR1466304, MR3343007, MR3346699} for linear processes; and \cite{MR3310530, MR3798001} for nonstationary time series. \cite{MR2656050} introduced the dependent wild bootstrap for stationary time series, and subsequent works, such as \cite{MR4726969, MR4829492}, have extended it to accommodate nonstationary settings.

The classical dependent wild bootstrap was primarily designed for linear combinations of observations (such as sample mean) and therefore could not fully capture higher-order cumulant information. More recent studies, including \cite{MR4829492, zhang2025anova}, have extended the algorithm to account for fourth-order cumulants.
Building on these advances, we employ a second-order dependent wild bootstrap for hypothesis testing.

The implementation of the bootstrap algorithm needs a kernel function that satisfies the following conditions. 

\begin{definition}[Kernel function]
    Suppose the function $K(\cdot):\mathbf{R}\to [0,\infty)$ satisfies $K(0) = 1,$ $K(\cdot)$ is decreasing on $[0,\infty)$ and is symmetric around $0,$ and $\sup_{x\in\mathbf{R}}\vert K^\prime(x)\vert < \infty.$ Assume that $\int_{\mathbf{R}}K(x)\mathrm{d}x < \infty.$ For any $t\in\mathbf{R},$ define the Fourier transform of $K(\cdot)$ as $\mathcal{F}K(t) = \int_{\mathbf{R}}K(x)\exp(-2\pi\mathrm{i}tx)\mathrm{d}x.$ We assume that $\mathcal{F}K(t)\geq 0$ for all $t\in\mathbf{R}$ and $\int_{\mathbf{R}}\mathcal{F}K(t)\mathrm{d}t < \infty.$
    \label{def.kernel}
\end{definition}

A canonical example is the Gaussian kernel $K(x) = \exp\left(-\frac{x^2}{2}\right),$ which satisfies the conditions of Definition \ref{def.kernel}.

\begin{remark}
The conditions on the Fourier transform $\mathcal{F}K$ ensure that the covariance matrix induced by the kernel is positive semi-definite. Specifically, for any real numbers $c_t, t = 1,\cdots, T,$ from Theorem 8.26 of \cite{MR1681462}, for any variance bandwidth $\mathcal{K}_T > 0,$
    \begin{align*}
        \sum_{t_1 = 1}^T\sum_{t_2 = 1}^T c_{t_1}c_{t_2}K\left(\frac{t_1 - t_2}{\mathcal{K}_T}\right) 
        = \int_{\mathbf{R}}\mathcal{F}K(y)\left\vert\sum_{t = 1}^T c_t \exp\left(\frac{2\pi\mathrm{i}yt_1}{\mathcal{K}_T}\right)\right\vert^2\mathrm{d}y\geq 0,
    \end{align*}
    so the matrix $\left\{K\left(\frac{t_1 - t_2}{\mathcal{K}_T}\right)\right\}_{t_1,t_2 = 1,\cdots,T}$ is positive semi-definite.
\end{remark}

\begin{algorithm}[H]
\caption{Second-order dependent wild bootstrap for hypothesis testing.}
\label{algorithm.dependent_wild_bootstrap}

\SetAlgoLined
\KwIn{Observations $\mathbf{z}_t = \left(\mathbf{x}_t^\top, \mathbf{y}_t^\top\right)^\top\in\mathbf{R}^{d_1 + d_2}$ for $t = 1,\cdots,T,$ integer bandwidths
   $1\leq\lambda_1 < \lambda_2 < T$ and $h\geq 0,$  a kernel function $K(\cdot)$ satisfying Definition \ref{def.kernel}, a variance bandwidth
    $\mathcal{K}_T > 0,$ number of bootstrap replicates $B,$ and the nominal size (Type-I error) $0<\mathcal{A}<1.$}
    
\hspace{1in}

 Derive the test statistic $\widehat{R}$ as well as $O_t$ for $t = 1+\lambda_1,\cdots, T$ as in equation \eqref{eq.def_R}.

\For{$b = 1$ \KwTo $B$}{
    Generate joint normal random variables $\left(e^*_1,\cdots,e^*_T\right)$ such that 
\begin{align*}
 \mathbf{E}\left[e^*_t\right] = 0,\quad \mathrm{Cov}\left(e^*_{t_1}, e^*_{t_2}\right) = K\left(\frac{t_1 - t_2}{\mathcal{K}_T}\right).
\end{align*}

Calculate the linear combination $\widehat{R}^*_b = \sum_{t = 1+ \lambda_1}^T O_te^*_t.$
}
Calculate the $1 - \alpha$ sample quantile $q^*_{1-\alpha}$ of $\left\vert \widehat{R}^*_b\right\vert:$ Sort $\left\vert\widehat{R}^*_b\right\vert$ into $\left\vert\widehat{R}^*_{(1)}\right\vert\leq \left\vert\widehat{R}^*_{(2)}\right\vert\leq \cdots\leq \left\vert\widehat{R}^*_{(B)}\right\vert,$ and the sample quantile is given by $\left\vert\widehat{R}^*_{(q^*)}\right\vert,$ where $q^*$ is the smallest number such that $q^* / B \geq 1 - \mathcal{A}.$

\KwOut{Reject $H_0$ if $\left\vert \widehat{R}\right\vert > q^*_{1-\mathcal{A}}$.}
\end{algorithm}

The use of a two-sided rejection region is necessary due to the effect of autocovariance nonstationarity discussed in Remark \ref{remark.underH1}, under which the bias need not be positive.

To elucidate the mechanism of Algorithm \ref{algorithm.dependent_wild_bootstrap}, note that $e^*_1,\cdots,e^*_T$ have joint normal distribution, so $\widehat{R}^*_b,$ which is a linear combination of $e^*_t,$ also has normal distribution in the bootstrap world (conditional on the observed data). Furthermore, the mean and variance of  $\widehat{R}^*_b$ in the bootstrap world are given by 0 and
$
\sum_{t_1 = 1 + \lambda_1}^T\sum_{t_2 = 1 + \lambda_1}^T O_{t_1}O_{t_2}K\left(\frac{t_1 - t_2}{\mathcal{K}_T}\right)
$
respectively.
Therefore, Algorithm \ref{algorithm.dependent_wild_bootstrap} actually generates mean-zero normal random variables with a specific variance. Furthermore, as demonstrated in Theorem \ref{theorem.distributional_under_H0}, the test statistic has an asymptotic mean-zero normal distribution under $H_0.$ Hence, the distribution of $\widehat{R}^*_b$ in the bootstrap world approximates that of $\widehat{R},$ provided that its variance is close to the asymptotic variance of $\widehat{R}.$ This variance consistency is proved in \eqref{eq.difference_variance} of the online supplement.

\begin{remark}
    The variance of $\widehat{R}^*_b$ in the bootstrap world serves as a heteroskedasticity and autocorrelation consistent (HAC) estimator of the asymptotic variance of $\widehat{R}$. HAC estimators, originating in \cite{MR890864, MR1106513}, have proven useful for heterogeneous data and nonstationary time series; see, for example, \cite{MR2656050, MR3310530, MR4134802, MR4410878, MR4392159, MR4726969}. 
    In our setting, where no parametric structure is imposed on fourth-order cumulants, HAC estimation provides a natural and flexible approach  for estimating variance.
\end{remark}

\section{Asymptotic results}
\label{section.asymptotic}
This section establishes the asymptotic distribution of the test statistic under $H_0,$ as well as the consistency of Algorithm \ref{algorithm.dependent_wild_bootstrap}. To this end, we introduce a class of short-range dependence conditions, named $(M,\alpha)$-short-range dependence, which accommodates nonlinear and nonstationary data-generating processes. Under these conditions, we derive concentration inequalities for quadratic forms of high-dimensional time series. These results are of independent interest, as many statistics in the time series literature---including sample covariance and precision matrices, spectral density \cite{MR4206676}, and sample autoregressive coefficient matrices \cite{MR3620446, zhang2023statistical}---can be reduced to the analysis of quadratic forms. Consequently, the developments in Section \ref{section.malpha_short_range} may be of broader interest beyond the scope of this paper.

\subsection{A short-range dependence condition}
\label{section.malpha_short_range}
Suppose $\left\{e_t\in\mathbf{H}, t\in\mathbf{Z}\right\}$ are independent (but not necessarily identically distributed) random variables taking values in $\mathbf{H}$, where $\mathbf{H}$ is a measurable space. We assume that 
\begin{equation}
    \mathbf{z}_t = g_{t,T}\left(\cdots, e_{t-2}, e_{t-1}\right),
    \label{eq.def_z_t}
\end{equation}
where $g_{t,T}(\cdot):\mathbf{H}^{\mathbf{Z}}\to\mathbf{R}^{d_1+d_2}$ is a measurable function.  The subscript $t,T$ indicates that the function $g_{t,T}(\cdot)$ may vary with respect to the time index $t$ and the sample size $T.$  We define $\left\{e_t^\dagger \in\mathbf{H}, t\in\mathbf{Z}\right\}$ as an independent copy of $\left\{e_t\in\mathbf{H}, t\in\mathbf{Z}\right\}:$ the random variable sequence $\left\{e_t^\dagger \in\mathbf{H}, t\in\mathbf{Z}\right\}$ is independent of $\left\{e_t\in\mathbf{H}, t\in\mathbf{Z}\right\},$ and $e^\dagger_t$ has the same distribution as $e_t$ for any $t.$ For any integer $s\in\mathbf{Z},$ define 
\begin{align*}
    \mathbf{z}_{t,s} = 
    \begin{cases}
    g_{t,T}\left(\cdots, e_{t-s - 2}, e_{t-s - 1}, e_{t-s}^\dagger, e_{t-s+1},\cdots, e_t\right) & \text{if}\quad s\geq 0,\\
    \mathbf{z}_t & \text{if}\quad s < 0.
    \end{cases}
\end{align*}
We define $\delta_{t,s}$ and $\delta_s$ as follows:
\begin{align*}
    \delta_{t,s} = \sup_{\mathbf{a}\in\mathbf{R}^{d_1+d_2},\ \vert\mathbf{a}\vert_2 = 1}\left\Vert\mathbf{a}^\top\left(\mathbf{z}_{t} - \mathbf{z}_{t,s}\right)\right\Vert_M,\quad \delta_s = \sup_{t\in\mathbf{Z}}\delta_{t,s}.
\end{align*}
With a slight abuse of notation, we do not explicitly include $M$ in $\delta_{t,s}$ and $\delta_s.$ Since $M$ is considered as fixed throughout the manuscript, this abuse of notation should not cause confusion. With these notations, we are able to introduce the definition of $(M,\alpha)$-short-range dependence conditions, presented as follows:

\begin{definition}[$(M,\alpha)$-short-range dependence]
    For two given real numbers $M,\alpha>1,$ suppose that the random variables $\mathbf{z}_t, t\in\mathbf{Z}$ satisfy equation \eqref{eq.def_z_t}. Furthermore, we assume that $\mathbf{E}\left[\mathbf{z}_t\right] = 0,$  $\sup_{t\in\mathbf{Z};\ \mathbf{a}\in\mathbf{R}^{d_1+d_2},\ \vert\mathbf{a}\vert_2 = 1}\left\Vert\mathbf{a}^\top\mathbf{z}_{t}\right\Vert_M = O(1),$ and $\sup_{s\geq 0}(1 + s)^\alpha\sum_{q = s}^\infty\delta_q = O(1).$ Then we say that $\mathbf{z}_t,t\in\mathbf{Z}$ satisfies the $(M,\alpha)$-short-range dependence.
    \label{def.m_alpha_short_range_dependence}
\end{definition}

\begin{remark}
    Discussions of processes of the form \eqref{eq.def_z_t} trace back to a conjecture of N. Wiener, later disproved by \cite{MR2493017}. \cite{MR2172215} introduced a short-range dependence condition for processes of the form \eqref{eq.def_z_t}.  Subsequent studies---such as  \cite{MR2351105, MR3021390, MR3466186}---have applied this condition to derive distributional results for various statistics. However,  \cite{MR2172215} assumed that the time series $\mathbf{z}_t$ is strictly stationary, i.e., the joint distribution of $\mathbf{z}_{t+1},\cdots,\mathbf{z}_{t+k}$ is identical to the joint distribution of $\mathbf{z}_{1},\cdots,\mathbf{z}_{k}$ for any $k\geq 1$ and any $t\in\mathbf{Z},$ as in Definition 1.3.3 of \cite{MR1093459}.

   More recent studies---including  \cite{MR3174655, MR4998106, MR4829492, MR3718156, MR4206676, MR3779697}---have extended the dependence conditions of \cite{MR2172215} to accommodate nonstationary and high-dimensional time series. Definition \ref{def.m_alpha_short_range_dependence} also generalizes the conditions of \cite{MR2172215} to the setup of high-dimensional nonstationary time series, but it
   imposes moment bounds on linear combinations $\boldsymbol{a}^\top \mathbf{z}_t,$ rather than on individual entries. This formulation is motivated by the test statistic involving inner products of observations. Related conditions appeared in \cite{MR2604697, zhang2025anova}, where quadratic-type test statistics were considered.

The literature has offered alternative  short-range dependence conditions, including various mixing conditions \cite{MR1312160} and near-epoch dependence (NED) \cite{MR4602904}. The process defined in \eqref{eq.def_z_t} can be viewed as a special case of an NED process, where the innovations $e_t$ are independent rather than mixing. As noted in \cite{MR2172215}, verifying Definition \ref{def.m_alpha_short_range_dependence} is comparatively simple relative to mixing conditions, as it only requires bounding moments of random variables, whereas mixing conditions typically involve controlling joint probabilities. On the other hand, Definition \ref{def.m_alpha_short_range_dependence} relies on a nonlinear moving-average representation of the observations, while mixing conditions generally do not impose a specific functional form on the data-generating process.
\end{remark}

\begin{example}
    Suppose a nonstationary linear process 
    $
        \mathbf{z}_t=  \sum_{j = 0}^\infty \mathbf{A}_{t,j}\boldsymbol{e}_{t-j},
    $
    where $\boldsymbol{e}_j\in\mathbf{R}^{d_1 + d_2}$ are independent random vectors (that is, we choose $\mathbf{H}$ as $ \mathbf{R}^{d_1+d_2}$ with Borel $\sigma$-field) with $\mathbf{E}\left[\boldsymbol{e}_j\right] = 0$ and $\sup_{t\in\mathbf{Z};\ \mathbf{a}\in\mathbf{R}^{d_1+d_2},\ \vert\mathbf{a}\vert_2 = 1}\left\Vert\mathbf{a}^\top\mathbf{e}_{t}\right\Vert_M = O(1)$ (like those in \cite{MR2604697}). We further assume that $\sup_{t\in\mathbf{Z}}\left\vert \mathbf{A}_{t,j}\right\vert_2\leq \frac{C}{(1 + j)^{\alpha + 1}}$ for any $j\geq 0.$ With these assumptions, for any vector $\mathbf{a}\in\mathbf{R}^{d_1 + d_2}$ with $\vert \mathbf{a}\vert_2 = 1,$ we have $\mathbf{E}\left[\mathbf{z}_t\right] = \sum_{j  = 0}^\infty\mathbf{A}_{t,j}\mathbf{E}\left[\boldsymbol{e}_{t-j}\right] = 0,$ 
    \begin{align*}
        \left\Vert \mathbf{a}^\top\mathbf{z}_t\right\Vert_M\leq \sum_{j = 0}^\infty\left\Vert\mathbf{a}^\top \mathbf{A}_{t,j}\boldsymbol{e}_{t-j}\right\Vert_M\leq C\sum_{j = 0}^\infty\left\vert \mathbf{A}_{t,j}^\top\mathbf{a}\right\vert_2\leq \sum_{j = 0}^\infty\frac{C_1}{(1 + j)^{1+\alpha}}\leq C_2,
    \end{align*}
    and for any $j\geq 0,$
    \begin{align*}
        \left\Vert
        \mathbf{a}^\top\left(\mathbf{z}_t - \mathbf{z}_{t,j}\right)
        \right\Vert_M = \left\Vert
        \mathbf{a}^\top\mathbf{A}_{t,j}\left(\mathbf{e}_{t-j} - \mathbf{e}_{t-j}^\dagger\right)
        \right\Vert_M\leq 2\left\Vert
        \mathbf{a}^\top\mathbf{A}_{t,j}\mathbf{e}_{t-j}
        \right\Vert_M\leq \frac{C}{(1 + j)^{1 + \alpha}},
    \end{align*}
    which implies 
    $$
    \sum_{j = s}^\infty\delta_q\leq \sum_{j = s}^\infty \frac{C}{(1 + j)^{1 + \alpha}}\leq \frac{C_1}{(1+s)^\alpha},
    $$
    which verifies the $(M,\alpha)$-short-range dependence of $\mathbf{z}_t.$
\end{example}

Theorem \ref{theorem.linear_and_quadratic} presents concentration inequalities in moments of linear combinations and quadratic forms of $\mathbf{z}_t,$ which will serve as the theoretical foundation of the distributional results for the test statistics in later sections. In the following parts of this section, for any $t\in\mathbf{Z}$ and $s\geq 0,$ we define $\mathcal{F}_{t,s}$ as the $\sigma$-field generated by $e_t,e_{t-1},\cdots,e_{t-s}.$

\begin{theorem}
    \label{theorem.linear_and_quadratic}
    Suppose $\mathbf{z}_t, t\in\mathbf{Z}$ satisfy Definition \ref{def.m_alpha_short_range_dependence} with $M > 8$ and $\alpha > 8,$ and the dimension $d = d_1 + d_2 = O(T),$ where $T$ denotes the sample size. Then there exists a constant $C$ independent of the sample size $T$ and the dimension $d$ such that for any real-value vectors $\mathbf{a}_t\in\mathbf{R}^d, t = 1,\cdots,T$ and any $s\geq 0,$
    \begin{equation}
    \begin{aligned}
        \left\Vert
        \sum_{t = 1}^T \mathbf{a}_t^\top\mathbf{z}_t
        \right\Vert_M & \leq C\sqrt{\sum_{t = 1}^T\vert\mathbf{a}_t\vert^2_2},\\  
        \text{and}\quad \left\Vert
        \sum_{t = 1}^T \mathbf{a}_t^\top\left(\mathbf{z}_t - \mathbf{E}\left[\mathbf{z}_t\mid\mathcal{F}_{t,s}\right]\right)
        \right\Vert_M&\leq \frac{C}{(1+s)^\alpha}\sqrt{\sum_{t = 1}^T\vert\mathbf{a}_t\vert^2_2}.
    \end{aligned}
        \label{eq.linear_combination}
    \end{equation}
    Furthermore, assume that two integer bandwidths $1<\lambda_1<\lambda_2 <T$ satisfy $\lambda_1\asymp T^{\kappa_1}, $ $\lambda_2\asymp T^{\kappa_2},$ where $1 > \kappa_2 >\kappa_1 > \frac{5}{2(\alpha - 2)}, $ then for any real symmetric matrices $\mathbf{A}_{t_1t_2},$ we have 
    \begin{equation}
    \begin{aligned}
        &\left\Vert\sum_{t_1 = 1 + \lambda_1}^T\sum_{t_2 = (t_1 - \lambda_2)\vee 1}^{t_1 - \lambda_1}\left(\mathbf{z}_{t_1}^\top\mathbf{A}_{t_1t_2}\mathbf{z}_{t_2} - \mathbf{E}\left[\mathbf{z}_{t_1}^\top\mathbf{A}_{t_1t_2}\mathbf{z}_{t_2}\right]\right)\right\Vert_{M/2}\\
        &= O\left(\sqrt{d\sum_{t_1 = 1 + \lambda_1}^T\sum_{t_2 = (t_1 - \lambda_2)\vee 1}^{t_1 - \lambda_1}\left\vert \mathbf{A}_{t_1t_2}\right\vert^2_2}\right).
    \end{aligned}
        \label{eq.product_quadratic_forms}
    \end{equation}
    Furthermore, there exists a constant $C>0$ such that for any integer $S > 2\lambda_2,$  
    \begin{equation}
        \begin{aligned}
            &\left\Vert\sum_{t_1 = 1 + \lambda_1}^T\sum_{t_2 = (t_1 - \lambda_2)\vee 1}^{t_1 - \lambda_1}\left(\mathbf{z}_{t_1}^\top\mathbf{A}_{t_1t_2}\mathbf{z}_{t_2} - \mathbf{E}\left[\mathbf{z}_{t_1}^\top\mathbf{A}_{t_1t_2}\mathbf{z}_{t_2}\mid\mathcal{F}_{t_1, S}\right]\right)\right\Vert_{M/2}
            \leq  \frac{CT\sqrt{T}a^\dagger}{(S - \lambda_2)^{\alpha - 2}},
        \end{aligned}
        \label{eq.truncate_whole}
    \end{equation}
    where $a^\dagger = \max_{t_1 = 1+\lambda_1,\cdots, T;\ t_2 = (t_1 - \lambda_2)\vee 1,\cdots, t_1  -\lambda_1}\vert \mathbf{A}_{t_1t_2}\vert_2.$
\end{theorem}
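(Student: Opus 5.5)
For the linear bounds in \eqref{eq.linear_combination} I would use the standard martingale-difference (projection) decomposition. Let $\mathcal{P}_j(\cdot)=\mathbf{E}[\cdot\mid e_j,e_{j-1},\cdots]-\mathbf{E}[\cdot\mid e_{j-1},\cdots]$. Then
\begin{align*}
\sum_t\mathbf{a}_t^\top\mathbf{z}_t=\sum_{q\geq 1}\sum_t\mathcal{P}_{t-q}(\mathbf{a}_t^\top\mathbf{z}_t).
\end{align*}
Here the lag starts at $q=1$ because $\mathbf{z}_t$ depends on $e_{t-1},e_{t-2},\cdots$. The coupling argument of \cite{MR2172215}, run with the nonstationary $g_{t,T}$, gives $\Vert\mathcal{P}_{t-q}(\mathbf{a}^\top\mathbf{z}_t)\Vert_M\leq\vert\mathbf{a}\vert_2\delta_{q}$. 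For fixed $q$, the sum over $t$ is a martingale-difference sequence. Burkholder's inequality followed by Minkowski in $L^{M/2}$ then bounds its $M$-norm by $C\delta_q(\sum_t\vert\mathbf{a}_t\vert_2^2)^{1/2}$. Summing over $q$ and using $\sum_q\delta_q<\infty$ gives the first bound.

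The second bound follows the same way. The remainder $\mathbf{z}_t-\mathbf{E}[\mathbf{z}_t\mid\mathcal{F}_{t,s}]$ decomposes into projections $\mathcal{P}_{t-q}$ with $q>s$ only, plus a term that vanishes because $\mathbf{E}[\mathbf{z}_t]=0$. The resulting $\sum_{q>s}\delta_q$ is $O((1+s)^{-\alpha})$ by Definition \ref{def.m_alpha_short_range_dependence}.

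For the quadratic form \eqref{eq.product_quadratic_forms} I would truncate first. Replace $\mathbf{z}_t$ by $\tilde{\mathbf{z}}_t=\mathbf{E}[\mathbf{z}_t\mid\mathcal{F}_{t,m}]$ with $m$ slightly less than $\lambda_1$, say $m=\lambda_1/2$. The error terms have the form $\mathbf{z}_{t_1}^\top\mathbf{A}(\mathbf{z}_{t_2}-\tilde{\mathbf{z}}_{t_2})$, and I would handle them with a crude bound. Apply Hölder, then the second part of \eqref{eq.linear_combination} conditionally/columnwise, and use $\vert\mathbf{A}\vert_F\leq\sqrt d\vert\mathbf{A}\vert_2$. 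This gives an error of size $T\lambda_2 d\,a^\dagger m^{-\alpha}$, which is negligible because $\kappa_1>5/(2(\alpha-2))$ and $d=O(T)$. After truncation, $\tilde{\mathbf{z}}_{t_1}$ and $\tilde{\mathbf{z}}_{t_2}$ are independent whenever $t_1-t_2\geq\lambda_1>m$.

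Next, write $Q=\sum_{t_1}\tilde{\mathbf{z}}_{t_1}^\top\mathbf{b}_{t_1}$ with $\mathbf{b}_{t_1}=\sum_{t_2}\mathbf{A}_{t_1t_2}\tilde{\mathbf{z}}_{t_2}$, which is measurable with respect to the past beyond lag $m$ from $t_1$. Projecting onto $\mathcal{P}_j$ splits $Q-\mathbf{E}Q$ into martingale differences in $j$. Each piece is either $\mathcal{P}_j$ acting on $\tilde{\mathbf{z}}_{t_1}$ with $\mathbf{b}_{t_1}$ fixed, or on $\mathbf{b}_{t_1}$. Burkholder applied again, together with conditional use of \eqref{eq.linear_combination}, gives a square function of order $\sum_{t_1}\mathbf{E}\vert\mathbf{b}_{t_1}\vert_2^2$. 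The trace identity $\mathbf{E}\vert\mathbf{A}\tilde{\mathbf{z}}\vert_2^2\leq \vert\mathbf{A}\vert_2^2\,\mathbf{E}\vert\tilde{\mathbf{z}}\vert_2^2=O(d\vert\mathbf{A}\vert_2^2)$, combined with short-range decay across the $t_2$ index, yields $O(d\sum\vert\mathbf{A}_{t_1t_2}\vert_2^2)$ in the $M/2$ norm. Here the bounded moments of $\mathbf{a}^\top\mathbf{z}$ enter through Hölder with exponents $M/2$ and $M$.

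\textbf{Main obstacle.} I expect the hardest step to be controlling the cross terms of the $\mathcal{P}_j$ acting on $\mathbf{b}_{t_1}$ so that no extra factor of $\lambda_2$ appears. Naively, $\vert\mathbf{b}_{t_1}\vert$ sums $\lambda_2$ terms. I would use the first part of \eqref{eq.linear_combination} in the $t_2$ direction, with coefficient vectors $\mathbf{A}_{t_1t_2}\mathbf{u}$, to recover the $\sqrt{\sum_{t_2}}$ rate, via a duality/supremum over unit $\mathbf{u}$ combined with $d$ coordinates. The requirement $\alpha>8$ and the exponent on $\kappa_1$ should supply the room for these moment bounds.

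For \eqref{eq.truncate_whole}, write $\mathbf{z}^\top_{t_1}\mathbf{A}\mathbf{z}_{t_2}-\mathbf{E}[\cdot\mid\mathcal{F}_{t_1,S}]$ as a sum of three terms: $(\mathbf{z}_{t_1}-\tilde{\mathbf{z}}_{t_1})^\top\mathbf{A}\mathbf{z}_{t_2}$, $\tilde{\mathbf{z}}_{t_1}^\top\mathbf{A}(\mathbf{z}_{t_2}-\tilde{\mathbf{z}}_{t_2})$, and their conditional expectations. Here the truncation of $\mathbf{z}_{t_2}$ is at depth $S-(t_1-t_2)\geq S-\lambda_2$, and the truncation of $\mathbf{z}_{t_1}$ is at depth $S$. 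Each term is bounded using the second part of \eqref{eq.linear_combination} together with Hölder, giving $C d a^\dagger(S-\lambda_2)^{-\alpha}$ per pair. Summing over the at most $T\lambda_2$ pairs, and using $d=O(T)$ and $\lambda_2\leq S$, gives a total of $T\sqrt T a^\dagger(S-\lambda_2)^{-(\alpha-2)}$, after absorbing the polynomial factors into two powers of $(S-\lambda_2)$.
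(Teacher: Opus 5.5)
\paragraph{Gap in \eqref{eq.truncate_whole}.} Your argument does not deliver the stated rate. Take $d\asymp T$, which $d=O(T)$ allows. Summing your per-pair bound $C d a^\dagger (S-\lambda_2)^{-\alpha}$ over the $\asymp T\lambda_2$ pairs gives
\[
C T^2\lambda_2 a^\dagger (S-\lambda_2)^{-\alpha}=\frac{CT\sqrt{T}a^\dagger}{(S-\lambda_2)^{\alpha-2}}\cdot\frac{\sqrt{T}\lambda_2}{(S-\lambda_2)^{2}} .
\]
The two spare powers of $S-\lambda_2>\lambda_2$ absorb $\lambda_2$ but not $\sqrt T$. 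For $S$ just above $2\lambda_2$ the last factor is of order $\sqrt T/\lambda_2$, which diverges whenever $\kappa_2<1/2$; this includes the regime $\kappa_2<1/6$ of Assumption \ref{assumption.for_variance_estimation}.

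The loss comes from summing over the $T$ indices $t_1$ with no cancellation. The paper gains $\sqrt T$ instead of $T$ from martingale structure. It writes $\mathbf{z}_{t_1}=\boldsymbol{\zeta}_{t_1,t_1-t_2-1}+\boldsymbol{\omega}_{t_1,t_1-t_2-1}$, where $\boldsymbol{\zeta}_{t_1,t_1-t_2-1}=\mathbf{E}[\mathbf{z}_{t_1}\mid\mathcal{F}_{t_1,t_1-t_2-1}]$ is $\mathcal{F}_{t_1,S}$-measurable and independent of $\mathbf{z}_{t_2}$.
\begin{itemize}
\item The $\boldsymbol{\zeta}$-part leaves $\boldsymbol{\zeta}_{t_1,t_1-t_2-1}^\top\mathbf{A}_{t_1t_2}(\mathbf{z}_{t_2}-\mathbf{E}[\mathbf{z}_{t_2}\mid\mathcal{F}_{t_2,t_2+S-t_1}])$. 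This is handled by reverse martingales in $t_2$ and has size $C(S-\lambda_2)^{-\alpha}\sqrt{d\sum\vert\mathbf{A}_{t_1t_2}\vert_2^2}$.
\item The $\boldsymbol{\omega}$-part is expanded as $\sum_{j>S}(\mathbf{E}[\cdot\mid\mathcal{F}_{t_1,j}]-\mathbf{E}[\cdot\mid\mathcal{F}_{t_1,j-1}])$. For each $j$, Burkholder's inequality applies across $t_1$.
\end{itemize}

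\paragraph{Second inequality in \eqref{eq.linear_combination}.} Your proof rests on a false claim. $\mathcal{F}_{t,s}$ is generated by the most recent innovations $e_t,\dots,e_{t-s}$, not by a tail. So for nonlinear $g_{t,T}$, $\mathbf{z}_t-\mathbf{E}[\mathbf{z}_t\mid\mathcal{F}_{t,s}]$ generally has nonzero projections $\mathcal{P}_{t-q}$ with $q\le s$. For example, with $z_t=e_{t-1}e_{t-3}$ and $s=1$, the remainder is $e_{t-1}e_{t-3}$ itself, and $\mathcal{P}_{t-1}$ leaves it unchanged. Bounding these pieces crudely loses a factor $s$.

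The fix is the paper's decomposition along the local $\sigma$-fields:
\[
\mathbf{z}_t-\mathbf{E}[\mathbf{z}_t\mid\mathcal{F}_{t,s}]=\sum_{j>s}\left(\mathbf{E}[\mathbf{z}_t\mid\mathcal{F}_{t,j}]-\mathbf{E}[\mathbf{z}_t\mid\mathcal{F}_{t,j-1}]\right).
\]
This identity is exact, and each summand satisfies $\Vert\mathbf{a}^\top(\cdot)\Vert_M\le\vert\mathbf{a}\vert_2\delta_j$. For fixed $j$, the sum over $t$ is a martingale in reversed time, so Burkholder's inequality gives the $(1+s)^{-\alpha}$ rate.

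\paragraph{Bound \eqref{eq.product_quadratic_forms}.} Your route here is a valid alternative to the paper's pair-dependent truncation: you truncate at fixed depth $m=\lambda_1/2$ and then run a martingale in $t_1$ with predictable $\mathbf{b}_{t_1}$.
\begin{itemize}
\item The terms you flag as the main obstacle, $\mathcal{P}_j$ acting on $\mathbf{b}_{t_1}$, vanish identically. For $j<t_1-m$, $\tilde{\mathbf{z}}_{t_1}$ is mean zero and jointly independent of $\mathbf{b}_{t_1}$ and $e_j,e_{j-1},\dots$.
\item The $\lambda_2$ issue must be resolved in $L^{M/2}$, not via $\mathbf{E}\vert\mathbf{b}_{t_1}\vert_2^2$. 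Applying the linear bound to each coordinate gives $\Vert\,\vert\mathbf{b}_{t_1}\vert_2\Vert_{M/2}^2\le\sum_{i=1}^d\Vert\mathbf{b}_{t_1}^{(i)}\Vert_{M/2}^2\le C\sum_{t_2}\vert\mathbf{A}_{t_1t_2}\vert_F^2$.
\end{itemize}
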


We note that the moment bound in \eqref{eq.product_quadratic_forms} remains valid for mutually independent $\mathbf{z}_t$ with independent entries, as established in \cite{MR0133849}; see also Remark \ref{remark.independent_linear_quadratic} in Appendix \ref{section.proof_linear_quadratic}. Therefore, Theorem \ref{theorem.linear_and_quadratic} extends the results of \cite{MR0133849} to settings involving dependent random variables.

The differing bounds in \eqref{eq.product_quadratic_forms} and \eqref{eq.truncate_whole} reflect the cost of approximating the original quadratic forms by sums of $m$-dependent random variables (see Definition 6.4.3 of \cite{MR2839251}); the technical details are provided in \eqref{eq.second_moment_truncate_part} in the online supplement. To provide some intuition, suppose that $\mathbf{z}_t$ is $m$-dependent with $m < S.$ In this case, $\mathbf{z}_{t_1}$ is $\mathcal{F}_{t_1, S}$-measurable, and 
$$
\mathbf{E}\left[\mathbf{z}_{t_1}^\top\mathbf{A}_{t_1t_2}\mathbf{z}_{t_2}\mid\mathcal{F}_{t_1, S}\right]
= \mathbf{z}_{t_1}^\top\mathbf{A}_{t_1t_2}\mathbf{E}\left[\mathbf{z}_{t_2}\mid\mathcal{F}_{t_1, S}\right]
= \mathbf{z}_{t_1}^\top\mathbf{A}_{t_1t_2}\mathbf{E}\left[\mathbf{z}_{t_2}\right]= 0
$$
if $S < t_1 - t_2.$ This makes $\mathbf{E}\left[\mathbf{z}_{t_1}^\top\mathbf{A}_{t_1t_2}\mathbf{z}_{t_2}\mid\mathcal{F}_{t_1, S}\right]$ a poor approximation when the time lag $t_1 - t_2$ is significantly larger than $S.$ This crude approximation increases the  approximation error of the summation $\sum_{t_1 = 1 + \lambda_1}^T\sum_{t_2 = (t_1 - \lambda_2)\vee 1}^{t_1 - \lambda_1}\mathbf{E}\left[\mathbf{z}_{t_1}^\top\mathbf{A}_{t_1t_2}\mathbf{z}_{t_2}\mid\mathcal{F}_{t_1, S}\right]$  in \eqref{eq.truncate_whole}.

\begin{corollary}
From equation \eqref{eq.linear_combination}, suppose $t_1 > t_2,$
\begin{align*}
     \left\vert \mathbf{E}\left[\mathbf{x}_{t_1}^\top\mathbf{x}_{t_2}\right]\right\vert &\leq \sum_{i = 1}^{d_1}\left\vert
     \mathbf{E}\left[\left(\mathbf{z}_{t_1}^{(i)} - \mathbf{E}\left[\mathbf{z}_{t_1}^{(i)}\mid\mathcal{F}_{t_1, t_1 - t_2 - 1}\right]\right)\mathbf{z}_{t_2}^{(i)}\right]
     \right\vert\\
     &\leq \sum_{i = 1}^{d_1}\left\Vert \mathbf{z}_{t_2}^{(i)}\right\Vert_M\left\Vert \mathbf{z}_{t_1}^{(i)} - \mathbf{E}\left[\mathbf{z}_{t_1}^{(i)}\mid\mathcal{F}_{t_1, t_1 - t_2 - 1}\right] \right\Vert_M\leq \frac{Cd_1}{(t_1 -  t_2)^\alpha},
\end{align*}
and 
\begin{align*}
    \left\vert \mathbf{E}\left[\mathbf{y}_{t_1}^\top\mathbf{y}_{t_2}\right]\right\vert\leq \sum_{i = d_1 + 1}^d \left\vert
     \mathbf{E}\left[\left(\mathbf{z}_{t_1}^{(i)} - \mathbf{E}\left[\mathbf{z}_{t_1}^{(i)}\mid\mathcal{F}_{t_1, t_1 - t_2 - 1}\right]\right)\mathbf{z}_{t_2}^{(i)}\right]
     \right\vert\leq \frac{Cd_2}{(t_1 -  t_2)^\alpha}.
\end{align*}
Therefore, in a heuristic sense, Definition \ref{def.m_alpha_short_range_dependence} ensures that the temporal correlations of $\mathbf{x}_t$ and $\mathbf{y}_t$ decrease at a polynomial rate. The parameter $\alpha$ quantifies the strength of dependence: a smaller $\alpha$ corresponds to a slower decay of covariances, indicating potentially strong dependence.
    \label{corollary.covariances}
\end{corollary}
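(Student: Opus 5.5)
The corollary's display already carries most of the argument, so my plan is to justify each inequality in turn, for the pair $\mathbf{x}_{t_1},\mathbf{x}_{t_2}$ with $t_1>t_2$; the $\mathbf{y}$ case is identical with coordinates $d_1+1,\dots,d$.

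\textbf{Step 1 (first inequality).} Write $\mathbf{E}[\mathbf{x}_{t_1}^\top\mathbf{x}_{t_2}]=\sum_{i=1}^{d_1}\mathbf{E}[\mathbf{z}_{t_1}^{(i)}\mathbf{z}_{t_2}^{(i)}]$. Set $s=t_1-t_2-1\geq 0$. Then $\mathcal{F}_{t_1,s}$ is generated by $e_{t_1},\dots,e_{t_2+1}$. By \eqref{eq.def_z_t}, $\mathbf{z}_{t_2}$ depends only on $e_{t_2-1},e_{t_2-2},\dots$, so it is independent of $\mathcal{F}_{t_1,s}$. Hence
\[
\mathbf{E}\bigl[\mathbf{E}[\mathbf{z}_{t_1}^{(i)}\mid\mathcal{F}_{t_1,s}]\,\mathbf{z}_{t_2}^{(i)}\bigr]=\mathbf{E}\bigl[\mathbf{E}[\mathbf{z}_{t_1}^{(i)}\mid\mathcal{F}_{t_1,s}]\bigr]\,\mathbf{E}[\mathbf{z}_{t_2}^{(i)}]=0,
\]
using $\mathbf{E}[\mathbf{z}_t]=0$. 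Subtracting this zero term inside each expectation and applying the triangle inequality gives the first line.

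\textbf{Step 2 (second inequality).} Apply H\"older's inequality with exponents $M$ and $M/(M-1)$, then Lyapunov's inequality $\Vert\cdot\Vert_{M/(M-1)}\leq\Vert\cdot\Vert_M$, which holds because $M>2$.

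\textbf{Step 3 (the two factors).} The factor $\Vert\mathbf{z}_{t_2}^{(i)}\Vert_M$ is $O(1)$ uniformly: take $\mathbf{a}$ to be the $i$-th basis vector in Definition \ref{def.m_alpha_short_range_dependence}. For the other factor, invoke the second bound in \eqref{eq.linear_combination} with $T=1$ and $\mathbf{a}_1=\mathbf{e}_i$. This is applied to the single time point $t_1$, either by relabeling time or by taking $\mathbf{a}_t=0$ for $t\neq t_1$. It yields $\Vert \mathbf{z}_{t_1}^{(i)}-\mathbf{E}[\mathbf{z}_{t_1}^{(i)}\mid\mathcal{F}_{t_1,s}]\Vert_M\leq C(1+s)^{-\alpha}=C(t_1-t_2)^{-\alpha}$, and the constant is uniform in $i$ and $t$.

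\textbf{Step 4 (summing).} Summing over the $d_1$ coordinates gives $Cd_1(t_1-t_2)^{-\alpha}$.

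The only delicate point is Step 1's independence claim. It relies on the convention in \eqref{eq.def_z_t} that $\mathbf{z}_t$ is driven by innovations strictly before $t$, so that $\mathcal{F}_{t_1,t_1-t_2-1}$ and $\sigma(\mathbf{z}_{t_2})$ are generated by disjoint blocks of independent innovations. If instead $\mathbf{z}_t$ were also allowed to depend on $e_t$, the two fields would share $e_{t_2+1}$ only through $\mathbf{z}_{t_2+1}$, not through $\mathbf{z}_{t_2}$. They would still be independent, so the argument is robust either way. The heuristic reading of $\alpha$ in the corollary then follows directly from the bound.
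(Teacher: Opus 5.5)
Your proposal is correct and follows the paper's own argument. You center via $\mathbf{E}[\mathbf{z}_{t_1}^{(i)}\mid\mathcal{F}_{t_1,t_1-t_2-1}]$, which depends only on $e_{t_2+1},\dots,e_{t_1}$ and is therefore independent of the mean-zero $\mathbf{z}_{t_2}^{(i)}$; this is the same fact the paper uses in the proof of Theorem~\ref{theorem.linear_and_quadratic}. You then apply H\"older's inequality and the second bound in \eqref{eq.linear_combination} with $1+s=t_1-t_2$, exactly as the displayed chain does.

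Your closing remark about the indexing convention is confusingly worded, but the point it needs is true: $\sigma(e_{t_2+1},\dots,e_{t_1})$ shares no innovations with those driving $\mathbf{z}_{t_2}$, whether or not $e_{t_2}$ is among them. So nothing is missing.
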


\subsection{Asymptotic distribution under $H_0$}
By leveraging concentration inequalities developed in Section \ref{section.malpha_short_range}, this section establishes the asymptotic distribution of the test statistic and the validity of the bootstrap algorithm under the null hypothesis, which controls the Type-I error of the test.

Theorem \ref{theorem.distributional_under_H0} builds on ideas from \cite{MR3161448, MR3779697, MR3992401} in controlling approximation errors, but differs from these works in that it derives asymptotic distributional approximations for quadratic forms of dependent data.

\begin{theorem}
\label{theorem.distributional_under_H0}
    Suppose Assumptions \ref{assumption.independent_short_range_structure}, \ref{assumption.lambda_choices}, \ref{assumption.variance} of Appendix \ref{section.technical_details_under_H0} hold true. Then we have 
    \begin{equation}
        \sup_{x\in\mathbf{R}}\left\vert
        \mathbf{pr}\left(\widehat{R}\leq x\right) - \mathbf{pr}\left(\epsilon\leq x\right)
        \right\vert = o(1),
        \label{eq.Prob_close}
    \end{equation}
    where the random variable $\epsilon$ has normal distribution with $\mathbf{E}\left[\epsilon\right] = 0$ and $\mathrm{Var}\left(\epsilon\right) = \mathrm{Var}\left(\widehat{R}^\dagger\right),$ and $\widehat{R}^\dagger$ is the canonical statistic defined in Assumption \ref{assumption.variance} of Appendix \ref{section.technical_details_under_H0}.
\end{theorem}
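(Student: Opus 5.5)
The plan is to reduce $\widehat{R}$ to a sum of weakly dependent, block-structured terms and then apply a central limit theorem for $m$-dependent triangular arrays (big-block/small-block), with errors controlled by Theorem \ref{theorem.linear_and_quadratic}. Since the assumptions in the appendix are not visible here, I assume the canonical statistic $\widehat{R}^\dagger$ in Assumption \ref{assumption.variance} is the $\mathcal{F}_{t,S}$-truncated and centred version of $\widehat{R}$, namely $\sum_t \bigl(\mathbf{E}[O_t\mid\mathcal{F}_{t,S}] - \mathbf{E}[O_t]\bigr)$ or something close to it. I also assume that Assumption \ref{assumption.variance} keeps $\mathrm{Var}(\widehat{R}^\dagger)$ bounded away from zero.

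\textbf{Step 1 (bias removal).} Under $H_0$, $\mathbf{x}$ and $\mathbf{y}$ are independent, so $\mathbf{E}[\mathbf{x}_t^\top\mathbf{x}_{t_1}\mathbf{y}_{t-s}^\top\mathbf{y}_{t_1-s}]$ factors. Corollary \ref{corollary.covariances} then bounds each summand by $Cd_1d_2(t-t_1)^{-2\alpha}$, which is Remark \ref{remark.underH0}. Assumption \ref{assumption.lambda_choices} on $\lambda_1$ makes $\mathbf{E}[\widehat{R}] = o(1)$, so it suffices to study $\widehat{R}-\mathbf{E}[\widehat{R}]$.

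\textbf{Step 2 (truncation to $S$-dependence).} Each summand in $O_t$ is a product of two bilinear forms, one in $\mathbf{x}$ and one in $\mathbf{y}$. For fixed $s$, I would write $\mathbf{x}_t^\top\mathbf{x}_{t_1}\mathbf{y}_{t-s}^\top\mathbf{y}_{t_1-s}$ as a quadratic form in the stacked vector $(\mathbf{z}_t,\mathbf{z}_{t-s})$ against $(\mathbf{z}_{t_1},\mathbf{z}_{t_1-s})$. Conditioning on the independent $\mathbf{y}$-process (respectively the $\mathbf{x}$-process) turns it into a bilinear form $\mathbf{x}_t^\top \mathbf{A}\mathbf{x}_{t_1}$ with random coefficients. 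Applying \eqref{eq.truncate_whole} conditionally, with $S>2\lambda_2$ chosen as $T^{\kappa_3}$ and $\kappa_3>\kappa_2$ so that $T^{3/2}(S-\lambda_2)^{-(\alpha-2)}/\mathcal{U}\to0$, gives $\Vert\widehat{R}-\mathbf{E}\widehat{R}-\widehat{R}^\dagger\Vert_{M/4}=o(1)$. The truncated summands $\widetilde O_t$ are then $\mathcal{F}_{t,S+h}$-measurable, hence $(S+h+\lambda_2)$-dependent.

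\textbf{Step 3 (blocking CLT).} I would partition $\{1+\lambda_1,\dots,T\}$ into big blocks of length $L\gg S+\lambda_2$ separated by small blocks of length $S+h+\lambda_2$. The big-block sums are then independent. The small-block contribution has vanishing variance, which follows from \eqref{eq.product_quadratic_forms}: its variance is of order $dT\cdot(\text{small fraction})\cdot\lambda_2 h/\mathcal{U}^2$, using conditional independence to factor the norms in $d_1$ and $d_2$. For the big blocks I would verify a Lyapunov condition, using \eqref{eq.product_quadratic_forms} with $M/2>4$ to bound the $(2+\delta)$ moments of each block by $(L/T)^{1+\delta/2}$. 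Lyapunov's theorem then gives normality, and a Berry--Esseen bound converts this to the uniform Kolmogorov distance in \eqref{eq.Prob_close}. Finally, because the approximation errors are $o_p(1)$ and the limit density is bounded (variance bounded below), the Kolmogorov distances transfer to $\widehat{R}$ through a standard anti-concentration argument.

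\textbf{Main obstacle.} I expect the hardest part to be Step 2. Theorem \ref{theorem.linear_and_quadratic} treats quadratic forms in $\mathbf{z}$, whereas $O_t$ is a fourth-order product. I would handle this by conditioning on the $\mathbf{y}$-sigma-field, which is legitimate under $H_0$, treating $\mathbf{A}_{t_1t_2}=\mathbf{y}_{t-s}^\top\mathbf{y}_{t_1-s}\,\mathbf{I}$, and then controlling $\sum|\mathbf{A}_{t_1t_2}|_2^2$ by a second application of \eqref{eq.product_quadratic_forms} to the $\mathbf{y}$-process. Making the moment indices compatible ($M/2$ after two layers) is where $M>8$ is consumed.
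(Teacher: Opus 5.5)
Your overall architecture matches the paper's. You remove the temporal bias, then truncate each bilinear form to a finite window of innovations by conditioning on the independent companion process; this is the paper's $\gamma_{tt_1}$, $\gamma_{tt_1,S}$ device combined with \eqref{eq.truncate_whole} and Corollary \ref{corollary.combination_lag}. Finally you split into independent big blocks and negligible small blocks. Where you genuinely differ is the Gaussian approximation of the big-block sum. You invoke Lyapunov's CLT or Berry--Esseen for independent summands, then pass the remaining $o_p(1)$ errors through anti-concentration of the normal limit. The paper uses no packaged CLT. It replaces indicators by the smooth surrogates $g_{\psi,x}$ (cost $C/\psi$) and charges every approximation error as $\psi\Vert\cdot\Vert_{M/2}$. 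It then swaps each $A_z$ for a Gaussian $A_z^*$ of equal variance by Lindeberg telescoping (cost $C\psi^3\sqrt{W/T}$). It compares the two resulting centred normals directly in \eqref{eq.g_psi_x_to_epsilon}, and takes $\psi=\log T$, $S=\lfloor h+3\lambda_2\rfloor$, $W=\lfloor\sqrt{TS}\rfloor$. Your route is shorter and avoids the $\psi^3$ versus $1/\psi$ trade-off. The paper's is self-contained, gives explicit rates, and builds the smoothing machinery it reuses for the bootstrap theorem.

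You must add one step, because your guess of the canonical statistic is wrong. In Assumption \ref{assumption.variance}, $\widehat{R}^\dagger$ is not a conditional-expectation truncation of $\widehat{R}$. Its summands are untruncated products of separately centred forms, $(\mathbf{x}_t^\top\mathbf{x}_{t_1}-\mathbf{E}[\mathbf{x}_t^\top\mathbf{x}_{t_1}])(\mathbf{y}_{t-s}^\top\mathbf{y}_{t_1-s}-\mathbf{E}[\mathbf{y}_{t-s}^\top\mathbf{y}_{t_1-s}])$, plus the analogous second sum. As written, your argument yields a normal limit whose variance is that of your truncated statistic (essentially $\mathrm{Var}(\widehat{R})$). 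It also relies on a variance lower bound that the assumption grants only for the true $\widehat{R}^\dagger$.

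To close this you need more than $\mathbf{E}\widehat{R}=o(1)$, which is the paper's term III. You must also show that the cross terms $\mathbf{E}[\mathbf{x}_t^\top\mathbf{x}_{t_1}](\mathbf{y}_{t-s}^\top\mathbf{y}_{t_1-s}-\mathbf{E}[\cdot])$ and $\mathbf{E}[\mathbf{y}_{t-s}^\top\mathbf{y}_{t_1-s}](\mathbf{x}_t^\top\mathbf{x}_{t_1}-\mathbf{E}[\cdot])$ are negligible in $L^{M/2}$. These are the paper's I and II, each $O(T^{-3/2})$ by \eqref{eq.covariances_x_y}, \eqref{eq.gamma_norm} and \eqref{eq.product_quadratic_forms}, and together with III they give $\Vert\widehat{R}-\widehat{R}^\dagger\Vert_{M/2}\to0$. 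Combine this with your truncation and small-block bounds held in $L^{M/4}\subset L^2$, not merely as $o_p(1)$. You then get $\sum_z\mathrm{Var}(A_z)=\mathrm{Var}(\widehat{R}^\dagger)+o(1)$. That transfers the lower bound needed for Lyapunov's normalization and identifies the limiting variance the statement requires.

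Also, the truncation window must exceed $2\lambda_2+h$, not just $2\lambda_2$, because of the lag-$h$ terms. Your $\mathcal{F}_{t,S+h}$ remark covers this implicitly. A window of order $\lambda_2+h$ already suffices, since $\lambda_1^{\alpha-2}\gg T^{5/2}$.
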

Theorem \ref{theorem.distributional_under_H0} further justifies the claim in Remark \ref{remark.underH0}: For appropriate choices of $\lambda_1,$ the test statistic has an asymptotic normal distribution with mean zero. The asymptotic normality also makes it reasonable to sample jointly normal random variables in Algorithm \ref{algorithm.dependent_wild_bootstrap}.

The validity of Algorithm \ref{algorithm.dependent_wild_bootstrap} hinges on the fact that the distribution of the bootstrapped statistics $\widehat{R}^*_b$ in the bootstrap world accurately approximates the distribution of $\widehat{R}.$ Specifically, define the ``probability in the bootstrap world''---that is, the conditional probability conditional on all observations---as  $\mathbf{pr}^*\left(\cdot\right) = \mathbf{pr}\left(\cdot\mid \mathbf{z}_t, t= 1,\cdots,T\right).$ According to Theorem 1.2.1 of \cite{MR1707286}, it suffices to establish that 
\begin{equation}
    \begin{aligned}
        \sup_{x\in\mathbf{R}}\left\vert \mathbf{pr}^*\left(\widehat{R}^*_b\leq x\right) - \mathbf{pr}\left(\epsilon\leq x\right)\right\vert = o_p(1),
    \end{aligned}
    \label{eq.bootstrap_validity}
\end{equation}
where $\epsilon$ is defined in Theorem \ref{theorem.distributional_under_H0}. We demonstrate this result in Theorem \ref{theorem.consistency_bootstrap}.

\begin{theorem}
    \label{theorem.validity_of_bootstrap_algorithm}
    Suppose Assumptions \ref{assumption.independent_short_range_structure}, \ref{assumption.lambda_choices}, \ref{assumption.variance}, and \ref{assumption.for_variance_estimation} of Appendix \ref{section.technical_details_under_H0} hold true. Then \eqref{eq.bootstrap_validity} holds true.
    \label{theorem.consistency_bootstrap}
\end{theorem}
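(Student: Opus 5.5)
Conditional on the data, $\widehat{R}^*_b$ is exactly normal with mean zero and variance
$$\widehat{\sigma}^{*2} = \sum_{t_1 = 1 + \lambda_1}^T\sum_{t_2 = 1 + \lambda_1}^T O_{t_1}O_{t_2}K\left(\frac{t_1 - t_2}{\mathcal{K}_T}\right).$$
Since $\epsilon$ is also mean-zero normal with variance $\sigma^2 = \mathrm{Var}(\widehat{R}^\dagger)$, the whole problem reduces to showing $\widehat{\sigma}^{*2}/\sigma^2 \to_p 1$. Here $\sigma^2$ is bounded away from zero by Assumption \ref{assumption.variance}. Indeed, for two centered normals the Kolmogorov distance is bounded by $C\vert \widehat{\sigma}^{*2}/\sigma^2 - 1\vert$. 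Taking the supremum over $x$ and using that $\widehat{\sigma}^{*2}$ is a measurable function of the data then gives \eqref{eq.bootstrap_validity}. So the plan is to prove variance consistency of this HAC-type estimator.

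\textbf{Step 1: replace $O_t$ by a canonical version.} Following the construction behind $\widehat{R}^\dagger$ in Assumption \ref{assumption.variance}, I would write $O_t = O_t^\dagger + (O_t - O_t^\dagger)$. Here $O_t^\dagger$ is the mean-zero, $\mathcal{F}_{t,S}$-truncated (hence $S$-dependent) version, obtained by replacing each product by its conditional expectation given $\mathcal{F}_{t,S}$ and centering, with $S > 2\lambda_2 + h$ chosen as a suitable power of $T$. Under $H_0$ the products factor across $\mathbf{x}$ and $\mathbf{y}$, so each $O_t$ is a quadratic form in $\mathbf{z}$ with coefficients depending on the other series. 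Theorem \ref{theorem.linear_and_quadratic} then applies conditionally: \eqref{eq.product_quadratic_forms} bounds $\Vert O_t\Vert_{M/2}$, \eqref{eq.truncate_whole} bounds the truncation error, and Remark \ref{remark.underH0} bounds the bias. Because $K$ is bounded and the kernel matrix has row sums $O(\mathcal{K}_T)$, Cauchy--Schwarz yields
$$\vert\widehat{\sigma}^{*2} - \widehat{\sigma}^{\dagger *2}\vert \le C\mathcal{K}_T\sum_t \vert O_t - O_t^\dagger\vert(\vert O_t\vert + \vert O_t^\dagger\vert),$$
which is $o_p(1)$ under the rate conditions on $\lambda_1,\lambda_2,\mathcal{K}_T$ in Assumption \ref{assumption.for_variance_estimation}.

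\textbf{Step 2: bias of the HAC estimator.} Write $\mathbf{E}[\widehat{\sigma}^{\dagger *2}] - \sigma^2 = \sum_{t_1,t_2}\mathbf{E}[O^\dagger_{t_1}O^\dagger_{t_2}]\left(K\left(\frac{t_1 - t_2}{\mathcal{K}_T}\right) - 1\right)$. For $\vert t_1 - t_2\vert > S$ the covariance vanishes by $S$-dependence, up to the already controlled truncation error. For $\vert t_1 - t_2\vert \le S$ the factor $\vert K(x) - 1\vert \le \sup\vert K'\vert\,\vert x\vert$ gives the bound $C\sum_{\vert t_1 - t_2\vert \le S}\vert\mathrm{Cov}\vert\,\vert t_1 - t_2\vert/\mathcal{K}_T$. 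Covariances of $O^\dagger$ at lag $k$ decay via the second bound of \eqref{eq.linear_combination} applied to the quadratic forms, so this is $O(S/\mathcal{K}_T)$ times $\sigma^2$. Choosing $S = o(\mathcal{K}_T)$ makes it negligible.

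\textbf{Step 3: variance of the HAC estimator.} This is the main obstacle: $\mathrm{Var}(\widehat{\sigma}^{\dagger *2})$ involves fourth-order joint moments of the $O^\dagger_t$, i.e. eighth-order moments of $\mathbf{z}_t$, which is where $M > 8$ is used. I would split the sum into blocks of length $\asymp \mathcal{K}_T + S$. Products $O^\dagger_{t_1}O^\dagger_{t_2}$ with $\vert t_1 - t_2\vert$ restricted to the effective kernel range are then $(S + C\mathcal{K}_T)$-dependent up to the tail of $K$, which is controlled by $\int K < \infty$. A variance bound for sums of $m$-dependent variables then gives $\mathrm{Var} \le C T(\mathcal{K}_T + S)\,\mathcal{K}_T^2 \max_t\Vert O_t^\dagger\Vert_4^4$. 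Since $\Vert O_t\Vert_{M/2}^2 = O(1/T)$ by \eqref{eq.product_quadratic_forms} and the normalization $\mathcal{U}$, this is $O(\mathcal{K}_T^3/T)$ after tail truncation, or $O(\mathcal{K}_T^2/T)$ with care. It is $o(1)$ under $\mathcal{K}_T = o(T^{1/3})$, which I expect Assumption \ref{assumption.for_variance_estimation} to provide. If the crude block bound is too weak, I would instead expand the fourth moment using the bimodal structure and \eqref{eq.linear_combination} to get a sharper rate. Combining Steps 1--3 gives $\widehat{\sigma}^{*2} - \sigma^2 = o_p(1)$, completing the proof.
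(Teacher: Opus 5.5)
The overall architecture matches the paper. Your reduction to variance consistency, and your split into an approximation term, a bias term $\sum(1-K)\mathbf{E}[O^\dagger O^\dagger]$ and a fluctuation term, coincide with the paper's three-term decomposition.

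Steps 1--2 are sound up to details:
\begin{itemize}
\item In Step 1, the Schur-test bound should be stated with $\ell^2$ norms, i.e.\ $C\mathcal{K}_T\bigl(\sum_t\vert O_t-O^\dagger_t\vert^2\bigr)^{1/2}\bigl(\sum_t(\vert O_t\vert+\vert O^\dagger_t\vert)^2\bigr)^{1/2}$.
\item In Step 2, the cited inequality does not give decay of $\mathbf{E}[O^\dagger_tO^\dagger_{t+k}]$ for $k\le S$. However, the trivial bound $C/T$ gives $O(S^2/\mathcal{K}_T)$. That is what the paper uses, and it is $o(1)$ because $\kappa_k>2(\kappa_2\vee\kappa_h)$.
\end{itemize}

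The genuine gap is Step 3, which you flag as the main obstacle but do not close.
\begin{itemize}
\item Your block bound gives $O(\mathcal{K}_T^3/T)$, which needs $\kappa_k<1/3$. Assumption \ref{assumption.for_variance_estimation} only gives $\kappa_k<1/2-(\kappa_2\vee\kappa_h)$, and since $\kappa_2\vee\kappa_h<1/6$ this permits $\kappa_k>1/3$. For example, $\kappa_2\vee\kappa_h=0.1$ and $\kappa_k=0.35$ are admissible, and there your bound diverges.
\item The sharper $O(\mathcal{K}_T^2/T)$ is only asserted.
\item Cutting the kernel at range $L\asymp\mathcal{K}_T$ is not justified by $\int K<\infty$ alone. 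With only the trivial per-lag bound, the discarded part is of order $\mathcal{K}_T\int_{L/\mathcal{K}_T}^\infty K(x)\,\mathrm{d}x$. Definition \ref{def.kernel} imposes no tail rate; for instance $K(x)=(1+x^2)^{-\nu}$ with $\nu$ slightly above $1/2$ is admissible. Making this tail $o(1)$ may therefore force $L\gg\mathcal{K}_T$ and inflate your dependence range.
\end{itemize}

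The missing idea is to organize the fluctuation term by lag, as the paper does. Write it as $\sum_{s}K(s/\mathcal{K}_T)\sum_t\bigl(O^\dagger_tO^\dagger_{t+s}-\mathbf{E}[O^\dagger_tO^\dagger_{t+s}]\bigr)$ plus the symmetric part. Bound the inner sum uniformly in $s$, then use only $\sum_{s\ge 0}K(s/\mathcal{K}_T)=O(\mathcal{K}_T)$, which follows from monotonicity and integrability of $K$. No kernel truncation is needed.

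The paper does this with the untruncated centered $O^\dagger_t$ and a Burkholder martingale decomposition. It gets $C(\lambda_2+h)/\sqrt{T}$ per lag, hence $C\mathcal{K}_T(\lambda_2+h)/\sqrt{T}$ in \eqref{eq.difference_variance}. That is exactly where the condition $\kappa_k<1/2-(\kappa_2\vee\kappa_h)$ is used.

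With your exactly $S$-dependent $O^\dagger_t$, the per-lag bound is elementary:
\begin{itemize}
\item If $s\le S$, the products at times $t$ and $t'$ are independent once $\vert t-t'\vert>S+s$.
\item If $s>S$ and $\vert t-t'\vert>S$, the factor with the earliest time index depends on innovations disjoint from those of the other three factors and has mean zero. Also $\mathbf{E}[O^\dagger_tO^\dagger_{t+s}]=0$, so the covariance vanishes.
\end{itemize}
Only $O(TS)$ pairs remain, each bounded by $\max_t\Vert O^\dagger_t\Vert_4^4\le C/T^2$ (valid since $M/2>4$). Hence $\bigl\Vert\sum_t(O^\dagger_tO^\dagger_{t+s}-\mathbf{E}[O^\dagger_tO^\dagger_{t+s}])\bigr\Vert_2\le C\sqrt{S/T}$ uniformly in $s$. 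The fluctuation is then of order $\mathcal{K}_T\sqrt{(\lambda_2+h)/T}$ for $S\asymp\lambda_2+h$, which is $o(1)$ under Assumption \ref{assumption.for_variance_estimation}.
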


\section{Numerical simulations}
\label{section.numerical_simulation}
\subsection{Implementation details}
The proposed testing procedure involves four bandwidth parameters: $\lambda_1$, $\lambda_2$, $h$, and $\mathcal{K}_T$. Among these bandwidths, $h$ determines the maximal lag up to which mutual dependence is captured;  as discussed in \cite{MR1423878, MR1997176, MR2472050, MR3416594}. The parameter $\mathcal{K}_T$ 
corresponds to the bandwidth used in HAC estimation, which is common in the literature; see, for example, \cite{MR2656050, MR4134802, MR4726969}. The literature has proposed various methods for selecting $\mathcal{K}_T$ in different kinds of bootstrap settings \cite{MR1366282, MR2211092, MR2380557}. In particular, the original dependent wild bootstrap of \cite{MR2656050} leveraged \cite{MR2041534} for bandwidth selection.

In contrast, $\lambda_1$ and $\lambda_2$ are associated with the proposed test statistic $\widehat{R}.$ The parameter $\lambda_1$ serves to mitigate the bias induced by temporal dependence, while $\lambda_2$ controls the dependence strength in the products of time series data. Algorithm \ref{algorithm.selection_lambda_1} provides  a data-driven method for selecting $\lambda_1.$  We then set  $\lambda_2 = \iota\lambda_1,$ with different  scale parameters $\iota.$ Numerical experiments indicate that the size of the proposed test remains stable across a wide range of $\lambda_2.$ However, the power can be affected if $\lambda_2$ is chosen to be excessively small or excessively large.

The design of Algorithm \ref{algorithm.selection_lambda_1} is motivated by the need to control the temporal bias of the test statistic.
To illustrate this, suppose the null hypothesis holds true and assume that $\mathbf{z}_t,t\in\mathbf{Z}$ is  weak stationary. From equation \eqref{eq.bias_Ot} in the online supplement, if we set $h = 0,$ then 
$$
    \mathcal{U}\left\vert \mathbf{E}\left[ O_t\right]\right\vert 
    \leq \frac{1}{2} \sum_{t_1 = (t - \lambda_2)\vee 1}^{t - \lambda_1}\left(\left(\mathbf{E}\left[\mathbf{x}_{t}^\top\mathbf{x}_{t_1}\right]\right)^2 + \left(\mathbf{E}\left[\mathbf{y}_{t}^\top\mathbf{y}_{t_1}\right]\right)^2\right).
$$
Therefore, Algorithm \ref{algorithm.selection_lambda_1} chooses the smallest $\lambda_1$ such that   $\left(\mathbf{E}\left[\mathbf{x}_{t}^\top\mathbf{x}_{t_1}\right]\right)^2 + \left(\mathbf{E}\left[\mathbf{y}_{t}^\top\mathbf{y}_{t_1}\right]\right)^2$ is small compared to the total sum of squared sample autocovariances. This choice ensures a small temporal bias. Although $\mathbf{E}\left[\mathbf{x}_{t}^\top\mathbf{x}_{t_1}\right]$ and $\mathbf{E}\left[\mathbf{y}_{t}^\top\mathbf{y}_{t_1}\right]$ are not observable, under stationarity they can be estimated by the sample autocovariances $\widehat{\sigma}_{t-t_1}^{(x)}$ and $\widehat{\sigma}_{t-t_1}^{(y)}$ in Algorithm \ref{algorithm.selection_lambda_1}, leading to the design of Algorithm \ref{algorithm.selection_lambda_1}.

\begin{algorithm}[H]
\caption{Selection of $\lambda_1.$}
\label{algorithm.selection_lambda_1}
\SetAlgoLined
\KwIn{Observations $\mathbf{z}_t = \left(\mathbf{x}_t^\top, \mathbf{y}_t^\top\right)^\top\in\mathbf{R}^{d_1 + d_2}$ for $t = 1,\cdots,T,$ ratio threshold $\mathcal{F}.$ } 
    
\hspace{1in}

Derive the estimators $\widehat{\sigma}_a^{(x)} = \left(\sum_{t = a + 1}^T\mathbf{x}^\top_{t}\mathbf{x}_{t - a}\right) / T$ and $\widehat{\sigma}_a^{(y)} = \left(\sum_{t = a + 1}^T\mathbf{y}^\top_{t}\mathbf{y}_{t - a}\right) / T$
for $a = 0,1,\cdots,  T-1.$

Choose $\lambda_1$ as the smallest integer such that 
\begin{align*}
    \frac{\sum_{a = \lambda_1}^{T-1}\left(\widehat{\sigma}_a^{(x)2} + \widehat{\sigma}_a^{(y)2}\right)}{\sum_{a =  0}^{T-1}\left(\widehat{\sigma}_a^{(x)2} + \widehat{\sigma}_a^{(y)2}\right)}
    \leq \mathcal{F}
\end{align*}

\KwOut{The bandwidth $\lambda_1$.}

\noindent \textbf{Remark: }  We set the ratio threshold to $\mathcal{F} = 0.5\%$ in our experiments.
\end{algorithm}

This experiment sets the scale parameter $\iota$ to $\lfloor 2T^{0.05}\rfloor, \lfloor 2T^{0.1}\rfloor, \lfloor 2T^{0.2}\rfloor,$ with $\lfloor x\rfloor$ denotes the largest integer that is smaller than or equal to $x.$ Notably, the choice $\lfloor 2T^{0.2}\rfloor$ violates Assumption \ref{assumption.for_variance_estimation} in the online supplement. We intentionally include this ratio to  assess the robustness of the proposed test to violations of the underlying assumptions. Following \cite{MR3416594}, we consider three choices of $h = \lfloor \log(T)\rfloor, \lfloor6T^{0.1}\rfloor, \lfloor 6T^{0.3}\rfloor.$

This experiment adopts the method proposed in \cite{MR2041534}, implemented in the R package \texttt{blocklength}, to select $\mathcal{K}_T$. Since the procedure in \cite{MR2041534} is designed for univariate time series, we apply it to the scalar series $\mathbf{z}_t^\top \mathbf{z}_t$ for $t = 1, \ldots, T$.

\subsection{Simulation data}
\label{section.simulation_data_res}
The numerical experiment considers three data-generating processes of $\mathbf{z}_t = (\mathbf{x}_t^\top, \mathbf{y}_t^\top)^\top \in \mathbf{R}^{d_1 + d_2},$  namely
\begin{itemize}
    \item nonstationary autoregressive (nAR(1)) process: 
    \begin{align*}
        \mathbf{z}_t = 
        \begin{cases}
        \boldsymbol{\varepsilon}_t + \mathbf{A}\mathbf{z}_{t-1}  & \text{if}\quad t\mod 2 = 1,\\
        \boldsymbol{\varepsilon}_t  + \mathbf{B}\mathbf{z}_{t-1} & \text{if}\quad t \mod  2 = 0.
        \end{cases}
    \end{align*}
    \item nonstationary autoregressive moving-average (nARMA(1,1)) process:
    \begin{align*}
        \mathbf{z}_t = 
        \begin{cases}
        \mathbf{A}\mathbf{z}_{t-1} + \boldsymbol{\varepsilon}_t +\mathbf{A}\boldsymbol{\varepsilon}_{t-1} & \text{if}\quad t\mod 2 = 1,\\
        \mathbf{B}\mathbf{z}_{t-1} + \boldsymbol{\varepsilon}_t  + \mathbf{B}\boldsymbol{\varepsilon}_{t-1} & \text{if}\quad t \mod  2 = 0.
        \end{cases}
    \end{align*}
    \item nonstationary threshold ARMA (nTARMA(1,1)) process :
    \begin{align*}
        \mathbf{z}_t = 
        \begin{cases}
         \boldsymbol{\varepsilon}_t + \mathbf{A}\mathbf{z}_{t - 1} + \mathbf{A}\boldsymbol{\varepsilon}_{t-1}, &\text{if} \quad t \mod  2 = 1\quad\text{and}\quad \mathbf{z}_{t - 2}^{(1)}\leq 0,\\
         \boldsymbol{\varepsilon}_t - \mathbf{A}\mathbf{z}_{t - 1} - \mathbf{A}\boldsymbol{\varepsilon}_{t-1}, &\text{if} \quad t \mod  2 = 1\quad\text{and}\quad \mathbf{z}_{t - 2}^{(1)} > 0,\\
        \boldsymbol{\varepsilon}_t + \mathbf{B}\mathbf{z}_{t - 1} + \mathbf{B}\boldsymbol{\varepsilon}_{t-1}, &\text{if} \quad t \mod  2 = 0\quad\text{and}\quad \mathbf{z}_{t - 2}^{(1)}\leq 0,\\
         \boldsymbol{\varepsilon}_t - \mathbf{B}\mathbf{z}_{t - 1} - \mathbf{B}\boldsymbol{\varepsilon}_{t-1}, &\text{if} \quad t \mod  2 = 0\quad\text{and}\quad \mathbf{z}_{t - 2}^{(1)} > 0.\\
        \end{cases}
    \end{align*}
\end{itemize}
The innovations satisfy $\boldsymbol{\varepsilon}_t = \boldsymbol{\Pi}\boldsymbol{\eta}_t,$ where $\boldsymbol{\eta}_{t} = (\boldsymbol{\eta}_{t}^{(1)},\cdots, \boldsymbol{\eta}_{t}^{(d_1+d_2)})^\top,$ and each $\boldsymbol{\eta}_{t}^{(j)}, t\in\mathbf{Z},j = 1,\cdots, d_1 + d_2$ satisfy independent $t$-distribution with 20 degrees of freedom. We consider $t$-distribution because it has finite polynomial moments but lacks  exponential moments, which satisfies the setup of Section \ref{section.asymptotic}. The values of the matrices $\mathbf{A},\mathbf{B},\boldsymbol{\Pi}$ are postponed to Section \ref{section.values_parameter_matrices} of the online supplement.

\subsection{Simulation results}
\label{section.simulation_verify}
We choose the sample size $T = 300, 500, 700$ and the dimensions $d_1 = 1.2T,  d_2 = 1.5T.$ The numerical results for $T = 500$ are reported in Table \ref{size_power_performance}, and other results are postponed to Table \ref{size_power_performance_sample_size} of the online supplement. We compare our proposed method with the HSIC-based testing procedure of \cite{MR4270386}. As shown in  Table \ref{size_power_performance}, the size of the proposed hypothesis testing procedure is insensitive to different choices of $\lambda_2$ and $h.$ In particular, even when the choices of $\lambda_2$ and $h$ are chosen to be of higher order than required by Assumption \ref{assumption.variance}, the test sizes remain close to the nominal level.

However, appropriate  choices of $h$ and $\lambda_2$ can improve the power performance. From Table \ref{size_power_performance}, when $\lambda_2$ is  set to a moderate level ($\lfloor 2T^{0.1}\rfloor\lambda_1$ in the experiment), the proposed hypothesis testing procedure achieves the most favorable power performance overall compared to other choices. On the other hand, selecting values of $\lambda_2$  that are either too small or too large leads to a deterioration in power. Numerical results also indicate that adopting a relatively large bandwidth $h$ is beneficial for improving power performance.

Table \ref{size_power_performance} further highlights the necessity of applying independence tests directly to the time series data $\mathbf{z}_t,$ rather than to fitted residuals, in nonstationary settings. The issue arises from potential model incompatibility and is further illustrated by comparison with Table \ref{table.stationary_performance}, where the data-generating process follows a stationary AR(1) model of the form $\mathbf{z}_t = \mathbf{A}\mathbf{z}_{t-1} + \boldsymbol{\epsilon}_t.$

Specifically,  when the time series exhibits a stationary autocovariance structure, practitioners can employ parametric models---such as vector ARMA processes---to remove autocovariances by computing fitted residuals. In contrast, for nonstationary time series with evolving covariance structures, few models are capable of adequately capturing the underlying dependence. As a result, model misspecification induces residual autocorrelation, which in turn deteriorates the size and power performance of the HSIC-based testing procedure proposed in \cite{MR4270386}.

When the data-generating process exhibits nonlinearity, the size of the test remains stable, whereas its power is affected. Table \ref{table.sample_imbalance} further illustrates that relatively large sample sizes are required to achieve satisfactory power under nonstationarity. These findings suggest the need for developing independence tests that are suitable for capturing nonlinear dependence structures, for example by exploiting reproducing kernel methods as in \cite{MR2249882}, which we leave for future research.

\textbf{Necessity of selecting a reasonably large $\lambda_1.$} Equation \eqref{eq.E_R_hat} suggests that, under $H_0,$ choosing a positive $\lambda_1$ removes the bias induced by temporal dependence. This theoretical implication is further supported by the numerical results in Table \ref{table.size_power_different_lambda}, where we perform hypothesis testing with different values of $\lambda_1.$
When $\lambda_1 = 0, $ the bias arising from temporal dependence dominates the stochastic error, causing the test to reject $H_0$ regardless of the underlying hypothesis. When $\lambda_1$ is nonzero but relatively small, the temporal dependence bias still affects the test size---it even partially offsets the bias induced under $H_1,$ resulting in power that can be lower than the size.

\begin{table}[htbp]
	\caption{Size and power performance of the proposed test procedure. The sample size is 500, and the nominal size is 5\%. We repeat the experiment 100 times to evaluate the size and power.   } 
    \scriptsize
{	\begin{tabular}{ccc|rrrr || rrrr}
\hline\hline
		& & & \multicolumn{8}{c}{Different choices of $\delta$}\\
        \hline
        & & & \multicolumn{4}{c||}{Our work}  & \multicolumn{4}{c}{HSIC-based test}\\
	DGP	& $h$ &  $(\lambda_1,\lambda_2, \mathcal{K}_T)$ & 0 (null)  &  0.075  & 0.100 & 0.125 & 0 (null)  &  0.075  & 0.100 & 0.125\\
    \hline
    \multirow{9}{*}{nAR(1)} & \multirow{3}{*}{6} & (2, 4, 4.2) & 6\% &   10\% & 62\% & 99\% & \multirow{3}{*}{1\%} &  \multirow{3}{*}{1\%} & \multirow{3}{*}{0\%} & \multirow{3}{*}{0\%}\\
        &     &   (2, 6, 4.2) & 6\% & 15\% & 65\% & 100\% \\
        &     &   (2, 12, 6.4) & 6\%  & 12\% & 58\% & 99\% \\
        \cline{2-11}
        &  \multirow{3}{*}{11}   &   (2, 4, 4.4) & 6\% & 9\% & 60\% & 100\%  &  \multirow{3}{*}{1\%} &  \multirow{3}{*}{0\%} & \multirow{3}{*}{1\%} & \multirow{3}{*}{0\%}\\
        &                       &   (2, 6, 4.4) &  5\% & 18\% & 67\% & 100\%\\
        &                       &   (2, 12, 4.1) & 6\% & 19\% & 66\% & 100\%\\
        \cline{2-11}
        &   \multirow{3}{*}{38} &   (2, 4, 4.6)  &  7\% & 18\% & 86\% & 100\% & \multirow{3}{*}{0\%} & \multirow{3}{*}{0\%} & \multirow{3}{*}{0\%} & \multirow{3}{*}{0\%}\\
        &                       &   (2, 6, 6.7)  &  4\% & 31\% & 92\% & 100\%\\
        &                       &   (2, 12, 4.2) &  4\% & 13\% & 76\% & 100\%\\
        \hline\hline
    \multirow{9}{*}{nARMA(1,1)}  & \multirow{3}{*}{6} &  (3, 6, 7.1) & 4\% & 27\% & 90\% & 100\% & \multirow{3}{*}{1\%} & \multirow{3}{*}{0\%} & \multirow{3}{*}{0\%} & \multirow{3}{*}{0\%}\\
        &                       &     (3, 9, 7.6)  & 3\% & 32\% & 92\% & 100\%\\
        &                       &     (3, 18, 6.7) & 3\% & 37\% & 86\% & 100\%\\    
        \cline{2-11}
        &    \multirow{3}{*}{11} &     (2, 4, 7.1) & 5\% & 36\% & 95\% & 100\%  & \multirow{3}{*}{1\%} & \multirow{3}{*}{0\%} & \multirow{3}{*}{0\%} & \multirow{3}{*}{0\%}\\    
        &                       &      (2, 6, 7.5) & 8\% & 36\% & 92\% & 100\%\\
        &                       &      (2, 12, 7.3) & 3\% & 28\% & 94\% & 100\% \\
        \cline{2-11}
        &    \multirow{3}{*}{38} & (2, 4, 7.2) & 4\% & 46\% & 98\% & 100\%   & \multirow{3}{*}{0\%} & \multirow{3}{*}{0\%} & \multirow{3}{*}{0\%} & \multirow{3}{*}{0\%}\\
        &                        & (2, 6, 7.3) & 3\% & 46\% & 100\% & 100\% \\
        &                        &  (2, 12, 7.4) & 4\% & 47\% & 98\% & 100\%\\
\hline\hline
\multirow{9}{*}{nTARMA(1,1)} &  \multirow{3}{*}{6} & (1, 2, 8.2) & 6\% & 2\% & 18\% & 92\% & \multirow{3}{*}{1\%} & \multirow{3}{*}{0\%} & \multirow{3}{*}{0\%} & \multirow{3}{*}{0\%} \\
                       &                     & (1, 3, 4.5) & 5\% & 3\% & 17\% & 95\%\\
                       &                     & (1, 6, 6.0) & 5\% & 6\% & 42\% & 99\%\\
\cline{2-11}
                         &  \multirow{3}{*}{11} & (1, 2, 4.0) & 5\% & 5\% & 14\% & 68\%  & \multirow{3}{*}{0\%} & \multirow{3}{*}{0\%} & \multirow{3}{*}{0\%} & \multirow{3}{*}{0\%}\\
                         &                      & (1, 3, 6.9) & 5\% & 6\% & 10\% & 66\% \\
                         &                      & (1, 6, 7.2) & 3\% & 6\% & 23\% & 83\%\\
                         \cline{2-11}
                         &  \multirow{3}{*}{38} & (1, 2, 6.5) & 5\% & 3\% & 4\% & 21\% & \multirow{3}{*}{1\%} & \multirow{3}{*}{1\%} & \multirow{3}{*}{0\%} & \multirow{3}{*}{0\%}\\
                         & & (1, 3, 4.3) & 7\% & 3\% & 6\% & 26\%\\
                         & & (1, 6, 6.8) &  5\% & 5\% & 5\% & 40\% \\
\hline\hline
	\end{tabular}
}
	\label{size_power_performance}
\end{table}

\begin{table}[htbp]
    \caption{Size and power performance of tests on stationary AR(1) model $\mathbf{z}_t = \mathbf{A}\mathbf{z}_{t-1} + \boldsymbol{\epsilon}_t,$ where  $\mathbf{A}$ is defined as in Section \ref{section.values_parameter_matrices} of the online supplement.  The experiment setup coincides with Table \ref{size_power_performance}. We choose the sample size $T = 500$ and $\iota = \lfloor 2T^{0.1}\rfloor.$}
    \centering
    \scriptsize
  {  \begin{tabular}{cc| rrrr || rrrr}
  \hline\hline
         &  &  \multicolumn{8}{c}{Different choices of $\delta$}  \\
         \hline
         &  &  \multicolumn{4}{c||}{Our work} & \multicolumn{4}{c}{HSIC-based test}\\
         $h$ &  $(\lambda_1,\lambda_2, \mathcal{K}_T)$ & 0 (null)  &  0.075  & 0.100 & 0.125 & 0 (null)  &  0.075  & 0.100 & 0.125\\
         \hline
         6  & (2, 6, 4.5) & 7\% & 14\% & 61\% & 99\% & 5\% & 4\% & 0\% & 1\%\\
         11 & (2, 6, 6.2) & 6\% & 15\% & 70\% & 100\% & 6\% & 10\% & 4\% & 3\% \\
         38 & (2, 6, 6.5) & 5\% & 23\% & 94\% & 100\% & 7\% & 6\% & 6\% & 3\%\\
    \hline\hline
    \end{tabular}
}
    \label{table.stationary_performance}
\end{table}

\begin{table}[htbp]
\caption{Size and power performance with respect to  different $\lambda_1.$  We choose the sample size $T = 500,$ and fix $(h,\lambda_2,\mathcal{K}_T) = (38, 6, 6.7),$ which coincides with the choices of corresponding parameters of nAR(1) model. }
\scriptsize
\centering
{
    \begin{tabular}{c|rrr||rrr||rrr}
    \hline\hline
            &   \multicolumn{9}{c}{Different choices of $\delta$}\\
            \hline
        &   \multicolumn{3}{c||}{$\lambda_1 = 0$} &   \multicolumn{3}{c||}{$\lambda_1 = 1$} &  \multicolumn{3}{c}{$\lambda_1 = 2$}\\
         DGP    &   0 (null)  &  0.075  & 0.100 & 0 (null)  &  0.075  & 0.100 & 0 (null)  &  0.075  & 0.100\\
            \hline
    nAR(1)   &  100\%  & 100\% & 100\% & 82\% & 7\% & 92\% & 3\% & 26\% & 86\%\\
    nARMA(1,1) & 100\%  & 100\% & 100\% & 87\% & 17\% & 100\% & 4\% & 49\% & 100\%\\
    \hline\hline
    \end{tabular}
}
\label{table.size_power_different_lambda}
\end{table}

\section{Applications}
\label{section.applications}

This section illustrates the practical use of the proposed test in two applications. The first facilitates causal discovery by refining causal directions after applying causal discovery algorithms (e.g., PC algorithm in Section 5.4.2 of \cite{spirtes2012causation}). The second applies the test to financial data to detect cross-sector dependence in stock returns.

\textbf{Causal discovery. }   After observing a time series $\mathbf{z}_t = \left(\mathbf{x}_t^\top, \mathbf{y}_t^\top\right)^\top$ for $ t = 1,\cdots,T,$ 
one goal of causal discovery involves determining the direction of causality---whether $\mathbf{x}_t$ causes $\mathbf{y}_t$ or $\mathbf{y}_t$ causes $\mathbf{x}_t$---as studied in \cite{NIPS2008_f7664060, MR4119157}.  \cite{zheng2024causal} demonstrated that 
independence tests could complement causal discovery algorithms, such as the PC algorithm in \cite{spirtes2012causation}, when causal directions were not identifiable.  \cite{NIPS2008_f7664060} proposed a three-step procedure for causal discovery: 1. Test if  $\mathbf{x}_t$ and $\mathbf{y}_t$ are independent; 2. If independence is rejected, fit the model $\mathbf{y}_t = \mathbf{f}\left(\mathbf{x}_t\right) + \boldsymbol{\epsilon}_{t,1},$ estimate $\widehat{\mathbf{f}},$ and test independence between $\mathbf{x}_t$ and the residuals $\widehat{\boldsymbol{\epsilon}}_{t,1} = \mathbf{y}_t - \widehat{\mathbf{f}}(\mathbf{x}_t).$ Failure to reject the null hypothesis suggests the causal direction $\mathbf{x}_t\to \mathbf{y}_t .$  3. Repeat Step 2 with the reversed model $\mathbf{x}_t = \mathbf{g}(\mathbf{y}_t) + \boldsymbol{\epsilon}_{t,2}$. This section applies our method to infer causal directions in time series data. We generate 4 mutually independent nAR(1) processes  $\mathbf{x}_{t,i}$ with $i = 1,\cdots,4.$ After that, define the observations $\mathbf{w}_{t,i}, i = 1,2,3,4$ as 
\begin{equation}
\begin{aligned}
    \mathbf{w}_{t,1}  &= \mathbf{x}_{t,1},\quad \mathbf{w}_{t,2}  = \mathbf{A}^{(x)}\mathbf{w}_{t,1} + \mathbf{x}_{t,2},\quad \mathbf{w}_{t,3}  = \mathbf{B}^{(x)} \mathbf{w}_{t,1}+ \mathbf{x}_{t,3},\\
    \mathbf{w}_{t,4}  &= \mathbf{B}^{(x)} \mathbf{w}_{t,2} + \mathbf{A}^{(x)} \mathbf{w}_{t,3} + \mathbf{x}_{t,4},\quad
\end{aligned}
\label{eq.DGP_system}
\end{equation}
where $\mathbf{A}^{(x)}, \mathbf{B}^{(x)}$ are defined in Section \ref{section.values_parameter_matrices} of the online supplement.
This construction induces the causal graph shown in Figure~\ref{fig.real_causal_graph}.

We apply the PC algorithm as implemented in \cite{zheng2024causal} to recover causal relationships among the observations. Since the algorithm is designed for scalar data, we reduce dimensionality at each node via principal component analysis.
The resulting graph is displayed in Figure \ref{fig.causal_discovery}. Due to temporal dependence and information loss during  dimension reduction, the original PC algorithm includes a spurious edge and fails to correctly identify causal directions.

Following \cite{NIPS2008_f7664060}, we use Lasso to estimate $\mathbf{f}$ and $\mathbf{g},$ and apply Algorithm \ref{algorithm.dependent_wild_bootstrap} to calibrate the direction of causality. The results, reported in Table \ref{table.causation_direction} and Figure \ref{figure.causal_graph}, show that the calibrated directions align with the true causal graph. However, the spurious edge $W_3\to W_2$ cannot be removed because of the presence of the common confounder $W_1;$ in this case, regression adjustment fails to eliminate the induced dependence between $\mathbf{w}_{t,3}$ and $\mathbf{w}_{t,2}.$

\begin{figure}[htbp]
    \centering
    \begin{subfigure}[b]{0.32\textwidth}
        \centering
        \includegraphics[width=\linewidth]{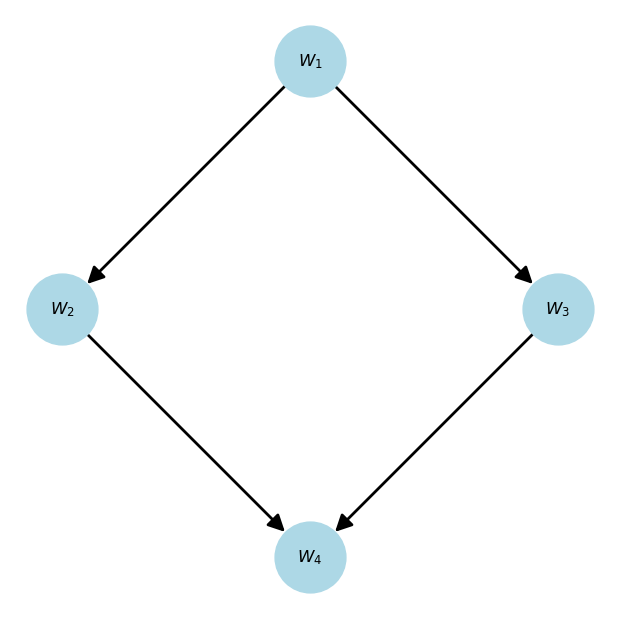}
        \caption{Population causal graph}
        \label{fig.real_causal_graph}
    \end{subfigure}
    \hfill
    \begin{subfigure}[b]{0.32\textwidth}
        \centering
        \includegraphics[width=\linewidth]{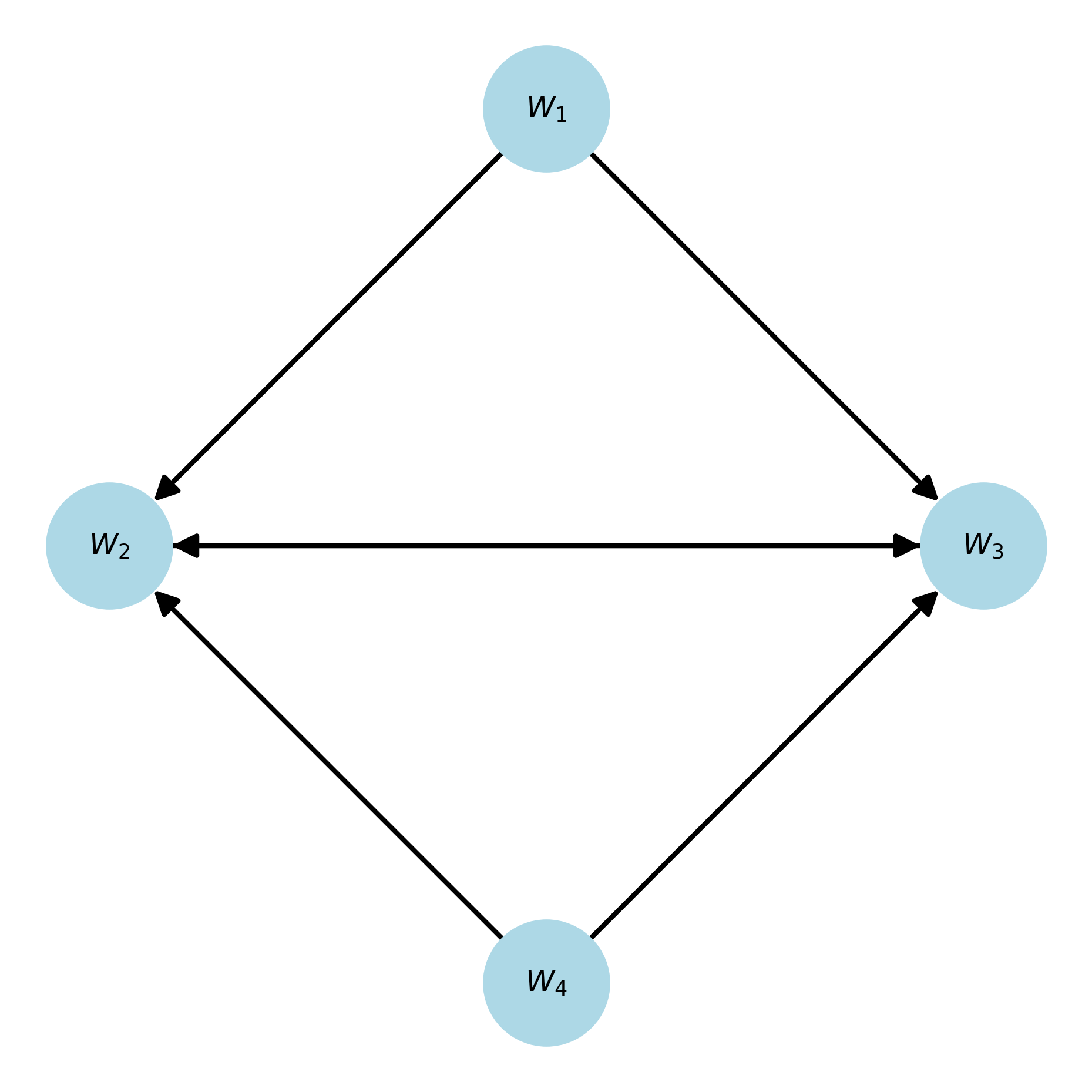}
        \caption{Causal graph by PC algorithm}
       \label{fig.causal_discovery}
    \end{subfigure}
    \begin{subfigure}[b]{0.32\textwidth}
        \centering
        \includegraphics[width=\linewidth]{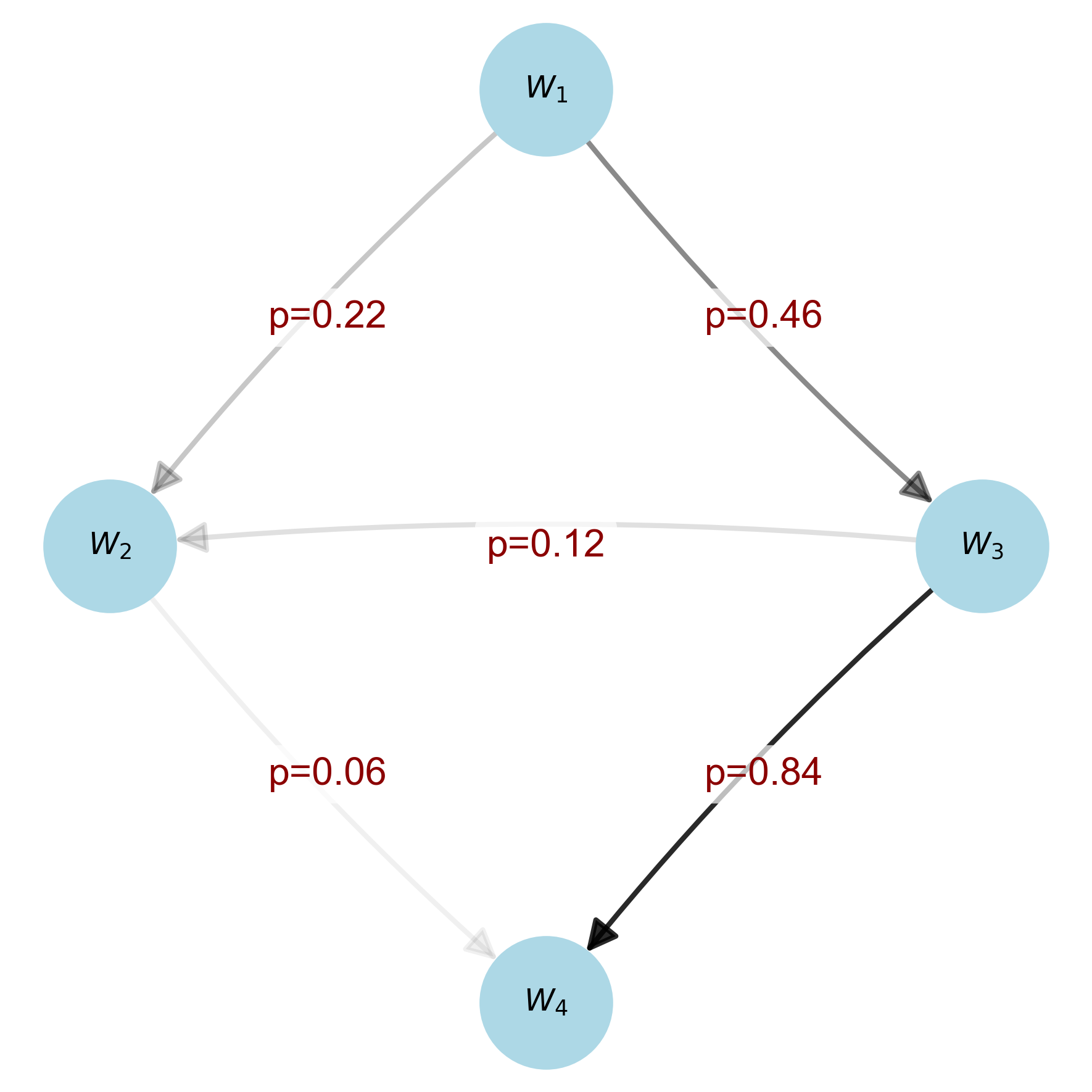}
        \caption{Calibrated causal graph}
       \label{Calibrated causal graph}
    \end{subfigure}
    \caption{Population and estimated causal graphs for the data-generating process \eqref{eq.DGP_system}. ``$p$'' in Figure \ref{Calibrated causal graph} denotes the empirical P-values (ratio of bootstrapped statistics greater than the test statistic) of Algorithm \ref{algorithm.dependent_wild_bootstrap} in testing the corresponding direction.}
\label{figure.causal_graph}
\end{figure}




\begin{table}
    \centering
    \caption{Causal discovery result of the data-generating process \eqref{eq.DGP_system}. }
    \scriptsize
    \begin{tabular}{c c c c c c c c c}
    \hline\hline 
     Node pairs $(X,Y)$    &  \multicolumn{2}{c}{independence test} &    \multicolumn{2}{c}{Test $X\to Y$ }   &   \multicolumn{2}{c}{Test $Y\to X$ } \\
                   &  Statistics & Reject $H_0$ &  Statistics & Reject $H_0$ &  Statistics & Reject $H_0$\\
    $(\mathbf{w}_{t,1}, \mathbf{w}_{t,2} )$ & 87.0   & True & -4.8 &  False & 6.2 & True\\ 
    $(\mathbf{w}_{t,1}, \mathbf{w}_{t,3} )$ & 140.7 & True & 3.1 & False & 9.5 & True\\ 
    $(\mathbf{w}_{t,2}, \mathbf{w}_{t,3} )$ & 198.1 & True & 12.5 & True & 7.3 & False\\
    $(\mathbf{w}_{t,2}, \mathbf{w}_{t,4} )$ & 308.1 & True & 6.9 & False & -24.7 & True\\  
    $(\mathbf{w}_{t,3}, \mathbf{w}_{t,4} )$ & 355.8 & True & -1.8 & False & 26.7 & True\\ 
    \hline\hline 
    \end{tabular}
    \label{table.causation_direction}
\end{table}

\textbf{Testing for independence among stock returns.} This application collects 100 days of daily S\&P 500 stock prices obtained from {\it Kaggle} (see \url{https://www.kaggle.com/datasets/florentbaptist/sp-500}). After collecting the stock prices, we compute log returns and then group stocks according to their Global Industry Classification Standard (GICS) sector classifications. Our objective is to test for independence between log returns across different sectors.

Figure \ref{fig.data_example}  displays  the log returns of two representative stocks, and Figure \ref{fig.test_res} presents the pairwise hypothesis testing results. The tuning parameters are set to
$
(\lambda_1, \lambda_2, h, K_T) = (2, 6, 9, 7.8).
$
The test results indicate dependence between the communication services and the information technology sectors, which is economically plausible given their business overlap. For instance, companies such as Alphabet Inc. (communication services) are closely related to firms such as Apple Inc. (information technology) through shared platforms and cloud-based services, as discussed in \cite{doi:10.5465/amp.2016.0048}. We also observe dependence among the health care, information technology, and financial sectors. This pattern may reflect the growing integration of cloud computing and AI-driven technologies in health care and financial services, as demonstrated in \cite{athey2000impact, 10.1093/rfs/hhaa009}. In empirical studies, detecting dependence across time series provides useful empirical evidence and motivates further investigation into related areas.

\begin{figure}[htbp]
\centering
\begin{subfigure}[b]{0.35\linewidth}
    \centering
    \includegraphics[width=\linewidth]{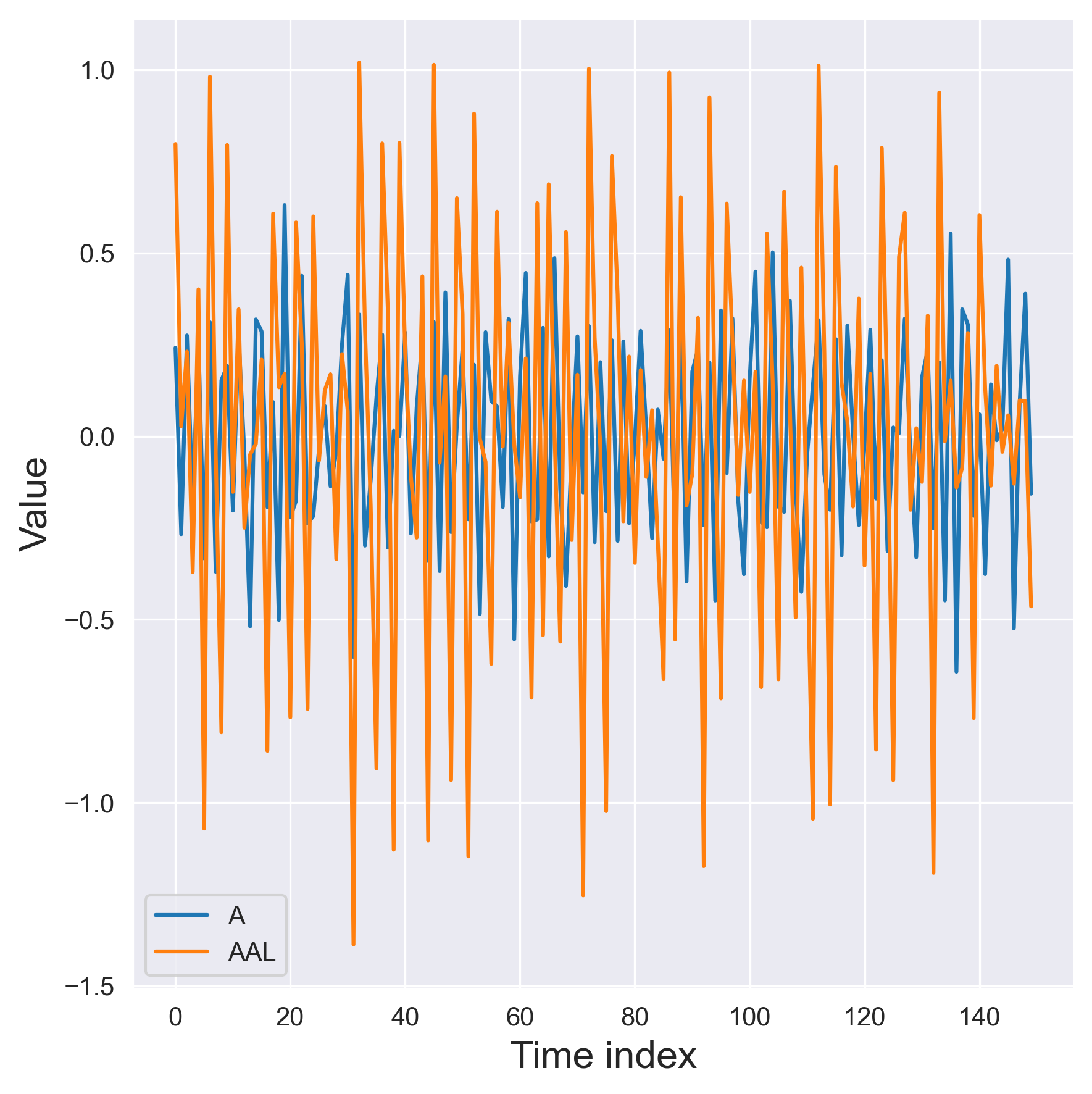}
    \caption{Observation examples}
\label{fig.data_example}
\end{subfigure}
\begin{subfigure}[b]{0.45\linewidth}
    \centering
    \includegraphics[width=\linewidth]{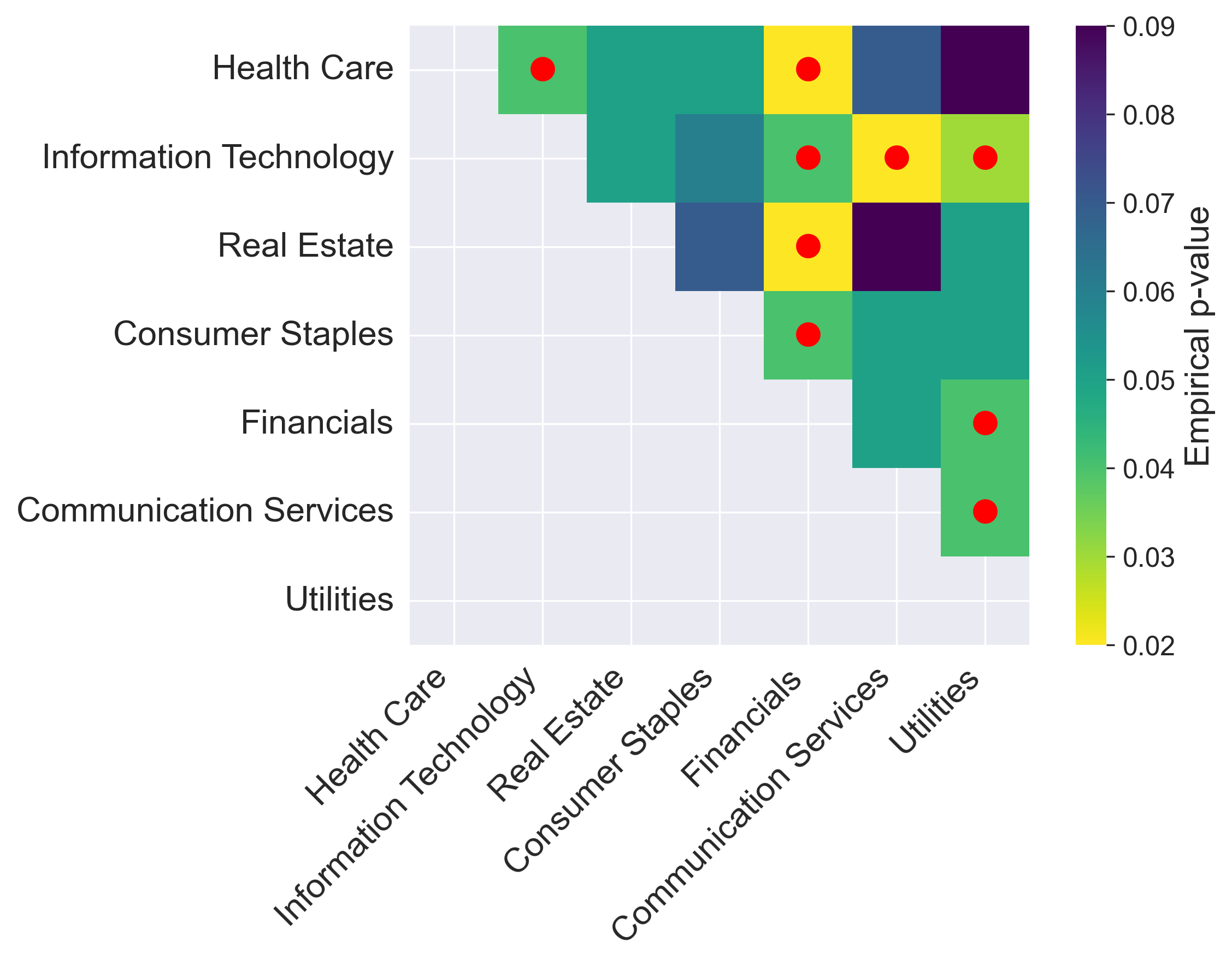}
    \caption{Hypothesis testing results}
    \label{fig.test_res}
\end{subfigure}
\caption{Observation examples and hypothesis testing results on S\&P 500 stock prices. The empirical P-values are computed in the same way as in Figure \ref{figure.causal_graph}.}
\end{figure}


\pagebreak











\appendix

\section{Additional literature review}

\textbf{High-dimensional vector autoregressive (VAR) processes.}  As demonstrated in \cite{MR3416594, MR4270386}, one possible approach is to whiten the time series by fitting a VAR model and conducting hypothesis testing based on the resulting residuals. However, this approach encounters substantial difficulties in high-dimensional settings. Consistent estimation of VAR coefficient matrices typically relies on structural assumptions, such as sparsity or low-rank conditions; see, for example, \cite{MR3450535, MR3357870, MR3620446, MR3925874, MR4561044, zhang2023statistical, MR4863067}. Furthermore, when the time series lacks a stationary autocovariance structure, which is common in modern applications as demonstrated in \cite{MR4270034}, the fitted residuals may fail to approximate the underlying innovations. Consequently, the whitening step can be unreliable, undermining the validity of subsequent independence tests.

\section{Behaviors of the test statistic under $H_0$ and $H_1$}
Remarks \ref{remark.underH0_appendix} and \ref{remark.underH1_appendix} extend Remarks \ref{remark.underH0} and \ref{remark.underH1} in the manuscript by providing detailed calculations of the expectation of the test statistic under both $H_0$ and $H_1$. These calculations formally justify the claims made in those remarks.

\begin{remark}[Under $H_0$]
\label{remark.underH0_appendix}
    Under $H_0,$ $\mathbf{x}_{t}^\top\mathbf{x}_{t_1}$ is independent of $\mathbf{y}_{t - s}^\top\mathbf{y}_{t_1 - s},$ so we have 
\begin{equation}
\begin{aligned}
    \mathbf{E}\left[O_t\right] &= \frac{1}{\mathcal{U}}\sum_{t_1 = (t - \lambda_2)\vee 1}^{t - \lambda_1}\sum_{s = 0}^{h\wedge (t_1 - 1)}\mathbf{E}\left[\mathbf{x}_{t}^\top\mathbf{x}_{t_1}\right]\mathbf{E}\left[\mathbf{y}_{t - s}^\top\mathbf{y}_{t_1 - s}\right]\\
    &+ \frac{1}{\mathcal{U}}\sum_{t_1 = (t - \lambda_2)\vee 1}^{t - \lambda_1}\sum_{s = 1}^{h\wedge (t_1 - 1)}\mathbf{E}\left[\mathbf{x}_{t - s}^\top\mathbf{x}_{t_1 - s}\right]\mathbf{E}\left[\mathbf{y}_{t}^\top\mathbf{y}_{t_1}\right].
\end{aligned}
\label{eq.bias_Ot}
\end{equation}
From Corollary \ref{corollary.covariances} of Section \ref{section.asymptotic} below, under weak dependence (Definition \ref{def.m_alpha_short_range_dependence}), the covariances $\mathbf{E}\left[\mathbf{x}_{t_1}^\top\mathbf{x}_{t_2}\right]$ and $\mathbf{E}\left[\mathbf{y}_{t_1}^\top\mathbf{y}_{t_2}\right]$ respectively satisfies 
\begin{equation}
\begin{aligned}
    \left\vert \mathbf{E}\left[\mathbf{x}_{t_1}^\top\mathbf{x}_{t_2}\right]\right\vert\leq \frac{Cd_1}{\vert t_1 - t_2\vert^\alpha},\quad \left\vert\mathbf{E}\left[\mathbf{y}_{t_1}^\top\mathbf{y}_{t_2}\right]\right\vert\leq \frac{Cd_2}{\vert t_1  -t_2\vert^\alpha}.
\end{aligned}
\label{eq.covariances_x_y}
\end{equation}
When we take summations, the expectation of $O_t$ and $\widehat{R}$ become
\begin{equation}
\begin{aligned}
   &\left\vert
   \mathbf{E}\left[O_t\right]
   \right\vert\leq \frac{Cd_1d_2}{\mathcal{U}}\sum_{t_1 = (t - \lambda_2)\vee 1}^{t - \lambda_1}\sum_{s = 0}^{h\wedge (t_1 - 1)}\frac{1}{(t - t_1)^{2\alpha}}\leq \frac{C_1d_1d_2h}{\mathcal{U}\lambda_1^{2\alpha - 1}},\\
   &\left\vert\mathbf{E}\left[ \widehat{R} \right]\right\vert\leq \sum_{t = 1 + \lambda_1}^T \left\vert\mathbf{E}\left[O_t\right]\right\vert\leq \frac{CTd_1d_2h}{\mathcal{U}\lambda_1^{2\alpha - 1}}.
\end{aligned}
\label{eq.E_R_hat}
\end{equation}
From \eqref{eq.E_R_hat}, although the test statistic exhibits bias arising from temporal dependence under $H_0$, this bias decays at a polynomial rate with respect to the bandwidth $\lambda_1.$  By selecting a sufficiently large $\lambda_1,$ the bias induced by temporal dependence becomes negligible relative to the stochastic error, and, as demonstrated in Theorem \ref{theorem.distributional_under_H0}, the test statistic remains of constant order.
\end{remark}

\begin{remark}[Under $H_1$]
\label{remark.underH1_appendix}
Due to the dependence across two time series, $\mathbf{E}\left[\mathbf{x}_{t}^\top\mathbf{x}_{t_1}\mathbf{y}_{t - s}^\top\mathbf{y}_{t_1 - s}\right]\neq 
\mathbf{E}\left[\mathbf{x}_{t}^\top\mathbf{x}_{t_1}\right]\mathbf{E}\left[\mathbf{y}_{t - s}^\top\mathbf{y}_{t_1 - s}\right]$ in general. For any $t\in\mathbf{Z}$ and $s\geq 0,$ we define the terms $\boldsymbol{\Pi}_{t,s,1} = \mathbf{E}\left[\mathbf{x}_t\mathbf{y}_{t-s}^\top\right]\in\mathbf{R}^{d_1\times d_2}$ and $\boldsymbol{\Pi}_{t,s,2} = \mathbf{E}\left[\mathbf{y}_t\mathbf{x}_{t-s}^\top\right]\in\mathbf{R}^{d_2\times d_1},$ since  
\begin{align*}
    \mathbf{E}\left[\mathbf{x}_{t}^\top\mathbf{x}_{t_1}\mathbf{y}_{t - s}^\top\mathbf{y}_{t_1 - s}\right] = \mathrm{Tr}\left(\boldsymbol{\Pi}_{t_1,s,1}\boldsymbol{\Pi}_{t,s,1}^\top\right) + \mathbf{E}\left[\left(\mathbf{x}_{t_1}\mathbf{y}_{t_1 - s}^\top - \boldsymbol{\Pi}_{t_1,s,1} \right)\left(\mathbf{y}_{t - s} \mathbf{x}_{t}^\top - \boldsymbol{\Pi}_{t,s,1}^\top\right)\right],
\end{align*}
and we have 
\begin{equation}
\begin{aligned}
    \mathcal{U}\mathbf{E}\left[O_t\right] &= \sum_{t_1 = (t - \lambda_2)\vee 1}^{t - \lambda_1}\sum_{s = 0}^{h\wedge (t_1 - 1)}\mathrm{Tr}\left(\boldsymbol{\Pi}_{t_1,s,1}\boldsymbol{\Pi}_{t,s,1}^\top\right) + \sum_{t_1 = (t - \lambda_2)\vee 1}^{t - \lambda_1}\sum_{s = 1}^{h\wedge (t_1 - 1)}\mathrm{Tr}\left(\boldsymbol{\Pi}_{t_1,s,2}\boldsymbol{\Pi}_{t,s,2}^\top\right)\\
    & +\sum_{t_1 = (t - \lambda_2)\vee 1}^{t - \lambda_1}\sum_{s = 0}^{h\wedge (t_1 - 1)}\mathrm{Tr}\left(\mathbf{E}\left[\left(\mathbf{x}_{t_1}\mathbf{y}_{t_1  -  s}^\top - \boldsymbol{\Pi}_{t_1,s,1}\right)\left(\mathbf{y}_{t-s}\mathbf{x}_t^\top - \boldsymbol{\Pi}_{t,s,1}^\top\right)\right]\right)\\
    & + \sum_{t_1 = (t - \lambda_2)\vee 1}^{t - \lambda_1}\sum_{s = 1}^{h\wedge (t_1 - 1)}\mathrm{Tr}\left(\mathbf{E}\left[\left(\mathbf{y}_{t_1}\mathbf{x}_{t_1  -  s}^\top - \boldsymbol{\Pi}_{t_1,s,2}\right)\left(\mathbf{x}_{t-s}\mathbf{y}_t^\top - \boldsymbol{\Pi}_{t,s,2}^\top\right)\right]\right).
\end{aligned}
\label{eq.Tr_Pi}
\end{equation}
Under the alternative where $\boldsymbol{\Pi}_{t_1,s,1}\neq 0$ or $\boldsymbol{\Pi}_{t,s,2}\neq 0$ for some $s = 0,1,\cdots, h,$ the nonzero traces $\mathrm{Tr}\left(\boldsymbol{\Pi}_{t_1,s,1}\boldsymbol{\Pi}_{t,s,1}^\top\right)$ or $\mathrm{Tr}\left(\boldsymbol{\Pi}_{t_1,s,2}\boldsymbol{\Pi}_{t,s,2}^\top\right)$ introduce bias to $O_t$ and consequently into $\widehat{R},$ which distinguishes it from $H_0.$
\end{remark}

\section{Additional numerical experimental details}

\subsection{Values of the parameter matrices in Section \ref{section.numerical_simulation}}
\label{section.values_parameter_matrices}

We select the matrix 
$$
\boldsymbol{\Pi} = \left[
    \begin{matrix}
        \boldsymbol{\Pi}^{(x)}  &  0\\
        0                       &  \boldsymbol{\Pi}^{(y)}
    \end{matrix}
    \right],
$$
where
\begin{align*}
    \quad 
    \boldsymbol{\Pi}^{(x)} = 
    \left[
    \begin{matrix}
        1.0 & 0.5 & 0  & \cdots & 0\\
        -0.3 & 1.0 & 0.5 & \cdots & 0\\
        0 & -0.3 & 1.0  & \cdots & 0\\
        \vdots & \vdots  & \vdots & \cdots & \vdots\\
        0 & 0 & 0 &  \cdots & 1.0
    \end{matrix}
    \right]\in\mathbf{R}^{d_1\times d_1},\quad \boldsymbol{\Pi}^{(y)} = 
    \left[
    \begin{matrix}
        1.0 & 0.3 & 0 & \cdots & 0\\
        -0.5 & 1.0 & 0.3 & \cdots & 0\\
        0 & -0.5 & 1.0  & \cdots & 0\\
        \vdots & \vdots  & \vdots & \cdots & \vdots\\
        0 & 0 & 0 & \cdots & 1.0
    \end{matrix}
    \right]\in\mathbf{R}^{d_2\times d_2}.
\end{align*}
Under the null hypothesis, we consider 
\begin{align*}
    \mathbf{A} =\left[
    \begin{matrix}
        \mathbf{A}^{(x)} & 0\\
        0  & \mathbf{A}^{(y)}
    \end{matrix}
    \right], \quad \mathbf{B} =\left[
    \begin{matrix}
        \mathbf{B}^{(x)} & 0\\
        0  & \mathbf{B}^{(y)}
    \end{matrix}
    \right],
\end{align*}
where 
\begin{align*}
    \mathbf{A}^{(x)} = 
    \left[
    \begin{matrix}
        0 & 0.5 & -0.2  & \cdots & 0\\
        0 & 0 & 0.5 & \cdots & 0\\
        0 & 0 & 0 & \cdots & 0\\
        \vdots & \vdots  & \vdots & \cdots & \vdots\\
        0 & 0 & 0 &  \cdots & 0
    \end{matrix}
    \right]\in\mathbf{R}^{d_1\times d_1},\quad \mathbf{A}^{(y)} = 
    \left[
    \begin{matrix}
        0 & 0 & 0  & \cdots & 0\\
        0.5 & 0 & 0 & \cdots & 0\\
        -0.2 & 0.5 & 0 & \cdots & 0\\
        \vdots & \vdots  & \vdots & \cdots & \vdots\\
        0 & 0 & 0 &  \cdots & 0
    \end{matrix}
    \right]\in\mathbf{R}^{d_2\times d_2},
\end{align*}
and 
\begin{align*}
    \mathbf{B}^{(x)} = 
    \left[
    \begin{matrix}
        0 & 0.2 & -0.5  & \cdots & 0\\
        0 & 0 & 0.2 & \cdots & 0\\
        0 & 0 & 0 & \cdots & 0\\
        \vdots & \vdots  & \vdots & \cdots & \vdots\\
        0 & 0 & 0 &  \cdots & 0
    \end{matrix}
    \right]\in\mathbf{R}^{d_1\times d_1},\quad \mathbf{B}^{(y)} = 
    \left[
    \begin{matrix}
        0 & 0 & 0  & \cdots & 0\\
        0.2 & 0 & 0 & \cdots & 0\\
        -0.5 & 0.2 & 0 & \cdots & 0\\
        \vdots & \vdots  & \vdots & \cdots & \vdots\\
        0 & 0 & 0 &  \cdots & 0
    \end{matrix}
    \right]\in\mathbf{R}^{d_2\times d_2}.
\end{align*}
Under the alternative hypothesis, we consider 
\begin{align*}
    \mathbf{A} =\left[
    \begin{matrix}
        \mathbf{A}^{(x)} & \mathbf{A}^{(x,y)}\\
        \mathbf{A}^{(x,y)\top}  & \mathbf{A}^{(y)}
    \end{matrix}
    \right], \quad \mathbf{B} =\left[
    \begin{matrix}
        \mathbf{B}^{(x)} & \mathbf{B}^{(x,y)}\\
        \mathbf{B}^{(x,y)\top}  & \mathbf{B}^{(y)}
    \end{matrix}
    \right],
\end{align*}
where 
\begin{align*}
    \mathbf{A}^{(x,y)} = \mathbf{B}^{(x,y)} = 
    \left[
    \begin{matrix}
        0 & \delta & 0  & \cdots & 0\\
        0 & 0 & \delta & \cdots & 0\\
        \vdots & \vdots  & \vdots & \cdots & \vdots\\
        0 & 0 & 0 &  \cdots & 0
    \end{matrix}
    \right]\in\mathbf{R}^{d_1\times d_2},
\end{align*}
where $\delta$ reflects the dependence strength.

\subsection{Additional numerical experiments}

Table \ref{size_power_performance_sample_size} further extends Table \ref{size_power_performance} in the manuscript by considering different sample sizes. The results show that the empirical size remains stable even when the sample size is relatively small. However, the sample size has a relatively strong effect on the power performance: the power of the test deteriorates for smaller sample sizes.

\begin{table}[htbp]
	\caption{Size and power performance of the proposed test procedure. The sample sizes are 300 and 700, and the nominal size of the test is 5\%. We repeat the experiment 100 times to evaluate the size and power.   }
    \scriptsize
    \centering
{	\begin{tabular}{c| rrrrr || rrrrr}
\hline\hline
& \multicolumn{10}{c}{Different choices of $\delta$}\\
\hline 
& \multicolumn{5}{c||}{Sample size $T = 300$} & \multicolumn{5}{c}{Sample size $T = 700$}\\
DGP & $h$ & $(\lambda_1, \lambda_2, \mathcal{K}_T)$ & 0(null) & 0.075 & 0.100 & $h$ & $(\lambda_1, \lambda_2, \mathcal{K}_T)$ & 0 (null) & 0.075 & 0.100\\
\hline
\multirow{9}{*}{nAR(1)} & \multirow{3}{*}{5} & $(2, 4, 9.6)$ & 6\% & 7\% & 16\% & \multirow{3}{*}{6} & (2, 4, 4.6) &  7\% & 13\% & 85\% \\
& &  $(2, 6, 7.5)$ & 6\% & 7\% & 19\% && $(2, 6, 7.3)$ & 7\% & 24\% & 90\% \\ 
& &  $(2, 12, 3.4)$ & 6\% & 12\% & 23\% && $(2, 14, 7.3)$ & 5\% & 21\% & 83\% \\ 
\cline{2-11}
& \multirow{3}{*}{10} & $(2, 4, 6.1)$ & 7\% & 7\% & 25\% & \multirow{3}{*}{11} & (2, 4, 4.7) & 7\% & 22\% & 87\%\\
&                     & $(2, 6, 3.4)$ & 5\% & 12\% & 24\% &  & (2, 6, 7.3) & 6\% & 22\% & 94\% \\
&                     & $(2, 6, 3.4)$ & 7\% & 7\% & 29\%  &  & (2, 14, 4.5) & 6\% & 25\% & 93\% \\
\cline{2-11}
& \multirow{3}{*}{33} & $(2, 4,  5.8)$ & 7\% & 13\% & 32\% &  \multirow{3}{*}{42} & $(2, 4, 7.2)$ & 8\% & 36\% & 98\%  \\
&& $(2, 6, 3.8)$ & 6\% & 13\% & 45\% & & $(2, 6, 7.1)$ & 3\% &52\% & 100\%\\
&& $(2, 12, 3.5)$ & 6\% & 10\% & 33\% & & $(2,  14,  28.7)$ & 6\% & 22\% & 94\% \\
\hline\hline
\multirow{9}{*}{nARMA(1, 1)} & \multirow{3}{*}{5}  & $(2, 4, 5.5)$ & 7\% & 11\% & 33\%  & \multirow{3}{*}{6} & $(2, 4, 9.6)$ & 4\% & 46\% & 100\%\\
&& $(2, 6, 6.4)$ & 4\% & 11\% & 49\% & & $(2, 6, 7.6)$ & 3\% & 50\% & 100\%\\
&& $(2, 12, 6.0)$ & 5\% & 12\% & 64\% & & $(2, 14, 9.7)$  & 6\% & 55\% & 100\% \\
\cline{2-11}
& \multirow{3}{*}{10} & $(2, 4, 4.1)$ & 5\% & 12\% & 58\% & \multirow{3}{*}{11} & (2, 4, 9.5) & 5\% & 46\% & 100\% \\
&   & $(2,6, 6.1)$ & 8\% & 12\% & 54\%  & & (2,6, 8.5) & 8\% & 61\% & 100\%\\
&&  $(2, 12, 12.2)$ & 4\% & 11\% & 44\% & & (2, 14, 10.1) & 6\% & 50\% & 100\%\\
\cline{2-11}
&\multirow{3}{*}{33} & (2, 4, 5.9) & 6\% & 11\% & 68\% & \multirow{3}{*}{42} & (2, 4, 7.3) & 7\% & 73\% & 100\%\\
& & (2, 6, 6.2) & 7\% & 12\% & 69\% & & (2, 6, 8.0) & 3\% & 84\% & 100\%\\
&  & (2, 12, 7.6) & 8\% & 15\% & 60\% & &(2, 14, 8.0) & 8\% & 68\% & 100\%\\
\hline\hline
\multirow{9}{*}{nTARMA(1, 1)} & \multirow{3}{*}{5} & (1, 2, 4.4) & 3\% & 10\% & 5\% &  \multirow{3}{*}{6} &  (2, 4, 10.0) & 5\% & 9\% & 35\% \\
                              &                    & (1, 3, 8.2) & 7\% & 5\% & 11\% & &(2, 6, 8.3) & 5\% & 12\% & 46\%\\
                              &                    & (1, 6, 10.1) & 5\% & 5\% & 14\% & & (2, 14, 7.3) & 3\% & 19\% & 58\% \\
                              \cline{2-11}
                              & \multirow{3}{*}{10} & (1, 2, 6.0) & 4\% & 6\% & 9\% & \multirow{3}{*}{11} & (2, 4, 9.7) & 7\% & 10\% & 28\% \\
                              &  & (1, 3, 6.2)  & 4\% & 7\% & 10\% && (2, 6, 11.4) & 5\% & 11\% & 24\% \\
                              &  & (1, 6, 6.5)  & 5\% & 5\% & 10\% && (2, 14, 15.6) & 4\% & 13\% & 28\% \\
                              \cline{2-11}
                              & \multirow{3}{*}{33} & (1, 2, 8.4) & 4\% & 1\% & 6\% &  \multirow{3}{*}{42} & (2, 4, 10.2) & 5\% & 4\% & 7\%\\
                              &  & (1, 3, 8.4) & 4\% & 4\% & 1\%  && (3, 9, 7.5) & 3\% & 5\% & 8\% \\
                              &  & (1, 6, 6.3) & 8\% & 3\% & 9\%  && (2, 14, 8.2) & 8\% & 8\% & 11\% \\
\hline
\hline
	\end{tabular}
}
	\label{size_power_performance_sample_size}
\end{table}

\textbf{Influence of high-dimensionality, dimension imbalance, and data nonlinearity.}  Table \ref{table.sample_imbalance} further assesses the effects of high dimensionality, dimensional imbalance, and the nonlinear mutual dependence structure on the performance of the testing procedure. 
The results show that both size and power remain stable when the dimension is large relative to the sample size. However, significant dimensional imbalance can lead to a loss of power. In addition, when nonlinear dependence is present, a relatively large sample size is required to maintain satisfactory power.

\begin{table}[htbp]
\caption{Size and power performance with respect to  different sample sizes. We choose the sample size $T = 500,$ the scale $\iota = \lfloor 2T^{0.1}\rfloor,$ the lag $\lfloor 6T^{0.3}\rfloor,$ and $\lambda_1,\mathcal{K}_T$ are chosen based on Algorithm \ref{algorithm.selection_lambda_1} and \cite{MR2041534}. 
}
\centering
\scriptsize
{
    \begin{tabular}{c|rrr||rrr||rrr}
    \hline\hline
            &   \multicolumn{8}{c}{Different choices of $\delta$}\\
        \multirow{2}{*}{\diagbox{DGP}{$(d_1, d_2) = $}}  &   \multicolumn{3}{c||}{$(\lfloor 1.2 T^{1.1}\rfloor,\quad \lfloor 1.5 T^{1.1}\rfloor)$} &   \multicolumn{3}{c||}{$(\lfloor 1.2 T^{1.2}\rfloor,\quad \lfloor 1.5 T^{1.2}\rfloor)$} &  \multicolumn{3}{c}{$(\lfloor 1.2 T^{1.3}\rfloor,\quad \lfloor 1.5 T^{1.3}\rfloor)$}\\
          &   0 (null)  &  0.075 & 0.100 & 0 (null)  &  0.075  & 0.100  & 0 (null)  & 0.075 & 0.100 \\
            \hline
    nAR(1)   &  5\% & 47\% & 100\% &  7\% & 76\% & 100\% & 5\% & 100\% & 100\% \\
    nARMA(1,1) & 7\% & 80\% & 100\% &  6\% & 99\% & 100\% & 4\% & 100\% & 100\% \\
    \hline\hline
    \\[1pt]
     &  \multicolumn{3}{c||}{$(\lfloor 0.1 T\rfloor,\quad \lfloor 1.5 T^{1.2}\rfloor)$} &\multicolumn{3}{c||}{$(\lfloor 0.5 T\rfloor,\quad \lfloor 1.5 T^{1.2}\rfloor)$} & \multicolumn{3}{c}{$(\lfloor 0.8 T\rfloor,\quad \lfloor 1.5 T^{1.2}\rfloor)$}\\
     & 0 (null)  & 0.075 & 0.100 &  0 (null)  & 0.075 & 0.100 & 0 (null)  & 0.075 & 0.100\\
     \hline 
     nAR(1) &7\% & 12\% & 28\% & 7\% & 19\% & 63\%  & 6\% & 19\% & 75\% \\
     nARMA(1,1) & 7\% & 10\% & 45\% & 5\% & 34\% & 92\% & 4\% & 49\% & 100\% \\
     \hline\hline
     \\[1pt]
     \multirow{2}{*}{\diagbox{DGP}{ $T$}} & \multicolumn{3}{c||}{$T = 800$}  &  \multicolumn{3}{c||}{$T = 1000$} & \multicolumn{3}{c}{$T = 1500$}\\
                &  0 (null)  &  0.075  & 0.100 & 0 (null)  &  0.075  & 0.100 & 0 (null)  &  0.075  & 0.100\\
                \hline 
     nTARMA(1,1) & 8\%    & 11\% & 63\% & 5\% & 19\% & 86\% & 4\% & 43\% & 99\% \\
     \hline\hline
    \end{tabular}
}
\label{table.sample_imbalance}
\end{table}

\newpage

\section{Proofs and technical details of Section \ref{section.asymptotic}}
\label{section.proof_linear_quadratic}
This section presents the proofs of theoretical results proposed in Section \ref{section.asymptotic}.

\begin{proof}[Proof of Theorem \ref{theorem.linear_and_quadratic}]
Recall that for any $t\in\mathbf{Z}$ and $s\geq 0,$ $\mathcal{F}_{t,s}$ is the $\sigma$-field generated by $e_t,e_{t-1}, \cdots ,e_{t-s}.$
    Define 
    $$
    \mathbf{E}\left[\mathbf{z}_t \mid \mathcal{F}_{t,-1}\right] = \mathbf{E}\left[\mathbf{z}_t\right] = 0,
    $$
    then from Corollary C.9 of \cite{MR2001996}, 
    \begin{align*}
        \mathbf{z}_t = \sum_{j = 0}^\infty\left(\mathbf{E}\left[\mathbf{z}_t\mid\mathcal{F}_{t,j}\right] - \mathbf{E}\left[\mathbf{z}_t\mid\mathcal{F}_{t,j - 1}\right]\right),
    \end{align*}
    which implies that 
    \begin{align*}
        \left\Vert
        \sum_{t = 1}^T \mathbf{a}_t^\top\mathbf{z}_t
        \right\Vert_M\leq \sum_{j = 0}^\infty \left\Vert
        \sum_{t = 1}^T \left(\mathbf{E}\left[\mathbf{a}_t^\top\mathbf{z}_t\mid\mathcal{F}_{t,j}\right] - \mathbf{E}\left[\mathbf{a}_t^\top\mathbf{z}_t\mid\mathcal{F}_{t,j - 1}\right]\right)
        \right\Vert_M
    \end{align*}
    and 
    \begin{align*}
        \left\Vert
        \sum_{t = 1}^T \mathbf{a}_t^\top\left(\mathbf{z}_t - \mathbf{E}\left[\mathbf{z}_t\mid\mathcal{F}_{t,s}\right]\right)
        \right\Vert_M\leq\sum_{j = s + 1}^\infty\left\Vert
        \sum_{t = 1}^T \left(\mathbf{E}\left[\mathbf{a}_t^\top\mathbf{z}_t\mid\mathcal{F}_{t,j}\right] - \mathbf{E}\left[\mathbf{a}_t^\top\mathbf{z}_t\mid\mathcal{F}_{t,j-1}\right]\right)
        \right\Vert_M.
    \end{align*}
    For any $j\geq 0$ and $z = 1,\cdots,T,$ define 
    \begin{align*}
        A_{z,j} = \sum_{t = T - z + 1}^T \left(\mathbf{E}\left[\mathbf{a}_t^\top\mathbf{z}_t\mid\mathcal{F}_{t,j}\right] - \mathbf{E}\left[\mathbf{a}_t^\top\mathbf{z}_t\mid\mathcal{F}_{t,j-1}\right]\right)
    \end{align*}
    and $\mathcal{A}_{z,j}$ the $\sigma$-field generated by $e_T,e_{T-1},\cdots,e_{T-z+1-j}.$ Then $A_{z,j}$ is measurable in $\mathcal{A}_{z,j},$ $\mathcal{A}_{z,j}\subset \mathcal{A}_{z + 1,j},$ and 
    \begin{align*}
        \mathbf{E}\left[A_{z + 1,j} - A_{z,j}\mid\mathcal{A}_{z,j} \right] &=  \mathbf{E}\left[\left(
        \mathbf{E}\left[\mathbf{a}_{T-z}^\top\mathbf{z}_{T-z}\mid\mathcal{F}_{T-z,j}\right] - \mathbf{E}\left[\mathbf{a}_{T-z}^\top\mathbf{z}_{T-z}\mid\mathcal{F}_{T-z,j-1}\right]
        \right)\mid\mathcal{A}_{z,j} \right]\\
        & = \mathbf{E}\left[\mathbf{a}_{T-z}^\top\mathbf{z}_{T-z}\mid\mathcal{F}_{T-z,j - 1}\right] - \mathbf{E}\left[\mathbf{a}_{T-z}^\top\mathbf{z}_{T-z}\mid\mathcal{F}_{T-z,j-1}\right] = 0,
    \end{align*}
    so $A_{z,j}$ forms a martingale. According to Theorem 1.1 of \cite{MR0400380}, 
    \begin{align*}
        \left\Vert \sum_{t = 1}^T \left(\mathbf{E}\left[\mathbf{a}_t^\top\mathbf{z}_t\mid\mathcal{F}_{t,j}\right] - \mathbf{E}\left[\mathbf{a}_t^\top\mathbf{z}_t\mid\mathcal{F}_{t,j-1}\right]\right)\right\Vert_M &\leq C\sqrt{\sum_{t = 1}^T\left\Vert \mathbf{E}\left[\mathbf{a}_t^\top\mathbf{z}_t\mid\mathcal{F}_{t,j}\right] - \mathbf{E}\left[\mathbf{a}_t^\top\mathbf{z}_t\mid\mathcal{F}_{t,j-1}\right]\right\Vert^2_M}\\
        &\leq C\sqrt{\sum_{t = 1}^T\left\Vert \mathbf{a}_t^\top\mathbf{z}_t - \mathbf{a}_t^\top\mathbf{z}_{t,j}\right\Vert^2_M}\leq C\delta_j\sqrt{\sum_{t = 1}^T\left\vert\mathbf{a}_t\right\vert^2_2}.
    \end{align*}
    From this inequality,
    \begin{align*}
        \left\Vert
        \sum_{t = 1}^T \mathbf{a}_t^\top\mathbf{z}_t
        \right\Vert_M\leq C\sqrt{\sum_{t = 1}^T\left\vert\mathbf{a}_t\right\vert^2_2}\sum_{j = 0}^\infty  \delta_j\leq C_1\sqrt{\sum_{t = 1}^T\left\vert\mathbf{a}_t\right\vert^2_2},
    \end{align*}
    and 
    \begin{align*}
         \left\Vert
        \sum_{t = 1}^T \mathbf{a}_t^\top\left(\mathbf{z}_t - \mathbf{E}\left[\mathbf{z}_t\mid\mathcal{F}_{t,s}\right]\right)
        \right\Vert_M\leq C\sqrt{\sum_{t = 1}^T\left\vert\mathbf{a}_t\right\vert^2_2}\sum_{j = s+1}^\infty\delta_j\leq \frac{C_1}{(1+s)^\alpha}\sqrt{\sum_{t = 1}^T\left\vert\mathbf{a}_t\right\vert^2_2},
    \end{align*}
    and we prove \eqref{eq.linear_combination}.

    For any $t\in\mathbf{Z}$ and any $s\in\mathbf{Z},$ define
    \begin{equation}
        \boldsymbol{\zeta}_{t,s} = 
        \begin{cases}
        \mathbf{E}\left[\mathbf{z}_{t}\mid \mathcal{F}_{t,s}\right] &\text{if}\quad s\geq 0,\\
        0&\text{if}\quad s < 0,
        \end{cases}\quad \boldsymbol{\omega}_{t,s} = \mathbf{z}_t - \boldsymbol{\zeta}_{t,s} .
        \label{eq.def_zeta_omega}
    \end{equation}
    In equation \eqref{eq.product_quadratic_forms}, $t_1\geq t_2 + \lambda_1 > t_2 + 1$ for sufficiently large $T,$ so 
    \begin{align*}
        \mathbf{z}_{t_1}^\top\mathbf{A}_{t_1t_2}\mathbf{z}_{t_2} = \boldsymbol{\zeta}_{t_1, t_1 - t_2 - 1}^\top\mathbf{A}_{t_1t_2}\mathbf{z}_{t_2} + \boldsymbol{\omega}_{t_1, t_1 - t_2 - 1}^\top\mathbf{A}_{t_1t_2}\mathbf{z}_{t_2},
    \end{align*}
    and $\boldsymbol{\zeta}_{t_1, t_1 - t_2 - 1}$ is measurable in $\mathcal{F}_{t_1, t_1 - t_2 - 1},$ which is independent of $\mathbf{z}_{t_2}.$ Furthermore, 
    \begin{align*}
        \mathbf{E}\left[\boldsymbol{\zeta}_{t_1, t_1 - t_2 - 1}^\top\mathbf{A}_{t_1t_2}\mathbf{z}_{t_2}\right]
        = \left(\mathbf{E}\left[\boldsymbol{\zeta}_{t_1, t_1 - t_2 - 1}\right]\right)^\top\mathbf{A}_{t_1t_2}\left(\mathbf{E}\left[\mathbf{z}_{t_2}\right]\right) = 0,
    \end{align*}
    so 
    \begin{align*}
        \mathbf{E}\left[\mathbf{z}_{t_1}^\top\mathbf{A}_{t_1t_2}\mathbf{z}_{t_2}\right] = \mathbf{E}\left[\boldsymbol{\omega}_{t_1, t_1 - t_2 - 1}^\top\mathbf{A}_{t_1t_2}\mathbf{z}_{t_2}\right],
    \end{align*}
    and 
    \begin{align*}
        &\left\Vert\sum_{t_1 = 1 + \lambda_1}^T\sum_{t_2 = (t_1 - \lambda_2)\vee 1}^{t_1 - \lambda_1}\left(\mathbf{z}_{t_1}^\top\mathbf{A}_{t_1t_2}\mathbf{z}_{t_2} - \mathbf{E}\left[\mathbf{z}_{t_1}^\top\mathbf{A}_{t_1t_2}\mathbf{z}_{t_2}\right]\right)\right\Vert_{M/2}\leq I + II,\\
    \end{align*}
    where 
    \begin{align*}
        I = \left\Vert\sum_{t_1 = 1 + \lambda_1}^T\sum_{t_2 = (t_1 - \lambda_2)\vee 1}^{t_1 - \lambda_1}\boldsymbol{\zeta}_{t_1, t_1 - t_2 - 1}^\top\mathbf{A}_{t_1t_2}\mathbf{z}_{t_2}\right\Vert_{M/2},
    \end{align*}
    and 
    \begin{align*}
        II = \left\Vert\sum_{t_1 = 1 + \lambda_1}^T\sum_{t_2 = (t_1 - \lambda_2)\vee 1}^{t_1 - \lambda_1}\left(\boldsymbol{\omega}_{t_1, t_1 - t_2 - 1}^\top\mathbf{A}_{t_1t_2}\mathbf{z}_{t_2} - \mathbf{E}\left[\boldsymbol{\omega}_{t_1, t_1 - t_2 - 1}^\top\mathbf{A}_{t_1t_2}\mathbf{z}_{t_2}\right]\right)\right\Vert_{M/2}.
    \end{align*}
    For $\mathbf{z}_{t_2} = \sum_{j = 0}^\infty\left(\mathbf{E}\left[\mathbf{z}_{t_2}\mid\mathcal{F}_{t_2, j}\right] - \mathbf{E}\left[\mathbf{z}_{t_2}\mid\mathcal{F}_{t_2, j - 1}\right]\right),$ 
    \begin{align*}
        I\leq \sum_{j = 0}^\infty\left\Vert\sum_{t_1 = 1 + \lambda_1}^T\sum_{t_2 = (t_1 - \lambda_2)\vee 1}^{t_1 - \lambda_1}\boldsymbol{\zeta}_{t_1, t_1 - t_2 - 1}^\top\mathbf{A}_{t_1t_2}\left(\mathbf{E}\left[\mathbf{z}_{t_2}\mid\mathcal{F}_{t_2, j}\right] - \mathbf{E}\left[\mathbf{z}_{t_2}\mid\mathcal{F}_{t_2, j - 1}\right]\right)\right\Vert_{M/2}.
    \end{align*}
    Notice that  
    \begin{align*}
        &\sum_{t_1 = 1 + \lambda_1}^T\sum_{t_2 = (t_1 - \lambda_2)\vee 1}^{t_1 - \lambda_1}\boldsymbol{\zeta}_{t_1, t_1 - t_2 - 1}^\top\mathbf{A}_{t_1t_2}\left(\mathbf{E}\left[\mathbf{z}_{t_2}\mid\mathcal{F}_{t_2, j}\right] - \mathbf{E}\left[\mathbf{z}_{t_2}\mid\mathcal{F}_{t_2, j - 1}\right]\right)\\
        & = \sum_{t_2 = 1}^{T-\lambda_1}\sum_{t_1 = t_2 + \lambda_1}^{(t_2 + \lambda_2)\wedge T}\boldsymbol{\zeta}_{t_1, t_1 - t_2 - 1}^\top\mathbf{A}_{t_1t_2}\left(\mathbf{E}\left[\mathbf{z}_{t_2}\mid\mathcal{F}_{t_2, j}\right] - \mathbf{E}\left[\mathbf{z}_{t_2}\mid\mathcal{F}_{t_2, j - 1}\right]\right).
    \end{align*}
    For any $j\geq 0$ and $z = 1,\cdots, T-\lambda_1,$ define 
    \begin{align*}
        B_{z,j} = \sum_{t_2 = T - \lambda_1 - z + 1}^{T-\lambda_1}\sum_{t_1 = t_2 + \lambda_1}^{(t_2 + \lambda_2)\wedge T}\boldsymbol{\zeta}_{t_1, t_1 - t_2 - 1}^\top\mathbf{A}_{t_1t_2}\left(\mathbf{E}\left[\mathbf{z}_{t_2}\mid\mathcal{F}_{t_2, j}\right] - \mathbf{E}\left[\mathbf{z}_{t_2}\mid\mathcal{F}_{t_2, j - 1}\right]\right)
    \end{align*}
    and $\mathcal{B}_{z,j}$ the $\sigma$-field generated by $e_T,e_{T-1},\cdots,e_{T-\lambda_1-z+1-j}.$ Then $B_{z,j}$ is measurable in $\mathcal{B}_{z,j},$ $\mathcal{B}_{z,j}\subset \mathcal{B}_{z+1,j},$ and 
    \begin{align*}
        &\mathbf{E}\left[B_{z + 1,j} - B_{z,j}\mid \mathcal{B}_{z,j}\right] \\
        & = \mathbf{E}\left[\sum_{t_1 = T-\lambda_1-z + \lambda_1}^{(T-\lambda_1-z + \lambda_2)\wedge T}\boldsymbol{\zeta}_{t_1, t_1 - T + \lambda_1 + z - 1}^\top\mathbf{A}_{t_1(T-\lambda_1-z)}\right.\\
        &
        \left(\mathbf{E}\left[\mathbf{z}_{T-\lambda_1-z}\mid\mathcal{F}_{T-\lambda_1-z, j}\right]- \mathbf{E}\left[\mathbf{z}_{T-\lambda_1-z}\mid\mathcal{F}_{T-\lambda_1-z, j - 1}\right]\right)\mid \mathcal{B}_{z,j}\Bigg]\\
        & = \sum_{t_1 = T-\lambda_1-z + \lambda_1}^{(T-\lambda_1-z + \lambda_2)\wedge T}\boldsymbol{\zeta}_{t_1, t_1 - T + \lambda_1 + z - 1}^\top\mathbf{A}_{t_1(T-\lambda_1-z)}\left(\mathbf{E}\left[\mathbf{z}_{T-\lambda_1-z}\mid\mathcal{F}_{T-\lambda_1-z, j - 1}\right]\right.\\
        &\left.- \mathbf{E}\left[\mathbf{z}_{T-\lambda_1-z}\mid\mathcal{F}_{T-\lambda_1-z, j - 1}\right]\right)\\
        &= 0.
    \end{align*}
    Therefore, $B_{z,j}$ forms a martingale, and from Theorem 1.1 of \cite{MR0400380}, 
    \begin{align*}
        &\left\Vert
        \sum_{t_1 = 1 + \lambda_1}^T\sum_{t_2 = (t_1 - \lambda_2)\vee 1}^{t_1 - \lambda_1}\boldsymbol{\zeta}_{t_1, t_1 - t_2 - 1}^\top\mathbf{A}_{t_1t_2}\left(\mathbf{E}\left[\mathbf{z}_{t_2}\mid\mathcal{F}_{t_2, j}\right] - \mathbf{E}\left[\mathbf{z}_{t_2}\mid\mathcal{F}_{t_2, j - 1}\right]\right)
        \right\Vert_{M/2}\\
       &\leq C\sqrt{\sum_{t_2 = 1}^{T-\lambda_1}\left\Vert \sum_{t_1 = t_2 + \lambda_1}^{(t_2 + \lambda_2)\wedge T}\boldsymbol{\zeta}_{t_1, t_1 - t_2 - 1}^\top\mathbf{A}_{t_1t_2}\left(\mathbf{E}\left[\mathbf{z}_{t_2}\mid\mathcal{F}_{t_2, j}\right] - \mathbf{E}\left[\mathbf{z}_{t_2}\mid\mathcal{F}_{t_2, j - 1}\right]\right)\right\Vert^2_{M/2}}\\
       & = C\sqrt{\sum_{t_2 = 1}^{T-\lambda_1}\left\Vert \left(\sum_{t_1 = t_2 + \lambda_1}^{(t_2 + \lambda_2)\wedge T}\boldsymbol{\zeta}_{t_1, t_1 - t_2 - 1}^\top\mathbf{A}_{t_1t_2}\right)\left(\mathbf{E}\left[\mathbf{z}_{t_2}\mid\mathcal{F}_{t_2, j}\right] - \mathbf{E}\left[\mathbf{z}_{t_2}\mid\mathcal{F}_{t_2, j - 1}\right]\right)\right\Vert^2_{M/2}}.
    \end{align*}
    For any real-value vector $\mathbf{a}\in\mathbf{R}^d,$ since 
    \begin{align*}
        \sum_{t_1 = t_2 + \lambda_1}^{(t_2 + \lambda_2)\wedge T}\boldsymbol{\zeta}_{t_1, t_1 - t_2 - 1}^\top\mathbf{A}_{t_1t_2}\mathbf{a}
        &= \sum_{t_1 = t_2 + \lambda_1}^{(t_2 + \lambda_2)\wedge T}\left(\mathbf{E}\left[\mathbf{z}_{t_1}\mid\mathcal{F}_{t_1, t_1 - t_2 - 1}\right]\right)^\top\mathbf{A}_{t_1t_2}\mathbf{a}\\
        &=  \sum_{t_1 = t_2 + \lambda_1}^{(t_2 + \lambda_2)\wedge T}\left(\mathbf{E}\left[\mathbf{z}_{t_1}\mid\mathcal{F}_{T, T - t_2 - 1}\right]\right)^\top\mathbf{A}_{t_1t_2}\mathbf{a},
    \end{align*}
    from equation \eqref{eq.linear_combination} in the manuscript, 
    \begin{equation}
    \begin{aligned}
        \left\Vert \sum_{t_1 = t_2 + \lambda_1}^{(t_2 + \lambda_2)\wedge T}\boldsymbol{\zeta}_{t_1, t_1 - t_2 - 1}^\top\mathbf{A}_{t_1t_2}\mathbf{a}\right\Vert_{M/2} &= \left\Vert\mathbf{E}\left[\sum_{t_1 = t_2 + \lambda_1}^{(t_2 + \lambda_2)\wedge T}\mathbf{z}_{t_1}^\top \mathbf{A}_{t_1t_2}\mathbf{a}\mid \mathcal{F}_{T, T - t_2 - 1}\right]\right\Vert_{M/2}\\
        &\leq \left\Vert\sum_{t_1 = t_2 + \lambda_1}^{(t_2 + \lambda_2)\wedge T}\mathbf{a}^\top\mathbf{A}_{t_1t_2}^\top\mathbf{z}_{t_1}\right\Vert_{M/2}\\
        &\leq 
        C\sqrt{\sum_{t_1 = t_2 + \lambda_1}^{(t_2 + \lambda_2)\wedge T}\left\vert \mathbf{A}_{t_1t_2}\mathbf{a}\right\vert_2^2}\leq C\left\vert\mathbf{a}\right\vert_2\sqrt{\sum_{t_1 = t_2 + \lambda_1}^{(t_2 + \lambda_2)\wedge T}\left\vert\mathbf{A}_{t_1t_2}\right\vert^2_2}.
    \end{aligned}
    \label{eq.linear_truncate_moment}
    \end{equation}
    For $\boldsymbol{\zeta}_{t_1, t_1 - t_2 - 1}$ is measurable in the $\sigma$-field generated by $e_{t_1},\cdots,e_{t_2 + 1},$  $\sum_{t_1 = t_2 + \lambda_1}^{(t_2 + \lambda_2)\wedge T}\boldsymbol{\zeta}_{t_1, t_1 - t_2 - 1}^\top\mathbf{A}_{t_1t_2}$ is independent of $\mathbf{E}\left[\mathbf{z}_{t_2}\mid\mathcal{F}_{t_2, j}\right] - \mathbf{E}\left[\mathbf{z}_{t_2}\mid\mathcal{F}_{t_2, j - 1}\right],$ and from 
    Examples 1.22 of \cite{MR2767184}, for any vector $\mathbf{a}\in\mathbf{R}^{d},$ 
    \begin{align*}
        &\mathbf{E}\left[\left\vert\left(\sum_{t_1 = t_2 + \lambda_1}^{(t_2 + \lambda_2)\wedge T}\boldsymbol{\zeta}_{t_1, t_1 - t_2 - 1}^\top\mathbf{A}_{t_1t_2}\right)\left(\mathbf{E}\left[\mathbf{z}_{t_2}\mid\mathcal{F}_{t_2, j}\right] - \mathbf{E}\left[\mathbf{z}_{t_2}\mid\mathcal{F}_{t_2, j - 1}\right]\right)\right\vert^{M/2}\right.\\
        &\mid \mathbf{E}\left[\mathbf{z}_{t_2}\mid\mathcal{F}_{t_2, j}\right] - \mathbf{E}\left[\mathbf{z}_{t_2}\mid\mathcal{F}_{t_2, j - 1}\right] = \mathbf{a}\Bigg]\\
        & = \mathbf{E}\left[\left\vert\sum_{t_1 = t_2 + \lambda_1}^{(t_2 + \lambda_2)\wedge T}\boldsymbol{\zeta}_{t_1, t_1 - t_2 - 1}^\top\mathbf{A}_{t_1t_2}\mathbf{a}\right\vert^{M/2}\right]\leq C\left(\sum_{t_1 = t_2 + \lambda_1}^{(t_2 + \lambda_2)\wedge T}\left\vert\mathbf{A}_{t_1t_2}\right\vert^2_2\right)^{M/4}\left\vert\mathbf{a}\right\vert_2^{M/2},
    \end{align*}
    and 
    \begin{align*}
        &\left\Vert
        \left(\sum_{t_1 = t_2 + \lambda_1}^{(t_2 + \lambda_2)\wedge T}\boldsymbol{\zeta}_{t_1, t_1 - t_2 - 1}^\top\mathbf{A}_{t_1t_2}\right)\left(\mathbf{E}\left[\mathbf{z}_{t_2}\mid\mathcal{F}_{t_2, j}\right] - \mathbf{E}\left[\mathbf{z}_{t_2}\mid\mathcal{F}_{t_2, j - 1}\right]\right)
        \right\Vert_{M/2}\\
        &\leq C\sqrt{\sum_{t_1 = t_2 + \lambda_1}^{(t_2 + \lambda_2)\wedge T}\left\vert\mathbf{A}_{t_1t_2}\right\vert^2_2}\ \left\Vert\ \left\vert \mathbf{E}\left[\mathbf{z}_{t_2}\mid\mathcal{F}_{t_2, j}\right] - \mathbf{E}\left[\mathbf{z}_{t_2}\mid\mathcal{F}_{t_2, j - 1}\right]\right\vert_2\ \right\Vert_{M/2}.
    \end{align*}
    Since 
    \begin{align*}
        &\left\Vert\ \left\vert \mathbf{E}\left[\mathbf{z}_{t_2}\mid\mathcal{F}_{t_2, j}\right] - \mathbf{E}\left[\mathbf{z}_{t_2}\mid\mathcal{F}_{t_2, j - 1}\right]\right\vert_2\ \right\Vert_{M/2}\\
        &= \left\Vert\ \sqrt{\sum_{i = 1}^d \left(\mathbf{E}\left[\mathbf{z}_{t_2}^{(i)}\mid\mathcal{F}_{t_2, j}\right] - \mathbf{E}\left[\mathbf{z}_{t_2}^{(i)}\mid\mathcal{F}_{t_2, j - 1}\right]\right)^2} \right\Vert_{M/2}\\
        & = \sqrt{\left\Vert \sum_{i = 1}^d \left(\mathbf{E}\left[\mathbf{z}_{t_2}^{(i)}\mid\mathcal{F}_{t_2, j}\right] - \mathbf{E}\left[\mathbf{z}_{t_2}^{(i)}\mid\mathcal{F}_{t_2, j - 1}\right]\right)^2\right\Vert_{M/4}}\\
        &\leq \sqrt{\sum_{i = 1}^d\left\Vert \mathbf{E}\left[\mathbf{z}_{t_2}^{(i)}\mid\mathcal{F}_{t_2, j}\right] - \mathbf{E}\left[\mathbf{z}_{t_2}^{(i)}\mid\mathcal{F}_{t_2, j - 1}\right]\right\Vert^2_{M/2}}\leq \sqrt{d}\delta_j,
    \end{align*}
    we have 
    \begin{align*}
        &\left\Vert
        \left(\sum_{t_1 = t_2 + \lambda_1}^{(t_2 + \lambda_2)\wedge T}\boldsymbol{\zeta}_{t_1, t_1 - t_2 - 1}^\top\mathbf{A}_{t_1t_2}\right)\left(\mathbf{E}\left[\mathbf{z}_{t_2}\mid\mathcal{F}_{t_2, j}\right] - \mathbf{E}\left[\mathbf{z}_{t_2}\mid\mathcal{F}_{t_2, j - 1}\right]\right)
        \right\Vert_{M/2}\\
        &\leq C\delta_j\sqrt{d\sum_{t_1 = t_2 + \lambda_1}^{(t_2 + \lambda_2)\wedge T}\left\vert\mathbf{A}_{t_1t_2}\right\vert^2_2},
    \end{align*}
    and 
    \begin{equation}
        \begin{aligned}
            &\left\Vert
        \sum_{t_1 = 1 + \lambda_1}^T\sum_{t_2 = (t_1 - \lambda_2)\vee 1}^{t_1 - \lambda_1}\boldsymbol{\zeta}_{t_1, t_1 - t_2 - 1}^\top\mathbf{A}_{t_1t_2}\left(\mathbf{E}\left[\mathbf{z}_{t_2}\mid\mathcal{F}_{t_2, j}\right] - \mathbf{E}\left[\mathbf{z}_{t_2}\mid\mathcal{F}_{t_2, j - 1}\right]\right)
        \right\Vert_{M/2}\\
        &\leq C\delta_j\sqrt{d\sum_{t_2 = 1}^{T-\lambda_1}\sum_{t_1 = t_2 + \lambda_1}^{(t_2 + \lambda_2)\wedge T}\left\vert\mathbf{A}_{t_1t_2}\right\vert^2_2} = C\delta_j\sqrt{d\sum_{t_1 = 1 + \lambda_1}^{T}\sum_{t_2  = (t_1 - \lambda_2)\vee 1}^{t_1 - \lambda_1}\left\vert\mathbf{A}_{t_1t_2}\right\vert^2_2}.
        \end{aligned}
        \label{eq.Fir_separate}
    \end{equation}
    From \eqref{eq.Fir_separate}, 
    \begin{equation}
        \begin{aligned}
            I &\leq \left(C\sum_{j = 0}^\infty\delta_j\right)\sqrt{d\sum_{t_1 = 1 + \lambda_1}^{T}\sum_{t_2  = (t_1 - \lambda_2)\vee 1}^{t_1 - \lambda_1}\left\vert\mathbf{A}_{t_1t_2}\right\vert^2_2}\\
            &\leq C_1\sqrt{d\sum_{t_1 = 1 + \lambda_1}^{T}\sum_{t_2  = (t_1 - \lambda_2)\vee 1}^{t_1 - \lambda_1}\left\vert\mathbf{A}_{t_1t_2}\right\vert^2_2}.
        \end{aligned}
        \label{eq.fir_part_delta_z}
    \end{equation}
    For the second part, define 
    \begin{align*}
        \mathbf{E}\left[\boldsymbol{\omega}_{t_1, t_1 - t_2 - 1}^\top\mathbf{A}_{t_1t_2}\mathbf{z}_{t_2}\mid\mathcal{F}_{t_1, - 1}\right] = \mathbf{E}\left[\boldsymbol{\omega}_{t_1, t_1 - t_2 - 1}^\top\mathbf{A}_{t_1t_2}\mathbf{z}_{t_2}\right],
    \end{align*}
    then 
    \begin{align*}
    &\boldsymbol{\omega}_{t_1, t_1 - t_2 - 1}^\top\mathbf{A}_{t_1t_2}\mathbf{z}_{t_2} - \mathbf{E}\left[\boldsymbol{\omega}_{t_1, t_1 - t_2 - 1}^\top\mathbf{A}_{t_1t_2}\mathbf{z}_{t_2}\right]\\
    &= \sum_{j = 0}^\infty\left(\mathbf{E}\left[\boldsymbol{\omega}_{t_1, t_1 - t_2 - 1}^\top\mathbf{A}_{t_1t_2}\mathbf{z}_{t_2}\mid\mathcal{F}_{t_1, j}\right] - \mathbf{E}\left[\boldsymbol{\omega}_{t_1, t_1 - t_2 - 1}^\top\mathbf{A}_{t_1t_2}\mathbf{z}_{t_2}\mid\mathcal{F}_{t_1, j - 1}\right]\right),
    \end{align*}
    which makes 
    \begin{align*}
        II\leq \sum_{j = 0}^\infty \left\Vert\sum_{t_1 = 1 + \lambda_1}^T\sum_{t_2 = (t_1 - \lambda_2)\vee 1}^{t_1 - \lambda_1}\left(\mathbf{E}\left[\boldsymbol{\omega}_{t_1, t_1 - t_2 - 1}^\top\mathbf{A}_{t_1t_2}\mathbf{z}_{t_2}\mid\mathcal{F}_{t_1, j}\right] - \mathbf{E}\left[\boldsymbol{\omega}_{t_1, t_1 - t_2 - 1}^\top\mathbf{A}_{t_1t_2}\mathbf{z}_{t_2}\mid\mathcal{F}_{t_1, j - 1}\right]\right)\right\Vert_{M/2}.
    \end{align*}
    For any $j\geq 0$ and $z = 1,2,\cdots, T - \lambda_1,$ define 
    \begin{align*}
        D_{z,j} = \sum_{t_1 = T-z+1}^T\sum_{t_2 = (t_1 - \lambda_2)\vee 1}^{t_1 - \lambda_1}\left(\mathbf{E}\left[\boldsymbol{\omega}_{t_1, t_1 - t_2 - 1}^\top\mathbf{A}_{t_1t_2}\mathbf{z}_{t_2}\mid\mathcal{F}_{t_1, j}\right] - \mathbf{E}\left[\boldsymbol{\omega}_{t_1, t_1 - t_2 - 1}^\top\mathbf{A}_{t_1t_2}\mathbf{z}_{t_2}\mid\mathcal{F}_{t_1, j - 1}\right]\right)
    \end{align*}
    and $\mathcal{D}_{z,j}$ the $\sigma$-field generated by $e_T,\cdots,e_{T-z+1-j}.$ Then $D_{z,j} $ is measurable in $\mathcal{D}_{z,j},$ $\mathcal{D}_{z,j}\subset \mathcal{D}_{z + 1,j},$ and 
    \begin{align*}
        &\mathbf{E}\left[D_{z + 1,j} - D_{z,j} \mid \mathcal{D}_{z,j}\right]\\
        &= \sum_{t_2 = (T - z - \lambda_2)\vee 1}^{T - z - \lambda_1}\mathbf{E}\left[\left(\mathbf{E}\left[\boldsymbol{\omega}_{T-z, T-z - t_2 - 1}^\top\mathbf{A}_{(T-z)t_2}\mathbf{z}_{t_2}\mid\mathcal{F}_{T-z, j}\right] \right.\right.\\
        &\left.\left.- \mathbf{E}\left[\boldsymbol{\omega}_{T-z, T-z - t_2 - 1}^\top\mathbf{A}_{(T-z)t_2}\mathbf{z}_{t_2}\mid\mathcal{F}_{T-z, j - 1}\right]\right)\mid \mathcal{D}_{z,j}\right]\\
        & = \sum_{t_2 = (T - z - \lambda_2)\vee 1}^{T - z - \lambda_1}\left(\mathbf{E}\left[\boldsymbol{\omega}_{T-z, T-z - t_2 - 1}^\top\mathbf{A}_{(T-z)t_2}\mathbf{z}_{t_2}\mid\mathcal{F}_{T-z, j - 1}\right]\right.\\
        &\left.- \mathbf{E}\left[\boldsymbol{\omega}_{T-z, T-z - t_2 - 1}^\top\mathbf{A}_{(T-z)t_2}\mathbf{z}_{t_2}\mid\mathcal{F}_{T-z, j - 1}\right]\right)\\
        &= 0,
    \end{align*}
    so $D_{z,j}$ forms a martingale. From Theorem 1.1 of \cite{MR0400380},
    \begin{align*}
        &\left\Vert\sum_{t_1 = 1 + \lambda_1}^T\sum_{t_2 = (t_1 - \lambda_2)\vee 1}^{t_1 - \lambda_1}\left(\mathbf{E}\left[\boldsymbol{\omega}_{t_1, t_1 - t_2 - 1}^\top\mathbf{A}_{t_1t_2}\mathbf{z}_{t_2}\mid\mathcal{F}_{t_1, j}\right] - \mathbf{E}\left[\boldsymbol{\omega}_{t_1, t_1 - t_2 - 1}^\top\mathbf{A}_{t_1t_2}\mathbf{z}_{t_2}\mid\mathcal{F}_{t_1, j - 1}\right]\right)\right\Vert_{M/2}\\
        &\leq C\sqrt{\sum_{t_1 = 1 + \lambda_1}^T\left\Vert \sum_{t_2 = (t_1 - \lambda_2)\vee 1}^{t_1 - \lambda_1}\left(\mathbf{E}\left[\boldsymbol{\omega}_{t_1, t_1 - t_2 - 1}^\top\mathbf{A}_{t_1t_2}\mathbf{z}_{t_2}\mid\mathcal{F}_{t_1, j}\right] - \mathbf{E}\left[\boldsymbol{\omega}_{t_1, t_1 - t_2 - 1}^\top\mathbf{A}_{t_1t_2}\mathbf{z}_{t_2}\mid\mathcal{F}_{t_1, j - 1}\right]\right)\right\Vert^2_{M/2}}.
    \end{align*}
    Notice that 
    \begin{align*}
        &\left\Vert \sum_{t_2 = (t_1 - \lambda_2)\vee 1}^{t_1 - \lambda_1}\left(\mathbf{E}\left[\boldsymbol{\omega}_{t_1, t_1 - t_2 - 1}^\top\mathbf{A}_{t_1t_2}\mathbf{z}_{t_2}\mid\mathcal{F}_{t_1, j}\right] - \mathbf{E}\left[\boldsymbol{\omega}_{t_1, t_1 - t_2 - 1}^\top\mathbf{A}_{t_1t_2}\mathbf{z}_{t_2}\mid\mathcal{F}_{t_1, j - 1}\right]\right)\right\Vert_{M/2}\\
        &\leq \sum_{t_2 = (t_1 - \lambda_2)\vee 1}^{t_1 - \lambda_1}\left\Vert \mathbf{E}\left[\boldsymbol{\omega}_{t_1, t_1 - t_2 - 1}^\top\mathbf{A}_{t_1t_2}\mathbf{z}_{t_2}\mid\mathcal{F}_{t_1, j}\right] - \mathbf{E}\left[\boldsymbol{\omega}_{t_1, t_1 - t_2 - 1}^\top\mathbf{A}_{t_1t_2}\mathbf{z}_{t_2}\mid\mathcal{F}_{t_1, j - 1}\right]\right\Vert_{M/2}
    \end{align*}
    On the one hand, we have 
    \begin{align*}
        &\left\Vert\mathbf{E}\left[\boldsymbol{\omega}_{t_1, t_1 - t_2 - 1}^\top\mathbf{A}_{t_1t_2}\mathbf{z}_{t_2}\mid\mathcal{F}_{t_1, j}\right] - \mathbf{E}\left[\boldsymbol{\omega}_{t_1, t_1 - t_2 - 1}^\top\mathbf{A}_{t_1t_2}\mathbf{z}_{t_2}\mid\mathcal{F}_{t_1, j - 1}\right]\right\Vert_{M/2}\\
        &\leq 2\left\Vert \boldsymbol{\omega}_{t_1, t_1 - t_2 - 1}^\top\mathbf{A}_{t_1t_2}\mathbf{z}_{t_2}\right\Vert_{M/2}.
    \end{align*}
    Since
    \begin{align*}
        \left\vert \boldsymbol{\omega}_{t_1, t_1 - t_2 - 1}^\top\mathbf{A}_{t_1t_2}\mathbf{z}_{t_2}\right\vert\leq \left\vert \boldsymbol{\omega}_{t_1, t_1 - t_2 - 1}\right\vert_2\left\vert\mathbf{A}_{t_1t_2}\right\vert_2\left\vert \mathbf{z}_{t_2}\right\vert_2,
    \end{align*}
    from Cauchy--Schwarz inequality,
    \begin{align*}
        \left\Vert
        \boldsymbol{\omega}_{t_1, t_1 - t_2 - 1}^\top\mathbf{A}_{t_1t_2}\mathbf{z}_{t_2}
        \right\Vert_{M/2} &\leq \left\Vert \left\vert \boldsymbol{\omega}_{t_1, t_1 - t_2 - 1}\right\vert_2\left\vert\mathbf{A}_{t_1t_2}\right\vert_2\left\vert \mathbf{z}_{t_2}\right\vert_2\right\Vert_{M/2}\\
        &\leq \left\vert\mathbf{A}_{t_1t_2}\right\vert_2\left\Vert\ \left\vert \boldsymbol{\omega}_{t_1, t_1 - t_2 - 1}\right\vert_2\ \right\Vert_M\left\Vert\ \left\vert \mathbf{z}_{t_2}\right\vert_2\ \right\Vert_M.
    \end{align*}
    From \eqref{eq.linear_combination} 
    \begin{align*}
        \left\Vert\ \left\vert \boldsymbol{\omega}_{t_1, t_1 - t_2 - 1}\right\vert_2\ \right\Vert_M
        \leq \sqrt{\sum_{i = 1}^d\left\Vert \mathbf{z}_{t_1}^{(i)} - \mathbf{E}\left[\mathbf{z}_{t_1}^{(i)}\mid\mathcal{F}_{t_1, t_1 - t_2 - 1}\right]\right\Vert^2_{M}}\leq\frac{C\sqrt{d}}{(t_1 -  t_2)^\alpha},    
    \end{align*}
    we also have 
    \begin{align*}
        \left\Vert\ \left\vert \mathbf{z}_{t_2}\right\vert_2\ \right\Vert_M\leq \sqrt{\sum_{i = 1}^d\left\Vert \mathbf{z}_{t_2}^{(i)}\right\Vert^2_{M}}\leq C\sqrt{d},
    \end{align*}
    which makes 
    \begin{equation}
    \begin{aligned}
        \left\Vert
        \boldsymbol{\omega}_{t_1, t_1 - t_2 - 1}^\top\mathbf{A}_{t_1t_2}\mathbf{z}_{t_2}
        \right\Vert_{M/2}\leq \frac{Cd a^\dagger}{(t_1 -  t_2)^\alpha},
    \end{aligned}
    \label{eq.omega_one_term}
    \end{equation}
    where $a^\dagger = \max_{t_1 = 1+\lambda_1,\cdots, T;\ t_2 = (t_1 - \lambda_2)\vee 1,\cdots, t_1  -\lambda_1}\vert \mathbf{A}_{t_1t_2}\vert_2.$

    On the other hand, if $t_1 - t_2 < j,$ we have  
    \begin{align*}
         \boldsymbol{\omega}_{t_1, t_1 - t_2 - 1} = \mathbf{z}_{t_1} - \boldsymbol{\zeta}_{t_1, j - 1} + \boldsymbol{\zeta}_{t_1, j - 1} - \boldsymbol{\zeta}_{t_1, t_1 - t_2 -  1} =  \boldsymbol{\omega}_{t_1, j - 1}  + \left(\boldsymbol{\zeta}_{t_1, j - 1} - \boldsymbol{\zeta}_{t_1, t_1 - t_2 -  1}\right),
    \end{align*}
    and $\boldsymbol{\zeta}_{t_1, j - 1} - \boldsymbol{\zeta}_{t_1, t_1 - t_2 -  1}$ is measurable in $\mathcal{F}_{t_1, j - 1}.$ Therefore, 
    \begin{align*}
    &\left\Vert \mathbf{E}\left[\boldsymbol{\omega}_{t_1, t_1 - t_2 - 1}^\top\mathbf{A}_{t_1t_2}\mathbf{z}_{t_2}\mid\mathcal{F}_{t_1, j}\right] - \mathbf{E}\left[\boldsymbol{\omega}_{t_1, t_1 - t_2 - 1}^\top\mathbf{A}_{t_1t_2}\mathbf{z}_{t_2}\mid\mathcal{F}_{t_1, j - 1}\right]\right\Vert_{M/2}\\
    &\leq \left\Vert \mathbf{E}\left[\boldsymbol{\omega}_{t_1, j - 1}^\top\mathbf{A}_{t_1t_2}\mathbf{z}_{t_2}\mid\mathcal{F}_{t_1, j}\right] - \mathbf{E}\left[\boldsymbol{\omega}_{t_1, j - 1}^\top\mathbf{A}_{t_1t_2}\mathbf{z}_{t_2}\mid\mathcal{F}_{t_1, j - 1}\right]\right\Vert_{M/2}\\
    & + \left\Vert \left(\boldsymbol{\zeta}_{t_1, j - 1} - \boldsymbol{\zeta}_{t_1, t_1 - t_2 -  1}\right)^\top\mathbf{A}_{t_1t_2}\left(\mathbf{E}\left[\mathbf{z}_{t_2}\mid\mathcal{F}_{t_2, t_2 + j - t_1}\right] - \mathbf{E}\left[\mathbf{z}_{t_2}\mid\mathcal{F}_{t_2, t_2 + j - 1 - t_1}\right]\right)\right\Vert_{M/2}\\
    &\leq 2\left\Vert \boldsymbol{\omega}_{t_1, j - 1}^\top\mathbf{A}_{t_1t_2}\mathbf{z}_{t_2}\right\Vert_{M/2}\\
    & + \left\vert\mathbf{A}_{t_1t_2}\right\vert_2\left\Vert\ \left\vert\boldsymbol{\zeta}_{t_1, j - 1} - \boldsymbol{\zeta}_{t_1, t_1 - t_2 -  1}\right\vert_2\ \right\Vert_M\left\Vert\ \left\vert \mathbf{E}\left[\mathbf{z}_{t_2}\mid\mathcal{F}_{t_2, t_2 + j - t_1}\right] - \mathbf{E}\left[\mathbf{z}_{t_2}\mid\mathcal{F}_{t_2, t_2 + j - 1 - t_1}\right]\right\vert_2\ \right\Vert_M.
    \end{align*}
    We notice that 
    \begin{align*}
        \left\Vert \boldsymbol{\omega}_{t_1, j - 1}^\top\mathbf{A}_{t_1t_2}\mathbf{z}_{t_2}\right\Vert_{M/2}\leq \frac{Cda^\dagger}{j^\alpha}.
    \end{align*}
    On the other hand,
    \begin{equation}
    \begin{aligned}
        \left\Vert\ \left\vert\boldsymbol{\zeta}_{t_1, j - 1} - \boldsymbol{\zeta}_{t_1, t_1 - t_2 -  1}\right\vert_2\ \right\Vert_M
        &= \left\Vert \ \left\vert\boldsymbol{\omega}_{t_1, t_1 - t_2 - 1} - \boldsymbol{\omega}_{t_1, j -  1}\right\vert_2\ \right\Vert_M\\
        &\leq \left\Vert\ \left\vert \boldsymbol{\omega}_{t_1, t_1 - t_2 - 1}\right\vert_2\ \right\Vert_M + \left\Vert\ \left\vert \boldsymbol{\omega}_{t_1, j -  1}\right\vert_2\ \right\Vert_M\\
        &\leq \frac{C\sqrt{d}}{(t_1 - t_2)^\alpha} + \frac{C\sqrt{d}}{j^\alpha}\leq \frac{2C\sqrt{d}}{(t_1 - t_2)^\alpha},
    \end{aligned}
    \label{eq.zeta_transform}
    \end{equation}
    and 
    \begin{align*}
        &\left\Vert\ \left\vert \mathbf{E}\left[\mathbf{z}_{t_2}\mid\mathcal{F}_{t_2, t_2 + j - t_1}\right] - \mathbf{E}\left[\mathbf{z}_{t_2}\mid\mathcal{F}_{t_2, t_2 + j - 1 - t_1}\right]\right\vert_2\ \right\Vert_M\\
        &\leq \sqrt{\sum_{i = 1}^d\left\Vert \mathbf{E}\left[\mathbf{z}_{t_2}^{(i)}\mid\mathcal{F}_{t_2, t_2 + j - t_1}\right] - \mathbf{E}\left[\mathbf{z}_{t_2}^{(i)}\mid\mathcal{F}_{t_2, t_2 + j - 1 - t_1}\right]\right\Vert^2_M}\leq \sqrt{d}\delta_{t_2 + j - t_1},
    \end{align*}
    so 
    \begin{align*}
        \left\Vert \mathbf{E}\left[\boldsymbol{\omega}_{t_1, t_1 - t_2 - 1}^\top\mathbf{A}_{t_1t_2}\mathbf{z}_{t_2}\mid\mathcal{F}_{t_1, j}\right] - \mathbf{E}\left[\boldsymbol{\omega}_{t_1, t_1 - t_2 - 1}^\top\mathbf{A}_{t_1t_2}\mathbf{z}_{t_2}\mid\mathcal{F}_{t_1, j - 1}\right]\right\Vert_{M/2}
        \leq \frac{Cda^\dagger}{j^\alpha} + \frac{Cda^\dagger\delta_{t_2+j-t_1}}{(t_1 - t_2)^\alpha}.
    \end{align*}
    If $j\leq \lambda_2,$ then 
    \begin{align*}
        &\sum_{t_2 = (t_1 - \lambda_2)\vee 1}^{t_1 - \lambda_1}\left\Vert \mathbf{E}\left[\boldsymbol{\omega}_{t_1, t_1 - t_2 - 1}^\top\mathbf{A}_{t_1t_2}\mathbf{z}_{t_2}\mid\mathcal{F}_{t_1, j}\right] - \mathbf{E}\left[\boldsymbol{\omega}_{t_1, t_1 - t_2 - 1}^\top\mathbf{A}_{t_1t_2}\mathbf{z}_{t_2}\mid\mathcal{F}_{t_1, j - 1}\right]\right\Vert_{M/2}\\
        &\leq  \sum_{t_2 = (t_1 - \lambda_2)\vee 1}^{t_1 - \lambda_1} \frac{Cda^\dagger}{(t_1 - t_2)^\alpha}\leq \frac{C_1da^\dagger}{\lambda_1^{\alpha  - 1}},
    \end{align*}
    and 
    \begin{align*}
        &\left\Vert\sum_{t_1 = 1 + \lambda_1}^T\sum_{t_2 = (t_1 - \lambda_2)\vee 1}^{t_1 - \lambda_1}\left(\mathbf{E}\left[\boldsymbol{\omega}_{t_1, t_1 - t_2 - 1}^\top\mathbf{A}_{t_1t_2}\mathbf{z}_{t_2}\mid\mathcal{F}_{t_1, j}\right] - \mathbf{E}\left[\boldsymbol{\omega}_{t_1, t_1 - t_2 - 1}^\top\mathbf{A}_{t_1t_2}\mathbf{z}_{t_2}\mid\mathcal{F}_{t_1, j - 1}\right]\right)\right\Vert_{M/2}\\
        &\leq \frac{Cda^\dagger\sqrt{T}}{\lambda_1^{\alpha  - 1}}.
    \end{align*}    
    If $j > \lambda_2,$ then for any $t_2 = (t_1 - \lambda_2)\vee 1,\cdots, t_1 - \lambda_1,$ $t_1 - t_2 < j,$ so 
    \begin{align*}
        &\sum_{t_2 = (t_1 - \lambda_2)\vee 1}^{t_1 - \lambda_1}\left\Vert \mathbf{E}\left[\boldsymbol{\omega}_{t_1, t_1 - t_2 - 1}^\top\mathbf{A}_{t_1t_2}\mathbf{z}_{t_2}\mid\mathcal{F}_{t_1, j}\right] - \mathbf{E}\left[\boldsymbol{\omega}_{t_1, t_1 - t_2 - 1}^\top\mathbf{A}_{t_1t_2}\mathbf{z}_{t_2}\mid\mathcal{F}_{t_1, j - 1}\right]\right\Vert_{M/2}\\
        &\leq \sum_{t_2 = (t_1 - \lambda_2)\vee 1}^{t_1 - \lambda_1}\frac{Cda^\dagger}{j^\alpha} + \sum_{t_2 = (t_1 - \lambda_2)\vee 1}^{t_1 - \lambda_1}\frac{Cda^\dagger\delta_{t_2+j-t_1}}{(t_1 - t_2)^\alpha}\\
        &\leq \frac{C_1da^\dagger(\lambda_2 - \lambda_1)}{j^\alpha} + \frac{Cda^\dagger}{\lambda_1^\alpha}\sum_{t_2 = (t_1 - \lambda_2)\vee 1}^{t_1 - \lambda_1}\delta_{t_2+j-t_1}
        \leq \frac{C_1da^\dagger(\lambda_2 - \lambda_1)}{j^\alpha} + \frac{C_1da^\dagger}{\lambda_1^\alpha (j - \lambda_2)^\alpha},
    \end{align*}
    and 
    \begin{equation}
    \begin{aligned}
        &\left\Vert\sum_{t_1 = 1 + \lambda_1}^T\sum_{t_2 = (t_1 - \lambda_2)\vee 1}^{t_1 - \lambda_1}\left(\mathbf{E}\left[\boldsymbol{\omega}_{t_1, t_1 - t_2 - 1}^\top\mathbf{A}_{t_1t_2}\mathbf{z}_{t_2}\mid\mathcal{F}_{t_1, j}\right] - \mathbf{E}\left[\boldsymbol{\omega}_{t_1, t_1 - t_2 - 1}^\top\mathbf{A}_{t_1t_2}\mathbf{z}_{t_2}\mid\mathcal{F}_{t_1, j - 1}\right]\right)\right\Vert_{M/2}\\
        &\leq \frac{Cda^\dagger(\lambda_2 - \lambda_1)\sqrt{T}}{j^\alpha} + \frac{Cda^\dagger\sqrt{T}}{\lambda_1^\alpha (j - \lambda_2)^\alpha}.
    \end{aligned}
    \label{eq.moment_j_greater_lambda_2}
    \end{equation}
    From these observations, we have 
    \begin{equation}
        \begin{aligned}
            II &\leq \sum_{j = 0}^{\lambda_2} \frac{Cda^\dagger\sqrt{T}}{\lambda_1^{\alpha  - 1}} + \sum_{j = \lambda_2 + 1}^\infty\frac{Cda^\dagger(\lambda_2 - \lambda_1)\sqrt{T}}{j^\alpha} + \sum_{j = \lambda_2 + 1}^\infty\frac{Cda^\dagger\sqrt{T}}{\lambda_1^\alpha (j - \lambda_2)^\alpha}\\
            &\leq \frac{C_1da^\dagger\sqrt{T}\lambda_2}{\lambda_1^{\alpha - 1}} + \frac{Cda^\dagger\sqrt{T}}{\lambda_2^{\alpha - 2}} + \frac{C_1da^\dagger\sqrt{T}}{\lambda_1^\alpha}\leq \frac{C_2da^\dagger\sqrt{T}\lambda_2}{\lambda_1^{\alpha - 1}}.
            \label{eq.sec_part_delta_z}
        \end{aligned}
    \end{equation}
    For $d = O(T)$ and $\kappa_1 > \frac{5}{2(\alpha - 2)},$ we have 
    \begin{align*}
        \frac{da^\dagger\sqrt{T}\lambda_2}{\lambda_1^{\alpha - 1}}\leq \frac{CT a^\dagger T\sqrt{T}}{T^{5/2}} = Ca^\dagger =  O\left(\sqrt{d\sum_{t_1 = 1 + \lambda_1}^T\sum_{t_2 = (t_1 - \lambda_2)\vee 1}^{t_1 - \lambda_1}\left\vert \mathbf{A}_{t_1t_2}\right\vert^2_2}\right).
    \end{align*}
From \eqref{eq.fir_part_delta_z} and \eqref{eq.sec_part_delta_z}, we prove \eqref{eq.product_quadratic_forms}.

For $S > 2\lambda_2$ and $t_1 - t_2\leq \lambda_2 < S,$ define $\boldsymbol{\zeta}_{t_1,t_1 - t_2 - 1}$ and $\boldsymbol{\omega}_{t_1, t_1 - t_2 - 1}$ as in \eqref{eq.def_zeta_omega} with $s = t_1 - t_2 - 1.$ Then $\boldsymbol{\zeta}_{t_1,t_1 - t_2 - 1}$ is measurable in $\mathcal{F}_{t_1,S},$ and 

\begin{align*}
    \mathbf{E}\left[\mathbf{z}_{t_1}^\top\mathbf{A}_{t_1t_2}\mathbf{z}_{t_2}\mid\mathcal{F}_{t_1,S}\right] = \boldsymbol{\zeta}_{t_1,t_1 - t_2 - 1}^\top\mathbf{A}_{t_1t_2}\mathbf{E}\left[\mathbf{z}_{t_2}\mid\mathcal{F}_{t_2, t_2 + S - t_1}\right] +
    \mathbf{E}\left[\boldsymbol{\omega}_{t_1, t_1 - t_2 - 1}^\top\mathbf{A}_{t_1t_2}\mathbf{z}_{t_2}\mid\mathcal{F}_{t_1,S}\right],
\end{align*}
and 
\begin{align*}
    &\left\Vert\sum_{t_1 = 1 + \lambda_1}^T\sum_{t_2 = (t_1 - \lambda_2)\vee 1}^{t_1 - \lambda_1}\left(\mathbf{z}_{t_1}^\top\mathbf{A}_{t_1t_2}\mathbf{z}_{t_2} - \mathbf{E}\left[\mathbf{z}_{t_1}^\top\mathbf{A}_{t_1t_2}\mathbf{z}_{t_2}\mid\mathcal{F}_{t_1, S}\right]\right)\right\Vert_{M/2}\\
    &\leq \left\Vert\sum_{t_1 = 1 + \lambda_1}^T\sum_{t_2 = (t_1 - \lambda_2)\vee 1}^{t_1 - \lambda_1}
    \boldsymbol{\zeta}_{t_1,t_1 - t_2 - 1}^\top\mathbf{A}_{t_1t_2}\left(\mathbf{z}_{t_2} - \mathbf{E}\left[\mathbf{z}_{t_2}\mid\mathcal{F}_{t_2, t_2 + S - t_1}\right]\right)
    \right\Vert_{M/2}\\
    & + \left\Vert\sum_{t_1 = 1 + \lambda_1}^T\sum_{t_2 = (t_1 - \lambda_2)\vee 1}^{t_1 - \lambda_1}
    \left(\boldsymbol{\omega}_{t_1, t_1 - t_2 - 1}^\top\mathbf{A}_{t_1t_2}\mathbf{z}_{t_2} - \mathbf{E}\left[\boldsymbol{\omega}_{t_1, t_1 - t_2 - 1}^\top\mathbf{A}_{t_1t_2}\mathbf{z}_{t_2}\mid\mathcal{F}_{t_1,S}\right]\right)
    \right\Vert_{M/2}.
\end{align*}
Since
\begin{align*}
    &\sum_{t_1 = 1 + \lambda_1}^T\sum_{t_2 = (t_1 - \lambda_2)\vee 1}^{t_1 - \lambda_1}
    \boldsymbol{\zeta}_{t_1,t_1 - t_2 - 1}^\top\mathbf{A}_{t_1t_2}\left(\mathbf{z}_{t_2} - \mathbf{E}\left[\mathbf{z}_{t_2}\mid\mathcal{F}_{t_2, t_2 + S - t_1}\right]\right)\\
    & = \sum_{t_2 = 1}^{T - \lambda_1}\sum_{t_1 = t_2 + \lambda_1}^{(t_2 + \lambda_2)\wedge T}\boldsymbol{\zeta}_{t_1,t_1 - t_2 - 1}^\top\mathbf{A}_{t_1t_2}\left(\mathbf{z}_{t_2} - \mathbf{E}\left[\mathbf{z}_{t_2}\mid\mathcal{F}_{t_2, t_2 + S - t_1}\right]\right),
\end{align*}
and 
\begin{align*}
    &\left\Vert
    \sum_{t_2 = 1}^{T - \lambda_1}\sum_{t_1 = t_2 + \lambda_1}^{(t_2 + \lambda_2)\wedge T}\boldsymbol{\zeta}_{t_1,t_1 - t_2 - 1}^\top\mathbf{A}_{t_1t_2}\left(\mathbf{z}_{t_2} - \mathbf{E}\left[\mathbf{z}_{t_2}\mid\mathcal{F}_{t_2, t_2 + S - t_1}\right]\right)
    \right\Vert_{M/2}\\
    & = \left\Vert
    \sum_{t_2 = 1}^{T - \lambda_1}\sum_{t_1 = t_2 + \lambda_1}^{(t_2 + \lambda_2)\wedge T}\sum_{j = t_2 + S  - t_1 + 1}^\infty\boldsymbol{\zeta}_{t_1,t_1 - t_2 - 1}^\top\mathbf{A}_{t_1t_2}\left(\mathbf{E}\left[\mathbf{z}_{t_2}\mid\mathcal{F}_{t_2, j}\right] - \mathbf{E}\left[\mathbf{z}_{t_2}\mid\mathcal{F}_{t_2, j - 1}\right]\right)
    \right\Vert_{M/2}\\
    &\leq \sum_{j = S + 1 - \lambda_2}^\infty\left\Vert \sum_{t_2 = 1}^{T - \lambda_1}\sum_{t_1 = (t_2 + \lambda_1)\vee (t_2 + S + 1 - j)}^{(t_2 + \lambda_2)\wedge T}\boldsymbol{\zeta}_{t_1,t_1 - t_2 - 1}^\top\mathbf{A}_{t_1t_2}\left(\mathbf{E}\left[\mathbf{z}_{t_2}\mid\mathcal{F}_{t_2, j}\right] - \mathbf{E}\left[\mathbf{z}_{t_2}\mid\mathcal{F}_{t_2, j - 1}\right]\right)\right\Vert_{M/2}.
\end{align*}
For any $z = 1,\cdots, T - \lambda_1$ and any $j\geq S + 1 - \lambda_2,$ define 
\begin{align*}
   H_{z,j} = \sum_{t_2 = T-\lambda_1 + 1 - z}^{T - \lambda_1}\sum_{t_1 = (t_2 + \lambda_1)\vee (t_2 + S + 1 - j)}^{(t_2 + \lambda_2)\wedge T}\boldsymbol{\zeta}_{t_1,t_1 - t_2 - 1}^\top\mathbf{A}_{t_1t_2}\left(\mathbf{E}\left[\mathbf{z}_{t_2}\mid\mathcal{F}_{t_2, j}\right] - \mathbf{E}\left[\mathbf{z}_{t_2}\mid\mathcal{F}_{t_2, j - 1}\right]\right)
\end{align*}
and $\mathcal{H}_{z,j} $ the $\sigma$-field generated by $e_T,e_{T-1},\cdots, e_{T-\lambda_1 + 1 - z - j}.$ Then $\mathcal{H}_{z,j}\subset \mathcal{H}_{z + 1,j},$ $ H_{z,j} $ is measurable in $\mathcal{H}_{z,j},$ and 
\begin{align*}
    \mathbf{E}\left[H_{z + 1,j}  - H_{z,j} \mid \mathcal{H}_{z,j} \right] &= \sum_{t_1 = (T-\lambda_1 - z + \lambda_1)\vee (T-\lambda_1 - z + S + 1 - j)}^{(T-\lambda_1 - z + \lambda_2)\wedge T}\mathbf{E}\left[\boldsymbol{\zeta}_{t_1,t_1 - T + \lambda_1 + z - 1}^\top\mathbf{A}_{t_1(T-\lambda_1 - z)}\right.\\
    &\left.\left(\mathbf{E}\left[\mathbf{z}_{T-\lambda_1 - z}\mid\mathcal{F}_{T-\lambda_1 - z, j}\right] - \mathbf{E}\left[\mathbf{z}_{T-\lambda_1 - z}\mid\mathcal{F}_{T-\lambda_1 - z, j - 1}\right]\right)\mid \mathcal{H}_{z,j} \right]\\
    & = \sum_{t_1 = (T-\lambda_1 - z + \lambda_1)\vee (T-\lambda_1 - z + S + 1 - j)}^{(T-\lambda_1 - z + \lambda_2)\wedge T}
    \boldsymbol{\zeta}_{t_1,t_1 - T + \lambda_1 + z - 1}^\top\mathbf{A}_{t_1(T-\lambda_1 - z)}\\
    &\left(\mathbf{E}\left[\mathbf{z}_{T-\lambda_1 - z}\mid\mathcal{F}_{T-\lambda_1 - z, j - 1}\right] - \mathbf{E}\left[\mathbf{z}_{T-\lambda_1 - z}\mid\mathcal{F}_{T-\lambda_1 - z, j - 1}\right]\right)\\
    & = 0,
\end{align*}
so $H_{z,j}$ forms a martingale. From Theorem 1.1 of \cite{MR0400380}, 
\begin{align*}
   &\left\Vert \sum_{t_2 = 1}^{T - \lambda_1}\sum_{t_1 = (t_2 + \lambda_1)\vee (t_2 + S + 1 - j)}^{(t_2 + \lambda_2)\wedge T}\boldsymbol{\zeta}_{t_1,t_1 - t_2 - 1}^\top\mathbf{A}_{t_1t_2}\left(\mathbf{E}\left[\mathbf{z}_{t_2}\mid\mathcal{F}_{t_2, j}\right] - \mathbf{E}\left[\mathbf{z}_{t_2}\mid\mathcal{F}_{t_2, j - 1}\right]\right)\right\Vert_{M/2}\\
   &\leq C\sqrt{\sum_{t_2 = 1}^{T - \lambda_1}\left\Vert \sum_{t_1 = (t_2 + \lambda_1)\vee (t_2 + S + 1 - j)}^{(t_2 + \lambda_2)\wedge T}\boldsymbol{\zeta}_{t_1,t_1 - t_2 - 1}^\top\mathbf{A}_{t_1t_2}\left(\mathbf{E}\left[\mathbf{z}_{t_2}\mid\mathcal{F}_{t_2, j}\right] - \mathbf{E}\left[\mathbf{z}_{t_2}\mid\mathcal{F}_{t_2, j - 1}\right]\right)\right\Vert^2_{M/2}}.
\end{align*}
From \eqref{eq.linear_truncate_moment} and Examples 1.22 of \cite{MR2767184}, for any vector $\mathbf{a}\in\mathbf{R}^d$
\begin{align*}
    &\mathbf{E}\left[
    \left\vert \sum_{t_1 = (t_2 + \lambda_1)\vee (t_2 + S + 1 - j)}^{(t_2 + \lambda_2)\wedge T}\boldsymbol{\zeta}_{t_1,t_1 - t_2 - 1}^\top\mathbf{A}_{t_1t_2}\left(\mathbf{E}\left[\mathbf{z}_{t_2}\mid\mathcal{F}_{t_2, j}\right] - \mathbf{E}\left[\mathbf{z}_{t_2}\mid\mathcal{F}_{t_2, j - 1}\right]\right)\right\vert^{M / 2}\right.\\
    &\mid \mathbf{E}\left[\mathbf{z}_{t_2}\mid\mathcal{F}_{t_2, j}\right] - \mathbf{E}\left[\mathbf{z}_{t_2}\mid\mathcal{F}_{t_2, j - 1}\right] = \mathbf{a}
    \Bigg]\\
    & = \mathbf{E}\left[
    \left\vert \sum_{t_1 = (t_2 + \lambda_1)\vee (t_2 + S + 1 - j)}^{(t_2 + \lambda_2)\wedge T}\boldsymbol{\zeta}_{t_1,t_1 - t_2 - 1}^\top\mathbf{A}_{t_1t_2}\mathbf{a}\right\vert^{M / 2}
    \right]\\
    &\leq C\left\vert\mathbf{a}\right\vert_2^{M/2}\left( \sum_{t_1 = (t_2 + \lambda_1)\vee (t_2 + S + 1 - j)}^{(t_2 + \lambda_2)\wedge T}\left\vert \mathbf{A}_{t_1t_2}\right\vert_2^2\right)^{M/4},
\end{align*}
which makes 
\begin{align*}
    &\left\Vert \sum_{t_1 = (t_2 + \lambda_1)\vee (t_2 + S + 1 - j)}^{(t_2 + \lambda_2)\wedge T}\boldsymbol{\zeta}_{t_1,t_1 - t_2 - 1}^\top\mathbf{A}_{t_1t_2}\left(\mathbf{E}\left[\mathbf{z}_{t_2}\mid\mathcal{F}_{t_2, j}\right] - \mathbf{E}\left[\mathbf{z}_{t_2}\mid\mathcal{F}_{t_2, j - 1}\right]\right)\right\Vert_{M/2}\\
    &\leq C\sqrt{\sum_{t_1 = (t_2 + \lambda_1)\vee (t_2 + S + 1 - j)}^{(t_2 + \lambda_2)\wedge T}\left\vert \mathbf{A}_{t_1t_2}\right\vert_2^2}\ \left\Vert\ \left\vert\mathbf{E}\left[\mathbf{z}_{t_2}\mid\mathcal{F}_{t_2, j}\right] - \mathbf{E}\left[\mathbf{z}_{t_2}\mid\mathcal{F}_{t_2, j - 1}\right]\right\vert_2\ \right\Vert_{M/2}.
\end{align*}
Since 
\begin{align*}
    &\left\Vert\ \left\vert\mathbf{E}\left[\mathbf{z}_{t_2}\mid\mathcal{F}_{t_2, j}\right] - \mathbf{E}\left[\mathbf{z}_{t_2}\mid\mathcal{F}_{t_2, j - 1}\right]\right\vert_2\ \right\Vert_{M/2}\\
    &\leq \sqrt{\sum_{i = 1}^d\left\Vert\mathbf{E}\left[\mathbf{z}_{t_2}^{(i)}\mid\mathcal{F}_{t_2, j}\right] - \mathbf{E}\left[\mathbf{z}_{t_2}^{(i)}\mid\mathcal{F}_{t_2, j - 1}\right]\right\Vert_{M/2}^2}\leq \sqrt{d}\delta_j,
\end{align*}
we have 
\begin{align*}
    &\left\Vert \sum_{t_1 = (t_2 + \lambda_1)\vee (t_2 + S + 1 - j)}^{(t_2 + \lambda_2)\wedge T}\boldsymbol{\zeta}_{t_1,t_1 - t_2 - 1}^\top\mathbf{A}_{t_1t_2}\left(\mathbf{E}\left[\mathbf{z}_{t_2}\mid\mathcal{F}_{t_2, j}\right] - \mathbf{E}\left[\mathbf{z}_{t_2}\mid\mathcal{F}_{t_2, j - 1}\right]\right)\right\Vert_{M/2}\\
    &\leq C\delta_j\sqrt{d\sum_{t_1 = (t_2 + \lambda_1)\vee (t_2 + S + 1 - j)}^{(t_2 + \lambda_2)\wedge T}\left\vert \mathbf{A}_{t_1t_2}\right\vert_2^2}
    \leq C\delta_j\sqrt{d\sum_{t_1 = (t_2 + \lambda_1)}^{(t_2 + \lambda_2)\wedge T}\left\vert \mathbf{A}_{t_1t_2}\right\vert_2^2},
\end{align*}
which makes 
\begin{align*}
    &\left\Vert \sum_{t_2 = 1}^{T - \lambda_1}\sum_{t_1 = (t_2 + \lambda_1)\vee (t_2 + S + 1 - j)}^{(t_2 + \lambda_2)\wedge T}\boldsymbol{\zeta}_{t_1,t_1 - t_2 - 1}^\top\mathbf{A}_{t_1t_2}\left(\mathbf{E}\left[\mathbf{z}_{t_2}\mid\mathcal{F}_{t_2, j}\right] - \mathbf{E}\left[\mathbf{z}_{t_2}\mid\mathcal{F}_{t_2, j - 1}\right]\right)\right\Vert_{M/2}\\
    &\leq C\delta_j\sqrt{d\sum_{t_1 = 1 + \lambda_1}^{T}\sum_{t_2 = (t_1  - \lambda_2)\vee 1}^{t_1 - \lambda_1}\left\vert \mathbf{A}_{t_1t_2}\right\vert_2^2}.
\end{align*}
Since $d = O(T)$ and $S>\lambda_2,$ we have 
\begin{equation}
    \begin{aligned}
        &\left\Vert
    \sum_{t_2 = 1}^{T - \lambda_1}\sum_{t_1 = t_2 + \lambda_1}^{(t_2 + \lambda_2)\wedge T}\boldsymbol{\zeta}_{t_1,t_1 - t_2 - 1}^\top\mathbf{A}_{t_1t_2}\left(\mathbf{z}_{t_2} - \mathbf{E}\left[\mathbf{z}_{t_2}\mid\mathcal{F}_{t_2, t_2 + S - t_1}\right]\right)
    \right\Vert_{M/2}\\
    & \leq C\sum_{j = S+1 - \lambda_2}^\infty\delta_j\sqrt{d\sum_{t_1 = 1 + \lambda_1}^{T}\sum_{t_2 = (t_1  - \lambda_2)\vee 1}^{t_1 - \lambda_1}\left\vert \mathbf{A}_{t_1t_2}\right\vert_2^2}\\
    &\leq \frac{C_1}{(S+1-\lambda_2)^\alpha}\sqrt{d\sum_{t_1 = 1 + \lambda_1}^{T}\sum_{t_2 = (t_1  - \lambda_2)\vee 1}^{t_1 - \lambda_1}\left\vert \mathbf{A}_{t_1t_2}\right\vert_2^2}\leq \frac{C_2a^\dagger T\sqrt{\lambda_2}}{(S-\lambda_2)^\alpha}\leq \frac{C_3a^\dagger T}{(S-\lambda_2)^{\alpha - 1}}
    \end{aligned}
    \label{eq.first_moment_truncate_part}
\end{equation}
On the other hand, notice that 
\begin{align*}
    &\left\Vert\sum_{t_1 = 1 + \lambda_1}^T\sum_{t_2 = (t_1 - \lambda_2)\vee 1}^{t_1 - \lambda_1}
    \left(\boldsymbol{\omega}_{t_1, t_1 - t_2 - 1}^\top\mathbf{A}_{t_1t_2}\mathbf{z}_{t_2} - \mathbf{E}\left[\boldsymbol{\omega}_{t_1, t_1 - t_2 - 1}^\top\mathbf{A}_{t_1t_2}\mathbf{z}_{t_2}\mid\mathcal{F}_{t_1,S}\right]\right)
    \right\Vert_{M/2}\\
    &\leq \sum_{j = S + 1}^\infty\left\Vert\sum_{t_1 = 1 + \lambda_1}^T\sum_{t_2 = (t_1 - \lambda_2)\vee 1}^{t_1 - \lambda_1}
    \left(\mathbf{E}\left[\boldsymbol{\omega}_{t_1, t_1 - t_2 - 1}^\top\mathbf{A}_{t_1t_2}\mathbf{z}_{t_2}\mid\mathcal{F}_{t_1, j}\right] - \mathbf{E}\left[\boldsymbol{\omega}_{t_1, t_1 - t_2 - 1}^\top\mathbf{A}_{t_1t_2}\mathbf{z}_{t_2}\mid\mathcal{F}_{t_1, j - 1}\right]\right)
    \right\Vert_{M/2}.
\end{align*}
For $j\geq S+1 > \lambda_2,$ from \eqref{eq.moment_j_greater_lambda_2},
\begin{align*}
     &\left\Vert\sum_{t_1 = 1 + \lambda_1}^T\sum_{t_2 = (t_1 - \lambda_2)\vee 1}^{t_1 - \lambda_1}\left(\mathbf{E}\left[\boldsymbol{\omega}_{t_1, t_1 - t_2 - 1}^\top\mathbf{A}_{t_1t_2}\mathbf{z}_{t_2}\mid\mathcal{F}_{t_1, j}\right] - \mathbf{E}\left[\boldsymbol{\omega}_{t_1, t_1 - t_2 - 1}^\top\mathbf{A}_{t_1t_2}\mathbf{z}_{t_2}\mid\mathcal{F}_{t_1, j - 1}\right]\right)\right\Vert_{M/2}\\
        &\leq \frac{Cda^\dagger(\lambda_2 - \lambda_1)\sqrt{T}}{j^\alpha} + \frac{Cda^\dagger\sqrt{T}}{\lambda_1^\alpha (j - \lambda_2)^\alpha},
\end{align*}
also notice that $S > \lambda_2,$
\begin{equation}
    \begin{aligned}
        &\left\Vert\sum_{t_1 = 1 + \lambda_1}^T\sum_{t_2 = (t_1 - \lambda_2)\vee 1}^{t_1 - \lambda_1}
    \left(\boldsymbol{\omega}_{t_1, t_1 - t_2 - 1}^\top\mathbf{A}_{t_1t_2}\mathbf{z}_{t_2} - \mathbf{E}\left[\boldsymbol{\omega}_{t_1, t_1 - t_2 - 1}^\top\mathbf{A}_{t_1t_2}\mathbf{z}_{t_2}\mid\mathcal{F}_{t_1,S}\right]\right)
    \right\Vert_{M/2}\\
    &\leq \sum_{j = S + 1}^\infty\left(\frac{Cda^\dagger(\lambda_2 - \lambda_1)\sqrt{T}}{j^\alpha} + \frac{Cda^\dagger\sqrt{T}}{\lambda_1^\alpha (j - \lambda_2)^\alpha}\right)\\
    &\leq \frac{C_1da^\dagger(\lambda_2 - \lambda_1)\sqrt{T}}{(S+1)^{\alpha - 1}} + \frac{C_1da^\dagger\sqrt{T}}{\lambda_1^\alpha (S - \lambda_2)^{\alpha - 1}}\leq \frac{C_2T^{3/2}a^\dagger}{(S+1)^{\alpha - 2}}.
    \end{aligned}
    \label{eq.second_moment_truncate_part}
\end{equation}
From \eqref{eq.first_moment_truncate_part} and \eqref{eq.second_moment_truncate_part}, we prove \eqref{eq.truncate_whole}.
\end{proof}

\begin{remark}
\label{remark.independent_linear_quadratic}
    To make a further illustration of the bounds in equations \eqref{eq.linear_combination} and \eqref{eq.product_quadratic_forms}, suppose $\mathbf{z}_t, t\in\mathbf{Z}$ are mutually independent with independent entries, i.e., 
    $\mathbf{z}_t^{(i)},  t\in\mathbf{Z}, i = 1,\cdots,d$ are mutually independent. In this case, from Theorem 2 of \cite{MR0133849}, 
\begin{align*}
    \left\Vert
        \sum_{t = 1}^T \mathbf{a}_t^\top\mathbf{z}_t
        \right\Vert_M = \left\Vert
        \sum_{t = 1}^T\sum_{i = 1}^d \mathbf{a}_t^{(i)}\mathbf{z}_t^{(i)}
        \right\Vert_M\leq C\sqrt{\sum_{t = 1}^T\sum_{i = 1}^d\left\vert \mathbf{a}_t^{(i)}\right\vert^2} = C\sqrt{\sum_{t = 1}^T\left\vert \mathbf{a}_t\right\vert_2^2},
\end{align*}
which is exactly the moment bound in equation \eqref{eq.linear_combination}. Furthermore, define the random vector $\boldsymbol{z}\in\mathbf{R}^{Td}$ and the matrix $\boldsymbol{B}\in\mathbf{R}^{Td\times Td}$ as follows:
\begin{align*}
    \boldsymbol{z} = \left[
    \begin{matrix}
        \mathbf{z}_{1}\\
        \mathbf{z}_2\\
        \vdots\\
        \mathbf{z}_T
    \end{matrix}
    \right],\quad 
    \boldsymbol{B} = \left[
    \begin{matrix}
        \boldsymbol{B}_{11} & \boldsymbol{B}_{12} & \cdots & \boldsymbol{B}_{1T}\\
        \boldsymbol{B}_{21} & \boldsymbol{B}_{22} & \cdots & \boldsymbol{B}_{2T}\\
        \vdots & \vdots & \cdots & \vdots \\
        \boldsymbol{B}_{T1} & \boldsymbol{B}_{T2} & \cdots & \boldsymbol{B}_{TT}
    \end{matrix}
    \right],
\end{align*}
where $\boldsymbol{B}_{t_1t_2}\in\mathbf{R}^{d\times d}$ is defined as follows:
\begin{align*}
   \boldsymbol{B}_{t_1t_2} = 
    \begin{cases}
        \mathbf{A}_{t_1t_2} & \text{if}\quad t_1 \geq 1 + \lambda_1 \quad \text{and}\quad    (t_1 - \lambda_2) \vee 1\leq t_2 \leq t_1 - \lambda_1,\\
        0 & \text{Otherwise}.
    \end{cases}
\end{align*}
Then 
\begin{align*}
    &\left\Vert
    \sum_{t_1 = 1 + \lambda_1}^T\sum_{t_2 = (t_1 - \lambda_2)\vee 1}^{t_1 - \lambda_1}\left(\mathbf{z}_{t_1}\mathbf{A}_{t_1t_2}\mathbf{z}_{t_2} - \mathbf{E}\left[\mathbf{z}_{t_1}\mathbf{A}_{t_1t_2}\mathbf{z}_{t_2}\right]\right)
    \right\Vert_{M/2}\\
    &= \left\Vert\sum_{t_1 = 1}^T\sum_{t_2 = 1}^T\left(\mathbf{z}_{t_1}\boldsymbol{B}_{t_1t_2}\mathbf{z}_{t_2} - \mathbf{E}\left[\mathbf{z}_{t_1}\boldsymbol{B}_{t_1t_2}\mathbf{z}_{t_2}\right]\right)\right\Vert_{M/2}\\
    & = \left\Vert
    \boldsymbol{z}^\top \boldsymbol{B}\boldsymbol{z} - \mathbf{E}\left[\boldsymbol{z}^\top \boldsymbol{B}\boldsymbol{z}\right]
    \right\Vert_{M/2}.
\end{align*}
From Theorem 2 of \cite{MR0133849}, we have 
\begin{align*}
    \left\Vert
    \boldsymbol{z}^\top \boldsymbol{B}\boldsymbol{z} - \mathbf{E}\left[\boldsymbol{z}^\top \boldsymbol{B}\boldsymbol{z}\right]
    \right\Vert_{M/2}\leq C\left\vert \boldsymbol{B}\right\vert_{F} = C\sqrt{\sum_{t_1 = 1}^T\sum_{t_2 = 1}^T \left\vert \boldsymbol{B}_{t_1t_2}\right\vert_F^2}
    = C\sqrt{\sum_{t_1 = 1 + \lambda_1}^T\sum_{t_2 = (t_1 - \lambda_2)\vee 1}^{t_1 - \lambda_1}\left\vert \mathbf{A}_{t_1t_2}\right\vert_F^2}.
\end{align*}
We derive the moment bound in \eqref{eq.product_quadratic_forms} by further employing the inequality $\left\vert \mathbf{A}_{t_1t_2}\right\vert_F\leq \sqrt{d}\left\vert \mathbf{A}_{t_1t_2}\right\vert_2.$ As a result, we adopt a relatively loose moment bound to compensate for the effect of temporal dependence in the proof. However, the inequality $\left\vert \mathbf{A}_{t_1t_2}\right\vert_F\leq \sqrt{d}\left\vert \mathbf{A}_{t_1t_2}\right\vert_2$ does not change the order of the moment bound: If all singular values of $\mathbf{A}_{t_1t_2}$ 
are of the same order, then $\left\vert \mathbf{A}_{t_1t_2}\right\vert_F\asymp\sqrt{d}\left\vert \mathbf{A}_{t_1t_2}\right\vert_2.$ Since our primary interest lies in the order of the moment bound, replacing $\left\vert \mathbf{A}_{t_1t_2}\right\vert_F$ with $\sqrt{d}\left\vert \mathbf{A}_{t_1t_2}\right\vert_2$ does not lead to too much information loss. 
\end{remark}

We then present a corollary of Theorem \ref{theorem.linear_and_quadratic} that serves an important role in the proof of Theorem \ref{theorem.validity_of_bootstrap_algorithm}.

\begin{corollary}
    Suppose the conditions of Theorem \ref{theorem.linear_and_quadratic} hold true and suppose $h\in\mathbf{Z}$ satisfies $h = O(T^{\kappa_h})$ with $0<\kappa_h<1.$ Then there exists a constant $C>0$ such that for any $t_1 = 1+\lambda_1,\cdots, T,$ $ 1\vee (t_1 - \lambda_2)\leq t_2\leq t_1-\lambda_1,$ and 
    any real symmetric matrices $\mathbf{B}_u, u = 0,1,\cdots, h,$ we have
    \begin{equation}
        \left\Vert
        \sum_{u = 0}^{h\wedge (t_2 - 1)}\left(\mathbf{z}_{t_1- u}^\top\mathbf{B}_u\mathbf{z}_{t_2- u} - \mathbf{E}\left[\mathbf{z}_{t_1- u}^\top\mathbf{B}_u\mathbf{z}_{t_2- u} \right]\right)
        \right\Vert_{M/2}\leq C\sqrt{d\sum_{u = 0}^{h\wedge (t_2 - 1)}\left\vert\mathbf{B}_u\right\vert^2_2}.
        \label{eq.quadratic_lag}
    \end{equation}
    Furthermore, for any $S > 2\lambda_2 + h,$
    \begin{equation}
        \left\Vert
        \sum_{u = 0}^{h\wedge (t_2 - 1)}\left(\mathbf{z}_{t_1- u}^\top\mathbf{B}_u\mathbf{z}_{t_2- u} - \mathbf{E}\left[\mathbf{z}_{t_1- u}^\top\mathbf{B}_u\mathbf{z}_{t_2- u}\mid\mathcal{F}_{t_1,S}\right]\right)
        \right\Vert_{M/2}\leq \frac{C\sqrt{dh}b^\dagger}{(S-h-\lambda_2)^\alpha},
        \label{eq.organize_lag_S}
    \end{equation}
    where the notations $\lambda_1,\lambda_2$ coincide with Theorem \ref{theorem.linear_and_quadratic}, and $b^\dagger = \max_{u = 0, \cdots, h}\vert\mathbf{B}_u\vert_2.$ 
    \label{corollary.combination_lag}
\end{corollary}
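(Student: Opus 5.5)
The plan is to rerun the proof of Theorem \ref{theorem.linear_and_quadratic}, now summing over the lag index $u$ instead of over the pair $(t_1,t_2)$. The indices $t_1 - u$ and $t_2 - u$ are separated by $t_1 - t_2\in[\lambda_1,\lambda_2]$ for every $u$. Write $k = t_1 - t_2$. For each $u$ we split
\begin{align*}
\mathbf{z}_{t_1-u} = \boldsymbol{\zeta}_{t_1-u,k-1} + \boldsymbol{\omega}_{t_1-u,k-1},
\end{align*}
as in \eqref{eq.def_zeta_omega}. The piece $\boldsymbol{\zeta}_{t_1-u,k-1}$ depends only on $e_{t_1-u},\dots,e_{t_2-u+1}$ and is therefore independent of $\mathbf{z}_{t_2-u}$; in particular its cross term has mean zero. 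This splits the left side of \eqref{eq.quadratic_lag} into a term $I$ built from the $\boldsymbol{\zeta}$ pieces and a centered term $II$ built from the $\boldsymbol{\omega}$ pieces.

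\textbf{Bounding $I$.} Expand
\begin{align*}
\mathbf{z}_{t_2-u} = \sum_{j\geq 0}\left(\mathbf{E}\left[\mathbf{z}_{t_2-u}\mid\mathcal{F}_{t_2-u,j}\right] - \mathbf{E}\left[\mathbf{z}_{t_2-u}\mid\mathcal{F}_{t_2-u,j-1}\right]\right).
\end{align*}
For fixed $j$, order the summands by decreasing $u$, i.e. by increasing time $t_2-u$, and reveal innovations backwards in time as in the $B_{z,j}$ construction. This makes the partial sums over $u$ a martingale. Burkholder's inequality (Theorem 1.1 of \cite{MR0400380}) then bounds the sum by the square root of the summed squared norms. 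Each summand is handled as in \eqref{eq.Fir_separate}: condition on the martingale-difference vector $\mathbf{a}$, which is independent of the $\boldsymbol{\zeta}$ factor, and use \eqref{eq.linear_combination} to get
\begin{align*}
\left\Vert \boldsymbol{\zeta}^\top\mathbf{B}_u\mathbf{a}\right\Vert_{M/2}\leq C\vert\mathbf{B}_u\vert_2\vert\mathbf{a}\vert_2,
\end{align*}
together with $\Vert\,\vert\mathbf{a}\vert_2\Vert_{M/2}\le\sqrt d\,\delta_j$. Summing over $j$ with $\sum_j\delta_j<\infty$ gives $I\le C\sqrt{d\sum_u\vert\mathbf{B}_u\vert_2^2}$.

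\textbf{Where the martingale structure may fail.} Successive factors $\boldsymbol{\zeta}_{t_1-u,k-1}$ live on overlapping windows of innovations, so the filtration ordering needs care. The main check is this: when $u$ decreases by one, the new increment must depend only on innovations newer than the conditioning field, apart from the single fresh innovation $e_{t_2-u-j}$. If that fails, I would fall back on splitting the $u$-range into residue classes modulo $\lambda_2$, within which the windows are disjoint. This costs only a constant, since \eqref{eq.quadratic_lag} is stated with $\sqrt{d}$ and no extra $h$ factor to protect. Finally, the assumption $h=O(T^{\kappa_h})$ ensures that $h$ is small compared with $T$, so all indices stay in the admissible range.

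\textbf{Bounding $II$.} Use the martingale decomposition in $\mathcal{F}_{t_1-u,j}$ exactly as for $D_{z,j}$, together with \eqref{eq.omega_one_term}:
\begin{align*}
\left\Vert\boldsymbol{\omega}_{t_1-u,k-1}^\top\mathbf{B}_u\mathbf{z}_{t_2-u}\right\Vert_{M/2}\leq \frac{Cd\,\vert\mathbf{B}_u\vert_2}{\lambda_1^{\alpha}}.
\end{align*}
The sum over $j$ is handled as in \eqref{eq.sec_part_delta_z}, giving a bound of order $\sqrt{h}\,d\,b^\dagger\lambda_2/\lambda_1^{\alpha-1}$. Since $d=O(T)$ and $\kappa_1>5/(2(\alpha-2))$, this is $o(b^\dagger)$, which is dominated by the target bound $\sqrt{d\sum_u\vert\mathbf{B}_u\vert_2^2}$. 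Strictly, this needs the largest $\vert\mathbf{B}_u\vert_2$ to be controlled by the sum of squares. That holds because $b^\dagger\le\sqrt{\sum_u\vert\mathbf{B}_u\vert_2^2}$, and the $\sqrt d$ factor is then slack.

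\textbf{Proving \eqref{eq.organize_lag_S}.} Because $S>2\lambda_2+h$, every $\boldsymbol{\zeta}_{t_1-u,k-1}$ is $\mathcal{F}_{t_1,S}$-measurable. The conditional error therefore reduces, as in the proof of \eqref{eq.truncate_whole}, to two tails. The first is
\begin{align*}
\boldsymbol{\zeta}^\top\mathbf{B}_u\left(\mathbf{z}_{t_2-u}-\mathbf{E}\left[\mathbf{z}_{t_2-u}\mid\mathcal{F}_{t_1,S}\right]\right),
\end{align*}
in which only projections with $j\ge S-h-\lambda_2$ survive. By the second line of \eqref{eq.linear_combination} this contributes $C\sqrt{dh}\,b^\dagger/(S-h-\lambda_2)^{\alpha}$. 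The second is the $\boldsymbol{\omega}$ tail with $j>S-h$. Using the bound in \eqref{eq.moment_j_greater_lambda_2}, summed over only $h$ terms rather than $T$ terms, it is of smaller order.

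The step I expect to be hardest is verifying the martingale ordering in $I$ when the $\boldsymbol{\zeta}$ windows overlap across lags. The exponent bookkeeping in the $\boldsymbol{\omega}$ tail of \eqref{eq.organize_lag_S} is also delicate, since it must end up at $\alpha$ rather than $\alpha-2$; if it does not, I would tighten it by using the $\lambda_1^{-\alpha}$ factor in \eqref{eq.omega_one_term}.
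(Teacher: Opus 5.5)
The step that fails is your claim that the $\boldsymbol{\omega}$ tail in \eqref{eq.organize_lag_S} is ``of smaller order''. Take $j\geq S-h+1>k$. As in the proof of \eqref{eq.product_quadratic_forms}, the $j$-th projection of $\boldsymbol{\omega}_{t_1-u,k-1}^\top\mathbf{B}_u\mathbf{z}_{t_2-u}$ splits into two pieces:
\begin{itemize}
\item the projection of $\boldsymbol{\omega}_{t_1-u,j-1}^\top\mathbf{B}_u\mathbf{z}_{t_2-u}$, of size $Cd\vert\mathbf{B}_u\vert_2/j^{\alpha}$;
\item a piece containing $\boldsymbol{\zeta}_{t_1-u,j-1}-\boldsymbol{\zeta}_{t_1-u,k-1}$, of size $Cd\vert\mathbf{B}_u\vert_2\delta_{j-k}/k^{\alpha}$.
\end{itemize}
Only the second piece carries the factor $\lambda_1^{-\alpha}$, so your proposed repair through \eqref{eq.omega_one_term} does not touch the first. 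After Burkholder over $u$ and summation over $j\geq S-h+1$, the best bound for the first piece is $C\sqrt{h}\,d\,b^\dagger/(S-h)^{\alpha-1}$. This is the analogue of the term $Cda^\dagger(\lambda_2-\lambda_1)\sqrt{T}/(S+1)^{\alpha-1}$ in \eqref{eq.second_moment_truncate_part}, which is why \eqref{eq.truncate_whole} carries $T\sqrt{T}$ and only exponent $\alpha-2$. It exceeds the target $C\sqrt{dh}\,b^\dagger/(S-h-\lambda_2)^{\alpha}$ by a factor of order $\sqrt{d}\,(S-h)$. Following the paper will not rescue this. Its proof of the corollary writes $\boldsymbol{\omega}_{t_1-u,t_1-t_2-1}=\boldsymbol{\omega}_{t_1-u,j-1}+\boldsymbol{\zeta}_{t_1-u,j-1}-\boldsymbol{\zeta}_{t_1-u,t_1-t_2-1}$, bounds only the $\boldsymbol{\zeta}$-difference piece, and silently drops the $\boldsymbol{\omega}_{t_1-u,j-1}$ piece.

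In fact \eqref{eq.organize_lag_S} cannot be proved as stated. Let $e_t=(\boldsymbol{\eta}_t,\xi_t)$ with $\boldsymbol{\eta}_t\sim N(0,\mathbf{I}_d)$ and $\xi_t\sim N(0,1)$, all independent. Fix $k=t_1-t_2\in[\lambda_1,\lambda_2]$ and $S=2\lambda_2+2h$, and set $\mathbf{z}_t=\boldsymbol{\eta}_t+S^{-\alpha}\xi_{t-S-1}\boldsymbol{\eta}_{t-k}$; this is admissible since $g_{t,T}$ may depend on $T$. Then $\delta_0=O(1)$, $\delta_k,\delta_{S+1}=O(S^{-\alpha})$ and all other $\delta_q=0$, so Definition \ref{def.m_alpha_short_range_dependence} holds with constants free of $T$. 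Take $\mathbf{B}_0=\mathbf{I}_d$ and $\mathbf{B}_u=0$ for $u\geq1$. Then $\mathbf{E}[\mathbf{z}_{t_1}^\top\mathbf{z}_{t_2}\mid\mathcal{F}_{t_1,S}]=\boldsymbol{\eta}_{t_1}^\top\boldsymbol{\eta}_{t_2}$. The remainder contains $S^{-\alpha}\xi_{t_1-S-1}\vert\boldsymbol{\eta}_{t_2}\vert_2^2$, which is orthogonal to the other two remainder terms. Hence the left side of \eqref{eq.organize_lag_S} is at least $dS^{-\alpha}$, whereas the right side is $C\sqrt{dh}/(\lambda_2+h)^{\alpha}=2^{\alpha}C\sqrt{dh}\,S^{-\alpha}$. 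The inequality therefore fails as soon as $d/h\to\infty$, e.g.\ $d\asymp T$. What your decomposition (and the paper's) actually yields is a bound of order $d\sqrt{h}\,b^\dagger/(S-h-\lambda_2)^{\alpha-1}$. That appears to be enough for the later applications, where $S-h-\lambda_2\gtrsim\lambda_2$.

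For \eqref{eq.quadratic_lag} you do not need to rerun anything. The paper puts $\mathbf{A}_{t_1-u,\,t_2-u}=\mathbf{B}_u$ for $0\leq u\leq h\wedge(t_2-1)$ and all other $\mathbf{A}_{t_3t_4}=0$. These pairs lie in the index range of \eqref{eq.product_quadratic_forms}, because $(t_1-u)-(t_2-u)\in[\lambda_1,\lambda_2]$ and $t_2-u\geq1$, so the bound is immediate. Your rerun also works: adding lags in increasing $u$ with innovations revealed backwards is a genuine martingale, as in the paper's $J_{z,j}$. However, the residue-class fallback would not cost only a constant; recombining the classes by Minkowski loses a factor $\sqrt{\min(h+1,\lambda_2)}$.
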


\begin{proof}[Proof of Corollary \ref{corollary.combination_lag}]
For given $t_1$ and $t_2,$ we define the matrices $\mathbf{A}_{t_3t_4}$ as follows:
\begin{align*}
    \mathbf{A}_{t_3t_4} = 
    \begin{cases}
    \mathbf{B}_u & \text{if}\quad t_3 = t_1 - u\quad\text{and}\quad t_4 = t_2 - u,\quad\text{where}\quad u = 0,1,\cdots, h\wedge (t_2  -1),\\
    0 & \text{Otherwise.} 
    \end{cases}
\end{align*}
In such case, 
\begin{align*}
    &\sum_{t_3 = 1 + \lambda_1}^T\sum_{t_4 = (t_3 - \lambda_2)\vee 1}^{t_3 - \lambda_1}\left(
    \mathbf{z}_{t_3}^\top\mathbf{A}_{t_3t_4}\mathbf{z}_{t_4} - \mathbf{E}\left[\mathbf{z}_{t_3}^\top\mathbf{A}_{t_3t_4}\mathbf{z}_{t_4} \right]
    \right)\\
    &= \sum_{u = 0}^{h\wedge (t_2 - 1)}\left(\mathbf{z}_{t_1- u}^\top\mathbf{B}_u\mathbf{z}_{t_2- u} - \mathbf{E}\left[\mathbf{z}_{t_1- u}^\top\mathbf{B}_u\mathbf{z}_{t_2- u}\right]\right),
\end{align*}
 so from equation \eqref{eq.product_quadratic_forms},
 \begin{align*}
     \left\Vert
        \sum_{u = 0}^{h\wedge (t_2 - 1)}\left(\mathbf{z}_{t_1- u}^\top\mathbf{B}_u\mathbf{z}_{t_2- u} - \mathbf{E}\left[\mathbf{z}_{t_1- u}^\top\mathbf{B}_u\mathbf{z}_{t_2- u} \right]\right)
        \right\Vert_{M/2} &\leq C\sqrt{d\sum_{t_3 = 1 + \lambda_1}^T\sum_{t_4 = (t_3 - \lambda_2)\vee 1}^{t_3 - \lambda_1}\left\vert \mathbf{A}_{t_3t_4}\right\vert^2_2}\\
        & = C\sqrt{d\sum_{u = 0}^{h\wedge (t_2 - 1)}\left\vert \mathbf{B}_u\right\vert^2_2},
 \end{align*}
 which proves \eqref{eq.quadratic_lag}.

Consider the decomposition  $\mathbf{z}_{t_1- u} = \boldsymbol{\zeta}_{t_1 -u, t_1 - t_2 - 1} + \boldsymbol{\omega }_{t_1 -u, t_1 - t_2 - 1},$ where $\boldsymbol{\zeta}_{t_1 -u, t_1 - t_2 - 1} $ and $\boldsymbol{\omega }_{t_1 -u, t_1 - t_2 - 1}$ coincide with \eqref{eq.def_zeta_omega} with $s = t_1 - t_2 - 1.$ Then $\boldsymbol{\zeta}_{t_1 -u, t_1 - t_2 - 1}$ is measurable in $\mathcal{F}_{t_1,S}, $ and 
\begin{align*}
    &\left\Vert
        \sum_{u = 0}^{h\wedge (t_2 - 1)}\left(\mathbf{z}_{t_1- u}^\top\mathbf{B}_u\mathbf{z}_{t_2- u} - \mathbf{E}\left[\mathbf{z}_{t_1- u}^\top\mathbf{B}_u\mathbf{z}_{t_2- u}\mid\mathcal{F}_{t_1,S}\right]\right)
        \right\Vert_{M/2}\\
    &\leq     \left\Vert
        \sum_{u = 0}^{h\wedge (t_2 - 1)}\left(\boldsymbol{\zeta}_{t_1 -u, t_1 - t_2 - 1}^\top\mathbf{B}_u\left(\mathbf{z}_{t_2- u} - \mathbf{E}\left[\mathbf{z}_{t_2- u}\mid\mathcal{F}_{t_2 - u, t_2 +S- u-t_1}\right]\right)\right)
        \right\Vert_{M/2}\\
        & + \left\Vert
        \sum_{u = 0}^{h\wedge (t_2 - 1)}\left(\boldsymbol{\omega }_{t_1 -u, t_1 - t_2 - 1}^\top\mathbf{B}_u\mathbf{z}_{t_2- u} - \mathbf{E}\left[\boldsymbol{\omega }_{t_1 -u, t_1 - t_2 - 1}^\top\mathbf{B}_u\mathbf{z}_{t_2- u}\mid\mathcal{F}_{t_1,S}\right]\right)
        \right\Vert_{M/2}.
\end{align*}
For 
\begin{align*}
    &\left\Vert
        \sum_{u = 0}^{h\wedge (t_2 - 1)}\left(\boldsymbol{\zeta}_{t_1 -u, t_1 - t_2 - 1}^\top\mathbf{B}_u\left(\mathbf{z}_{t_2- u} - \mathbf{E}\left[\mathbf{z}_{t_2- u}\mid\mathcal{F}_{t_2 - u, t_2 +S- u-t_1}\right]\right)\right)
    \right\Vert_{M/2}\\
    & = \left\Vert
        \sum_{u = 0}^{h\wedge (t_2 - 1)}\sum_{j = t_2 + S - u - t_1 + 1}^\infty\left(\boldsymbol{\zeta}_{t_1 -u, t_1 - t_2 - 1}^\top\mathbf{B}_u\left(\mathbf{E}\left[\mathbf{z}_{t_2- u}\mid\mathcal{F}_{t_2 - u,j}\right] - \mathbf{E}\left[\mathbf{z}_{t_2- u}\mid\mathcal{F}_{t_2 - u, j-1}\right]\right)\right)
    \right\Vert_{M/2}\\
    &\leq \sum_{j = t_2 + S - h - t_1 + 1}^\infty\left\Vert
        \sum_{u = 0\vee(t_2 + S - t_1 + 1 - j)}^{h\wedge (t_2 - 1)}\left(\boldsymbol{\zeta}_{t_1 -u, t_1 - t_2 - 1}^\top\mathbf{B}_u\left(\mathbf{E}\left[\mathbf{z}_{t_2- u}\mid\mathcal{F}_{t_2 - u,j}\right]\right.\right.\right.\\
    &\left.\left.- \mathbf{E}\left[\mathbf{z}_{t_2- u}\mid\mathcal{F}_{t_2 - u, j-1}\right]\right)\right)
    \Bigg\Vert_{M/2}.
\end{align*}
For any $j = t_2 + S - h - t_1 + 1, \cdots,$ and any $z = 1,\cdots, h\wedge (t_2 - 1) - 0\vee(t_2 + S - t_1 + 1 - j) + 1,$ define 
\begin{align*}
    J_{z,j} = \sum_{u = 0\vee(t_2 + S - t_1 + 1 - j)}^{ 0\vee(t_2 + S - t_1 + 1 - j) + z - 1}\left(\boldsymbol{\zeta}_{t_1 -u, t_1 - t_2 - 1}^\top\mathbf{B}_u\left(\mathbf{E}\left[\mathbf{z}_{t_2- u}\mid\mathcal{F}_{t_2 - u,j}\right] - \mathbf{E}\left[\mathbf{z}_{t_2- u}\mid\mathcal{F}_{t_2 - u, j-1}\right]\right)\right)
\end{align*}
and $\mathcal{J}_{z,j}$ the $\sigma$-field generated by $e_T,e_{T-1},\cdots, e_{t_2 - 0\vee(t_2 + S - t_1 + 1 - j) - z + 1 - j}.$ Then $J_{z,j} $ is measurable in $\mathcal{J}_{z,j},$ $\mathcal{J}_{z,j}\subset \mathcal{J}_{z +1,j},$ and 
\begin{align*}
    &\mathbf{E}\left[J_{z + 1,j} - J_{z,j}\mid \mathcal{J}_{z,j}\right]\\
    &= \mathbf{E}\left[\left(\boldsymbol{\zeta}_{t_1 - 0\vee(t_2 + S - t_1 + 1 - j) - z, t_1 - t_2 - 1}^\top\mathbf{B}_{0\vee(t_2 + S - t_1 + 1 - j) + z}\right.\right.\\
    &\left(\mathbf{E}\left[\mathbf{z}_{t_2- 0\vee(t_2 + S - t_1 + 1 - j) - z}\mid\mathcal{F}_{t_2 - 0\vee(t_2 + S - t_1 + 1 - j) - z,j}\right]\right.\\
    &\left.\left.\left.
    - \mathbf{E}\left[\mathbf{z}_{t_2- 0\vee(t_2 + S - t_1 + 1 - j) - z}\mid\mathcal{F}_{t_2 - 0\vee(t_2 + S - t_1 + 1 - j) - z, j-1}\right]\right)\right)\mid \mathcal{J}_{z,j}\right]\\
    & = \boldsymbol{\zeta}_{t_1 - 0\vee(t_2 + S - t_1 + 1 - j) - z, t_1 - t_2 - 1}^\top\mathbf{B}_{0\vee(t_2 + S - t_1 + 1 - j) + z}\left(\mathbf{E}\left[\mathbf{z}_{t_2- 0\vee(t_2 + S - t_1 + 1 - j) - z}\mid\mathcal{F}_{t_2 - 0\vee(t_2 + S - t_1 + 1 - j) - z,j - 1}\right]\right.\\
    &\left. - \mathbf{E}\left[\mathbf{z}_{t_2- 0\vee(t_2 + S - t_1 + 1 - j) - z}\mid\mathcal{F}_{t_2 - 0\vee(t_2 + S - t_1 + 1 - j) - z,j - 1}\right]\right) = 0.
\end{align*}
Therefore, $J_{z,j}$ forms a martingale, and from   Theorem 1.1 of \cite{MR0400380}, 
\begin{align*}
    &\left\Vert
    \sum_{u = 0\vee(t_2 + S - t_1 + 1 - j)}^{h\wedge (t_2 - 1)}\left(\boldsymbol{\zeta}_{t_1 -u, t_1 - t_2 - 1}^\top\mathbf{B}_u\left(\mathbf{E}\left[\mathbf{z}_{t_2- u}\mid\mathcal{F}_{t_2 - u,j}\right] - \mathbf{E}\left[\mathbf{z}_{t_2- u}\mid\mathcal{F}_{t_2 - u, j-1}\right]\right)\right)
    \right\Vert_{M/2}\\
    &\leq C\sqrt{\sum_{u = 0\vee(t_2 + S - t_1 + 1 - j)}^{h\wedge (t_2 - 1)}\left\Vert \boldsymbol{\zeta}_{t_1 -u, t_1 - t_2 - 1}^\top\mathbf{B}_u\left(\mathbf{E}\left[\mathbf{z}_{t_2- u}\mid\mathcal{F}_{t_2 - u,j}\right] - \mathbf{E}\left[\mathbf{z}_{t_2- u}\mid\mathcal{F}_{t_2 - u, j-1}\right]\right)\right\Vert^2_{M/2}}.
\end{align*}
For any vector $\mathbf{a}\in\mathbf{R}^d,$
\begin{align*}
    \left\Vert\boldsymbol{\zeta}_{t_1 -u, t_1 - t_2 - 1}^\top\mathbf{B}_u\mathbf{a}\right\Vert_{M/2} 
    = \left\Vert\mathbf{E}\left[\mathbf{z}_{t_1-  u}^\top\mathbf{B}_u\mathbf{a}\mid\mathcal{F}_{t_1 - u, t_1 - t_2 - 1}\right]\right\Vert_{M/2}
    \leq \left\Vert\mathbf{z}_{t_1-  u}^\top\mathbf{B}_u\mathbf{a}\right\Vert_{M/2}\leq C\left\vert\mathbf{B}_u\right\vert_2\vert\mathbf{a}\vert_2.
\end{align*}
Since $\boldsymbol{\zeta}_{t_1 -u, t_1 - t_2 - 1}^\top\mathbf{B}_u$ is independent of $\mathbf{E}\left[\mathbf{z}_{t_2- u}\mid\mathcal{F}_{t_2 - u,j}\right] - \mathbf{E}\left[\mathbf{z}_{t_2- u}\mid\mathcal{F}_{t_2 - u, j-1}\right],$ from Examples 1.22 of \cite{MR2767184}, for any $\mathbf{a}\in\mathbf{R}^d,$
\begin{align*}
    &\mathbf{E}\left[\left\vert  \boldsymbol{\zeta}_{t_1 -u, t_1 - t_2 - 1}^\top\mathbf{B}_u\left(\mathbf{E}\left[\mathbf{z}_{t_2- u}\mid\mathcal{F}_{t_2 - u,j}\right] - \mathbf{E}\left[\mathbf{z}_{t_2- u}\mid\mathcal{F}_{t_2 - u, j-1}\right]\right)\right\vert^{M/2}\right.\\
    &\mid \mathbf{E}\left[\mathbf{z}_{t_2- u}\mid\mathcal{F}_{t_2 - u,j}\right] - \mathbf{E}\left[\mathbf{z}_{t_2- u}\mid\mathcal{F}_{t_2 - u, j-1}\right] = \mathbf{a}\Big]\\
    & = \mathbf{E}\left[\left\vert  \boldsymbol{\zeta}_{t_1 -u, t_1 - t_2 - 1}^\top\mathbf{B}_u\mathbf{a}\right\vert^{M/2}\right]\leq C\left\vert\mathbf{B}_u\right\vert_2^{M/2}\vert\mathbf{a}\vert_2^{M/2},
\end{align*}
which implies 
\begin{align*}
    &\left\Vert
    \boldsymbol{\zeta}_{t_1 -u, t_1 - t_2 - 1}^\top\mathbf{B}_u\left(\mathbf{E}\left[\mathbf{z}_{t_2- u}\mid\mathcal{F}_{t_2 - u,j}\right] - \mathbf{E}\left[\mathbf{z}_{t_2- u}\mid\mathcal{F}_{t_2 - u, j-1}\right]\right)
    \right\Vert_{M/2}\\
    &\leq C\vert \mathbf{B}_u\vert_2\left\Vert\ \left\vert \mathbf{E}\left[\mathbf{z}_{t_2- u}\mid\mathcal{F}_{t_2 - u,j}\right] - \mathbf{E}\left[\mathbf{z}_{t_2- u}\mid\mathcal{F}_{t_2 - u, j-1}\right]\right\vert_2\ \right\Vert_{M/2}\leq C_1\vert \mathbf{B}_u\vert_2\delta_j\sqrt{d},
\end{align*}
and 
\begin{align*}
        &\left\Vert
    \sum_{u = 0\vee(t_2 + S - t_1 + 1 - j)}^{h\wedge (t_2 - 1)}\left(\boldsymbol{\zeta}_{t_1 -u, t_1 - t_2 - 1}^\top\mathbf{B}_u\left(\mathbf{E}\left[\mathbf{z}_{t_2- u}\mid\mathcal{F}_{t_2 - u,j}\right] - \mathbf{E}\left[\mathbf{z}_{t_2- u}\mid\mathcal{F}_{t_2 - u, j-1}\right]\right)\right)
    \right\Vert_{M/2}\\
    &\leq C\delta_j\sqrt{d\sum_{u = 0\vee(t_2 + S - t_1 + 1 - j)}^{h\wedge (t_2 - 1)}\vert\mathbf{B}_u\vert^2_2}\leq C\delta_j\sqrt{d\sum_{u = 0}^{h}\vert\mathbf{B}_u\vert^2_2},
\end{align*}
which implies that 
\begin{equation}
    \begin{aligned}
        &\left\Vert
        \sum_{u = 0}^{h\wedge (t_2 - 1)}\left(\boldsymbol{\zeta}_{t_1 -u, t_1 - t_2 - 1}^\top\mathbf{B}_u\left(\mathbf{z}_{t_2- u} - \mathbf{E}\left[\mathbf{z}_{t_2- u}\mid\mathcal{F}_{t_2 - u, t_2 +S- u-t_1}\right]\right)\right)
    \right\Vert_{M/2}\\
    &\leq C\sum_{j = t_2 + S - h - t_1 + 1}^\infty \delta_j\sqrt{d\sum_{u = 0}^{h}\vert\mathbf{B}_u\vert^2_2}\\
    &\leq \frac{C_1}{(t_2 + S - h - t_1 + 1)^\alpha}\sqrt{d\sum_{u = 0}^{h}\vert\mathbf{B}_u\vert^2_2}\leq \frac{C_1\sqrt{dh}b^\dagger}{(S-h-\lambda_2)^\alpha}.
    \end{aligned}
    \label{eq.idea_first_part}
\end{equation}
On the other hand, since 
\begin{align*}
    \mathbf{E}\left[\boldsymbol{\omega }_{t_1 -u, t_1 - t_2 - 1}^\top\mathbf{B}_u\mathbf{z}_{t_2- u}\mid\mathcal{F}_{t_1,S}\right]
    = \mathbf{E}\left[\boldsymbol{\omega }_{t_1 -u, t_1 - t_2 - 1}^\top\mathbf{B}_u\mathbf{z}_{t_2- u}\mid\mathcal{F}_{t_1 - u,S - u}\right],
\end{align*}
we have 
\begin{align*}
    &\left\Vert
        \sum_{u = 0}^{h\wedge (t_2 - 1)}\left(\boldsymbol{\omega }_{t_1 -u, t_1 - t_2 - 1}^\top\mathbf{B}_u\mathbf{z}_{t_2- u} - \mathbf{E}\left[\boldsymbol{\omega }_{t_1 -u, t_1 - t_2 - 1}^\top\mathbf{B}_u\mathbf{z}_{t_2- u}\mid\mathcal{F}_{t_1,S}\right]\right)
        \right\Vert_{M/2}\\
    & = \left\Vert
        \sum_{u = 0}^{h\wedge (t_2 - 1)}\sum_{j = S-u + 1}^\infty\left(\mathbf{E}\left[\boldsymbol{\omega }_{t_1 -u, t_1 - t_2 - 1}^\top\mathbf{B}_u\mathbf{z}_{t_2- u}\mid\mathcal{F}_{t_1 - u, j}\right] - \mathbf{E}\left[\boldsymbol{\omega }_{t_1 -u, t_1 - t_2 - 1}^\top\mathbf{B}_u\mathbf{z}_{t_2- u}\mid\mathcal{F}_{t_1 - u, j - 1}\right]\right)
        \right\Vert_{M/2}\\
    &\leq \sum_{j = S- h + 1}^\infty\left\Vert
        \sum_{u = 0\vee (S - j + 1)}^{h\wedge (t_2 - 1)}\left(\mathbf{E}\left[\boldsymbol{\omega }_{t_1 -u, t_1 - t_2 - 1}^\top\mathbf{B}_u\mathbf{z}_{t_2- u}\mid\mathcal{F}_{t_1 - u, j}\right]\right.\right.\\
        &\left.- \mathbf{E}\left[\boldsymbol{\omega }_{t_1 -u, t_1 - t_2 - 1}^\top\mathbf{B}_u\mathbf{z}_{t_2- u}\mid\mathcal{F}_{t_1 - u, j - 1}\right]\right)
        \Bigg\Vert_{M/2}
\end{align*}
For any $j = S - h + 1,\cdots,$ and $z = 1,\cdots, h\wedge (t_2 - 1) -  0\vee (S - j + 1) + 1,$ define 
\begin{align*}
    K_{z,j} = \sum_{u = 0\vee (S - j + 1)}^{0\vee (S - j + 1) + z - 1}\left(\mathbf{E}\left[\boldsymbol{\omega }_{t_1 -u, t_1 - t_2 - 1}^\top\mathbf{B}_u\mathbf{z}_{t_2- u}\mid\mathcal{F}_{t_1 - u, j}\right] - \mathbf{E}\left[\boldsymbol{\omega }_{t_1 -u, t_1 - t_2 - 1}^\top\mathbf{B}_u\mathbf{z}_{t_2- u}\mid\mathcal{F}_{t_1 - u, j - 1}\right]\right)
\end{align*}
and $\mathcal{K}_{z,j}$ the $\sigma$-field generated by $e_T,\cdots, e_{t_1 - j - 0\vee (S - j + 1) - z + 1}.$ Then $K_{z,j}$ is measurable in  $\mathcal{K}_{z,j},$  $\mathcal{K}_{z,j}\subset \mathcal{K}_{z+1,j},$ and  
\begin{align*}
    &\mathbf{E}\left[K_{z + 1,j} - K_{z,j}\mid \mathcal{K}_{z,j}\right]\\
    & = \mathbf{E}\left[\left(\mathbf{E}\left[\boldsymbol{\omega }_{t_1 -0\vee (S - j + 1) - z, t_1 - t_2 - 1}^\top\mathbf{B}_{0\vee (S - j + 1) + z}\mathbf{z}_{t_2- 0\vee (S - j + 1) - z}\mid\mathcal{F}_{t_1 - 0\vee (S - j + 1) - z, j}\right]\right.\right.\\
    &\left.\left.- \mathbf{E}\left[\boldsymbol{\omega }_{t_1 - 0\vee (S - j + 1) - z, t_1 - t_2 - 1}^\top\mathbf{B}_{0\vee (S - j + 1) + z}\mathbf{z}_{t_2- 0\vee (S - j + 1) - z}\mid\mathcal{F}_{t_1 - 0\vee (S - j + 1) - z, j - 1}\right]\right)\mid \mathcal{K}_{z,j}\right]\\
    & = \mathbf{E}\left[\boldsymbol{\omega }_{t_1 -0\vee (S - j + 1) - z, t_1 - t_2 - 1}^\top\mathbf{B}_{0\vee (S - j + 1) + z}\mathbf{z}_{t_2- 0\vee (S - j + 1) - z}\mid\mathcal{F}_{t_1 - 0\vee (S - j + 1) - z, j - 1}\right]\\
    & - \mathbf{E}\left[\boldsymbol{\omega }_{t_1 -0\vee (S - j + 1) - z, t_1 - t_2 - 1}^\top\mathbf{B}_{0\vee (S - j + 1) + z}\mathbf{z}_{t_2- 0\vee (S - j + 1) - z}\mid\mathcal{F}_{t_1 - 0\vee (S - j + 1) - z, j - 1}\right] = 0,
\end{align*}
so $K_{z,j}$ forms a martingale. From Theorem 1.1 of \cite{MR0400380},
\begin{align*}
    &\left\Vert
        \sum_{u = 0\vee (S - j + 1)}^{h\wedge (t_2 - 1)}\left(\mathbf{E}\left[\boldsymbol{\omega }_{t_1 -u, t_1 - t_2 - 1}^\top\mathbf{B}_u\mathbf{z}_{t_2- u}\mid\mathcal{F}_{t_1 - u, j}\right] - \mathbf{E}\left[\boldsymbol{\omega }_{t_1 -u, t_1 - t_2 - 1}^\top\mathbf{B}_u\mathbf{z}_{t_2- u}\mid\mathcal{F}_{t_1 - u, j - 1}\right]\right)
        \right\Vert_{M/2}\\
    &\leq C\sqrt{\sum_{u = 0\vee (S - j + 1)}^{h\wedge (t_2 - 1)}\left\Vert \mathbf{E}\left[\boldsymbol{\omega }_{t_1 -u, t_1 - t_2 - 1}^\top\mathbf{B}_u\mathbf{z}_{t_2- u}\mid\mathcal{F}_{t_1 - u, j}\right] - \mathbf{E}\left[\boldsymbol{\omega }_{t_1 -u, t_1 - t_2 - 1}^\top\mathbf{B}_u\mathbf{z}_{t_2- u}\mid\mathcal{F}_{t_1 - u, j - 1}\right]\right\Vert^2_{M/2}}    
\end{align*}

Since $j\geq S -h + 1 > t_1 - t_2,$ we can establish the following  decomposition
\begin{align*}
    \boldsymbol{\omega }_{t_1 -u, t_1 - t_2 - 1} = \boldsymbol{\omega}_{t_1  - u, j - 1} + \boldsymbol{\zeta}_{t_1 - u, j - 1} - \boldsymbol{\zeta}_{t_1 - u, t_1 - t_2 - 1},
\end{align*}
and $\boldsymbol{\zeta}_{t_1 - u, j - 1} - \boldsymbol{\zeta}_{t_1 - u, t_1 - t_2 - 1}$ is measurable in $\mathcal{F}_{t_1 - u, j - 1}.$ In such case,
\begin{align*}
     &\mathbf{E}\left[\boldsymbol{\omega }_{t_1 -u, t_1 - t_2 - 1}^\top\mathbf{B}_u\mathbf{z}_{t_2- u}\mid\mathcal{F}_{t_1 - u, j}\right] - \mathbf{E}\left[\boldsymbol{\omega }_{t_1 -u, t_1 - t_2 - 1}^\top\mathbf{B}_u\mathbf{z}_{t_2- u}\mid\mathcal{F}_{t_1 - u, j - 1}\right]\\
     & = \mathbf{E}\left[\boldsymbol{\omega }_{t_1 -u, j - 1}^\top\mathbf{B}_u\mathbf{z}_{t_2- u}\mid\mathcal{F}_{t_1 - u, j}\right] - \mathbf{E}\left[\boldsymbol{\omega }_{t_1 -u, j - 1}^\top\mathbf{B}_u\mathbf{z}_{t_2- u}\mid\mathcal{F}_{t_1 - u, j - 1}\right]\\
     & + \left(\boldsymbol{\zeta}_{t_1 - u, j - 1} - \boldsymbol{\zeta}_{t_1 - u, t_1 - t_2 - 1}\right)^\top\mathbf{B}_u\left(\mathbf{E}\left[\mathbf{z}_{t_2 - u}\mid\mathcal{F}_{t_2 - u, t_2 + j - t_1}\right] - \mathbf{E}\left[\mathbf{z}_{t_2 - u}\mid\mathcal{F}_{t_2 - u, t_2 + j - t_1 - 1}\right]\right),
\end{align*}
and from \eqref{eq.zeta_transform},
\begin{align*}
    &\left\Vert
    \left(\boldsymbol{\zeta}_{t_1 - u, j - 1} - \boldsymbol{\zeta}_{t_1 - u, t_1 - t_2 - 1}\right)^\top\mathbf{B}_u\left(\mathbf{E}\left[\mathbf{z}_{t_2 - u}\mid\mathcal{F}_{t_2 - u, t_2 + j - t_1}\right] - \mathbf{E}\left[\mathbf{z}_{t_2 - u}\mid\mathcal{F}_{t_2 - u, t_2 + j - t_1 - 1}\right]\right)
    \right\Vert_{M/2}\\
    &\leq \vert\mathbf{B}_u\vert_2\left\Vert\ \left\vert \boldsymbol{\zeta}_{t_1 - u, j - 1} - \boldsymbol{\zeta}_{t_1 - u, t_1 - t_2 - 1}\right\vert_2 \right\Vert_{M}\left\Vert\ \left\vert (\mathbf{E}\left[\mathbf{z}_{t_2 - u}\mid\mathcal{F}_{t_2 - u, t_2 + j - t_1}\right] - \mathbf{E}\left[\mathbf{z}_{t_2 - u}\mid\mathcal{F}_{t_2 - u, t_2 + j - t_1 - 1}\right]\right\vert_2\ \right\Vert_{M}\\
    &\leq \frac{Cb^\dagger d}{(t_1 - t_2)^\alpha}\delta_{t_2 + j - t_1}.
\end{align*}
Therefore, we have 
\begin{align*}
    &\left\Vert
        \sum_{u = 0\vee (S - j + 1)}^{h\wedge (t_2 - 1)}\left(\mathbf{E}\left[\boldsymbol{\omega }_{t_1 -u, t_1 - t_2 - 1}^\top\mathbf{B}_u\mathbf{z}_{t_2- u}\mid\mathcal{F}_{t_1 - u, j}\right] - \mathbf{E}\left[\boldsymbol{\omega }_{t_1 -u, t_1 - t_2 - 1}^\top\mathbf{B}_u\mathbf{z}_{t_2- u}\mid\mathcal{F}_{t_1 - u, j - 1}\right]\right)
        \right\Vert_{M/2}\\
    &\leq C\sqrt{\sum_{u = 0\vee (S - j + 1)}^{h\wedge (t_2 - 1)}\left(\frac{Cb^\dagger d}{(t_1 - t_2)^\alpha}\delta_{t_2 + j - t_1}\right)^2}\leq \frac{Cb^\dagger d\sqrt{h}}{(t_1 - t_2)^\alpha}\delta_{t_2 + j - t_1},
\end{align*}
and 
\begin{equation}
\begin{aligned}
     &\left\Vert
        \sum_{u = 0}^{h\wedge (t_2 - 1)}\left(\boldsymbol{\omega }_{t_1 -u, t_1 - t_2 - 1}^\top\mathbf{B}_u\mathbf{z}_{t_2- u} - \mathbf{E}\left[\boldsymbol{\omega }_{t_1 -u, t_1 - t_2 - 1}^\top\mathbf{B}_u\mathbf{z}_{t_2- u}\mid\mathcal{F}_{t_1,S}\right]\right)
        \right\Vert_{M/2}\\
    &\leq C\sum_{j = S - h + 1}^\infty\frac{b^\dagger d\sqrt{h}}{(t_1 - t_2)^\alpha}\delta_{t_2 + j - t_1}\\
    &\leq \frac{C_1b^\dagger d\sqrt{h}}{(t_1 - t_2)^\alpha}\frac{1}{(S-h+1+t_2 - t_1)^\alpha}\leq\frac{C_1b^\dagger d\sqrt{h}}{(t_1 - t_2)^\alpha}\frac{1}{(S-h - \lambda_2)^\alpha}.
\end{aligned}
\label{eq.second_term_idea}
\end{equation}
From \eqref{eq.idea_first_part} and \eqref{eq.second_term_idea}, 
\begin{align*}
    &\left\Vert
        \sum_{u = 0}^{h\wedge (t_2 - 1)}\left(\mathbf{z}_{t_1- u}^\top\mathbf{B}_u\mathbf{z}_{t_2- u} - \mathbf{E}\left[\mathbf{z}_{t_1- u}^\top\mathbf{B}_u\mathbf{z}_{t_2- u}\mid\mathcal{F}_{t_1,S}\right]\right)
        \right\Vert_{M/2}\\
    &\leq \frac{C\sqrt{dh}b^\dagger}{(S-h-\lambda_2)^\alpha} +\frac{Cb^\dagger d\sqrt{h}}{(t_1 - t_2)^\alpha}\frac{1}{(S-h - \lambda_2)^\alpha}\\
    &\leq \frac{C\sqrt{dh}b^\dagger}{(S-h-\lambda_2)^\alpha} +\frac{Cb^\dagger d\sqrt{h}}{\lambda_1^\alpha}\frac{1}{(S-h - \lambda_2)^\alpha}\leq \frac{2C\sqrt{dh}b^\dagger}{(S-h-\lambda_2)^\alpha},
\end{align*}
which proves \eqref{eq.organize_lag_S}.
\end{proof}

\begin{corollary}
    Suppose conditions of Corollary \ref{corollary.combination_lag} hold true.  Then for any $S > 2\lambda_2 + h,$ we further derive
    \begin{align*}
        &\left\Vert\sum_{t_1 = 1 + \lambda_1}^T\sum_{t_2 = (t_1 - \lambda_2)\vee 1}^{t_1 - \lambda_1}\left(\mathbf{E}\left[\mathbf{z}_{t_1}^\top\mathbf{A}_{t_1t_2}\mathbf{z}_{t_2}\mid\mathcal{F}_{t_1, S}\right] - \mathbf{E}\left[\mathbf{z}_{t_1}^\top\mathbf{A}_{t_1t_2}\mathbf{z}_{t_2}\right]\right)\right\Vert_{M/2}\\
        &\leq \left\Vert\sum_{t_1 = 1 + \lambda_1}^T\sum_{t_2 = (t_1 - \lambda_2)\vee 1}^{t_1 - \lambda_1}\left(\mathbf{z}_{t_1}^\top\mathbf{A}_{t_1t_2}\mathbf{z}_{t_2} - \mathbf{E}\left[\mathbf{z}_{t_1}^\top\mathbf{A}_{t_1t_2}\mathbf{z}_{t_2}\right]\right)\right\Vert_{M/2}\\
        & + \left\Vert
        \sum_{t_1 = 1 + \lambda_1}^T\sum_{t_2 = (t_1 - \lambda_2)\vee 1}^{t_1 - \lambda_1}\left(\mathbf{z}_{t_1}^\top\mathbf{A}_{t_1t_2}\mathbf{z}_{t_2} - \mathbf{E}\left[\mathbf{z}_{t_1}^\top\mathbf{A}_{t_1t_2}\mathbf{z}_{t_2}\mid\mathcal{F}_{t_1,S}\right]\right)
        \right\Vert_{M/2}\\
        &\leq C\sqrt{d\sum_{t_1 = 1 + \lambda_1}^T\sum_{t_2 = (t_1 - \lambda_2)\vee 1}^{t_1 - \lambda_1}\left\vert\mathbf{A}_{t_1t_2}\right\vert^2_2} + \frac{CT\sqrt{T}a^\dagger}{(S-\lambda_2)^{\alpha - 2}}\\
        &\leq C_1\sqrt{d\sum_{t_1 = 1 + \lambda_1}^T\sum_{t_2 = (t_1 - \lambda_2)\vee 1}^{t_1 - \lambda_1}\left\vert\mathbf{A}_{t_1t_2}\right\vert^2_2},
    \end{align*}
    and 
    \begin{align*}
        &\left\Vert
        \sum_{u = 0}^{h\wedge (t_2 - 1)}\left(\mathbf{E}\left[\mathbf{z}_{t_1- u}^\top\mathbf{B}_u\mathbf{z}_{t_2- u}\mid\mathcal{F}_{t_1,S}\right] - \mathbf{E}\left[\mathbf{z}_{t_1- u}^\top\mathbf{B}_u\mathbf{z}_{t_2- u}\right]\right)
        \right\Vert_{M/2}\\
        &\leq \left\Vert
        \sum_{u = 0}^{h\wedge (t_2 - 1)}\left(\mathbf{z}_{t_1- u}^\top\mathbf{B}_u\mathbf{z}_{t_2- u} - \mathbf{E}\left[\mathbf{z}_{t_1- u}^\top\mathbf{B}_u\mathbf{z}_{t_2- u}\right]\right)
        \right\Vert_{M/2}\\
        & + \left\Vert
        \sum_{u = 0}^{h\wedge (t_2 - 1)}\left(\mathbf{z}_{t_1- u}^\top\mathbf{B}_u\mathbf{z}_{t_2- u} - \mathbf{E}\left[\mathbf{z}_{t_1- u}^\top\mathbf{B}_u\mathbf{z}_{t_2- u}\mid\mathcal{F}_{t_1,S}\right]\right)
        \right\Vert_{M/2}\\
        &\leq C\sqrt{d\sum_{u = 0}^{h\wedge (t_2 - 1)}\left\vert\mathbf{B}_u\right\vert^2_2} + \frac{C\sqrt{dh}b^\dagger}{(S-h-\lambda_2)^\alpha}\\
        &\leq C_1\sqrt{d\sum_{u = 0}^{h\wedge (t_2 - 1)}\left\vert\mathbf{B}_u\right\vert^2_2}.
    \end{align*}
    \label{corollary.moment_expectation}
\end{corollary}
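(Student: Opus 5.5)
The statement is essentially a two-line consequence of the triangle inequality combined with the moment bounds already proved, so the plan is to verify each displayed inequality in order.

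\textbf{Step 1: the first display.} Write
\[
\mathbf{E}\left[Q\mid\mathcal{F}_{t_1,S}\right]-\mathbf{E}[Q]=\left(Q-\mathbf{E}[Q]\right)-\left(Q-\mathbf{E}\left[Q\mid\mathcal{F}_{t_1,S}\right]\right),
\]
where $Q=\mathbf{z}_{t_1}^\top\mathbf{A}_{t_1t_2}\mathbf{z}_{t_2}$. Summing over the index set and applying Minkowski's inequality in $\Vert\cdot\Vert_{M/2}$ (valid since $M/2>4\geq 1$) gives the first inequality.

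\textbf{Step 2: bounding the two terms.} The first term is controlled by \eqref{eq.product_quadratic_forms}. The second is controlled by \eqref{eq.truncate_whole}, which applies because $S>2\lambda_2+h>2\lambda_2$.

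\textbf{Step 3: absorbing the truncation term.} It remains to show that $CT\sqrt{T}a^\dagger/(S-\lambda_2)^{\alpha-2}$ is dominated by $\sqrt{d\sum|\mathbf{A}_{t_1t_2}|_2^2}$. This is the main (mild) obstacle, since the right-hand side can be as small as about $\sqrt{d}\,a^\dagger$. Use $S-\lambda_2>\lambda_2\asymp T^{\kappa_2}$ to get
\[
\frac{T^{3/2}a^\dagger}{(S-\lambda_2)^{\alpha-2}}\leq \frac{CT^{3/2}a^\dagger}{T^{\kappa_2(\alpha-2)}}.
\]
Since $\kappa_2>\kappa_1>5/(2(\alpha-2))$, we have $\kappa_2(\alpha-2)>5/2$, so this quantity is $o(a^\dagger)$. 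Now $a^\dagger$ is attained at some pair $(t_1,t_2)$ in the sum, so $a^\dagger\leq\sqrt{\sum|\mathbf{A}_{t_1t_2}|_2^2}\leq\sqrt{d\sum|\mathbf{A}_{t_1t_2}|_2^2}$. This is exactly the comparison used at the end of the proof of \eqref{eq.product_quadratic_forms}, and it yields the final constant $C_1$.

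\textbf{Step 4: the second display.} The argument is identical, with $Q$ replaced by $\mathbf{z}_{t_1-u}^\top\mathbf{B}_u\mathbf{z}_{t_2-u}$ summed over $u$. Minkowski's inequality splits the quantity into the two terms, which are bounded by \eqref{eq.quadratic_lag} and \eqref{eq.organize_lag_S} of Corollary \ref{corollary.combination_lag}; the latter requires $S>2\lambda_2+h$, which is assumed.

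\textbf{Step 5: absorbing the second truncation term.} Since $S-h-\lambda_2>\lambda_2\geq 1$, the truncation term is at most $C\sqrt{dh}\,b^\dagger/\lambda_2^{\alpha}$. Because $h=O(T^{\kappa_h})$ with $\kappa_h<1$, $\lambda_2\asymp T^{\kappa_2}$, and $\alpha>8$, we need $h\lesssim\lambda_2^{2\alpha}$; this holds provided $2\alpha\kappa_2>\kappa_h$, which is true since $\alpha\kappa_2>5/2>1$. Hence $\sqrt{h}/\lambda_2^{\alpha}\leq C$, and the term is at most $C\sqrt{d}\,b^\dagger$.

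Finally, $b^\dagger=|\mathbf{B}_{u^*}|_2$ for some $u^*\le h$, so $b^\dagger\leq\sqrt{\sum_u|\mathbf{B}_u|_2^2}$. This requires $u^*\leq h\wedge(t_2-1)$; in the edge case $t_2-1<h$, we interpret $b^\dagger$ as the maximum over the summed range, since only those $\mathbf{B}_u$ enter the sum. Combining gives the bound with constant $C_1$.
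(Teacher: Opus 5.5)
Your proposal is correct and matches the paper's argument, which is exactly the chain displayed in the statement: Minkowski's inequality, then \eqref{eq.product_quadratic_forms} and \eqref{eq.truncate_whole} (resp.\ \eqref{eq.quadratic_lag} and \eqref{eq.organize_lag_S}), then absorbing the truncation terms using $S-\lambda_2>\lambda_2\asymp T^{\kappa_2}$, $\kappa_2(\alpha-2)>5/2$, and $a^\dagger\le\sqrt{\sum\vert\mathbf{A}_{t_1t_2}\vert_2^2}$ (resp.\ $b^\dagger\le\sqrt{\sum_u\vert\mathbf{B}_u\vert_2^2}$). Your handling of the edge case $t_2-1<h$ is valid, because setting the unused $\mathbf{B}_u$ to zero leaves the left-hand side unchanged; the paper does not address this point.
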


\subsection{Technical details of distributional results under $H_0$}
\label{section.technical_details_under_H0}
Some technical assumptions are needed for the distributional results of the test statistic, which are summarized as follows:

\begin{assumption}
\label{assumption.independent_short_range_structure}
    Suppose independent random variables $e_t = (e_t^{(1)}, e_t^{(2)})\in\mathbf{H}^{(1)}\times \mathbf{H}^{(2)}$ for measurable spaces $\mathbf{H}^{(1)}, \mathbf{H}^{(2)}.$ Furthermore, assume that $e_{t}^{(1)}$ is independent of $e_{t}^{(2)}$ for any $t\in\mathbf{Z},$ and   
    \begin{align*}
    \mathbf{z}_t = \left(
    \begin{matrix}
        \mathbf{x}_t\\
        \mathbf{y}_t
    \end{matrix}
    \right) = 
    \left(
    \begin{matrix}
        g_{t,T}^{(1)}\left(\cdots, e_{t - 2}^{(1)}, e_{t - 1}^{(1)}, e_t^{(1)}\right)\\
        g_{t,T}^{(2)}\left(\cdots, e_{t - 2}^{(2)}, e_{t - 1}^{(2)}, e_t^{(2)}\right)
    \end{matrix}
    \right),
    \end{align*}
    where $g_{t,T}^{(1)}\left(\cdot\right):\mathbf{H}^{(1)\mathbf{Z}}\to\mathbf{R}^{d_1}$ and $g_{t,T}^{(2)}\left(\cdot\right):\mathbf{H}^{(2)\mathbf{Z}}\to\mathbf{R}^{d_2}$ are respectively measurable functions in $\mathbf{H}^{(1)\mathbf{Z}}$ and $\mathbf{H}^{(2)\mathbf{Z}}.$ Assume that $\mathbf{z}_t, t\in\mathbf{Z}$ satisfy Definition \ref{def.m_alpha_short_range_dependence} with $M, \alpha > 8,$ and $d_1,d_2\asymp T.$
\end{assumption}
Assumption \ref{assumption.independent_short_range_structure} further restricts the functional form of $g_{t,T}(\cdot)$ in \eqref{eq.def_z_t}. Specifically, it requires that $\mathbf{x}_t$, $t \in \mathbf{Z}$, depend only on ${e_t^{(1)}, t \in \mathbf{Z}}$, while $\mathbf{y}_t$, $t \in \mathbf{Z}$, depend only on ${e_t^{(2)}, t \in \mathbf{Z}}$. Since $e_t^{(1)}$ and $e_t^{(2)}$ are independent for all $t$, Assumption \ref{assumption.independent_short_range_structure} guarantees that the sequence $\{\mathbf{x}_t, t \in \mathbf{Z}\}$ is independent of the sequence $\{\mathbf{y}_t, t \in \mathbf{Z}\}$.

\begin{assumption}
\label{assumption.lambda_choices}
    Suppose the parameters satisfy $\lambda_1\asymp T^{\kappa_1},$  $\lambda_2 \asymp T^{\kappa_2},$ and $h\asymp T^{\kappa_h},$ where $ 1 >\kappa_2> \kappa_1 > \frac{5}{2(\alpha - 2)}$ and $0 < \kappa_h < 1.$
\end{assumption}

\begin{assumption}
\label{assumption.variance}
    Define the canonical statistic 
    \begin{align*}
        \widehat{R}^\dagger = \sum_{t = 1 + \lambda_1}^T O_t^\dagger,
    \quad O_t^\dagger &= \frac{1}{\mathcal{U}}\sum_{t_1 = (t - \lambda_2)\vee 1}^{t - \lambda_1}\sum_{s = 0}^{h\wedge (t_1 - 1)}\left(\mathbf{x}_{t}^\top\mathbf{x}_{t_1} - \mathbf{E}\left[\mathbf{x}_{t}^\top\mathbf{x}_{t_1}\right]\right)\left(\mathbf{y}_{t - s}^\top\mathbf{y}_{t_1 - s} - \mathbf{E}\left[\mathbf{y}_{t - s}^\top\mathbf{y}_{t_1 - s}\right]\right)\\
     & + \frac{1}{\mathcal{U}}\sum_{t_1 = (t - \lambda_2)\vee 1}^{t - \lambda_1}\sum_{s = 1}^{h\wedge(t_1 - 1)}\left(\mathbf{x}_{t - s}^\top\mathbf{x}_{t_1 - s} - \mathbf{E}\left[\mathbf{x}_{t - s}^\top\mathbf{x}_{t_1 - s}\right]\right)\left(\mathbf{y}_{t}^\top\mathbf{y}_{t_1} - \mathbf{E}\left[\mathbf{y}_{t}^\top\mathbf{y}_{t_1}\right]\right).
    \end{align*}
    Assume that there exists a constant $c>0$ such that $\mathrm{Var}\left(\widehat{R}^\dagger\right) > c$ for  sufficiently large $T.$
\end{assumption}

We call $\widehat{R}^\dagger$ ``canonical'' because it cannot be calculated in practice, as the expectations $\mathbf{E}\left[\mathbf{x}_{t}^\top\mathbf{x}_{t_1}\right]$ and $\mathbf{E}\left[\mathbf{y}_{t}^\top\mathbf{y}_{t_1}\right]$ are unknown and hard to estimate, especially in the  setting of nonstationary time series, whose autocovariances may change with the time index $t.$ Suppose Assumptions \ref{assumption.independent_short_range_structure} and \ref{assumption.lambda_choices} hold true. For each $t,t_1,$ define 
\begin{equation}
\gamma_{tt_1} = \sum_{s = 0}^{h\wedge (t_1 - 1)}\left(\mathbf{y}_{t - s}^\top\mathbf{y}_{t_1 - s} - \mathbf{E}\left[\mathbf{y}_{t - s}^\top\mathbf{y}_{t_1 - s}\right]\right),
\label{eq.def_gamma_tt1}
\end{equation}
and define the matrices $\mathbf{I}_{d_1}\in\mathbf{R}^{d\times d},$ where $\mathbf{I}_{d_1}^{(i,i)} = 1$ for $i = 1,\cdots, d_1,$ and $\mathbf{I}_{d_1}^{(i,j)}  = 0$ otherwise; and $\mathbf{I}_{d_2}\in\mathbf{R}^{d\times d},$ where $\mathbf{I}_{d_1}^{(i,i)} = 1$ for $i = d_1 + 1,\cdots, d,$ and  $\mathbf{I}_{d_1}^{(i,j)}  = 0$ otherwise. With these notations, we have $\mathbf{x}_t^\top\mathbf{x}_{t_1} = \mathbf{z}^\top_t\mathbf{I}_{d_1}\mathbf{z}_t$ and $\mathbf{y}_t^\top\mathbf{y}_{t_1} = \mathbf{z}^\top_t\mathbf{I}_{d_2}\mathbf{z}_t.$ Furthermore, from Corollary \ref{corollary.combination_lag},
\begin{equation}
\begin{aligned}
    \left\Vert\gamma_{tt_1}\right\Vert_{M/2} &= \left\Vert
    \sum_{s = 0}^{h\wedge (t_1 - 1)}\left(\mathbf{z}_{t - s}^\top\mathbf{I}_{d_2}\mathbf{z}_{t_1 - s} - \mathbf{E}\left[\mathbf{z}_{t - s}^\top\mathbf{I}_{d_2}\mathbf{z}_{t_1 - s}\right]\right)
    \right\Vert_{M/2}\\
    &\leq C\sqrt{d\sum_{s = 0}^{h\wedge (t_1 - 1)}\left\vert \mathbf{I}_{d_2}\right\vert^2_2}\leq C\sqrt{dh}
\end{aligned}
\label{eq.gamma_norm}
\end{equation}

Since the random variables $\left\{\gamma_{tt_1}: t = 1+\lambda_1,\cdots,T, t_1 = (t-\lambda_2)\vee 1,\cdots, t-\lambda_1\right\}$ are independent of $\mathbf{x}_t, t\in\mathbf{Z},$ from Examples 1.22 of \cite{MR2767184} and Theorem \ref{theorem.linear_and_quadratic}, for any real numbers $c_{tt_1},$ 
\begin{align*}
    &\mathbf{E}\left[\left\vert \sum_{t = 1 + \lambda_1}^T\sum_{t_1 = (t - \lambda_2)\vee 1}^{t - \lambda_1}\gamma_{tt_1}\left(\mathbf{x}_{t}^\top\mathbf{x}_{t_1} - \mathbf{E}\left[\mathbf{x}_{t}^\top\mathbf{x}_{t_1}\right]\right)\right\vert^{M/2}\right.\\
    & \mid\gamma_{tt_1} = c_{tt_1}, t = 1+\lambda_1,\cdots,T, t_1 = (t-\lambda_2)\vee 1,\cdots, t-\lambda_1\Bigg]\\
    &=  \mathbf{E}\left[\left\vert \sum_{t = 1 + \lambda_1}^T\sum_{t_1 = (t - \lambda_2)\vee 1}^{t - \lambda_1}c_{tt_1}\left(\mathbf{z}_{t}^\top\mathbf{I}_{d_1}\mathbf{z}_{t_1} - \mathbf{E}\left[\mathbf{z}_{t}^\top\mathbf{I}_{d_1}\mathbf{z}_{t_1}\right]\right)\right\vert^{M/2}\right]
    \leq C\left(\sqrt{d\sum_{t = 1 + \lambda_1}^T\sum_{t_1 = (t - \lambda_2)\vee 1}^{t - \lambda_1}c^2_{tt_1}}\right)^{M/2}.
\end{align*}
Therefore, from Corollary \ref{corollary.combination_lag} and \eqref{eq.gamma_norm},
\begin{equation}
\begin{aligned}
    &\left\Vert
    \sum_{t = 1 + \lambda_1}^T\sum_{t_1 = (t - \lambda_2)\vee 1}^{t - \lambda_1}\gamma_{tt_1}\left(\mathbf{x}_{t}^\top\mathbf{x}_{t_1} - \mathbf{E}\left[\mathbf{x}_{t}^\top\mathbf{x}_{t_1}\right]\right)
    \right\Vert_{M/2}\\
    &\leq C\sqrt{d}\left\Vert\sqrt{\sum_{t = 1 + \lambda_1}^T\sum_{t_1 = (t - \lambda_2)\vee 1}^{t - \lambda_1}\gamma^2_{tt_1}}\right\Vert_{M/2}\\
    &\leq C\sqrt{d\sum_{t = 1 + \lambda_1}^T\sum_{t_1 = (t - \lambda_2)\vee 1}^{t - \lambda_1}\left\Vert\gamma_{tt_1}\right\Vert^2_{M/2}}\leq C_1d\sqrt{T(\lambda_2 - \lambda_1)h}.
\end{aligned}
\label{eq.moment_bound}
\end{equation}
Similar to \eqref{eq.def_gamma_tt1} to \eqref{eq.moment_bound}, we have 
\begin{align*}
    &\left\Vert\sum_{t_1 = (t - \lambda_2)\vee 1}^{t - \lambda_1}\sum_{s = 1}^{h\wedge(t_1 - 1)}\left(\mathbf{x}_{t - s}^\top\mathbf{x}_{t_1 - s} - \mathbf{E}\left[\mathbf{x}_{t - s}^\top\mathbf{x}_{t_1 - s}\right]\right)\left(\mathbf{y}_{t}^\top\mathbf{y}_{t_1} - \mathbf{E}\left[\mathbf{y}_{t}^\top\mathbf{y}_{t_1}\right]\right)\right\Vert_{M/2}\\
    &\leq Cd\sqrt{T(\lambda_2 - \lambda_1)h},
\end{align*}
and 
\begin{equation}
\begin{aligned}
    \mathrm{Var}\left(\widehat{R}^\dagger\right)\leq \left\Vert \widehat{R}^\dagger\right\Vert_{M/2}^2\leq C.
\end{aligned}
\label{eq.R_scale}
\end{equation}
Therefore, Assumption \ref{assumption.variance} actually ensures that $\widehat{R}^\dagger$ has a constant order. Furthermore, as demonstrated in the proof of Theorem \ref{theorem.distributional_under_H0}, the difference between $\widehat{R}$ and $\widehat{R}^\dagger$ is negligible asymptotically, so under $H_0,$  $\widehat{R}$ should also have a constant order.

As in Section \ref{section.proof_linear_quadratic}, we denote $\mathcal{F}_{t,S}$ as the $\sigma$-field generated by $e_{t-S}, e_{t-S+1}, \ldots, e_t$, where $e_t = (e_t^{(1)}, e_t^{(2)})$.

\begin{proof}[Proof of Theorem \ref{theorem.distributional_under_H0}]
 Define the functions 
    \begin{align*}
        g_0(u)  = \left(1 - \min\left(1, \max(u,0)\right)^4\right)^4\quad\text{and}\quad g_{\psi,t}(u) = g_0\left(\psi(u - t)\right).
    \end{align*}
    According to (S2) - (S5) of \cite{MR3992401},  we have 
    \begin{align*}
        &g_* = \sup_{u\in\mathbf{R}}\left(
        \left\vert g^\prime_0(u)\right\vert + \left\vert g^{\prime\prime}_0(u)\right\vert + \left\vert g^{\prime\prime\prime}_0(u)\right\vert
        \right) < \infty,\quad \mathbf{1}_{u\leq t}\leq g_{\psi,t}(u)\leq\mathbf{1}_{u\leq t + \psi^{-1}},\\
        &\text{and}\quad \sup_{x,t\in\mathbf{R}}\left\vert g_{\psi,t}^\prime(u)\right\vert\leq g_*\psi,\quad \sup_{x,t\in\mathbf{R}}\left\vert g_{\psi,t}^{\prime\prime}(u)\right\vert\leq g_*\psi^2,\quad \sup_{x,t\in\mathbf{R}}\left\vert g_{\psi,t}^{\prime\prime\prime}(u)\right\vert\leq g_*\psi^3.
    \end{align*}
    Since $\mathrm{Var}(\epsilon) > c > 0,$ for any given $\psi  > 0,$
    \begin{align*}
        &\mathbf{pr}\left(\widehat{R}\leq x\right) - \mathbf{pr}\left(\epsilon\leq x\right)\\
        &= \mathbf{pr}\left(\widehat{R}\leq x\right) - \mathbf{pr}\left(\epsilon\leq x + \frac{1}{\psi}\right)  + \mathbf{pr}\left(\epsilon\leq x + \frac{1}{\psi}\right) - \mathbf{pr}\left(\epsilon\leq x\right)\\
        &\leq \mathbf{E}\left[g_{\psi,x}\left(\widehat{R}\right)\right] - \mathbf{E}\left[g_{\psi,x}\left(\epsilon\right)\right] +\int_{t\in[x,x + 1/\psi]}\frac{1}{\sqrt{2\pi\mathrm{Var}(\epsilon)}}\exp\left(-\frac{t^2}{2\mathrm{Var}(\epsilon)}\right)\mathrm{d}t\\
        &\leq \sup_{x\in\mathbf{R}}\left\vert\mathbf{E}\left[g_{\psi,x}\left(\widehat{R}\right)\right] - \mathbf{E}\left[g_{\psi,x}\left(\epsilon\right)\right]\right\vert + \frac{C}{\psi},
    \end{align*}
    and 
    \begin{align*}
        &\mathbf{pr}\left(\widehat{R}\leq x\right) - \mathbf{pr}\left(\epsilon\leq x\right)\\
        &= \mathbf{pr}\left(\widehat{R}\leq x\right) - \mathbf{pr}\left(\epsilon\leq x - \frac{1}{\psi}\right)  -\left(\mathbf{pr}\left(\epsilon\leq x\right) - \mathbf{pr}\left(\epsilon\leq x - \frac{1}{\psi}\right)\right)\\
        &\geq \mathbf{E}\left[g_{\psi,x - 1/\psi}\left(\widehat{R}\right)\right] -  \mathbf{E}\left[g_{\psi,x - 1/\psi}\left(\epsilon\right)\right] - \int_{t\in[x - 1/\psi, x]}\frac{1}{\sqrt{2\pi\mathrm{Var}(\epsilon)}}\exp\left(-\frac{t^2}{2\mathrm{Var}(\epsilon)}\right)\mathrm{d}t\\
        &\geq -\sup_{x\in\mathbf{R}}\left\vert
        \mathbf{E}\left[g_{\psi,x}\left(\widehat{R}\right)\right] -  \mathbf{E}\left[g_{\psi,x}\left(\epsilon\right)\right]
        \right\vert - \frac{C}{\psi},
    \end{align*}
    so we have 
    \begin{equation}
        \begin{aligned}
            \sup_{x\in\mathbf{R}}\left\vert \mathbf{pr}\left(\widehat{R}\leq x\right) - \mathbf{pr}\left(\epsilon\leq x\right)\right\vert
            \leq \sup_{x\in\mathbf{R}}\left\vert\mathbf{E}\left[g_{\psi,x}\left(\widehat{R}\right)\right] - \mathbf{E}\left[g_{\psi,x}\left(\epsilon\right)\right]\right\vert + \frac{C}{\psi}.
        \end{aligned}
        \label{eq.Prob_R_to_epsilon}
    \end{equation}  
    Since
    \begin{align*}
        &\frac{1}{\mathcal{U}}\left\Vert\sum_{t = 1 +\lambda_1}^T\sum_{t_1 = (t - \lambda_2)\vee 1}^{t - \lambda_1}\sum_{s = 0}^{h\wedge (t_1 - 1)}\mathbf{x}_{t}^\top\mathbf{x}_{t_1}\mathbf{y}_{t - s}^\top\mathbf{y}_{t_1 - s}\right.\\
        &\left.- \sum_{t = 1 +\lambda_1}^T\sum_{t_1 = (t - \lambda_2)\vee 1}^{t - \lambda_1}\sum_{s = 0}^{h\wedge (t_1 - 1)}\left(\mathbf{x}_{t}^\top\mathbf{x}_{t_1} - \mathbf{E}\left[\mathbf{x}_{t}^\top\mathbf{x}_{t_1}\right]\right)\left(\mathbf{y}_{t - s}^\top\mathbf{y}_{t_1 - s} - \mathbf{E}\left[\mathbf{y}_{t - s}^\top\mathbf{y}_{t_1 - s}\right]\right)\right\Vert_{M/2}\\
        &\leq I + II + III,\\
    \end{align*}
    where 
    \begin{align*}
        & I =  \frac{1}{\mathcal{U}}\left\Vert
        \sum_{t = 1 +\lambda_1}^T\sum_{t_1 = (t - \lambda_2)\vee 1}^{t - \lambda_1}\sum_{s = 0}^{h\wedge (t_1 - 1)}\mathbf{E}\left[\mathbf{x}_{t}^\top\mathbf{x}_{t_1}\right]\left(\mathbf{y}_{t - s}^\top\mathbf{y}_{t_1 - s} - \mathbf{E}\left[\mathbf{y}_{t - s}^\top\mathbf{y}_{t_1 - s}\right]\right)
        \right\Vert_{M/2},\\
        & II  = \frac{1}{\mathcal{U}}\left\Vert
        \sum_{t = 1 +\lambda_1}^T\sum_{t_1 = (t - \lambda_2)\vee 1}^{t - \lambda_1}\sum_{s = 0}^{h\wedge (t_1 - 1)}
        \mathbf{E}\left[\mathbf{y}_{t - s}^\top\mathbf{y}_{t_1 - s}\right]
        \left(\mathbf{x}_{t}^\top\mathbf{x}_{t_1} - \mathbf{E}\left[\mathbf{x}_{t}^\top\mathbf{x}_{t_1}\right]\right)
        \right\Vert_{M/2},\\
        & III = \frac{1}{\mathcal{U}}\sum_{t = 1 +\lambda_1}^T\sum_{t_1 = (t - \lambda_2)\vee 1}^{t - \lambda_1}\sum_{s = 0}^{h\wedge (t_1 - 1)}\left\vert \mathbf{E}\left[\mathbf{x}_{t}^\top\mathbf{x}_{t_1}\right]\right\vert\left\vert \mathbf{E}\left[\mathbf{y}_{t - s}^\top\mathbf{y}_{t_1 - s}\right]\right\vert.
    \end{align*}
    From \eqref{eq.covariances_x_y} and \eqref{eq.gamma_norm},
    \begin{align*}
        I 
        &\leq \frac{1}{\mathcal{U}}\sum_{t = 1 +\lambda_1}^T\sum_{t_1 = (t - \lambda_2)\vee 1}^{t - \lambda_1}
        \left\vert \mathbf{E}\left[\mathbf{x}_{t}^\top\mathbf{x}_{t_1}\right]\right\vert
        \left\Vert \sum_{s = 0}^{h\wedge (t_1 - 1)}\left(\mathbf{y}_{t - s}^\top\mathbf{y}_{t_1 - s} - \mathbf{E}\left[\mathbf{y}_{t - s}^\top\mathbf{y}_{t_1 - s}\right]\right)\right\Vert_{M/2}\\
        &\leq \frac{C}{\sqrt{dT(\lambda_2 - \lambda_1)}}\sum_{t = 1 +\lambda_1}^T\sum_{t_1 = (t - \lambda_2)\vee 1}^{t - \lambda_1}\frac{d_1}{(t - t_1)^\alpha}\leq \frac{C_1T}{\sqrt{\lambda_2 - \lambda_1}\lambda_1^{\alpha - 1}}\leq \frac{C_2}{T^{3/2}}.
    \end{align*}
    From Theorem \ref{theorem.linear_and_quadratic}, 
    \begin{align*}
       II  &= \frac{1}{\mathcal{U}}\left\Vert
        \sum_{t = 1 +\lambda_1}^T\sum_{t_1 = (t - \lambda_2)\vee 1}^{t - \lambda_1}\left(\sum_{s = 0}^{h\wedge (t_1 - 1)}
        \mathbf{E}\left[\mathbf{y}_{t - s}^\top\mathbf{y}_{t_1 - s}\right]\right)
        \left(\mathbf{z}_{t}^\top\mathbf{I}_{d_1}\mathbf{z}_{t_1} - \mathbf{E}\left[\mathbf{z}_{t}^\top\mathbf{I}_{d_1}\mathbf{z}_{t_1}\right]\right)
        \right\Vert_{M/2}\\
        &\leq \frac{C}{\mathcal{U}}\sqrt{d\sum_{t = 1 +\lambda_1}^T\sum_{t_1 = (t - \lambda_2)\vee 1}^{t - \lambda_1}\left(\sum_{s = 0}^{h\wedge (t_1 - 1)}
        \mathbf{E}\left[\mathbf{y}_{t - s}^\top\mathbf{y}_{t_1 - s}\right]\right)^2}.
    \end{align*}
    From \eqref{eq.covariances_x_y},
    \begin{align*}
        \left\vert\sum_{s = 0}^{h\wedge (t_1 - 1)}
        \mathbf{E}\left[\mathbf{y}_{t - s}^\top\mathbf{y}_{t_1 - s}\right]\right\vert\leq \sum_{s = 0}^{h\wedge (t_1 - 1)}\frac{Cd_2}{(t  -t_1)^\alpha}\leq \frac{C_1dh}{(t  -t_1)^\alpha},
    \end{align*}
    so 
    \begin{align*}
        II\leq \frac{Cdh}{\mathcal{U}}\sqrt{d\sum_{t = 1 +\lambda_1}^T\sum_{t_1 = (t - \lambda_2)\vee 1}^{t - \lambda_1}\frac{1}{(t  -t_1)^{2\alpha}}}\leq \frac{C_1\sqrt{Th}}{\lambda_1^{\alpha - 1/2}\sqrt{\lambda_2 - \lambda_1}}\leq \frac{C_2}{T^{3/2}}.
    \end{align*}
    From \eqref{eq.covariances_x_y}, 
    \begin{align*}
        III  &\leq  \frac{1}{\mathcal{U}}\sum_{t = 1 +\lambda_1}^T\sum_{t_1 = (t - \lambda_2)\vee 1}^{t - \lambda_1}\sum_{s = 0}^{h\wedge (t_1 - 1)} \frac{Cd^2}{(t - t_1)^{2\alpha}}\\
        &\leq \frac{C_1d^2h}{\mathcal{U}}\sum_{t = 1 +\lambda_1}^T\sum_{t_1 = (t - \lambda_2)\vee 1}^{t - \lambda_1}\frac{1}{(t - t_1)^{2\alpha}}\leq \frac{C_2d^2hT}{\mathcal{U}\lambda_1^{2\alpha - 1}}\leq \frac{C_3}{T^3}.
    \end{align*}
    Therefore, we have 
    \begin{align*}
        &\frac{1}{\mathcal{U}}\left\Vert\sum_{t = 1 +\lambda_1}^T\sum_{t_1 = (t - \lambda_2)\vee 1}^{t - \lambda_1}\sum_{s = 0}^{h\wedge (t_1 - 1)}\mathbf{x}_{t}^\top\mathbf{x}_{t_1}\mathbf{y}_{t - s}^\top\mathbf{y}_{t_1 - s}\right.\\
        &\left.- \sum_{t = 1 +\lambda_1}^T\sum_{t_1 = (t - \lambda_2)\vee 1}^{t - \lambda_1}\sum_{s = 0}^{h\wedge (t_1 - 1)}\left(\mathbf{x}_{t}^\top\mathbf{x}_{t_1} - \mathbf{E}\left[\mathbf{x}_{t}^\top\mathbf{x}_{t_1}\right]\right)\left(\mathbf{y}_{t - s}^\top\mathbf{y}_{t_1 - s} - \mathbf{E}\left[\mathbf{y}_{t - s}^\top\mathbf{y}_{t_1 - s}\right]\right)\right\Vert_{M/2}\leq \frac{C}{T^{3/2}}.
    \end{align*}
    Similarly,
    \begin{align*}
        &\frac{1}{\mathcal{U}}\left\Vert
        \sum_{t = 1+\lambda_1}^T\sum_{t_1 = (t - \lambda_2)\vee 1}^{t - \lambda_1}\sum_{s = 1}^{h\wedge(t_1 - 1)}\mathbf{x}_{t - s}^\top\mathbf{x}_{t_1 - s}\mathbf{y}_{t}^\top\mathbf{y}_{t_1}\right.\\
        &-\left.
        \sum_{t = 1+\lambda_1}^T\sum_{t_1 = (t - \lambda_2)\vee 1}^{t - \lambda_1}\sum_{s = 1}^{h\wedge(t_1 - 1)}\left(\mathbf{x}_{t - s}^\top\mathbf{x}_{t_1 - s} - \mathbf{E}\left[\mathbf{x}_{t - s}^\top\mathbf{x}_{t_1 - s}\right]\right)\left(\mathbf{y}_{t}^\top\mathbf{y}_{t_1} - \mathbf{E}\left[\mathbf{y}_{t}^\top\mathbf{y}_{t_1}\right]\right)
        \right\Vert_{M/2}\leq \frac{C}{T^{3/2}},
    \end{align*}
    and 
    \begin{align*}
        \left\Vert
         \widehat{R}^\dagger - \widehat{R}
        \right\Vert_{M/2}\leq \frac{C}{T^{3/2}}.
    \end{align*}
    For any given $\psi > 0,$ we have 
    \begin{equation}
    \sup_{x\in\mathbf{R}}\left\vert\mathbf{E}\left[g_{\psi,x}\left(\widehat{R}\right)\right] - \mathbf{E}\left[g_{\psi,x}\left(\widehat{R}^\dagger\right)\right]\right\vert\leq C\psi\left\Vert \widehat{R} - \widehat{R}^\dagger\right\Vert_{M/2}\leq \frac{C_1\psi}{T^{3/2}}.
    \label{eq.R_to_R_dagger}
    \end{equation}
    Choose $S > 2\lambda_2 + h,$ and define 
    \begin{equation}
    \begin{aligned}
    \widehat{R}^\dagger_S &= \sum_{t = 1 + \lambda_1}^T O^\dagger_{t, S},\\
     O^\dagger_{t, S} &= \frac{1}{\mathcal{U}}\sum_{t_1 = (t - \lambda_2)\vee 1}^{t - \lambda_1}\sum_{s = 0}^{h\wedge (t_1 - 1)}\left(\mathbf{E}\left[\mathbf{x}_{t}^\top\mathbf{x}_{t_1}\mid\mathcal{F}_{t,S}\right] - \mathbf{E}\left[\mathbf{x}_{t}^\top\mathbf{x}_{t_1}\right]\right)\left(\mathbf{E}\left[\mathbf{y}_{t - s}^\top\mathbf{y}_{t_1 - s}\mid\mathcal{F}_{t,S}\right] - \mathbf{E}\left[\mathbf{y}_{t - s}^\top\mathbf{y}_{t_1 - s}\right]\right)\\
     & + \frac{1}{\mathcal{U}}\sum_{t_1 = (t - \lambda_2)\vee 1}^{t - \lambda_1}\sum_{s = 1}^{h\wedge(t_1 - 1)}\left(\mathbf{E}\left[\mathbf{x}_{t - s}^\top\mathbf{x}_{t_1 - s}\mid\mathcal{F}_{t,S}\right] - \mathbf{E}\left[\mathbf{x}_{t - s}^\top\mathbf{x}_{t_1 - s}\right]\right)\left(\mathbf{E}\left[\mathbf{y}_{t}^\top\mathbf{y}_{t_1}\mid\mathcal{F}_{t,S}\right] - \mathbf{E}\left[\mathbf{y}_{t}^\top\mathbf{y}_{t_1}\right]\right).
    \end{aligned}
    \label{eq.def_O_t_S}
    \end{equation}
    Define
    \begin{equation}
        \gamma_{tt_1,S} = \sum_{s = 0}^{h\wedge (t_1 - 1)}\left(\mathbf{E}\left[\mathbf{y}_{t - s}^\top\mathbf{y}_{t_1 - s}\mid\mathcal{F}_{t,S}\right] - \mathbf{E}\left[\mathbf{y}_{t - s}^\top\mathbf{y}_{t_1 - s}\right]\right),
        \label{eq.def_gamma_tt1S}
    \end{equation}
    then from Corollary \ref{corollary.combination_lag}, 
    \begin{align*}
        \left\Vert
        \gamma_{tt_1} -  \gamma_{tt_1,S} 
        \right\Vert_{M/2} =  \left\Vert
        \sum_{s = 0}^{h\wedge (t_1 - 1)}\left(\mathbf{z}_{t - s}^\top\mathbf{I}_{d_2}\mathbf{z}_{t_1 - s}- \mathbf{E}\left[\mathbf{z}_{t - s}^\top\mathbf{I}_{d_2}\mathbf{z}_{t_1 - s}\mid\mathcal{F}_{t,S}\right]\right)
        \right\Vert_{M/2}\leq \frac{C\sqrt{dh}}{(S-h-\lambda_2)^\alpha},
    \end{align*}
    and from \eqref{eq.gamma_norm},
    \begin{equation}
    \begin{aligned}
        \left\Vert
       \gamma_{tt_1,S} 
        \right\Vert_{M/2}\leq \left\Vert
        \gamma_{tt_1} 
        \right\Vert_{M/2} + \left\Vert
        \gamma_{tt_1} -  \gamma_{tt_1,S} 
        \right\Vert_{M/2}\leq C\sqrt{dh}.
    \end{aligned}
    \label{eq.gamma_S_norm}
    \end{equation}
    We also have 
    \begin{align*}
    &\frac{1}{\mathcal{U}}\sum_{t = 1 + \lambda_1}^T\sum_{t_1 = (t - \lambda_2)\vee 1}^{t - \lambda_1}\sum_{s = 0}^{h\wedge (t_1 - 1)}\left(\mathbf{E}\left[\mathbf{x}_{t}^\top\mathbf{x}_{t_1}\mid\mathcal{F}_{t,S}\right] - \mathbf{E}\left[\mathbf{x}_{t}^\top\mathbf{x}_{t_1}\right]\right)\left(\mathbf{E}\left[\mathbf{y}_{t - s}^\top\mathbf{y}_{t_1 - s}\mid\mathcal{F}_{t,S}\right] - \mathbf{E}\left[\mathbf{y}_{t - s}^\top\mathbf{y}_{t_1 - s}\right]\right)\\
    & = \frac{1}{\mathcal{U}}\sum_{t = 1 + \lambda_1}^T\sum_{t_1 = (t - \lambda_2)\vee 1}^{t - \lambda_1}\gamma_{tt_1,S}\left(\mathbf{E}\left[\mathbf{x}_{t}^\top\mathbf{x}_{t_1}\mid\mathcal{F}_{t,S}\right] - \mathbf{E}\left[\mathbf{x}_{t}^\top\mathbf{x}_{t_1}\right]\right),
    \end{align*}
    and 
    \begin{align*}
        &\frac{1}{\mathcal{U}}\left\Vert
        \sum_{t = 1 + \lambda_1}^T\sum_{t_1 = (t - \lambda_2)\vee 1}^{t - \lambda_1}\sum_{s = 0}^{h\wedge (t_1 - 1)}\left(\mathbf{x}_{t}^\top\mathbf{x}_{t_1} - \mathbf{E}\left[\mathbf{x}_{t}^\top\mathbf{x}_{t_1}\right]\right)\left(\mathbf{y}_{t - s}^\top\mathbf{y}_{t_1 - s} - \mathbf{E}\left[\mathbf{y}_{t - s}^\top\mathbf{y}_{t_1 - s}\right]\right)\right.\\
        &\left. -\sum_{t = 1 + \lambda_1}^T\sum_{t_1 = (t - \lambda_2)\vee 1}^{t - \lambda_1}\sum_{s = 0}^{h\wedge (t_1 - 1)}\left(\mathbf{E}\left[\mathbf{x}_{t}^\top\mathbf{x}_{t_1}\mid\mathcal{F}_{t,S}\right] - \mathbf{E}\left[\mathbf{x}_{t}^\top\mathbf{x}_{t_1}\right]\right)\left(\mathbf{E}\left[\mathbf{y}_{t - s}^\top\mathbf{y}_{t_1 - s}\mid\mathcal{F}_{t,S}\right] - \mathbf{E}\left[\mathbf{y}_{t - s}^\top\mathbf{y}_{t_1 - s}\right]\right)
        \right\Vert_{M/2}\\
        & =  \frac{1}{\mathcal{U}}\left\Vert
        \sum_{t = 1 + \lambda_1}^T\sum_{t_1 = (t - \lambda_2)\vee 1}^{t - \lambda_1}\gamma_{tt_1}\left(\mathbf{x}_{t}^\top\mathbf{x}_{t_1} - \mathbf{E}\left[\mathbf{x}_{t}^\top\mathbf{x}_{t_1}\right]\right)\right.\\
        &\left.-
        \sum_{t = 1 + \lambda_1}^T\sum_{t_1 = (t - \lambda_2)\vee 1}^{t - \lambda_1}\gamma_{tt_1,S}\left(\mathbf{E}\left[\mathbf{x}_{t}^\top\mathbf{x}_{t_1}\mid\mathcal{F}_{t,S}\right] - \mathbf{E}\left[\mathbf{x}_{t}^\top\mathbf{x}_{t_1}\right]\right)
        \right\Vert_{M/2}\\
        &\leq \frac{1}{\mathcal{U}}\left\Vert
        \sum_{t = 1 + \lambda_1}^T\sum_{t_1 = (t - \lambda_2)\vee 1}^{t - \lambda_1}\left(\gamma_{tt_1} - \gamma_{tt_1,S}\right)\left(\mathbf{x}_{t}^\top\mathbf{x}_{t_1} - \mathbf{E}\left[\mathbf{x}_{t}^\top\mathbf{x}_{t_1}\right]\right)
        \right\Vert_{M/2}\\
        & + \frac{1}{\mathcal{U}}\left\Vert
        \sum_{t = 1 + \lambda_1}^T\sum_{t_1 = (t - \lambda_2)\vee 1}^{t - \lambda_1}\gamma_{tt_1,S}\left(\mathbf{x}_{t}^\top\mathbf{x}_{t_1} - \mathbf{E}\left[\mathbf{x}_{t}^\top\mathbf{x}_{t_1}\mid\mathcal{F}_{t,S}\right]\right)
        \right\Vert_{M/2}
    \end{align*}
    From Assumption \ref{assumption.independent_short_range_structure}, $\gamma_{tt_1} - \gamma_{tt_1,S}$ is a function of $\cdots, e_{t - 2}^{(2)}, e_{t - 1}^{(2)}, e_t^{(2)},$ which makes it independent of $\left\{\mathbf{x}_t, t\in\mathbf{Z}\right\}$ under $H_0.$ Therefore, 
    for any real numbers $c_{tt_1}, t = 1+\lambda_1,\cdots, T, t_1 = (t-\lambda_2)\vee 1,\cdots, t-\lambda_1,$  from Examples 1.22 of \cite{MR2767184}, 
    \begin{align*}
        &\mathbf{E}\left[
        \left\vert \sum_{t = 1 + \lambda_1}^T\sum_{t_1 = (t - \lambda_2)\vee 1}^{t - \lambda_1}\left(\gamma_{tt_1} - \gamma_{tt_1,S}\right)\left(\mathbf{x}_{t}^\top\mathbf{x}_{t_1} - \mathbf{E}\left[\mathbf{x}_{t}^\top\mathbf{x}_{t_1}\right]\right)\right\vert^{M/2}\mid \gamma_{tt_1} - \gamma_{tt_1,S}  = c_{tt_1},\right.\\
        &t = 1+\lambda_1,\cdots, T, t_1 = (t-\lambda_2)\vee 1,\cdots, t-\lambda_1
        \Bigg]\\
        & = \mathbf{E}\left[
        \left\vert \sum_{t = 1 + \lambda_1}^T\sum_{t_1 = (t - \lambda_2)\vee 1}^{t - \lambda_1}c_{tt_1}\left(\mathbf{z}_{t}^\top\mathbf{I}_{d_1}\mathbf{z}_{t_1} - \mathbf{E}\left[\mathbf{z}_{t}^\top\mathbf{I}_{d_1}\mathbf{z}_{t_1}\right]\right)\right\vert^{M/2}
        \right]\leq C\left(\sqrt{d\sum_{t = 1 + \lambda_1}^T\sum_{t_1 = (t - \lambda_2)\vee 1}^{t - \lambda_1}c_{tt_1}^2}\right)^{M/2},
    \end{align*}
    which makes 
    \begin{equation}
    \begin{aligned}
        &\frac{1}{\mathcal{U}}\left\Vert
        \sum_{t = 1 + \lambda_1}^T\sum_{t_1 = (t - \lambda_2)\vee 1}^{t - \lambda_1}\left(\gamma_{tt_1} - \gamma_{tt_1,S}\right)\left(\mathbf{x}_{t}^\top\mathbf{x}_{t_1} - \mathbf{E}\left[\mathbf{x}_{t}^\top\mathbf{x}_{t_1}\right]\right)
        \right\Vert_{M/2}\\
        &\leq \frac{C\sqrt{d}}{\mathcal{U}}\left\Vert\sqrt{\sum_{t = 1 + \lambda_1}^T\sum_{t_1 = (t - \lambda_2)\vee 1}^{t - \lambda_1}\left(\gamma_{tt_1} - \gamma_{tt_1,S}\right)^2}\right\Vert_{M/2}\\
        &\leq \frac{C\sqrt{d}}{\mathcal{U}}\sqrt{\sum_{t = 1 + \lambda_1}^T\sum_{t_1 = (t - \lambda_2)\vee 1}^{t - \lambda_1}\left\Vert\gamma_{tt_1} - \gamma_{tt_1,S}\right\Vert^2_{M/2}}\leq \frac{C_1}{(S-h-\lambda_2)^\alpha}.
    \end{aligned}
    \label{eq.truncate_fir_half}
    \end{equation}
    Similarly, notice that $\gamma_{tt_1,S}$ is a function of $\cdots, e_{t - 2}^{(2)}, e_{t - 1}^{(2)}, e_t^{(2)}$ and $\mathbf{x}_{t}^\top\mathbf{x}_{t_1} - \mathbf{E}\left[\mathbf{x}_{t}^\top\mathbf{x}_{t_1}\mid\mathcal{F}_{t,S}\right]$ is a function of $\cdots, e_{t - 2}^{(1)}, e_{t - 1}^{(1)}, e_t^{(1)}$ for any $t, t_1,$ the sequence 
    $$
    \left\{\gamma_{tt_1,S}: t = 1+\lambda_1,\cdots, T,\quad t_1 = (t-\lambda_2)\vee 1,\cdots, t-\lambda_1\right\}
    $$ is independent of the sequence  $$
    \left\{\mathbf{x}_{t}^\top\mathbf{x}_{t_1} - \mathbf{E}\left[\mathbf{x}_{t}^\top\mathbf{x}_{t_1}\mid\mathcal{F}_{t,S}\right]: t = 1+\lambda_1,\cdots, T,\quad t_1 = (t-\lambda_2)\vee 1,\cdots, t-\lambda_1\right\},
    $$ 
    and Theorem \ref{theorem.linear_and_quadratic} implies 
    \begin{align*}
        &\mathbf{E}\left[\left(\sum_{t = 1 + \lambda_1}^T\sum_{t_1 = (t - \lambda_2)\vee 1}^{t - \lambda_1}\gamma_{tt_1,S}\left(\mathbf{x}_{t}^\top\mathbf{x}_{t_1} - \mathbf{E}\left[\mathbf{x}_{t}^\top\mathbf{x}_{t_1}\mid\mathcal{F}_{t,S}\right]\right)\right)^{M/2}\right.\\
        &\mid \gamma_{tt_1,S} = c_{tt_1}, t = 1+\lambda_1,\cdots, T, t_1 = (t-\lambda_2)\vee 1,\cdots, t-\lambda_1\Bigg]\\
        & = \mathbf{E}\left[\left(\sum_{t = 1 + \lambda_1}^T\sum_{t_1 = (t - \lambda_2)\vee 1}^{t - \lambda_1}c_{tt_1}\left(\mathbf{z}_{t}^\top \mathbf{I}_{d_1}\mathbf{z}_{t_1} - \mathbf{E}\left[\mathbf{z}_{t}^\top \mathbf{I}_{d_1}\mathbf{z}_{t_1}\mid\mathcal{F}_{t,S}\right]\right)\right)^{M/2}\right]\\
        &\leq \left(\frac{CT\sqrt{T}}{(S - \lambda_2)^{\alpha - 2}}\max_{t = 1+\lambda_1,\cdots,T;\ t_1 = (t-\lambda_2)\vee 1,\cdots, t-\lambda_1}\left\vert c_{tt_1}\right\vert\right)^{M/2},
    \end{align*}
    and notice that $M > 8,$ which makes 
    \begin{equation}
    \begin{aligned}
        &\frac{1}{\mathcal{U}}\left\Vert
        \sum_{t = 1 + \lambda_1}^T\sum_{t_1 = (t - \lambda_2)\vee 1}^{t - \lambda_1}\gamma_{tt_1}\left(\mathbf{x}_{t}^\top\mathbf{x}_{t_1} - \mathbf{E}\left[\mathbf{x}_{t}^\top\mathbf{x}_{t_1}\mid\mathcal{F}_{t,S}\right]\right)
        \right\Vert_{M/2}\\
        &\leq \frac{CT\sqrt{T}}{\mathcal{U}(S - \lambda_2)^{\alpha - 2}}\left\Vert\max_{t = 1+\lambda_1,\cdots,T;\ t_1 = (t-\lambda_2)\vee 1,\cdots, t-\lambda_1}\left\vert \gamma_{tt_1,S}\right\vert\ \right\Vert_{M/2}\\
        &\leq \frac{C_1\sqrt{d}T^{2/M}(\lambda_2 - \lambda_1)^{2/M - 1/2}}{(S-\lambda_2)^{\alpha - 2}}\leq \frac{C_2T^{3/4}}{(S - \lambda_2 - h)^{\alpha - 2}}.
    \end{aligned}
     \label{eq.truncate_sec_half}
    \end{equation}
    From equations \eqref{eq.truncate_fir_half}, \eqref{eq.truncate_sec_half}, we have 
    \begin{align*}
        &\frac{1}{\mathcal{U}}\left\Vert
        \sum_{t = 1 + \lambda_1}^T\sum_{t_1 = (t - \lambda_2)\vee 1}^{t - \lambda_1}\sum_{s = 0}^{h\wedge (t_1 - 1)}\left(\mathbf{x}_{t}^\top\mathbf{x}_{t_1} - \mathbf{E}\left[\mathbf{x}_{t}^\top\mathbf{x}_{t_1}\right]\right)\left(\mathbf{y}_{t - s}^\top\mathbf{y}_{t_1 - s} - \mathbf{E}\left[\mathbf{y}_{t - s}^\top\mathbf{y}_{t_1 - s}\right]\right)\right.\\
        &\left. -\sum_{t = 1 + \lambda_1}^T\sum_{t_1 = (t - \lambda_2)\vee 1}^{t - \lambda_1}\sum_{s = 0}^{h\wedge (t_1 - 1)}\left(\mathbf{E}\left[\mathbf{x}_{t}^\top\mathbf{x}_{t_1}\mid\mathcal{F}_{t,S}\right] - \mathbf{E}\left[\mathbf{x}_{t}^\top\mathbf{x}_{t_1}\right]\right)\left(\mathbf{E}\left[\mathbf{y}_{t - s}^\top\mathbf{y}_{t_1 - s}\mid\mathcal{F}_{t,S}\right] - \mathbf{E}\left[\mathbf{y}_{t - s}^\top\mathbf{y}_{t_1 - s}\right]\right)
        \right\Vert_{M/2}\\
        &\leq \frac{CT^{3/4}}{(S - \lambda_2 - h)^{\alpha - 2}}.
    \end{align*}
    Similarly, we can derive 
    \begin{align*}
        &\frac{1}{\mathcal{U}}\left\Vert\sum_{t = 1 + \lambda_1}^T\sum_{t_1 = (t - \lambda_2)\vee 1}^{t - \lambda_1}\sum_{s = 1}^{h\wedge(t_1 - 1)}\left(\mathbf{x}_{t - s}^\top\mathbf{x}_{t_1 - s} - \mathbf{E}\left[\mathbf{x}_{t - s}^\top\mathbf{x}_{t_1 - s}\right]\right)\left(\mathbf{y}_{t}^\top\mathbf{y}_{t_1} - \mathbf{E}\left[\mathbf{y}_{t}^\top\mathbf{y}_{t_1}\right]\right)\right.\\
        &\left. - \sum_{t = 1 + \lambda_1}^T\sum_{t_1 = (t - \lambda_2)\vee 1}^{t - \lambda_1}\sum_{s = 1}^{h\wedge(t_1 - 1)}\left(\mathbf{E}\left[\mathbf{x}_{t - s}^\top\mathbf{x}_{t_1 - s}\mid\mathcal{F}_{t,S}\right] - \mathbf{E}\left[\mathbf{x}_{t - s}^\top\mathbf{x}_{t_1 - s}\right]\right)\left(\mathbf{E}\left[\mathbf{y}_{t}^\top\mathbf{y}_{t_1}\mid\mathcal{F}_{t,S}\right] - \mathbf{E}\left[\mathbf{y}_{t}^\top\mathbf{y}_{t_1}\right]\right)
        \right\Vert_{M/2}\\
        & \leq \frac{CT^{3/4}}{(S - \lambda_2 - h)^{\alpha - 2}},
    \end{align*}
    which implies that
    \begin{equation}
    \begin{aligned}
        \left\Vert
            \widehat{R}^\dagger - \widehat{R}^\dagger_S
            \right\Vert_{M/2}\leq \frac{CT^{3/4}}{(S - \lambda_2 - h)^{\alpha - 2}},
    \end{aligned}
    \label{eq.R_R_s}
    \end{equation}
and 
    \begin{equation}
        \begin{aligned}
            \sup_{x\in\mathbf{R}}\left\vert
            \mathbf{E}\left[g_{\psi,x}\left( \widehat{R}^\dagger\right)\right] - \mathbf{E}\left[g_{\psi,x}\left( \widehat{R}^\dagger_S\right)\right]
            \right\vert\leq \frac{C\psi T^{3/4}}{(S - \lambda_2 - h)^{\alpha - 2}}.
        \end{aligned}
        \label{eq.R_dagger_R_s}
    \end{equation}

Choose $W > S$ and define the ``big-block''
\begin{align*}
    A_z = \sum_{t = (1+\lambda_1)\vee ((z-1)(W+S) + 1)}^{T\wedge ((z-1)(W+S) + W)}O^\dagger_{t,S},\quad a_z = \sum_{t = (1+\lambda_1)\vee ((z-1)(W+S) + W+1)}^{T\wedge (z(W+S))}O^\dagger_{t,S}.
\end{align*}
Since $O^\dagger_{t, S}$ is measurable in $\mathcal{F}_{t,S},$ $A_z$ is measurable in the $\sigma$-field generated by $e_{T\wedge ((z-1)(W+S) + W)},\cdots, e_{(1+\lambda_1)\vee ((z-1)(W+S) + 1) - S},$ $A_z$ are mutually independent. Similarly, 
$a_z$ is measurable in the $\sigma$-field generated by $e_{T\wedge (z(W+S))},\cdots, e_{(1+\lambda_1)\vee ((z-1)(W+S) + W+1) - S},$ so  $a_z$ are mutually independent. Furthermore, from Corollary \ref{corollary.moment_expectation},
\begin{align*}
    &\left\Vert
    \frac{1}{\mathcal{U}}\sum_{t = (1+\lambda_1)\vee ((z-1)(W+S) + 1)}^{T\wedge ((z-1)(W+S) + W)}\sum_{t_1 = (t - \lambda_2)\vee 1}^{t - \lambda_1}\sum_{s = 0}^{h\wedge (t_1 - 1)}\left(\mathbf{E}\left[\mathbf{x}_{t}^\top\mathbf{x}_{t_1}\mid\mathcal{F}_{t,S}\right] - \mathbf{E}\left[\mathbf{x}_{t}^\top\mathbf{x}_{t_1}\right]\right)\right.\\
    &\left(\mathbf{E}\left[\mathbf{y}_{t - s}^\top\mathbf{y}_{t_1 - s}\mid\mathcal{F}_{t,S}\right] - \mathbf{E}\left[\mathbf{y}_{t - s}^\top\mathbf{y}_{t_1 - s}\right]\right)
    \Bigg\Vert_{M/2}\\
    & = \left\Vert
    \frac{1}{\mathcal{U}}\sum_{t = (1+\lambda_1)\vee ((z-1)(W+S) + 1)}^{T\wedge ((z-1)(W+S) + W)}\sum_{t_1 = (t - \lambda_2)\vee 1}^{t - \lambda_1}\gamma_{tt_1,S}\left(\mathbf{E}\left[\mathbf{x}_{t}^\top\mathbf{x}_{t_1}\mid\mathcal{F}_{t,S}\right] - \mathbf{E}\left[\mathbf{x}_{t}^\top\mathbf{x}_{t_1}\right]\right)
    \right\Vert_{M/2}\\
    &\leq \frac{C}{\mathcal{U}}\sqrt{d\sum_{t = (1+\lambda_1)\vee ((z-1)(W+S) + 1)}^{T\wedge ((z-1)(W+S) + W)}\sum_{t_1 = (t - \lambda_2)\vee 1}^{t - \lambda_1}\left\Vert \gamma_{tt_1,S}\right\Vert^2_{M/2}}\leq C_1\sqrt{\frac{W}{T}},
\end{align*}
similarly 
\begin{align*}
    &\left\Vert
    \frac{1}{\mathcal{U}}\sum_{t = (1+\lambda_1)\vee ((z-1)(W+S) + 1)}^{T\wedge ((z-1)(W+S) + W)}\sum_{t_1 = (t - \lambda_2)\vee 1}^{t - \lambda_1}\sum_{s = 0}^{h\wedge (t_1 - 1)}\left(\mathbf{E}\left[\mathbf{x}_{t - s}^\top\mathbf{x}_{t_1 - s}\mid\mathcal{F}_{t,S}\right] - \mathbf{E}\left[\mathbf{x}_{t - s}^\top\mathbf{x}_{t_1 - s}\right]\right)\right.\\
    &\left(\mathbf{E}\left[\mathbf{y}_{t}^\top\mathbf{y}_{t_1}\mid\mathcal{F}_{t,S}\right] - \mathbf{E}\left[\mathbf{y}_{t}^\top\mathbf{y}_{t_1}\right]\right)
    \Bigg\Vert_{M/2}\\
    &\leq C\sqrt{\frac{W}{T}},
\end{align*}
which makes 
\begin{equation}
    \begin{aligned}
        \left\Vert A_z\right\Vert_{M/2}\leq C\sqrt{\frac{W}{T}}.
    \end{aligned}
    \label{eq.A_z_moment}
\end{equation}
Also from Corollary \ref{corollary.moment_expectation}, 
\begin{align*}
&\left\Vert
    \frac{1}{\mathcal{U}}\sum_{t = (1+\lambda_1)\vee ((z-1)(W+S) + W+1)}^{T\wedge (z(W+S))} \sum_{t_1 = (t - \lambda_2)\vee 1}^{t - \lambda_1}\sum_{s = 0}^{h\wedge (t_1 - 1)}\left(\mathbf{E}\left[\mathbf{x}_{t}^\top\mathbf{x}_{t_1}\mid\mathcal{F}_{t,S}\right] - \mathbf{E}\left[\mathbf{x}_{t}^\top\mathbf{x}_{t_1}\right]\right)\right.\\
    &\left(\mathbf{E}\left[\mathbf{y}_{t - s}^\top\mathbf{y}_{t_1 - s}\mid\mathcal{F}_{t,S}\right] - \mathbf{E}\left[\mathbf{y}_{t - s}^\top\mathbf{y}_{t_1 - s}\right]\right)
\Bigg\Vert_{M/2}\\
& = \left\Vert
    \frac{1}{\mathcal{U}}\sum_{t = (1+\lambda_1)\vee ((z-1)(W+S) + W+1)}^{T\wedge (z(W+S))} \sum_{t_1 = (t - \lambda_2)\vee 1}^{t - \lambda_1}\gamma_{tt_1,S}\left(\mathbf{E}\left[\mathbf{x}_{t}^\top\mathbf{x}_{t_1}\mid\mathcal{F}_{t,S}\right] - \mathbf{E}\left[\mathbf{x}_{t}^\top\mathbf{x}_{t_1}\right]\right)
\right\Vert_{M/2}\\
&\leq \frac{C}{\mathcal{U}}\sqrt{d\sum_{t = (1+\lambda_1)\vee ((z-1)(W+S) + W+1)}^{T\wedge (z(W+S))} \sum_{t_1 = (t - \lambda_2)\vee 1}^{t - \lambda_1}\left\Vert\gamma_{tt_1,S}\right\Vert^2_{M/2}}\leq C_1\sqrt{\frac{S}{T}},
\end{align*}
which makes 
\begin{equation}
\begin{aligned}
    \left\Vert
    a_{z}
    \right\Vert_{M/2}\leq C\sqrt{\frac{S}{T}}.
\end{aligned}
\end{equation}
Define $P = \lceil\frac{T}{W+S}\rceil,$ where $\lceil x\rceil$ stands for the smallest integer that is larger than or equal to $x,$ then 
\begin{align*}
    \sum_{z = 1}^P A_z + \sum_{z = 1}^P a_z = \sum_{t  = 1+\lambda_1}^T O^\dagger_{t,S},
\end{align*}
so from Theorem 2 of \cite{MR0133849},
\begin{equation}
\begin{aligned}
    \left\Vert \widehat{R}^\dagger_S - \sum_{z = 1}^P A_z\right\Vert_{M/2} = \left\Vert\sum_{z = 1}^P a_z\right\Vert_{M/2} \leq C\sqrt{\sum_{z = 1}^P\left\Vert a_z\right\Vert^2_{M/2}}\leq C_1\sqrt{\frac{S}{W}},
\end{aligned}
\label{eq.R_to_A_z}
\end{equation}
and 
\begin{equation}
    \begin{aligned}
        \sup_{x\in\mathbf{R}}\left\vert
            \mathbf{E}\left[g_{\psi,x}\left( \widehat{R}^\dagger_S\right)\right] - \mathbf{E}\left[g_{\psi,x}\left(\sum_{z = 1}^P A_z\right)\right]
            \right\vert\leq C\psi\left\Vert \widehat{R}^\dagger_S - \sum_{z = 1}^P A_z\right\Vert_{M/2}\leq C\psi\sqrt{\frac{S}{W}}.
    \end{aligned}
    \label{eq.R_s_to_A_z}
\end{equation}
Define $A_z^*$ as independent random variables with $\mathbf{E}\left[A_z^*\right] = 0$ and $\mathrm{Var}\left(A_z^*\right) = \mathrm{Var}\left(A_z\right),$ and $A_z^*,z = 1,\cdots,P$ are independent of $\mathbf{z}_t, t\in\mathbf{Z}.$ For any $u = 1,\cdots, P,$ define 
\begin{align*}
    B_u = \sum_{z = 1}^{u - 1}A_z + \sum_{z = u+1}^P A^*_z,
\end{align*}
then 
\begin{align*}
    B_u + A_u = B_{u+1} + A_{u+1}^*.
\end{align*}
From Taylor's expansion,
\begin{align*}
    g_{\psi,x}\left(B_u + A_u\right) = g_{\psi,x}\left(B_u\right) + g_{\psi,x}^\prime\left(B_u\right)A_u + \frac{1}{2}g_{\psi,x}^{\prime\prime}\left(B_u\right)A_u^2 + \frac{1}{2}\int_{B_u}^{B_u+A_u} g_{\psi,x}^{\prime\prime\prime}(t)\left(B_u + A_u - t\right)^2\mathrm{d}t,
\end{align*}
and 
\begin{align*}
    g_{\psi,x}\left(B_u + A_u^*\right) = g_{\psi,x}\left(B_u\right) + g_{\psi,x}^\prime\left(B_u\right)A_u^* + \frac{1}{2}g_{\psi,x}^{\prime\prime}\left(B_u\right)A_u^{*2} + \frac{1}{2}\int_{B_u}^{B_u+A_u^*} g_{\psi,x}^{\prime\prime\prime}(t)\left(B_u + A^*_u - t\right)^2\mathrm{d}t.
\end{align*}
Therefore, 
\begin{align*}
    &\left\vert\mathbf{E}\left[g_{\psi,x}\left(B_u + A_u\right) - g_{\psi,x}\left(B_u + A_u^*\right)\mid B_u\right]
    \right\vert\\
    & = 
    \left\vert
    g_{\psi,x}^\prime\left(B_u\right)\left(\mathbf{E}\left[A_u\right] - \mathbf{E}\left[A_u^*\right]\right)
    + \frac{1}{2}g_{\psi,x}^{\prime\prime}\left(B_u\right)\left(\mathrm{Var}\left(A_u\right) - \mathrm{Var}\left(A^*_u\right)\right)\right.\\
    &\left. + \mathbf{E}\left[\frac{1}{2}\int_{B_u}^{B_u+A_u} g_{\psi,x}^{\prime\prime\prime}(t)\left(B_u + A_u - t\right)^2\mathrm{d}t\mid B_u\right] - \mathbf{E}\left[\frac{1}{2}\int_{B_u}^{B_u+A_u^*} g_{\psi,x}^{\prime\prime\prime}(t)\left(B_u + A^*_u - t\right)^2\mathrm{d}t\mid B_u\right]\right\vert\\
    &\leq  \frac{1}{2}\mathbf{E}\left[\int_{B_u}^{B_u+A_u} \left\vert g_{\psi,x}^{\prime\prime\prime}(t)\right\vert\left(B_u + A_u - t\right)^2\mathrm{d}t\mid B_u\right]
    + \frac{1}{2}\mathbf{E}\left[\int_{B_u}^{B_u + A^*_u} \left\vert g_{\psi,x}^{\prime\prime\prime}(t)\right\vert \left(B_u + A^*_u - t\right)^2\mathrm{d}t\mid B_u\right]\\
    &\leq C\psi^3\mathbf{E}\left[\left\vert A_u\right\vert^3\right] +  C\psi^3\mathbf{E}\left[\left\vert A_u^*\right\vert^3\right]\leq C\psi^3\left\Vert A_u\right\Vert_{M/2}^3  +  C\psi^3\left\Vert A_u^*\right\Vert_{M/2}^3.
\end{align*}
For $A^*_u$ has normal distribution, there exists a constant $C>0$ such that 
\begin{align*}
    \left\Vert A_u^*\right\Vert_{M/2}\leq C\left\Vert A_u^*\right\Vert_2 =  C\left\Vert A_u\right\Vert_2\leq C\left\Vert A_u\right\Vert_{M/2},
\end{align*}
so from \eqref{eq.A_z_moment}, 
\begin{align*}
    \left\vert\mathbf{E}\left[g_{\psi,x}\left(B_u + A_u\right) - g_{\psi,x}\left(B_u + A_u^*\right)\right]
    \right\vert &\leq \mathbf{E}\left[\left\vert\mathbf{E}\left[g_{\psi,x}\left(B_u + A_u\right) - g_{\psi,x}\left(B_u + A_u^*\right)\mid B_u\right]
    \right\vert\right]\\
    &\leq C\psi^3\left(\frac{W}{T}\right)^{3/2}.
\end{align*}
By taking summations, we have 
\begin{equation}
\begin{aligned}
         &\left\vert
     \mathbf{E}\left[g_{\psi,x}\left(\sum_{z = 1}^P A_z\right)\right] - \mathbf{E}\left[g_{\psi,x}\left(\sum_{z = 1}^P A_z^*\right)\right]
            \right\vert\\
            &= \left\vert \mathbf{E}\left[g_{\psi,x}\left(B_P +A_P\right)\right] - \mathbf{E}\left[g_{\psi,x}\left(B_1 +A_1^*\right)\right]\right\vert\\
            &\leq \left\vert\mathbf{E}\left[g_{\psi,x}\left(B_1 +A_1\right)\right] - \mathbf{E}\left[g_{\psi,x}\left(B_1 +A_1^*\right)\right]\right\vert\\
            &+ \sum_{u = 2}^P\left\vert
            \mathbf{E}\left[g_{\psi,x}\left(B_u +A_u\right)\right] - \mathbf{E}\left[g_{\psi,x}\left(B_{u-1} +A_{u-1}\right)\right]
            \right\vert\\
            & = \sum_{u = 1}^P\left\vert
            \mathbf{E}\left[g_{\psi,x}\left(B_u +A_u\right)\right] - \mathbf{E}\left[g_{\psi,x}\left(B_{u} +A^*_{u}\right)\right]
            \right\vert\leq C\psi^3\left(\frac{W}{T}\right)^{3/2}P\leq C_1\psi^3\sqrt{\frac{W}{T}}.
\end{aligned}
\label{eq.R_3_W_T}
\end{equation}
Finally, since $A^*_z$ are mutually independent, 
\begin{align*}
    \left\Vert \sum_{z = 1}^P A_z^*\right\Vert_{2} = \sqrt{\mathrm{Var}\left(\sum_{z = 1}^P A_z^*\right)} = \sqrt{\sum_{z=  1}^P\mathrm{Var}\left(A_z^*\right)} = \sqrt{\sum_{z=  1}^P\mathrm{Var}\left(A_z\right)} = \left\Vert\sum_{z = 1}^P A_z\right\Vert_2,
\end{align*}
so from \eqref{eq.R_R_s}, and \eqref{eq.R_to_A_z},
\begin{align*}
    \left\vert\ \left\Vert\sum_{z = 1}^P A_z^*\right\Vert_{2} - \left\Vert\widehat{R}^\dagger\right\Vert_{2}\ \right\vert
    &\leq \left\vert \ \left\Vert\sum_{z = 1}^P A_z\right\Vert_2 - \left\Vert\sum_{z = 1}^P A_z +\sum_{z=1}^Pa_z\right\Vert_2\ \right\vert
    + \left\vert
    \ \left\Vert\sum_{t  = 1+\lambda_1}^T O^\dagger_{t,S}\right\Vert_2 - \left\Vert\widehat{R}^\dagger\right\Vert_2\ 
    \right\vert\\
    &\leq \left\Vert \sum_{z=1}^Pa_z\right\Vert_{M/2}
    + \left\Vert \widehat{R}^\dagger - \sum_{t  = 1+\lambda_1}^T O^\dagger_{t,S}\right\Vert_{M/2}
    \leq \frac{CT^{3/4}}{(S - \lambda_2 - h)^{\alpha - 2}} + C\sqrt{\frac{S}{W}}.
\end{align*}
Define $Z$ as a standard normal random variable (i.e., $\mathbf{E}\left[Z\right] = 0$ and $\mathrm{Var}\left(Z\right) = 1$). For  $\sum_{z = 1}^P A_z^*$ has normal distribution with mean 0, the distribution of $\sum_{z = 1}^P A_z^*$ coincide with $\left\Vert\sum_{z = 1}^P A_z^*\right\Vert_{2}Z.$ Similarly, $\epsilon$ has normal distribution with mean 0 and variance $\mathrm{Var}\left(\widehat{R}^\dagger\right),$ so the distribution of $\epsilon$ coincide with $\sqrt{\mathrm{Var}\left(\widehat{R}^\dagger\right)}Z.$ From this observation, we have 
\begin{equation}
\begin{aligned}
    \sup_{x\in\mathbf{R}}\left\vert
    \mathbf{E}\left[g_{\psi,x}\left(\sum_{z = 1}^P A_z^*\right)\right] - \mathbf{E}\left[g_{\psi,x}\left(\epsilon\right)\right]
    \right\vert &= \sup_{x\in\mathbf{R}}\left\vert
    \mathbf{E}\left[g_{\psi,x}\left(\left\Vert\sum_{z = 1}^P A_z^*\right\Vert_{2}Z\right)\right] - \mathbf{E}\left[g_{\psi,x}\left(\left\Vert\widehat{R}^\dagger\right\Vert_{2}Z\right)\right]
    \right\vert\\
    &\leq C\psi\left\Vert Z\right\Vert_1\left\vert\  \left\Vert\sum_{z = 1}^P A_z^*\right\Vert_{2} - \left\Vert\widehat{R}^\dagger\right\Vert_{2}\ \right\vert\\
    &\leq \frac{C_1\psi T^{3/4}}{(S - \lambda_2 - h)^{\alpha - 2}} + C_1\psi\sqrt{\frac{S}{W}}.
\end{aligned}
\label{eq.g_psi_x_to_epsilon}
\end{equation}
Choose $\psi = \log(T), S = \lfloor h + 3\lambda_2\rfloor, W = \lfloor\sqrt{TS}\rfloor,$ where $\lfloor x\rfloor$ stands for the largest integer that is smaller than or equal to $x.$ Since $S = o(T), $ $W < T$ for sufficiently large $T,$ 
\begin{align*}
    &\frac{\psi T^{3/4}}{(S - \lambda_2 - h)^{\alpha - 2}}\leq \frac{C\log(T) T^{3/4}}{T^{5/2}} = \frac{C\log(T)}{T^{7/4}},\quad \psi\sqrt{\frac{S}{W}}\leq C\log(T)\left(\frac{1}{T^{1-\kappa_h}} + \frac{1}{T^{1-\kappa_2}}\right)^{1/4},\\
    &\psi^3\sqrt{\frac{W}{T}} \leq C\log^3(T)\left(\frac{1}{T^{1-\kappa_h}} + \frac{1}{T^{1-\kappa_2}}\right)^{1/4}.
\end{align*}
From \eqref{eq.Prob_R_to_epsilon}, \eqref{eq.R_to_R_dagger}, \eqref{eq.R_dagger_R_s}, \eqref{eq.R_s_to_A_z}, \eqref{eq.R_3_W_T}, \eqref{eq.g_psi_x_to_epsilon}, we prove \eqref{eq.Prob_close}.
\end{proof}

We then prove the validity of the bootstrap algorithm \ref{algorithm.dependent_wild_bootstrap}, which is summarized in Theorem \ref{theorem.validity_of_bootstrap_algorithm}. Since Algorithm \ref{algorithm.dependent_wild_bootstrap} samples joint normal random variables and constructs the bootstrap statistic as a linear combination of these sampled normal random variables, its validity actually relies on the fact that the variance of the  bootstrap statistic in the bootstrap world closely approximates that of the estimator $\widehat{R},$ which is proved below.

\begin{assumption}
Define $\alpha$ and $\kappa_z, z = 1,2,h$ as those in Assumption \ref{assumption.independent_short_range_structure} and Assumption \ref{assumption.lambda_choices}. 
Assume that $\alpha > 17$ and $ 1/ 6> \kappa_2 >\kappa_1 > \frac{5}{2(\alpha - 2)}$ and $1/6 > \kappa_h > 0.$ Suppose the bandwidth $\mathcal{K}_T\asymp T^{\kappa_k},$ where $ 1 / 2 - (\kappa_2\vee \kappa_h) > \kappa_k > 2(\kappa_2\vee \kappa_h).$
\label{assumption.for_variance_estimation}
\end{assumption}
Assumption \ref{assumption.for_variance_estimation} imposes a stronger condition than Assumption \ref{assumption.independent_short_range_structure} and Assumption \ref{assumption.lambda_choices}. This trade-off is required for the consistent estimation of the variance of the estimator $\widehat{R}.$

\begin{proof}[Proof of Theorem \ref{theorem.consistency_bootstrap}]
    Define ``the expectation in the bootstrap world'' as the conditional expectation conditional on $\mathbf{z}_t, t = 1,\cdots,T,$ i.e., 
    $$
    \mathbf{E}^*\left[\cdot\right] = \mathbf{E}\left[\cdot\mid\mathbf{z}_t, t = 1,\cdots,T\ \right].
    $$
    In the bootstrap world, since $e^*_t, t = 1,\cdots, T$ has joint normal distribution, $\widehat{R}^*_b = \sum_{t = 1+ \lambda_1}^T O_te^*_t$ also has normal distribution with 
    \begin{align*}
        \mathbf{E}^*\left[\widehat{R}^*_b\right] = \sum_{t = 1+ \lambda_1}^T O_t\mathbf{E}^*\left[e^*_t\right] = 0,
    \end{align*}
    and 
    \begin{align*}
       \mathbf{E}^*\left[\widehat{R}^{*2}_b\right] = \sum_{t_1 = 1+ \lambda_1}^T\sum_{t_2 = 1+ \lambda_1}^T O_{t_1}O_{t_2}\mathbf{E}^*\left[e^*_{t_1}e^*_{t_2}\right] = \sum_{t_1 = 1+ \lambda_1}^T\sum_{t_2 = 1+ \lambda_1}^T O_{t_1}O_{t_2}K\left(\frac{t_1  -  t_2}{\mathcal{K}_T}\right).
    \end{align*}
    For 
    \begin{align*}
        \mathrm{Var}\left(\widehat{R}^\dagger\right) = \mathbf{E}\left[\left(\sum_{t = 1 + \lambda_1}^T O_t^\dagger\right)^2\right] = \sum_{t_1 = 1 + \lambda_1}^T\sum_{t_2 = 1 + \lambda_1}^T\mathbf{E}\left[O_{t_1}^\dagger O_{t_2}^\dagger\right],
    \end{align*}
    we have 
    \begin{align*}
        \left\Vert
        \mathbf{E}^*\left[\widehat{R}^{*2}_b\right] -  \mathrm{Var}\left(\widehat{R}^\dagger\right)
        \right\Vert_{M/4}&\leq \left\Vert \sum_{t_1 = 1+ \lambda_1}^T\sum_{t_2 = 1+ \lambda_1}^T K\left(\frac{t_1  -  t_2}{\mathcal{K}_T}\right)\left(O_{t_1}O_{t_2} - O_{t_1}^\dagger O_{t_2}^\dagger\right)\right\Vert_{M/4}\\
        & + \left\Vert
        \sum_{t_1 = 1+ \lambda_1}^T\sum_{t_2 = 1+ \lambda_1}^T K\left(\frac{t_1  -  t_2}{\mathcal{K}_T}\right)\left(O_{t_1}^\dagger O_{t_2}^\dagger - \mathbf{E}\left[O_{t_1}^\dagger O_{t_2}^\dagger\right]\right)
        \right\Vert_{M/4}\\
        & + \sum_{t_1 = 1+ \lambda_1}^T\sum_{t_2 = 1+ \lambda_1}^T\left(1 - K\left(\frac{t_1  -  t_2}{\mathcal{K}_T}\right)\right)\left\vert \mathbf{E}\left[O_{t_1}^\dagger O_{t_2}^\dagger\right]\right\vert.
    \end{align*}
    For the first term,
    \begin{align*}
        &\left\Vert \sum_{t_1 = 1+ \lambda_1}^T\sum_{t_2 = 1+ \lambda_1}^T K\left(\frac{t_1  -  t_2}{\mathcal{K}_T}\right)\left(O_{t_1}O_{t_2} - O_{t_1}^\dagger O_{t_2}^\dagger\right)\right\Vert_{M/4}\\
        &\leq \sum_{t_1 = 1+ \lambda_1}^T\sum_{t_2 = 1+ \lambda_1}^T K\left(\frac{t_1  -  t_2}{\mathcal{K}_T}\right)\left\Vert O_{t_1} - O_{t_1}^\dagger\right\Vert_{M/2}\left\Vert O_{t_2}^\dagger\right\Vert_{M/2}\\
        & + \sum_{t_1 = 1+ \lambda_1}^T\sum_{t_2 = 1+ \lambda_1}^T K\left(\frac{t_1  -  t_2}{\mathcal{K}_T}\right)\left\Vert O_{t_1}^\dagger\right\Vert_{M/2}\left\Vert  O_{t_2} - O_{t_2}^\dagger\right\Vert_{M/2}\\
        & + \sum_{t_1 = 1+ \lambda_1}^T\sum_{t_2 = 1+ \lambda_1}^T K\left(\frac{t_1  -  t_2}{\mathcal{K}_T}\right)\left\Vert O_{t_1} - O_{t_1}^\dagger\right\Vert_{M/2}\left\Vert  O_{t_2} - O_{t_2}^\dagger\right\Vert_{M/2}.
    \end{align*}
Define $\gamma_{tt_1}$ as in \eqref{eq.def_gamma_tt1}, from Theorem \ref{theorem.linear_and_quadratic} and \eqref{eq.gamma_norm},
\begin{align*}
    &\frac{1}{\mathcal{U}}\left\Vert \sum_{t_1 = (t - \lambda_2)\vee 1}^{t - \lambda_1}\sum_{s = 0}^{h\wedge (t_1 - 1)}\left(\mathbf{x}_{t}^\top\mathbf{x}_{t_1} - \mathbf{E}\left[\mathbf{x}_{t}^\top\mathbf{x}_{t_1}\right]\right)\left(\mathbf{y}_{t - s}^\top\mathbf{y}_{t_1 - s} - \mathbf{E}\left[\mathbf{y}_{t - s}^\top\mathbf{y}_{t_1 - s}\right]\right)\right\Vert_{M/2}\\
    & = \frac{1}{\mathcal{U}}\left\Vert \sum_{t_1 = (t - \lambda_2)\vee 1}^{t - \lambda_1}\gamma_{tt_1}\left(\mathbf{z}_{t}^\top\mathbf{I}_{d_1}\mathbf{z}_{t_1} - \mathbf{E}\left[\mathbf{z}_{t}^\top \mathbf{I}_{d_1}\mathbf{z}_{t_1}\right]\right)\right\Vert_{M/2}\\
    &\leq \frac{C}{\mathcal{U}}\sqrt{d\sum_{t_1 = (t - \lambda_2)\vee 1}^{t - \lambda_1}\left\Vert\gamma_{tt_1}\right\Vert^2_{M/2}}\leq \frac{C_1}{\sqrt{T}}.
\end{align*}
Similarly, 
\begin{align*}
    \frac{1}{\mathcal{U}}\left\Vert\sum_{t_1 = (t - \lambda_2)\vee 1}^{t - \lambda_1}\sum_{s = 1}^{h\wedge(t_1 - 1)}\left(\mathbf{x}_{t - s}^\top\mathbf{x}_{t_1 - s} - \mathbf{E}\left[\mathbf{x}_{t - s}^\top\mathbf{x}_{t_1 - s}\right]\right)\left(\mathbf{y}_{t}^\top\mathbf{y}_{t_1} - \mathbf{E}\left[\mathbf{y}_{t}^\top\mathbf{y}_{t_1}\right]\right)\right\Vert_{M/2}\leq \frac{C}{\sqrt{T}},
\end{align*}
and 
\begin{equation}
    \left\Vert O_t^\dagger\right\Vert_{M/2}\leq \frac{C}{\sqrt{T}}.
    \label{eq.moment_O_t}
\end{equation}
For 
\begin{align*}
        &\frac{1}{\mathcal{U}}\left\Vert\sum_{t_1 = (t - \lambda_2)\vee 1}^{t - \lambda_1}\sum_{s = 0}^{h\wedge (t_1 - 1)}\mathbf{x}_{t}^\top\mathbf{x}_{t_1}\mathbf{y}_{t - s}^\top\mathbf{y}_{t_1 - s}\right.\\
        &\left.- \sum_{t_1 = (t - \lambda_2)\vee 1}^{t - \lambda_1}\sum_{s = 0}^{h\wedge (t_1 - 1)}\left(\mathbf{x}_{t}^\top\mathbf{x}_{t_1} - \mathbf{E}\left[\mathbf{x}_{t}^\top\mathbf{x}_{t_1}\right]\right)\left(\mathbf{y}_{t - s}^\top\mathbf{y}_{t_1 - s} - \mathbf{E}\left[\mathbf{y}_{t - s}^\top\mathbf{y}_{t_1 - s}\right]\right)\right\Vert_{M/2}\\
        &\leq I + II + III,\\
    \end{align*}
    where 
    \begin{align*}
        & I =  \frac{1}{\mathcal{U}}\left\Vert
        \sum_{t_1 = (t - \lambda_2)\vee 1}^{t - \lambda_1}\sum_{s = 0}^{h\wedge (t_1 - 1)}\mathbf{E}\left[\mathbf{x}_{t}^\top\mathbf{x}_{t_1}\right]\left(\mathbf{y}_{t - s}^\top\mathbf{y}_{t_1 - s} - \mathbf{E}\left[\mathbf{y}_{t - s}^\top\mathbf{y}_{t_1 - s}\right]\right)
        \right\Vert_{M/2},\\
        & II  = \frac{1}{\mathcal{U}}\left\Vert
        \sum_{t_1 = (t - \lambda_2)\vee 1}^{t - \lambda_1}\sum_{s = 0}^{h\wedge (t_1 - 1)}
        \mathbf{E}\left[\mathbf{y}_{t - s}^\top\mathbf{y}_{t_1 - s}\right]
        \left(\mathbf{x}_{t}^\top\mathbf{x}_{t_1} - \mathbf{E}\left[\mathbf{x}_{t}^\top\mathbf{x}_{t_1}\right]\right)
        \right\Vert_{M/2},\\
        & III = \frac{1}{\mathcal{U}}\sum_{t_1 = (t - \lambda_2)\vee 1}^{t - \lambda_1}\sum_{s = 0}^{h\wedge (t_1 - 1)}\left\vert \mathbf{E}\left[\mathbf{x}_{t}^\top\mathbf{x}_{t_1}\right]\right\vert\left\vert \mathbf{E}\left[\mathbf{y}_{t - s}^\top\mathbf{y}_{t_1 - s}\right]\right\vert.
    \end{align*}
From \eqref{eq.covariances_x_y} and \eqref{eq.gamma_norm}
\begin{align*}
    I &\leq \frac{1}{\mathcal{U}}\sum_{t_1 = (t - \lambda_2)\vee 1}^{t - \lambda_1}\left\vert \mathbf{E}\left[\mathbf{x}_{t}^\top\mathbf{x}_{t_1}\right]\right\vert\left\Vert
        \sum_{s = 0}^{h\wedge (t_1 - 1)}\left(\mathbf{y}_{t - s}^\top\mathbf{y}_{t_1 - s} - \mathbf{E}\left[\mathbf{y}_{t - s}^\top\mathbf{y}_{t_1 - s}\right]\right)
        \right\Vert_{M/2}\\
        &\leq \frac{C}{\mathcal{U}}\sum_{t_1 = (t - \lambda_2)\vee 1}^{t - \lambda_1}\frac{d_1\sqrt{dh}}{(t - t_1)^\alpha}\leq \frac{C}{\sqrt{\lambda_2 - \lambda_1}\lambda_1^{\alpha - 1}}\leq \frac{C_1}{T^{5/2}}.
\end{align*}
From Theorem \ref{theorem.linear_and_quadratic}, 
\begin{align*}
    II\leq \frac{C}{\mathcal{U}}\sqrt{d\sum_{t_1 = (t-\lambda_2)\vee 1}^{t-\lambda_1}\left(\sum_{s = 0}^{h\wedge (t_1 - 1)}
        \mathbf{E}\left[\mathbf{y}_{t - s}^\top\mathbf{y}_{t_1 - s}\right]\right)^2}\leq \frac{C_1\sqrt{h}}{\sqrt{(\lambda_2 - \lambda_1)}\lambda_1^{\alpha - 1/2}}\leq \frac{C_2}{T^2}.
\end{align*}
From \eqref{eq.covariances_x_y}, 
\begin{align*}
    III\leq \frac{1}{\mathcal{U}}\sum_{t_1 = (t - \lambda_2)\vee 1}^{t - \lambda_1}\sum_{s = 0}^{h\wedge (t_1 - 1)}\frac{Cd^2}{(t - t_1)^{2\alpha}}\leq \frac{C_1\sqrt{Th}}{\sqrt{(\lambda_2 - \lambda_1)}\lambda_1^{2\alpha - 1}}
    \leq \frac{C_2}{T^4}.
\end{align*}
Therefore, for any $t = 1 + \lambda_1, \cdots,T,$ 
\begin{align*}
     &\frac{1}{\mathcal{U}}\left\Vert\sum_{t_1 = (t - \lambda_2)\vee 1}^{t - \lambda_1}\sum_{s = 0}^{h\wedge (t_1 - 1)}\mathbf{x}_{t}^\top\mathbf{x}_{t_1}\mathbf{y}_{t - s}^\top\mathbf{y}_{t_1 - s}\right.\\
        &\left.- \sum_{t_1 = (t - \lambda_2)\vee 1}^{t - \lambda_1}\sum_{s = 0}^{h\wedge (t_1 - 1)}\left(\mathbf{x}_{t}^\top\mathbf{x}_{t_1} - \mathbf{E}\left[\mathbf{x}_{t}^\top\mathbf{x}_{t_1}\right]\right)\left(\mathbf{y}_{t - s}^\top\mathbf{y}_{t_1 - s} - \mathbf{E}\left[\mathbf{y}_{t - s}^\top\mathbf{y}_{t_1 - s}\right]\right)\right\Vert_{M/2}\leq \frac{C}{T^2}.
\end{align*}
Similarly, 
\begin{align*}
    &\frac{1}{\mathcal{U}}\left\Vert
    \sum_{t_1 = (t - \lambda_2)\vee 1}^{t - \lambda_1}\sum_{s = 1}^{h\wedge(t_1 - 1)}\mathbf{x}_{t - s}^\top\mathbf{x}_{t_1 - s}\mathbf{y}_{t}^\top\mathbf{y}_{t_1}\right.\\
    &\left.
    - \sum_{t_1 = (t - \lambda_2)\vee 1}^{t - \lambda_1}\sum_{s = 1}^{h\wedge(t_1 - 1)}\left(\mathbf{x}_{t - s}^\top\mathbf{x}_{t_1 - s} - \mathbf{E}\left[\mathbf{x}_{t - s}^\top\mathbf{x}_{t_1 - s}\right]\right)\left(\mathbf{y}_{t}^\top\mathbf{y}_{t_1} - \mathbf{E}\left[\mathbf{y}_{t}^\top\mathbf{y}_{t_1}\right]\right)
    \right\Vert_{M/2}\leq  \frac{C}{T^2},
\end{align*}
and 
\begin{equation}
    \left\Vert O_{t} - O_{t}^\dagger\right\Vert_{M/2}\leq \frac{C}{T^2}.
    \label{eq.delta_O_O_t}
\end{equation}
From \eqref{eq.moment_O_t}, \eqref{eq.delta_O_O_t}, 
\begin{align*}
    &\sum_{t_1 = 1+ \lambda_1}^T\sum_{t_2 = 1+ \lambda_1}^T K\left(\frac{t_1  -  t_2}{\mathcal{K}_T}\right)\left\Vert O_{t_1} - O_{t_1}^\dagger\right\Vert_{M/2}\left\Vert O_{t_2}^\dagger\right\Vert_{M/2}\\
    &\leq \frac{C}{T^2\sqrt{T}}\sum_{t_1 = 1+ \lambda_1}^T\sum_{t_2 = 1+ \lambda_1}^T K\left(\frac{t_1  -  t_2}{\mathcal{K}_T}\right)\\
    & \leq  \frac{C_1}{T^2\sqrt{T}}\sum_{u = 0}^{T - 1 - \lambda_1} K\left(\frac{u}{\mathcal{K}_T}\right)\sum_{t_1 = 1 + \lambda_1}^{T-u}1\\
    &\leq \frac{C_1}{T\sqrt{T}}\sum_{u = 0}^\infty K\left(\frac{u}{\mathcal{K}_T}\right)\leq \frac{C_2\mathcal{K}_T}{T\sqrt{T}}\leq \frac{C_3}{T},
\end{align*}
similarly 
\begin{align*}
    \sum_{t_1 = 1+ \lambda_1}^T\sum_{t_2 = 1+ \lambda_1}^T K\left(\frac{t_1  -  t_2}{\mathcal{K}_T}\right)\left\Vert O_{t_1}^\dagger\right\Vert_{M/2}\left\Vert  O_{t_2} - O_{t_2}^\dagger\right\Vert_{M/2}\leq \frac{C}{T},
\end{align*}
and 
\begin{align*}
    &\sum_{t_1 = 1+ \lambda_1}^T\sum_{t_2 = 1+ \lambda_1}^T K\left(\frac{t_1  -  t_2}{\mathcal{K}_T}\right)\left\Vert O_{t_1} - O_{t_1}^\dagger\right\Vert_{M/2}\left\Vert  O_{t_2} - O_{t_2}^\dagger\right\Vert_{M/2}\\
    &\leq \frac{C}{T^4}\sum_{t_1 = 1+ \lambda_1}^T\sum_{t_2 = 1+ \lambda_1}^T K\left(\frac{t_1  -  t_2}{\mathcal{K}_T}\right)\leq \frac{C_1\mathcal{K}_T}{T^3}\leq \frac{C_2}{T^{5/2}}.
\end{align*}
Therefore, we have 
\begin{equation}
    \begin{aligned}
        \left\Vert \sum_{t_1 = 1+ \lambda_1}^T\sum_{t_2 = 1+ \lambda_1}^T K\left(\frac{t_1  -  t_2}{\mathcal{K}_T}\right)\left(O_{t_1}O_{t_2} - O_{t_1}^\dagger O_{t_2}^\dagger\right)\right\Vert_{M/4}\leq \frac{C}{T}.
    \end{aligned}
    \label{eq.OO_to_OOS}
\end{equation}
For any $S\geq 0$ define $O^\dagger_{t, S} $ as in equation \eqref{eq.def_O_t_S}, i.e., 
\begin{align*}
    O^\dagger_{t, S} &= \frac{1}{\mathcal{U}}\sum_{t_1 = (t - \lambda_2)\vee 1}^{t - \lambda_1}\sum_{s = 0}^{h\wedge (t_1 - 1)}\left(\mathbf{E}\left[\mathbf{x}_{t}^\top\mathbf{x}_{t_1}\mid\mathcal{F}_{t,S}\right] - \mathbf{E}\left[\mathbf{x}_{t}^\top\mathbf{x}_{t_1}\right]\right)\left(\mathbf{E}\left[\mathbf{y}_{t - s}^\top\mathbf{y}_{t_1 - s}\mid\mathcal{F}_{t,S}\right] - \mathbf{E}\left[\mathbf{y}_{t - s}^\top\mathbf{y}_{t_1 - s}\right]\right)\\
     & + \frac{1}{\mathcal{U}}\sum_{t_1 = (t - \lambda_2)\vee 1}^{t - \lambda_1}\sum_{s = 1}^{h\wedge(t_1 - 1)}\left(\mathbf{E}\left[\mathbf{x}_{t - s}^\top\mathbf{x}_{t_1 - s}\mid\mathcal{F}_{t,S}\right] - \mathbf{E}\left[\mathbf{x}_{t - s}^\top\mathbf{x}_{t_1 - s}\right]\right)\left(\mathbf{E}\left[\mathbf{y}_{t}^\top\mathbf{y}_{t_1}\mid\mathcal{F}_{t,S}\right] - \mathbf{E}\left[\mathbf{y}_{t}^\top\mathbf{y}_{t_1}\right]\right).
\end{align*}
From this definition, $O_{t,S}^\dagger$ is measurable in $\mathcal{F}_{t,S}.$ After that, we define $P_{t,S}^\dagger = O_t^\dagger - O_{t,S}^\dagger,$ and define   $\gamma_{tt_1,S}$ as in equation \eqref{eq.def_gamma_tt1S}, i.e., 
\begin{align*}
    \gamma_{tt_1,S} = \sum_{s = 0}^{h\wedge (t_1 - 1)}\left(\mathbf{E}\left[\mathbf{y}_{t - s}^\top\mathbf{y}_{t_1 - s}\mid\mathcal{F}_{t,S}\right] - \mathbf{E}\left[\mathbf{y}_{t - s}^\top\mathbf{y}_{t_1 - s}\right]\right).
\end{align*}
Define $O^\dagger_{t, S} = 0$ if $S < 0,$ in such case, $P_{t,S}^\dagger = O_t^\dagger - O_{t,S}^\dagger = O_t^\dagger.$ Define $\gamma_{tt_1}$ as in \eqref{eq.def_gamma_tt1}, then  
\begin{align*}
   &\sum_{t_1 = (t - \lambda_2)\vee 1}^{t - \lambda_1}\sum_{s = 0}^{h\wedge (t_1 - 1)}\left(\mathbf{x}_{t}^\top\mathbf{x}_{t_1} - \mathbf{E}\left[\mathbf{x}_{t}^\top\mathbf{x}_{t_1}\right]\right)\left(\mathbf{y}_{t - s}^\top\mathbf{y}_{t_1 - s} - \mathbf{E}\left[\mathbf{y}_{t - s}^\top\mathbf{y}_{t_1 - s}\right]\right)\\
   & = \sum_{t_1 = (t - \lambda_2)\vee 1}^{t - \lambda_1}\gamma_{tt_1}\left(\mathbf{x}_{t}^\top\mathbf{x}_{t_1} - \mathbf{E}\left[\mathbf{x}_{t}^\top\mathbf{x}_{t_1}\right]\right).
\end{align*}
Define $\mathcal{G}_{t,s}$ as the $\sigma$-field generated by $\cdots, e_{t-1}^{(2)}, e_t^{(2)}, e^{(1)}_t, e^{(1)}_{t-1},\cdots, e^{(1)}_{t - S},$ with $e_t^{(i)}, i = 1,2$ defined in Assumption \ref{assumption.independent_short_range_structure}. Then $\mathcal{F}_{t,S}\subset \mathcal{G}_{t,s},$ and $\gamma_{tt_1}$ is measurable in $\mathcal{G}_{t,s}.$ Since 
\begin{align*}
    &\mathbf{E}\left[\sum_{t_1 = (t - \lambda_2)\vee 1}^{t - \lambda_1}\gamma_{tt_1}\left(\mathbf{x}_{t}^\top\mathbf{x}_{t_1} - \mathbf{E}\left[\mathbf{x}_{t}^\top\mathbf{x}_{t_1}\right]\right)\mid\mathcal{G}_{t,S}\right]\\
    & = \sum_{t_1 = (t - \lambda_2)\vee 1}^{t - \lambda_1}\gamma_{tt_1}\left(\mathbf{E}\left[\mathbf{x}_{t}^\top\mathbf{x}_{t_1}\mid\mathcal{G}_{t,S}\right] - \mathbf{E}\left[\mathbf{x}_{t}^\top\mathbf{x}_{t_1}\right]\right)\\
    & = \sum_{t_1 = (t - \lambda_2)\vee 1}^{t - \lambda_1}\gamma_{tt_1}\left(\mathbf{E}\left[\mathbf{x}_{t}^\top\mathbf{x}_{t_1}\mid\mathcal{F}_{t,S}\right] - \mathbf{E}\left[\mathbf{x}_{t}^\top\mathbf{x}_{t_1}\right]\right),
\end{align*}
from the tower property of conditional expectation, we have  
\begin{align*}
    &\mathbf{E}\left[\sum_{t_1 = (t - \lambda_2)\vee 1}^{t - \lambda_1}\gamma_{tt_1}\left(\mathbf{x}_{t}^\top\mathbf{x}_{t_1} - \mathbf{E}\left[\mathbf{x}_{t}^\top\mathbf{x}_{t_1}\right]\right)\mid\mathcal{F}_{t,S}\right]\\
    & = \mathbf{E}\left[\sum_{t_1 = (t - \lambda_2)\vee 1}^{t - \lambda_1}\gamma_{tt_1}\left(\mathbf{E}\left[\mathbf{x}_{t}^\top\mathbf{x}_{t_1}\mid\mathcal{F}_{t,S}\right] - \mathbf{E}\left[\mathbf{x}_{t}^\top\mathbf{x}_{t_1}\right]\right)\mid\mathcal{F}_{t,S}\right]\\
    & = \sum_{t_1 = (t - \lambda_2)\vee 1}^{t - \lambda_1}\left(\mathbf{E}\left[\mathbf{x}_{t}^\top\mathbf{x}_{t_1}\mid\mathcal{F}_{t,S}\right] - \mathbf{E}\left[\mathbf{x}_{t}^\top\mathbf{x}_{t_1}\right]\right)\sum_{s = 0}^{h\wedge (t_1 - 1)}\left(\mathbf{E}\left[\mathbf{y}_{t - s}^\top\mathbf{y}_{t_1 - s}\mid \mathcal{F}_{t,S}\right] - \mathbf{E}\left[\mathbf{y}_{t - s}^\top\mathbf{y}_{t_1 - s}\right]\right).
\end{align*}
Similarly 
\begin{align*}
    &\mathbf{E}\left[\sum_{t_1 = (t - \lambda_2)\vee 1}^{t - \lambda_1}\sum_{s = 1}^{h\wedge(t_1 - 1)}\left(\mathbf{x}_{t - s}^\top\mathbf{x}_{t_1 - s} - \mathbf{E}\left[\mathbf{x}_{t - s}^\top\mathbf{x}_{t_1 - s}\right]\right)\left(\mathbf{y}_{t}^\top\mathbf{y}_{t_1} - \mathbf{E}\left[\mathbf{y}_{t}^\top\mathbf{y}_{t_1}\right]\right)\mid\mathcal{F}_{t,S}\right]\\
    & = \sum_{t_1 = (t - \lambda_2)\vee 1}^{t - \lambda_1}\sum_{s = 1}^{h\wedge(t_1 - 1)}\left(\mathbf{E}\left[\mathbf{y}_{t}^\top\mathbf{y}_{t_1}\mid\mathcal{F}_{t,S}\right] - \mathbf{E}\left[\mathbf{y}_{t}^\top\mathbf{y}_{t_1}\right]\right)\left(\mathbf{E}\left[\mathbf{x}_{t - s}^\top\mathbf{x}_{t_1 - s}\mid\mathcal{F}_{t,S}\right] - \mathbf{E}\left[\mathbf{x}_{t - s}^\top\mathbf{x}_{t_1 - s}\right]\right),
\end{align*}
so 
\begin{equation}
\begin{aligned}
    \mathbf{E}\left[O^\dagger_{t}\mid \mathcal{F}_{t,S}\right] = O^\dagger_{t,S}.
\end{aligned}
\label{eq.O_t_F_s}
\end{equation}

Suppose $S\geq 0.$ For any real values $c_{tt_1}\in\mathbf{R}, t_1 = (t-\lambda_2)\vee 1,\cdots, t - \lambda_1,$ from Theorem \ref{theorem.linear_and_quadratic}, 
\begin{align*}
    &\left\Vert\frac{1}{\mathcal{U}}\sum_{t_1 = (t - \lambda_2)\vee 1}^{t - \lambda_1}c_{tt_1}\left(\mathbf{E}\left[\mathbf{x}_{t}^\top\mathbf{x}_{t_1}\mid\mathcal{F}_{t,S}\right] - \mathbf{E}\left[\mathbf{x}_{t}^\top\mathbf{x}_{t_1}\right]\right)\right\Vert_{M/2}\\
    &= \left\Vert\mathbf{E}\left[\frac{1}{\mathcal{U}}\sum_{t_1 = (t - \lambda_2)\vee 1}^{t - \lambda_1}c_{tt_1}\left(\mathbf{x}_{t}^\top\mathbf{x}_{t_1} - \mathbf{E}\left[\mathbf{x}_{t}^\top\mathbf{x}_{t_1}\right]\right)\mid\mathcal{F}_{t,S}\right]\right\Vert_{M/2}\\
    &\leq \frac{C}{\mathcal{U}}\sqrt{d\sum_{t_1 = (t - \lambda_2)\vee 1}^{t - \lambda_1}c_{tt_1}^2},
\end{align*}
therefore
\begin{align*}
    &\frac{1}{\mathcal{U}}\left\Vert \sum_{t_1 = (t - \lambda_2)\vee 1}^{t - \lambda_1}\sum_{s = 0}^{h\wedge (t_1 - 1)}\left(\mathbf{E}\left[\mathbf{x}_{t}^\top\mathbf{x}_{t_1}\mid\mathcal{F}_{t,S}\right] - \mathbf{E}\left[\mathbf{x}_{t}^\top\mathbf{x}_{t_1}\right]\right)\left(\mathbf{E}\left[\mathbf{y}_{t - s}^\top\mathbf{y}_{t_1 - s}\mid\mathcal{F}_{t,S}\right] - \mathbf{E}\left[\mathbf{y}_{t - s}^\top\mathbf{y}_{t_1 - s}\right]\right)\right\Vert_{M/2}\\
    & = \frac{1}{\mathcal{U}}\left\Vert \sum_{t_1 = (t - \lambda_2)\vee 1}^{t - \lambda_1}\gamma_{tt_1,S}\left(\mathbf{E}\left[\mathbf{x}_{t}^\top\mathbf{x}_{t_1}\mid\mathcal{F}_{t,S}\right] - \mathbf{E}\left[\mathbf{x}_{t}^\top\mathbf{x}_{t_1}\right]\right)\right\Vert_{M/2}\\
    & \leq \frac{C}{\mathcal{U}}\sqrt{d\sum_{t_1 = (t - \lambda_2)\vee 1}^{t - \lambda_1}\left\Vert\gamma_{tt_1,S}\right\Vert^2_{M/2}}.
\end{align*}
From Corollary \ref{corollary.combination_lag}, 
\begin{align*}
    \left\Vert
    \gamma_{tt_1,S}
    \right\Vert_{M/2} &= 
    \left\Vert
    \mathbf{E}\left[\sum_{s = 0}^{h\wedge (t_1 - 1)}\left(\mathbf{y}_{t - s}^\top\mathbf{y}_{t_1 - s} - \mathbf{E}\left[\mathbf{y}_{t - s}^\top\mathbf{y}_{t_1 - s}\right]\right)\mid\mathcal{F}_{t,S}\right]
    \right\Vert_{M/2}\\
    &\leq \left\Vert \sum_{s = 0}^{h\wedge (t_1 - 1)}\left(\mathbf{y}_{t - s}^\top\mathbf{y}_{t_1 - s} - \mathbf{E}\left[\mathbf{y}_{t - s}^\top\mathbf{y}_{t_1 - s}\right]\right)\right\Vert_{M/2}\leq C\sqrt{dh},
\end{align*}
which makes 
\begin{align*}
    &\frac{1}{\mathcal{U}}\left\Vert \sum_{t_1 = (t - \lambda_2)\vee 1}^{t - \lambda_1}\sum_{s = 0}^{h\wedge (t_1 - 1)}\left(\mathbf{E}\left[\mathbf{x}_{t}^\top\mathbf{x}_{t_1}\mid\mathcal{F}_{t,S}\right] - \mathbf{E}\left[\mathbf{x}_{t}^\top\mathbf{x}_{t_1}\right]\right)\left(\mathbf{E}\left[\mathbf{y}_{t - s}^\top\mathbf{y}_{t_1 - s}\mid\mathcal{F}_{t,S}\right] - \mathbf{E}\left[\mathbf{y}_{t - s}^\top\mathbf{y}_{t_1 - s}\right]\right)\right\Vert_{M/2}\\
    &\leq \frac{C}{d\sqrt{T(\lambda_2 - \lambda_1)h}}\sqrt{d(\lambda_2 - \lambda_1)dh}\leq \frac{C_1}{\sqrt{T}}.
\end{align*}
Similarly
\begin{align*}
    &\frac{1}{\mathcal{U}}\left\Vert
    \sum_{t_1 = (t - \lambda_2)\vee 1}^{t - \lambda_1}\sum_{s = 1}^{h\wedge(t_1 - 1)}\left(\mathbf{E}\left[\mathbf{x}_{t - s}^\top\mathbf{x}_{t_1 - s}\mid\mathcal{F}_{t,S}\right] - \mathbf{E}\left[\mathbf{x}_{t - s}^\top\mathbf{x}_{t_1 - s}\right]\right)\left(\mathbf{E}\left[\mathbf{y}_{t}^\top\mathbf{y}_{t_1}\mid\mathcal{F}_{t,S}\right] - \mathbf{E}\left[\mathbf{y}_{t}^\top\mathbf{y}_{t_1}\right]\right)
    \right\Vert_{M/2}\\
    &\leq \frac{C}{\sqrt{T}},
\end{align*}
and 
\begin{equation}
    \begin{aligned}
        \left\Vert O_{t,S}^\dagger \right\Vert_{M/2}\leq \frac{C}{\sqrt{T}}.
        \label{eq.size_O_t_S}
    \end{aligned}
\end{equation}
For any $S\in\mathbf{Z},$ 
\begin{equation}
\begin{aligned}
    \left\Vert
    P^\dagger_{t,S}
    \right\Vert_{M/2}\leq \left\Vert O^\dagger _t\right\Vert_{M/2} + \left\Vert O^\dagger _{t,S} \right\Vert_{M/2}\leq \frac{C}{\sqrt{T}}.
\end{aligned}
\label{eq.size_P_t_S}
\end{equation}
For any $S > 2\lambda_2 + h,$ from Theorem \ref{theorem.linear_and_quadratic} and Corollary \ref{corollary.combination_lag},
\begin{align*}
&\frac{1}{\mathcal{U}}\left\Vert
\sum_{t_1 = (t - \lambda_2)\vee 1}^{t - \lambda_1}\sum_{s = 0}^{h\wedge (t_1 - 1)}\left(\mathbf{x}_{t}^\top\mathbf{x}_{t_1} - \mathbf{E}\left[\mathbf{x}_{t}^\top\mathbf{x}_{t_1}\right]\right)\left(\mathbf{y}_{t - s}^\top\mathbf{y}_{t_1 - s} - \mathbf{E}\left[\mathbf{y}_{t - s}^\top\mathbf{y}_{t_1 - s}\right]\right)\right.\\
&\left. - \sum_{t_1 = (t - \lambda_2)\vee 1}^{t - \lambda_1}\sum_{s = 0}^{h\wedge (t_1 - 1)}\left(\mathbf{E}\left[\mathbf{x}_{t}^\top\mathbf{x}_{t_1}\mid\mathcal{F}_{t,S}\right] - \mathbf{E}\left[\mathbf{x}_{t}^\top\mathbf{x}_{t_1}\right]\right)\left(\mathbf{E}\left[\mathbf{y}_{t - s}^\top\mathbf{y}_{t_1 - s}\mid\mathcal{F}_{t,S}\right] - \mathbf{E}\left[\mathbf{y}_{t - s}^\top\mathbf{y}_{t_1 - s}\right]\right) 
\right\Vert_{M/2}\\
& = \frac{1}{\mathcal{U}}\left\Vert
\sum_{t_1 = (t - \lambda_2)\vee 1}^{t - \lambda_1}\gamma_{tt_1}\left(\mathbf{x}_{t}^\top\mathbf{x}_{t_1} - \mathbf{E}\left[\mathbf{x}_{t}^\top\mathbf{x}_{t_1}\right]\right)- \sum_{t_1 = (t - \lambda_2)\vee 1}^{t - \lambda_1}\gamma_{tt_1,S}\left(\mathbf{E}\left[\mathbf{x}_{t}^\top\mathbf{x}_{t_1}\mid\mathcal{F}_{t,S}\right] - \mathbf{E}\left[\mathbf{x}_{t}^\top\mathbf{x}_{t_1}\right]\right) 
\right\Vert_{M/2}\\
&\leq \frac{1}{\mathcal{U}}\left\Vert\sum_{t_1 = (t - \lambda_2)\vee 1}^{t - \lambda_1}\left(\gamma_{tt_1} - \gamma_{tt_1,S}\right)\left(\mathbf{x}_{t}^\top\mathbf{x}_{t_1} - \mathbf{E}\left[\mathbf{x}_{t}^\top\mathbf{x}_{t_1}\right]\right)\right\Vert_{M/2}\\
&+ \frac{1}{\mathcal{U}}\left\Vert\sum_{t_1 = (t - \lambda_2)\vee 1}^{t - \lambda_1} \gamma_{tt_1,S}\left(\mathbf{x}_{t}^\top\mathbf{x}_{t_1} - \mathbf{E}\left[\mathbf{x}_{t}^\top\mathbf{x}_{t_1}\mid\mathcal{F}_{t,S}\right]\right)\right\Vert_{M/2}\\
&\leq \frac{C}{\mathcal{U}}\sqrt{d\sum_{t_1 = (t - \lambda_2)\vee 1}^{t - \lambda_1}\left\Vert \gamma_{tt_1} - \gamma_{tt_1,S}\right\Vert^2_{M/2}} + \frac{CT\sqrt{T}}{\mathcal{U}(S - \lambda_2)^{\alpha - 2}}\left\Vert\max_{t = 1,\cdots,T;\  t_1 = (t-\lambda_2)\vee 1,\cdots, t-\lambda_1}\left\vert \gamma_{tt_1,S}\right\vert\ \right\Vert_{M/2}\\
&\leq \frac{C_1}{\sqrt{T}(S - h - \lambda_2)^\alpha} + \frac{C_1\sqrt{T}}{\sqrt{(\lambda_2 - \lambda_1)}(S - \lambda_2)^{\alpha - 2}}T^{2/M}(\lambda_2 - \lambda_1)^{2/M}.
\end{align*}
Similarly,
\begin{align*}
    &\frac{1}{\mathcal{U}}\left\Vert
    \sum_{t_1 = (t - \lambda_2)\vee 1}^{t - \lambda_1}\sum_{s = 1}^{h\wedge(t_1 - 1)}\left(\mathbf{x}_{t - s}^\top\mathbf{x}_{t_1 - s} - \mathbf{E}\left[\mathbf{x}_{t - s}^\top\mathbf{x}_{t_1 - s}\right]\right)\left(\mathbf{y}_{t}^\top\mathbf{y}_{t_1} - \mathbf{E}\left[\mathbf{y}_{t}^\top\mathbf{y}_{t_1}\right]\right)\right.\\
    &\left.
    - \sum_{t_1 = (t - \lambda_2)\vee 1}^{t - \lambda_1}\sum_{s = 1}^{h\wedge(t_1 - 1)}\left(\mathbf{E}\left[\mathbf{x}_{t - s}^\top\mathbf{x}_{t_1 - s}\mid\mathcal{F}_{t,S}\right] - \mathbf{E}\left[\mathbf{x}_{t - s}^\top\mathbf{x}_{t_1 - s}\right]\right)\left(\mathbf{E}\left[\mathbf{y}_{t}^\top\mathbf{y}_{t_1}\mid\mathcal{F}_{t,S}\right] - \mathbf{E}\left[\mathbf{y}_{t}^\top\mathbf{y}_{t_1}\right]\right)
    \right\Vert_{M/2}\\
    &\leq \frac{C}{\sqrt{T}(S - h - \lambda_2)^\alpha} + \frac{C\sqrt{T}}{\sqrt{(\lambda_2 - \lambda_1)}(S - \lambda_2)^{\alpha - 2}}T^{2/M}(\lambda_2 - \lambda_1)^{2/M},
\end{align*}
and therefore
\begin{equation}
    \begin{aligned}
        \left\Vert
        P_{t,S}^\dagger 
        \right\Vert_{M/2} & = \left\Vert O_t^\dagger - O_{t,S}^\dagger\right\Vert_{M/2}\\
        &\leq \frac{C}{\sqrt{T}(S - h - \lambda_2)^\alpha} + \frac{C\sqrt{T}}{\sqrt{(\lambda_2 - \lambda_1)}(S - \lambda_2)^{\alpha - 2}}T^{2/M}(\lambda_2 - \lambda_1)^{2/M}.
    \end{aligned}
    \label{eq.def_P_t_S}
\end{equation}

Notice that 
\begin{align*}
    &\left\Vert
        \sum_{t_1 = 1+ \lambda_1}^T\sum_{t_2 = 1+ \lambda_1}^T K\left(\frac{t_1  -  t_2}{\mathcal{K}_T}\right)\left(O_{t_1}^\dagger O_{t_2}^\dagger - \mathbf{E}\left[O_{t_1}^\dagger O_{t_2}^\dagger\right]\right)
        \right\Vert_{M/4}\\
    &\leq \sum_{s = 0}^{T - 1 - \lambda_1} K\left(\frac{s}{\mathcal{K}_T}\right)\left\Vert\sum_{t_1 = 1 + \lambda_1}^{T - s}\left(O_{t_1}^\dagger O_{t_1 + s}^\dagger - \mathbf{E}\left[O_{t_1}^\dagger O_{t_1 + s}^\dagger\right]\right)\right\Vert_{M/4}\\
    & + \sum_{s = 1}^{T - 1 - \lambda_1} K\left(\frac{s}{\mathcal{K}_T}\right)\left\Vert\sum_{t_2 = 1 + \lambda_1}^{T - s}\left(O_{t_2 + s}^\dagger O_{t_2}^\dagger - \mathbf{E}\left[O_{t_2 + s}^\dagger O_{t_2}^\dagger\right]\right)\right\Vert_{M/4}.
\end{align*}

For any $s\geq 0,$   $O^\dagger_{t_1 + s} = O^\dagger_{t_1 + s, s - 1} + P^\dagger_{t_1 + s, s - 1},$ and $O^\dagger_{t_1 + s, s - 1}$ is independent of $O^\dagger_{t_1},$ which makes
\begin{align*}
    \mathbf{E}\left[O^\dagger_{t_1 + s, s - 1}O^\dagger_{t_1}\right] = \mathbf{E}\left[O^\dagger_{t_1 + s, s - 1}\right]\mathbf{E}\left[O^\dagger_{t_1}\right] = 0,
\end{align*}
and 
\begin{align*}
    \mathbf{E}\left[O^\dagger_{t_1 + s}O^\dagger_{t_1}\right] = \mathbf{E}\left[P^\dagger_{t_1 + s, s - 1}O^\dagger_{t_1}\right].
\end{align*}
Therefore,
\begin{align*}
    &\left\Vert\sum_{t_1 = 1 + \lambda_1}^{T - s}\left(O_{t_1}^\dagger O_{t_1 + s}^\dagger - \mathbf{E}\left[O_{t_1}^\dagger O_{t_1 + s}^\dagger\right]\right)\right\Vert_{M/4}\\
    &\leq \left\Vert\sum_{t_1 = 1 + \lambda_1}^{T - s}O^\dagger_{t_1 + s, s - 1}O^\dagger_{t_1}\right\Vert_{M/4} + \left\Vert \sum_{t_1 = 1 + \lambda_1}^{T - s}\left(O_{t_1}^\dagger P_{t_1 + s, s - 1}^\dagger - \mathbf{E}\left[O_{t_1}^\dagger P_{t_1 + s, s - 1}^\dagger\right]\right)\right\Vert_{M/4}.
\end{align*}
From \eqref{eq.O_t_F_s},
\begin{align*}
   \left\Vert\sum_{t_1 = 1 + \lambda_1}^{T - s}O^\dagger_{t_1 + s, s - 1}O^\dagger_{t_1}\right\Vert_{M/4} 
   \leq  \sum_{j = 0}^\infty\left\Vert\sum_{t_1 = 1 + \lambda_1}^{T - s}O^\dagger_{t_1 + s, s - 1}\left(\mathbf{E}\left[O^\dagger_{t_1}\mid\mathcal{F}_{t_1, j}\right] - \mathbf{E}\left[O^\dagger_{t_1}\mid\mathcal{F}_{t_1, j - 1}\right]\right)\right\Vert_{M/4}, 
\end{align*}
where $\mathbf{E}\left[O^\dagger_{t_1}\mid\mathcal{F}_{t_1, - 1}\right] = \mathbf{E}\left[O^\dagger_{t_1}\right] = 0.$
For any $j = 0,1,\cdots$ and $z = 1,\cdots, T - s - \lambda_1,$ define 
\begin{align*}
    V_{z,j} = \sum_{t_1 = T - s - z + 1}^{T - s}O^\dagger_{t_1 + s, s - 1}\left(\mathbf{E}\left[O^\dagger_{t_1}\mid\mathcal{F}_{t_1, j}\right] - \mathbf{E}\left[O^\dagger_{t_1}\mid\mathcal{F}_{t_1, j - 1}\right]\right)
\end{align*}
and $\mathcal{V}_{z,j}$ as the $\sigma$-field generated by $e_T, e_{T-1},\cdots, e_{T-s-z+1-j}.$ Then $V_{z,j}$ is measurable in  $\mathcal{V}_{z,j},$  $\mathcal{V}_{z,j}\subset\mathcal{V}_{z + 1,j},$ and 
\begin{align*}
    \mathbf{E}\left[V_{z + 1,j} - V_{z,j}\mid \mathcal{V}_{z,j}\right] &= 
    \mathbf{E}\left[O^\dagger_{T-z, s - 1}\left(\mathbf{E}\left[O^\dagger_{T-s-z}\mid\mathcal{F}_{T-s-z, j}\right] - \mathbf{E}\left[O^\dagger_{T-s-z}\mid\mathcal{F}_{T-s-z, j - 1}\right]\right)\mid \mathcal{V}_{z,j}\right]\\
    & = O^\dagger_{T-z, s - 1}\left(\mathbf{E}\left[O^\dagger_{T-s-z}\mid\mathcal{F}_{T-s-z, j - 1}\right] - \mathbf{E}\left[O^\dagger_{T-s-z}\mid\mathcal{F}_{T-s-z, j - 1}\right]\right) = 0.
\end{align*}
Therefore, $V_{z,j}$ forms a martingale, and according to Theorem 1.1 of \cite{MR0400380}, 
\begin{align*}
    &\left\Vert\sum_{t_1 = 1 + \lambda_1}^{T - s}O^\dagger_{t_1 + s, s - 1}\left(\mathbf{E}\left[O^\dagger_{t_1}\mid\mathcal{F}_{t_1, j}\right] - \mathbf{E}\left[O^\dagger_{t_1}\mid\mathcal{F}_{t_1, j - 1}\right]\right)\right\Vert_{M/4}\\
    &\leq C\sqrt{\sum_{t_1 = 1 + \lambda_1}^{T - s}\left\Vert O^\dagger_{t_1 + s, s - 1}\right\Vert^2_{M/2}\left\Vert \mathbf{E}\left[O^\dagger_{t_1}\mid\mathcal{F}_{t_1, j}\right] - \mathbf{E}\left[O^\dagger_{t_1}\mid\mathcal{F}_{t_1, j - 1}\right]\right\Vert^2_{M/2}}\\
    & = C\sqrt{\sum_{t_1 = 1 + \lambda_1}^{T - s}\left\Vert O^\dagger_{t_1 + s, s - 1}\right\Vert^2_{M/2}\left\Vert P^\dagger_{t_1,j - 1} - P^\dagger_{t_1,j}\right\Vert^2_{M/2}}.
\end{align*}
From \eqref{eq.size_O_t_S}, \eqref{eq.size_P_t_S}, and \eqref{eq.def_P_t_S}, if $j\leq 2\lambda_2 + h + 1,$
\begin{align*}
    \sqrt{\sum_{t_1 = 1 + \lambda_1}^{T - s}\left\Vert O^\dagger_{t_1 + s, s - 1}\right\Vert^2_{M/2}\left\Vert P^\dagger_{t_1,j - 1} - P^\dagger_{t_1,j}\right\Vert^2_{M/2}}\leq \frac{C}{\sqrt{T}}.
\end{align*}
On the other hand, if $j >  2\lambda_2 + h + 1,$
\begin{align*}
    &\sqrt{\sum_{t_1 = 1 + \lambda_1}^{T - s}\left\Vert O^\dagger_{t_1 + s, s - 1}\right\Vert^2_{M/2}\left\Vert P^\dagger_{t_1,j - 1} - P^\dagger_{t_1,j}\right\Vert^2_{M/2}}\\
    &\leq \frac{C}{\sqrt{T}(j - 1 - h - \lambda_2)^\alpha} + \frac{C\sqrt{T}}{\sqrt{(\lambda_2 - \lambda_1)}(j - 1 - \lambda_2)^{\alpha - 2}}T^{2/M}(\lambda_2 - \lambda_1)^{2/M}.
\end{align*}
Therefore,
\begin{equation}
\begin{aligned}
   \left\Vert\sum_{t_1 = 1 + \lambda_1}^{T - s}O^\dagger_{t_1 + s, s - 1}O^\dagger_{t_1}\right\Vert_{M/4} &\leq \sum_{j = 0}^{2\lambda_2 + h + 1}\frac{C}{\sqrt{T}} + \sum_{j = 2\lambda_2 + h + 2}^{\infty}\frac{C}{\sqrt{T}(j - 1 - h - \lambda_2)^\alpha} \\
   &+ \sum_{j = 2\lambda_2 + h + 2}^{\infty}\frac{C\sqrt{T}}{\sqrt{(\lambda_2 - \lambda_1)}(j - 1 - \lambda_2)^{\alpha - 2}}T^{2/M}(\lambda_2 - \lambda_1)^{2/M}\\
   &\leq \frac{C_1\lambda_2 + C_1h}{\sqrt{T}} + \frac{C_1}{\sqrt{T}\lambda_2^{\alpha - 1}} + \frac{C_1\sqrt{T}}{\sqrt{\lambda_2 - \lambda_1}(\lambda_2 + h)^{\alpha - 3}}T^{2/M}(\lambda_2 - \lambda_1)^{2/M}\\
   &\leq \frac{C_2\lambda_2 + C_2h}{\sqrt{T}}.
\end{aligned}
\label{eq.OO_lag}
\end{equation}
Since 
\begin{align*}
    &\left\Vert \sum_{t_1 = 1 + \lambda_1}^{T - s}\left(O_{t_1}^\dagger P_{t_1 + s, s - 1}^\dagger - \mathbf{E}\left[O_{t_1}^\dagger P_{t_1 + s, s - 1}^\dagger\right]\right)\right\Vert_{M/4}\\
    &\leq \sum_{j = 0}^\infty\left\Vert \sum_{t_1 = 1 + \lambda_1}^{T - s}\left(\mathbf{E}\left[O_{t_1}^\dagger P_{t_1 + s, s - 1}^\dagger\mid\mathcal{F}_{t_1 +s, j}\right] - \mathbf{E}\left[O_{t_1}^\dagger P_{t_1 + s, s - 1}^\dagger\mid\mathcal{F}_{t_1 + s, j - 1}\right]\right)\right\Vert_{M/4},
\end{align*}
where we define $\mathbf{E}\left[O_{t_1}^\dagger P_{t_1 + s, s - 1}^\dagger\mid\mathcal{F}_{t_1 + s, - 1}\right] = \mathbf{E}\left[O_{t_1}^\dagger P_{t_1 + s, s - 1}^\dagger\right].$ For any $z = 1,\cdots, T - s - \lambda_1,$ define 
\begin{align*}
    W_{z,j} = \sum_{t_1 = T - s - z + 1}^{T - s}\left(\mathbf{E}\left[O_{t_1}^\dagger P_{t_1 + s, s - 1}^\dagger\mid\mathcal{F}_{t_1 +s, j}\right] - \mathbf{E}\left[O_{t_1}^\dagger P_{t_1 + s, s - 1}^\dagger\mid\mathcal{F}_{t_1 + s, j - 1}\right]\right),
\end{align*}
and $\mathcal{W}_{z,j}$ the $\sigma$-field generated by $e_T,e_{T-1},\cdots, e_{T-z+1 - j}.$ Then $W_{z,j}$ is measurable in $\mathcal{W}_{z,j},$ $\mathcal{W}_{z,j}\subset \mathcal{W}_{z + 1,j},$ and 
\begin{align*}
    &\mathbf{E}\left[W_{z + 1,j} - W_{z,j}\mid \mathcal{W}_{z,j}\right]\\
    &= 
    \mathbf{E}\left[\left(\mathbf{E}\left[O_{T-s-z}^\dagger P_{T-z, s - 1}^\dagger\mid\mathcal{F}_{T-z, j}\right] - \mathbf{E}\left[O_{T-s-z}^\dagger P_{T-z, s - 1}^\dagger\mid\mathcal{F}_{T-z, j - 1}\right]\right)\mid \mathcal{W}_{z,j}\right]\\
    & =  \mathbf{E}\left[O_{T-s-z}^\dagger P_{T-z, s - 1}^\dagger\mid\mathcal{F}_{T-z, j - 1}\right] - \mathbf{E}\left[O_{T-s-z}^\dagger P_{T-z, s - 1}^\dagger\mid\mathcal{F}_{T-z, j - 1}\right] = 0,
\end{align*}
so $W_{z,j} $ forms a martingale. According to Theorem 1.1 of \cite{MR0400380}, 
\begin{equation}
\begin{aligned}
    &\left\Vert \sum_{t_1 = 1 + \lambda_1}^{T - s}\left(\mathbf{E}\left[O_{t_1}^\dagger P_{t_1 + s, s - 1}^\dagger\mid\mathcal{F}_{t_1 +s, j}\right] - \mathbf{E}\left[O_{t_1}^\dagger P_{t_1 + s, s - 1}^\dagger\mid\mathcal{F}_{t_1 + s, j - 1}\right]\right)\right\Vert_{M/4}\\
    &\leq C\sqrt{\sum_{t_1 = 1 + \lambda_1}^{T - s}\left\Vert \mathbf{E}\left[O_{t_1}^\dagger P_{t_1 + s, s - 1}^\dagger\mid\mathcal{F}_{t_1 +s, j}\right] - \mathbf{E}\left[O_{t_1}^\dagger P_{t_1 + s, s - 1}^\dagger\mid\mathcal{F}_{t_1 + s, j - 1}\right]\right\Vert^2_{M/4}}.
\end{aligned}
\label{eq.moment_inequality_X}
\end{equation}
For any $j\in\mathbf{Z},$
\begin{align*}
    &\left\Vert \mathbf{E}\left[O_{t_1}^\dagger P_{t_1 + s, s - 1}^\dagger\mid\mathcal{F}_{t_1 +s, j}\right] - \mathbf{E}\left[O_{t_1}^\dagger P_{t_1 + s, s - 1}^\dagger\mid\mathcal{F}_{t_1 + s, j - 1}\right]\right\Vert_{M/4}
    \leq 2\left\Vert O_{t_1}^\dagger\right\Vert_{M/2}\left\Vert P_{t_1 + s, s - 1}^\dagger\right\Vert_{M/2}.
\end{align*}
On the other hand, if $j > s,$ we have 
\begin{equation}
\begin{aligned}
     P_{t_1 + s, s - 1}^\dagger &= O_{t_1 + s}^\dagger - O^\dagger_{t_1 + s, s - 1} \\
     &=  O_{t_1 + s}^\dagger - O^\dagger_{t_1 + s, j - 1} +O^\dagger_{t_1 + s, j - 1} - O^\dagger_{t_1 + s, s - 1}\\
    &= P^\dagger_{t_1 + s, j - 1} + O^\dagger_{t_1 + s, j - 1} - O^\dagger_{t_1 + s, s - 1},
\end{aligned}
\label{eq.P_separate}
\end{equation}
and $O^\dagger_{t_1 + s, j - 1} - O^\dagger_{t_1 + s, s - 1}$ is measurable in $\mathcal{F}_{t_1 + s, j - 1},$ which implies  
\begin{align*}
    &\left\Vert \mathbf{E}\left[O_{t_1}^\dagger P_{t_1 + s, s - 1}^\dagger\mid\mathcal{F}_{t_1 +s, j}\right] - \mathbf{E}\left[O_{t_1}^\dagger P_{t_1 + s, s - 1}^\dagger\mid\mathcal{F}_{t_1 + s, j - 1}\right]\right\Vert_{M/4}\\
    &\leq \left\Vert
    \mathbf{E}\left[O_{t_1}^\dagger P_{t_1 + s, j - 1}^\dagger\mid\mathcal{F}_{t_1 +s, j}\right] - \mathbf{E}\left[O_{t_1}^\dagger P_{t_1 + s, j - 1}^\dagger\mid\mathcal{F}_{t_1 + s, j - 1}\right]\right\Vert_{M/4}\\
    &+ \left\Vert
    \left(O^\dagger_{t_1 + s, j - 1} - O^\dagger_{t_1 + s, s - 1}\right)\left(\mathbf{E}\left[O_{t_1}^\dagger \mid\mathcal{F}_{t_1 +s, j}\right] - \mathbf{E}\left[O_{t_1}^\dagger \mid\mathcal{F}_{t_1 + s, j - 1}\right]\right)
    \right\Vert_{M/4}\\
    &\leq 2\left\Vert O_{t_1}^\dagger\right\Vert_{M/2}\left\Vert P^\dagger_{t_1 + s, j - 1}\right\Vert_{M/2} + \left\Vert O^\dagger_{t_1 + s, j - 1} - O^\dagger_{t_1 + s, s - 1}\right\Vert_{M/2}\left\Vert\mathbf{E}\left[O_{t_1}^\dagger\mid\mathcal{F}_{t_1 +s, j}\right] - \mathbf{E}\left[O_{t_1}^\dagger\mid\mathcal{F}_{t_1 + s, j - 1}\right]\right\Vert_{M/2}\\
    & = 2\left\Vert O_{t_1}^\dagger\right\Vert_{M/2}\left\Vert P^\dagger_{t_1 + s, j - 1}\right\Vert_{M/2} + \left\Vert P^\dagger_{t_1 + s, s - 1} - P^\dagger_{t_1 + s, j - 1}\right\Vert_{M/2}\left\Vert\mathbf{E}\left[O_{t_1}^\dagger\mid\mathcal{F}_{t_1, j - s}\right] - \mathbf{E}\left[O_{t_1}^\dagger\mid\mathcal{F}_{t_1, j - s - 1}\right]\right\Vert_{M/2}\\
    & =  2\left\Vert O_{t_1}^\dagger\right\Vert_{M/2}\left\Vert P^\dagger_{t_1 + s, j - 1}\right\Vert_{M/2} + \left\Vert P^\dagger_{t_1 + s, s - 1} - P^\dagger_{t_1 + s, j - 1}\right\Vert_{M/2}\left\Vert
    P^\dagger_{t_1, j - s - 1} - P^\dagger_{t_1, j - s}\right\Vert_{M/2}.
\end{align*}
Suppose $s\leq 2\lambda_2 + h + 1.$ In this case, if $j\leq 4\lambda_2 + 2h + 2,$ then from \eqref{eq.moment_O_t} and \eqref{eq.size_P_t_S}
\begin{align*}
    &\left\Vert \mathbf{E}\left[O_{t_1}^\dagger P_{t_1 + s, s - 1}^\dagger\mid\mathcal{F}_{t_1 +s, j}\right] - \mathbf{E}\left[O_{t_1}^\dagger P_{t_1 + s, s - 1}^\dagger\mid\mathcal{F}_{t_1 + s, j - 1}\right]\right\Vert_{M/4}\\
    &\leq 2\left\Vert O_{t_1}^\dagger\right\Vert_{M/2}\left\Vert P_{t_1 + s, s - 1}^\dagger\right\Vert_{M/2}\leq \frac{C}{T},
\end{align*}
which makes
\begin{equation}
\begin{aligned}
    &\left\Vert \sum_{t_1 = 1 + \lambda_1}^{T - s}\left(\mathbf{E}\left[O_{t_1}^\dagger P_{t_1 + s, s - 1}^\dagger\mid\mathcal{F}_{t_1 +s, j}\right] - \mathbf{E}\left[O_{t_1}^\dagger P_{t_1 + s, s - 1}^\dagger\mid\mathcal{F}_{t_1 + s, j - 1}\right]\right)\right\Vert_{M/4}
    \leq \frac{C}{\sqrt{T}}.
\end{aligned}
\label{eq.sum_moment}
\end{equation}
If $j > 4\lambda_2 + 2h + 2,$ then $j - s - 1 > 4\lambda_2 + 2h + 2  - (2\lambda_2 + h + 1) = 2\lambda_2 +h + 1,$ and \eqref{eq.def_P_t_S} makes 
\begin{align*}
    &\left\Vert \mathbf{E}\left[O_{t_1}^\dagger P_{t_1 + s, s - 1}^\dagger\mid\mathcal{F}_{t_1 +s, j}\right] - \mathbf{E}\left[O_{t_1}^\dagger P_{t_1 + s, s - 1}^\dagger\mid\mathcal{F}_{t_1 + s, j - 1}\right]\right\Vert_{M/4}\\
    &\leq \frac{C}{T(j - 1 - h - \lambda_2)^\alpha} + \frac{C}{\sqrt{(\lambda_2 - \lambda_1)}(j - 1 - \lambda_2)^{\alpha - 2}}T^{2/M}(\lambda_2 - \lambda_1)^{2/M}\\
    & + \frac{C}{T(j -s - 1 - h - \lambda_2)^\alpha} + \frac{C}{\sqrt{(\lambda_2 - \lambda_1)}(j - s - 1 - \lambda_2)^{\alpha - 2}}T^{2/M}(\lambda_2 - \lambda_1)^{2/M}\\
    &\leq \frac{C_1}{T(j -s - 1 - h - \lambda_2)^\alpha} + \frac{C_1}{\sqrt{(\lambda_2 - \lambda_1)}(j - s - 1 - \lambda_2)^{\alpha - 2}}T^{2/M}(\lambda_2 - \lambda_1)^{2/M}.
\end{align*}
Applying this inequality to \eqref{eq.moment_inequality_X}, we derive
\begin{align*}
    &\left\Vert \sum_{t_1 = 1 + \lambda_1}^{T - s}\left(\mathbf{E}\left[O_{t_1}^\dagger P_{t_1 + s, s - 1}^\dagger\mid\mathcal{F}_{t_1 +s, j}\right] - \mathbf{E}\left[O_{t_1}^\dagger P_{t_1 + s, s - 1}^\dagger\mid\mathcal{F}_{t_1 + s, j - 1}\right]\right)\right\Vert_{M/4}\\
    &\leq \frac{C_1}{\sqrt{T}(j -s - 1 - h - \lambda_2)^\alpha} + \frac{C_1\sqrt{T}}{\sqrt{(\lambda_2 - \lambda_1)}(j - s - 1 - \lambda_2)^{\alpha - 2}}T^{2/M}(\lambda_2 - \lambda_1)^{2/M}.
\end{align*}
Combine this inequality with \eqref{eq.sum_moment} and take summation, 
\begin{equation}
\begin{aligned}
    &\left\Vert \sum_{t_1 = 1 + \lambda_1}^{T - s}\left(O_{t_1}^\dagger P_{t_1 + s, s - 1}^\dagger - \mathbf{E}\left[O_{t_1}^\dagger P_{t_1 + s, s - 1}^\dagger\right]\right)\right\Vert_{M/4}\\
    &\leq \sum_{j = 0}^{4\lambda_2 + 2h + 2}\frac{C}{\sqrt{T}} + \sum_{j = 4\lambda_2 + 2h + 3}^{\infty}\frac{C}{\sqrt{T}(j -s - 1 - h - \lambda_2)^\alpha}\\
    &+ \sum_{j = 4\lambda_2 + 2h + 3}^{\infty}\frac{C\sqrt{T}}{\sqrt{(\lambda_2 - \lambda_1)}(j - s - 1 - \lambda_2)^{\alpha - 2}}T^{2/M}(\lambda_2 - \lambda_1)^{2/M}\\
    &\leq \frac{C_1\lambda_2 + C_1h}{\sqrt{T}} + \frac{C_1}{\sqrt{T}\lambda_2^{\alpha  -  1}} + \frac{C_1\sqrt{T}}{\sqrt{(\lambda_2 - \lambda_1)}(\lambda_2 + h)^{\alpha - 3}}T^{2/M}(\lambda_2 - \lambda_1)^{2/M}\\
    &\leq \frac{C_2\lambda_2 + C_2h}{\sqrt{T}}.
\end{aligned}
\label{eq.OP_first_part}
\end{equation}
Equation \eqref{eq.OP_first_part} derives the moment bound for  $\left\Vert \sum_{t_1 = 1 + \lambda_1}^{T - s}\left(O_{t_1}^\dagger P_{t_1 + s, s - 1}^\dagger - \mathbf{E}\left[O_{t_1}^\dagger P_{t_1 + s, s - 1}^\dagger\right]\right)\right\Vert_{M/4}$ under the setting $s\leq 2\lambda_2 + h + 1.$ We next turn to the complementary case where $s > 2\lambda_2 + h + 1.$

In this case, if $j\leq 2s + 1,$  from \eqref{eq.moment_O_t} and \eqref{eq.def_P_t_S},
\begin{align*}
    &\left\Vert \mathbf{E}\left[O_{t_1}^\dagger P_{t_1 + s, s - 1}^\dagger\mid\mathcal{F}_{t_1 +s, j}\right] - \mathbf{E}\left[O_{t_1}^\dagger P_{t_1 + s, s - 1}^\dagger\mid\mathcal{F}_{t_1 + s, j - 1}\right]\right\Vert_{M/4}\\
    &\leq 2\left\Vert O_{t_1}^\dagger P_{t_1 + s, s - 1}^\dagger\right\Vert_{M/4}\\
    &\leq \frac{C_1}{T(s - 1 - h - \lambda_2)^\alpha} + \frac{C_1}{\sqrt{(\lambda_2 - \lambda_1)}(s - 1 - \lambda_2)^{\alpha - 2}}T^{2/M}(\lambda_2 - \lambda_1)^{2/M},
\end{align*}
which, according to \eqref{eq.moment_inequality_X}, makes
\begin{align*}
    &\left\Vert \sum_{t_1 = 1 + \lambda_1}^{T - s}\left(\mathbf{E}\left[O_{t_1}^\dagger P_{t_1 + s, s - 1}^\dagger\mid\mathcal{F}_{t_1 +s, j}\right] - \mathbf{E}\left[O_{t_1}^\dagger P_{t_1 + s, s - 1}^\dagger\mid\mathcal{F}_{t_1 + s, j - 1}\right]\right)\right\Vert_{M/4}\\
    &\leq \frac{C}{\sqrt{T}(s - 1 - h - \lambda_2)^\alpha} + \frac{C\sqrt{T}}{\sqrt{(\lambda_2 - \lambda_1)}(s - 1 - \lambda_2)^{\alpha - 2}}T^{2/M}(\lambda_2 - \lambda_1)^{2/M}.
\end{align*}
If $j > 2s + 1,$ then $j - s - 1 > s  > 2\lambda_2 + h + 1.$ We apply \eqref{eq.P_separate} and derive 
\begin{align*}
    &\left\Vert \mathbf{E}\left[O_{t_1}^\dagger P_{t_1 + s, s - 1}^\dagger\mid\mathcal{F}_{t_1 +s, j}\right] - \mathbf{E}\left[O_{t_1}^\dagger P_{t_1 + s, s - 1}^\dagger\mid\mathcal{F}_{t_1 + s, j - 1}\right]\right\Vert_{M/4}\\
    &\leq 2\left\Vert O_{t_1}^\dagger\right\Vert_{M/2}\left\Vert P^\dagger_{t_1 + s, j - 1}\right\Vert_{M/2} + \left\Vert P^\dagger_{t_1 + s, s - 1} - P^\dagger_{t_1 + s, j - 1}\right\Vert_{M/2}\left\Vert
    P^\dagger_{t_1, j - s - 1} - P^\dagger_{t_1, j - s}\right\Vert_{M/2}\\
    &\leq \frac{C}{T(j - 1 - h - \lambda_2)^\alpha} + \frac{C}{\sqrt{(\lambda_2 - \lambda_1)}(j - 1 - \lambda_2)^{\alpha - 2}}T^{2/M}(\lambda_2 - \lambda_1)^{2/M}\\
    & + \left(\left(\frac{C}{\sqrt{T}(s-1 - h - \lambda_2)^\alpha} + \frac{CT^{1/2 +2/M}(\lambda_2 - \lambda_1)^{2/M}}{\sqrt{(\lambda_2 - \lambda_1)}(s-1 - \lambda_2)^{\alpha - 2}}\right)\right.\\
    &\left.\left(\frac{1}{\sqrt{T}(j-s-1 - h - \lambda_2)^\alpha} + \frac{T^{1/2 + 2/M}(\lambda_2 - \lambda_1)^{2/M}}{\sqrt{(\lambda_2 - \lambda_1)}(j-s-1 - \lambda_2)^{\alpha - 2}}\right)\right).
\end{align*}
Apply \eqref{eq.moment_inequality_X}, it makes 
\begin{align*}
    &\left\Vert \sum_{t_1 = 1 + \lambda_1}^{T - s}\left(\mathbf{E}\left[O_{t_1}^\dagger P_{t_1 + s, s - 1}^\dagger\mid\mathcal{F}_{t_1 +s, j}\right] - \mathbf{E}\left[O_{t_1}^\dagger P_{t_1 + s, s - 1}^\dagger\mid\mathcal{F}_{t_1 + s, j - 1}\right]\right)\right\Vert_{M/4}\\
    &\leq \frac{C}{\sqrt{T}(j - 1 - h - \lambda_2)^\alpha} + \frac{C\sqrt{T}}{\sqrt{(\lambda_2 - \lambda_1)}(j - 1 - \lambda_2)^{\alpha - 2}}T^{2/M}(\lambda_2 - \lambda_1)^{2/M}\\
    & + \left(\left(\frac{C}{(s-1 - h - \lambda_2)^\alpha} + \frac{CT^{1 +2/M}(\lambda_2 - \lambda_1)^{2/M}}{\sqrt{(\lambda_2 - \lambda_1)}(s-1 - \lambda_2)^{\alpha - 2}}\right)\right.\\
    &\left.\left(\frac{1}{\sqrt{T}(j-s-1 - h - \lambda_2)^\alpha} + \frac{T^{1/2 + 2/M}(\lambda_2 - \lambda_1)^{2/M}}{\sqrt{(\lambda_2 - \lambda_1)}(j-s-1 - \lambda_2)^{\alpha - 2}}\right)\right).
\end{align*}
From these observations,  
\begin{equation}
\begin{aligned}
    &\left\Vert \sum_{t_1 = 1 + \lambda_1}^{T - s}\left(O_{t_1}^\dagger P_{t_1 + s, s - 1}^\dagger - \mathbf{E}\left[O_{t_1}^\dagger P_{t_1 + s, s - 1}^\dagger\right]\right)\right\Vert_{M/4}\\
    &\leq \sum_{j = 0}^{2s + 1}\frac{C}{\sqrt{T}(s - 1 - h - \lambda_2)^\alpha} + \frac{C\sqrt{T}}{\sqrt{(\lambda_2 - \lambda_1)}(s - 1 - \lambda_2)^{\alpha - 2}}T^{2/M}(\lambda_2 - \lambda_1)^{2/M}\\
    & + \sum_{j = 2s+2}^\infty \frac{C}{\sqrt{T}(j - 1 - h - \lambda_2)^\alpha} + \frac{C\sqrt{T}}{\sqrt{(\lambda_2 - \lambda_1)}(j - 1 - \lambda_2)^{\alpha - 2}}T^{2/M}(\lambda_2 - \lambda_1)^{2/M}\\   
    & + \sum_{j = 2s+2}^\infty\left(\left(\frac{C}{(s-1 - h - \lambda_2)^\alpha} + \frac{CT^{1 +2/M}(\lambda_2 - \lambda_1)^{2/M}}{\sqrt{(\lambda_2 - \lambda_1)}(s-1 - \lambda_2)^{\alpha - 2}}\right)\right.\\
    &\left.\left(\frac{1}{\sqrt{T}(j-s-1 - h - \lambda_2)^\alpha} + \frac{T^{1/2 + 2/M}(\lambda_2 - \lambda_1)^{2/M}}{\sqrt{(\lambda_2 - \lambda_1)}(j-s-1 - \lambda_2)^{\alpha - 2}}\right)\right)\\
    &\leq \frac{C_1s}{\sqrt{T}(s - 1 - h - \lambda_2)^\alpha} + \frac{C_1s\sqrt{T}}{\sqrt{(\lambda_2 - \lambda_1)}(s - 1 - \lambda_2)^{\alpha - 2}}T^{2/M}(\lambda_2 - \lambda_1)^{2/M}\\
    & + \frac{C_1}{\sqrt{T}(2s+ 1 - h - \lambda_2)^{\alpha - 1}} + \frac{C_1\sqrt{T}}{\sqrt{(\lambda_2  - \lambda_1)}(2s+1-\lambda_2)^{\alpha - 3}}T^{2/M}(\lambda_2 - \lambda_1)^{2/M}\\
    & + \left(\left(\frac{C_1}{(s-1 - h - \lambda_2)^\alpha} + \frac{C_1T^{1 +2/M}(\lambda_2 - \lambda_1)^{2/M}}{\sqrt{(\lambda_2 - \lambda_1)}(s-1 - \lambda_2)^{\alpha - 2}}\right)\right.\\
    &\left.\left(\frac{1}{\sqrt{T}(s + 1 - h - \lambda_2)^{\alpha - 1}} + \frac{T^{1/2 + 2/M}(\lambda_2 - \lambda_1)^{2/M}}{\sqrt{(\lambda_2  - \lambda_1)}(s + 1 - \lambda_2)^{\alpha - 3}}\right)\right)\\
    &\leq \frac{C_2sT^{1/2 + 2/M}(\lambda_2 - \lambda_1)^{2/M}}{\sqrt{(\lambda_2 - \lambda_1)}(s - 1 - h - \lambda_2)^{\alpha - 2}}\\
    &+ \frac{C_2T^{1 +2/M}(\lambda_2 - \lambda_1)^{2/M}}{\sqrt{(\lambda_2 - \lambda_1)}(s-1 - h -  \lambda_2)^{\alpha - 2}}\frac{T^{1/2 + 2/M}(\lambda_2 - \lambda_1)^{2/M}}{\sqrt{(\lambda_2  - \lambda_1)}(s + 1 - h -  \lambda_2)^{\alpha - 3}}.
\end{aligned}
\label{eq.OP_second_part}
\end{equation}
This inequality applies the following facts:  
\begin{align*}
    \frac{1}{(s-1 - h - \lambda_2)^\alpha} &\leq \frac{T^{1 + 2/M}(\lambda_2 - \lambda_1)^{2/M}}{\sqrt{(\lambda_2 - \lambda_1)}(s-1 - h - \lambda_2)^{\alpha - 2}},\\
    \text{and}\quad  
    \frac{1}{(s + 1 - h - \lambda_2)^{\alpha - 1}} &\leq \frac{T^{1 + 2/M}(\lambda_2 - \lambda_1)^{2/M}}{\sqrt{(\lambda_2  - \lambda_1)}(s + 1 - h -  \lambda_2)^{\alpha - 3}}
\end{align*}
for sufficiently large $T.$  For $s > 2\lambda_2 + h + 1$ and $M > 8,$
\begin{align*}
    &\frac{sT^{1/2 + 2/M}(\lambda_2 - \lambda_1)^{2/M}}{\sqrt{(\lambda_2 - \lambda_1)}(s - 1 - h - \lambda_2)^{\alpha - 2}}\\ 
    &= \frac{T^{1/2 + 2/M}(\lambda_2 - \lambda_1)^{2/M}}{\sqrt{(\lambda_2 - \lambda_1)}(s - 1 - h - \lambda_2)^{\alpha - 3}} 
    + \frac{(1 + h + \lambda_2)T^{1/2 + 2/M}(\lambda_2 - \lambda_1)^{2/M}}{\sqrt{(\lambda_2 - \lambda_1)}(s - 1 - h - \lambda_2)^{\alpha - 2}}\\
    & \leq \frac{T^{1/2 + 2/M}}{\lambda_2^{\alpha - 3}} + \frac{(1 + h + \lambda_2)T^{1/2 + 2/M}}{\lambda_2^{\alpha - 2}}\leq \frac{C\lambda_2 + Ch}{\sqrt{T}},
\end{align*}
and 
\begin{align*}
    \frac{T^{3/2 + 4/M}(\lambda_2 - \lambda_1)^{4/M}}{(\lambda_2 - \lambda_1)(s - 1 - h - \lambda_2)^{2\alpha - 5}}\leq \frac{T^{3/2 + 4/M}}{\lambda_2^{2\alpha - 5}}\leq \frac{C\lambda_2 + Ch}{\sqrt{T}}.
\end{align*}

From \eqref{eq.OO_lag}, \eqref{eq.OP_first_part}, and \eqref{eq.OP_second_part}, 
\begin{align*}
    \left\Vert\sum_{t_1 = 1 + \lambda_1}^{T - s}\left(O_{t_1}^\dagger O_{t_1 + s}^\dagger - \mathbf{E}\left[O_{t_1}^\dagger O_{t_1 + s}^\dagger\right]\right)\right\Vert_{M/4}\leq \frac{C\lambda_2 + Ch}{\sqrt{T}},
\end{align*}
and 
\begin{align*}
    &\sum_{s = 0}^{T - 1 - \lambda_1} K\left(\frac{s}{\mathcal{K}_T}\right)\left\Vert\sum_{t_1 = 1 + \lambda_1}^{T - s}\left(O_{t_1}^\dagger O_{t_1 + s}^\dagger - \mathbf{E}\left[O_{t_1}^\dagger O_{t_1 + s}^\dagger\right]\right)\right\Vert_{M/4}\\
    &\leq C\sum_{s = 0}^{T - 1 - \lambda_1} K\left(\frac{s}{\mathcal{K}_T}\right)\frac{\lambda_2 + h}{\sqrt{T}}\leq \frac{C_1\mathcal{K}_T(\lambda_2 + h)}{\sqrt{T}}.
\end{align*}
Since 
\begin{align*}
    \left\Vert\sum_{t_2 = 1 + \lambda_1}^{T - s}\left(O_{t_2 + s}^\dagger O_{t_2}^\dagger - \mathbf{E}\left[O_{t_2 + s}^\dagger O_{t_2}^\dagger\right]\right)\right\Vert_{M/4} & = \left\Vert\sum_{t_1 = 1 + \lambda_1}^{T - s}\left(O_{t_1 + s}^\dagger O_{t_1}^\dagger - \mathbf{E}\left[O_{t_1 + s}^\dagger O_{t_1}^\dagger\right]\right)\right\Vert_{M/4}\leq \frac{C\lambda_2 + Ch}{\sqrt{T}},
\end{align*}
we have 
\begin{align*}
     &\sum_{s = 1}^{T - 1 - \lambda_1} K\left(\frac{s}{\mathcal{K}_T}\right)\left\Vert\sum_{t_2 = 1 + \lambda_1}^{T - s}\left(O_{t_2 + s}^\dagger O_{t_2}^\dagger - \mathbf{E}\left[O_{t_2 + s}^\dagger O_{t_2}^\dagger\right]\right)\right\Vert_{M/4}\\
     &\leq \sum_{s = 1}^{T - 1 - \lambda_1} K\left(\frac{s}{\mathcal{K}_T}\right)\frac{C\lambda_2 + Ch}{\sqrt{T}}\leq \frac{C_1\mathcal{K}_T(\lambda_2 + h)}{\sqrt{T}},
\end{align*}
and 
\begin{equation}
    \begin{aligned}
        &\left\Vert
        \sum_{t_1 = 1+ \lambda_1}^T\sum_{t_2 = 1+ \lambda_1}^T K\left(\frac{t_1  -  t_2}{\mathcal{K}_T}\right)\left(O_{t_1}^\dagger O_{t_2}^\dagger - \mathbf{E}\left[O_{t_1}^\dagger O_{t_2}^\dagger\right]\right)
        \right\Vert_{M/4}\leq \frac{C\mathcal{K}_T(\lambda_2 + h)}{\sqrt{T}}.
    \end{aligned}
    \label{eq.K_OO_EOO}
\end{equation}
Since 
\begin{align*}
    &\sum_{t_1 = 1+ \lambda_1}^T\sum_{t_2 = 1+ \lambda_1}^T\left(1 - K\left(\frac{t_1  -  t_2}{\mathcal{K}_T}\right)\right)\left\vert \mathbf{E}\left[O_{t_1}^\dagger O_{t_2}^\dagger\right]\right\vert\\
    & = \sum_{s = 0}^{T - 1 - \lambda_1}\left(1 - K\left(\frac{s}{\mathcal{K}_T}\right)\right)\sum_{t_1 = 1+ \lambda_1}^{T-s}\left\vert \mathbf{E}\left[O_{t_1}^\dagger O_{t_1 + s}^\dagger\right]\right\vert\\
    & + \sum_{s = 1}^{T - 1 - \lambda_1}\left(1 - K\left(\frac{s}{\mathcal{K}_T}\right)\right)\sum_{t_2 = 1+ \lambda_1}^{T-s}\left\vert \mathbf{E}\left[O_{t_2}^\dagger O_{t_2 + s}^\dagger\right]\right\vert
\end{align*}
If $s \leq 2\lambda_2 + h + 1,$ from \eqref{eq.moment_O_t},  
\begin{align*}
    \left\vert \mathbf{E}\left[O_{t_2}^\dagger O_{t_2 + s}^\dagger\right]\right\vert\leq \left\Vert O_{t_2}^\dagger\right\Vert_{M/2}\left\Vert  O_{t_2 + s}^\dagger\right\Vert_{M/2}\leq \frac{C}{T}.
\end{align*}
On the other hand, if $s > 2\lambda_2 + h + 1,$ since $\mathbf{E}\left[O_{t_1 + s}^\dagger\mid\mathcal{F}_{t_1 + s, s - 1}\right]$ is independent of $O_{t_1}^\dagger,$ we have 
\begin{align*}
    \mathbf{E}\left[O_{t_1}^\dagger\left(\mathbf{E}\left[O_{t_1 + s}^\dagger\mid\mathcal{F}_{t_1 + s, s - 1}\right]\right)\right] = \mathbf{E}\left[O_{t_1}^\dagger\right]\mathbf{E}\left[\left(\mathbf{E}\left[O_{t_1 + s}^\dagger\mid\mathcal{F}_{t_1 + s, s - 1}\right]\right)\right] = 0,
\end{align*}
and applying \eqref{eq.def_P_t_S} leads to 
\begin{align*}
    \left\vert \mathbf{E}\left[O_{t_1}^\dagger O_{t_1 + s}^\dagger\right]\right\vert &= \left\vert \mathbf{E}\left[O_{t_1}^\dagger \left(O_{t_1 + s}^\dagger - \mathbf{E}\left[O_{t_1 + s}^\dagger\mid\mathcal{F}_{t_1 + s, s - 1}\right]\right)\right]\right\vert\\
    &\leq \left\Vert O_{t_1}^\dagger\right\Vert_{M/2}\left\Vert O_{t_1 + s}^\dagger - \mathbf{E}\left[O_{t_1 + s}^\dagger\mid\mathcal{F}_{t_1 + s, s - 1}\right]\right\Vert_{M/2}\\
    &\leq \frac{C}{T(s - 1 - h - \lambda_2)^\alpha} + \frac{CT^{2/M}(\lambda_2 - \lambda_1)^{2/M}}{\sqrt{(\lambda_2 - \lambda_1)}(s - 1 - \lambda_2)^{\alpha - 2}}.
\end{align*}
From Definition \ref{def.kernel}, $\left\vert K^\prime(\cdot)\right\vert$ is bounded by a constant, so for $0\leq s\leq \mathcal{K}_T,$ 
\begin{align*}
    1 - K\left(\frac{s}{\mathcal{K}_T}\right) = K(0) - K\left(\frac{s}{\mathcal{K}_T}\right)\leq \frac{Cs}{\mathcal{K}_T}.
\end{align*}
From Assumption \ref{assumption.for_variance_estimation}, $2\lambda_2 + h + 1\leq \lfloor\mathcal{K}_T\rfloor$  for sufficiently large sample size $T, $ where $\lfloor\mathcal{K}_T\rfloor$ denotes the largest integer that is smaller than or equal to $\mathcal{K}_T.$ Therefore, 
\begin{align*}
    &\sum_{s = 0}^{T - 1 - \lambda_1}\left(1 - K\left(\frac{s}{\mathcal{K}_T}\right)\right)\sum_{t_1 = 1+ \lambda_1}^{T-s}\left\vert \mathbf{E}\left[O_{t_1}^\dagger O_{t_1 + s}^\dagger\right]\right\vert\\
    &\leq \sum_{s = 0}^{2\lambda_2 + h + 1}\frac{Cs}{\mathcal{K}_T} + \sum_{s = 2\lambda_2 + h + 2}^{\lfloor\mathcal{K}_T\rfloor}\frac{Cs}{\mathcal{K}_T(s - 1 - h - \lambda_2)^\alpha} + \sum_{s = 2\lambda_2 + h + 2}^{\lfloor\mathcal{K}_T\rfloor}\frac{Cs}{\mathcal{K}_T}\frac{T^{1 +2/M}(\lambda_2 - \lambda_1)^{2/M}}{\sqrt{(\lambda_2 - \lambda_1)}(s - 1 - \lambda_2)^{\alpha - 2}}\\
    & +  \sum_{s = \lfloor\mathcal{K}_T\rfloor + 1}^{T-1-\lambda_1}\frac{C}{(s - 1 - h - \lambda_2)^\alpha} + \sum_{s = \lfloor\mathcal{K}_T\rfloor + 1}^{T-1-\lambda_1}\frac{CT^{1 +2/M}(\lambda_2 - \lambda_1)^{2/M}}{\sqrt{(\lambda_2 - \lambda_1)}(s - 1 - \lambda_2)^{\alpha - 2}}.
\end{align*}
 Since 
\begin{align*}
    \sum_{s = 0}^{2\lambda_2 + h + 1}\frac{s}{\mathcal{K}_T}\leq \frac{C(\lambda_2 + h)^2}{\mathcal{K}_T},
\end{align*}
\begin{align*}
    &\sum_{s = 2\lambda_2 + h + 2}^{\lfloor\mathcal{K}_T\rfloor}\frac{s}{\mathcal{K}_T(s - 1 - h - \lambda_2)^\alpha}\\
    &= \sum_{s = 2\lambda_2 + h + 2}^{\lfloor\mathcal{K}_T\rfloor}\frac{1}{\mathcal{K}_T(s - 1 - h - \lambda_2)^{\alpha - 1}} + \sum_{s = 2\lambda_2 + h + 2}^{\lfloor\mathcal{K}_T\rfloor}\frac{1 + h +\lambda_2}{\mathcal{K}_T(s - 1 - h - \lambda_2)^\alpha}\\
    &\leq \frac{C}{\mathcal{K}_T\lambda_2^{\alpha - 2}} + \frac{C(1 + h +\lambda_2)}{\mathcal{K}_T\lambda_2^{\alpha - 1}}\leq \frac{C_1(\lambda_2 + h)^2}{\mathcal{K}_T},
\end{align*}
\begin{align*}
    \sum_{s = 2\lambda_2 + h + 2}^{\lfloor\mathcal{K}_T\rfloor}\frac{s}{\mathcal{K}_T}\frac{T^{1 +2/M}(\lambda_2 - \lambda_1)^{2/M}}{\sqrt{(\lambda_2 - \lambda_1)}(s - 1 - \lambda_2)^{\alpha - 2}} &= \sum_{s = 2\lambda_2 + h + 2}^{\lfloor\mathcal{K}_T\rfloor}\frac{T^{1 +2/M}(\lambda_2 - \lambda_1)^{2/M}}{\mathcal{K}_T\sqrt{(\lambda_2 - \lambda_1)}(s - 1 - \lambda_2)^{\alpha - 3}}\\
    &+  \sum_{s = 2\lambda_2 + h + 2}^{\lfloor\mathcal{K}_T\rfloor}\frac{(1 + \lambda_2)}{\mathcal{K}_T}\frac{T^{1 +2/M}(\lambda_2 - \lambda_1)^{2/M}}{\sqrt{(\lambda_2 - \lambda_1)}(s - 1 - \lambda_2)^{\alpha - 2}}\\
    &\leq \frac{CT^{1 +2/M}}{\mathcal{K}_T(\lambda_2 + h)^{\alpha - 4}} + \frac{(1 + \lambda_2)}{\mathcal{K}_T}\frac{T^{1 +2/M}}{(\lambda_2 + h)^{\alpha - 3}}\leq \frac{C(\lambda_2 + h)^2}{\mathcal{K}_T},
\end{align*}
and 
\begin{align*}
    \sum_{s = \lfloor\mathcal{K}_T\rfloor + 1}^{T-1-\lambda_1}\frac{1}{(s - 1 - h - \lambda_2)^\alpha}\leq \frac{C}{(\mathcal{K}_T - h - \lambda_2)^{\alpha - 1}}\leq \frac{C_1(\lambda_2 + h)^2}{\mathcal{K}_T},
\end{align*}
\begin{align*}
    \sum_{s = \lfloor\mathcal{K}_T\rfloor + 1}^{T-1-\lambda_1}\frac{T^{1 +2/M}(\lambda_2 - \lambda_1)^{2/M}}{\sqrt{(\lambda_2 - \lambda_1)}(s - 1 - \lambda_2)^{\alpha - 2}}\leq \frac{CT^{1 +2/M}}{(\mathcal{K}_T  - \lambda_2)^{\alpha - 3}}\leq \frac{C_1(\lambda_2 + h)^2}{\mathcal{K}_T}.
\end{align*}
We have 
\begin{equation}
    \begin{aligned}
         &\sum_{s = 0}^{T - 1 - \lambda_1}\left(1 - K\left(\frac{s}{\mathcal{K}_T}\right)\right)\sum_{t_1 = 1+ \lambda_1}^{T-s}\left\vert \mathbf{E}\left[O_{t_1}^\dagger O_{t_1 + s}^\dagger\right]\right\vert\leq \frac{C(\lambda_2 + h)^2}{\mathcal{K}_T}.
    \end{aligned}
    \label{eq.bias_O_tO_ts}
\end{equation}
From \eqref{eq.OO_to_OOS}, \eqref{eq.K_OO_EOO}, \eqref{eq.bias_O_tO_ts}, and Assumption \ref{assumption.for_variance_estimation}, 
\begin{equation}
    \begin{aligned}
        \left\Vert
        \mathbf{E}^*\left[\widehat{R}^{*2}_b\right] -  \mathrm{Var}\left(\widehat{R}^\dagger\right)
        \right\Vert_{M/4}&\leq \frac{C\mathcal{K}_T(\lambda_2 + h)}{\sqrt{T}} + \frac{C(\lambda_2 + h)^2}{\mathcal{K}_T}\\
        & = O\left(T^{\kappa_k + (\kappa_2\vee \kappa_h) - 1/2} + T^{2(\kappa_2\vee \kappa_h) - \kappa_k}\right).
    \end{aligned}
    \label{eq.difference_variance}
\end{equation}
Define $Z$ as a standard normal random variable that is independent of $\mathbf{z}_t, t = 1,\cdots,T,$ then 
\begin{align*}
    \mathbf{pr}^*\left(\widehat{R}^*_b\leq x\right) - \mathbf{pr}\left(\epsilon\leq x\right) &= \mathbf{pr}^*\left(\widehat{R}^*_b\leq x\right) - \mathbf{pr}\left(\epsilon\leq x + 1/\psi\right) + \mathbf{pr}\left(\epsilon\leq x + 1/\psi\right) - \mathbf{pr}\left(\epsilon\leq x\right)\\
    &\leq \mathbf{E}^*\left[g_{\psi,x}\left(\widehat{R}^*_b\right)\right] - \mathbf{E}\left[g_{\psi,x}\left(\epsilon\right)\right] + \frac{C}{\psi}\\
    & = \mathbf{E}^*\left[g_{\psi,x}\left(\sqrt{\mathbf{E}^*\left[\widehat{R}^{*2}_b\right]}Z\right)\right] - \mathbf{E}^*\left[g_{\psi,x}\left(\sqrt{\mathrm{Var}\left(\widehat{R}^\dagger\right)}Z\right)\right] + \frac{C}{\psi}\\
    &\leq \frac{C}{\psi} + C\psi\left\vert \sqrt{\mathbf{E}^*\left[\widehat{R}^{*2}_b\right]} - \sqrt{\mathrm{Var}\left(\widehat{R}^\dagger\right)}\right\vert
\end{align*}
and 
\begin{align*}
    \mathbf{pr}^*\left(\widehat{R}^*_b\leq x\right) - \mathbf{pr}\left(\epsilon\leq x\right) &= \mathbf{pr}^*\left(\widehat{R}^*_b\leq x\right) - \mathbf{pr}\left(\epsilon\leq x - 1/\psi\right) + \mathbf{pr}\left(\epsilon\leq x - 1/\psi\right) - \mathbf{pr}\left(\epsilon\leq x\right)\\
    &\geq \mathbf{E}\left[g_{\psi, x - 1/\psi}\left(\widehat{R}^*_b\right)\right] - \mathbf{E}\left[g_{\psi, x - 1/\psi}\left(\epsilon\right)\right] - \frac{C}{\psi}\\
    & = \mathbf{E}^*\left[g_{\psi,x - 1/\psi}\left(\sqrt{\mathbf{E}^*\left[\widehat{R}^{*2}_b\right]}Z\right)\right] - \mathbf{E}^*\left[g_{\psi, x - 1/\psi}\left(\sqrt{\mathrm{Var}\left(\widehat{R}^\dagger\right)}Z\right)\right] - \frac{C}{\psi}\\
    &\geq  -\frac{C}{\psi} - C\psi\left\vert \sqrt{\mathbf{E}^*\left[\widehat{R}^{*2}_b\right]} - \sqrt{\mathrm{Var}\left(\widehat{R}^\dagger\right)}\right\vert.
\end{align*}
These two inequalities make 
\begin{align*}
    &\sup_{x\in\mathbf{R}}\left\vert \mathbf{pr}^*\left(\widehat{R}^*_b\leq x\right) - \mathbf{pr}\left(\epsilon\leq x\right)\right\vert\\
    &\leq \frac{C}{\psi} + C\psi\left\vert \sqrt{\mathbf{E}^*\left[\widehat{R}^{*2}_b\right]} - \sqrt{\mathrm{Var}\left(\widehat{R}^\dagger\right)}\right\vert\\
    & = \frac{C}{\psi} + C\psi\frac{\left\vert \mathbf{E}^*\left[\widehat{R}^{*2}_b\right] - \mathrm{Var}\left(\widehat{R}^\dagger\right)\right\vert}{\sqrt{\mathbf{E}^*\left[\widehat{R}^{*2}_b\right]} + \sqrt{\mathrm{Var}\left(\widehat{R}^\dagger\right)}}\leq \frac{C}{\psi} + C_1\psi\left\vert \mathbf{E}^*\left[\widehat{R}^{*2}_b\right] - \mathrm{Var}\left(\widehat{R}^\dagger\right)\right\vert.
\end{align*}
From \eqref{eq.difference_variance}, $\left\vert \mathbf{E}^*\left[\widehat{R}^{*2}_b\right] - \mathrm{Var}\left(\widehat{R}^\dagger\right)\right\vert = O_p\left(T^{\kappa_k + (\kappa_2\vee \kappa_h) - 1/2} + T^{2(\kappa_2\vee \kappa_h) - \kappa_k}\right).$ Choose $\psi = \log(T),$ we have 
\begin{align*}
    \sup_{x\in\mathbf{R}}\left\vert \mathbf{pr}^*\left(\widehat{R}^*_b\leq x\right) - \mathbf{pr}\left(\epsilon\leq x\right)\right\vert = o_p(1),
\end{align*}
and we prove \eqref{eq.bootstrap_validity}.

\end{proof}



\scriptsize
  \bibliography{Arxivs}

\end{document}